\DeclareSymbolFont{rsfs}{U}{rsfs}{m}{n}
\DeclareSymbolFontAlphabet{\mathscrsfs}{rsfs}
\newcommand{\footremember}[2]{%
   \footnote{#2}
    \newcounter{#1}
    \setcounter{#1}{\value{footnote}}%
}
\newcommand{\footrecall}[1]{%
    \footnotemark[\value{#1}]%
}
\theoremstyle{definition}
\newtheorem{Def}{Definition}[section]
\newtheorem{Rmk}[Def]{Remark}
\theoremstyle{plain}
\newtheorem{Prop}[Def]{Proposition}
\newtheorem{Thm}[Def]{Theorem}
\newtheorem{Lemma}[Def]{Lemma}
\newcommand{\R}{\mathbb{R}}
\newcommand{\E}{\mathbb E}
\newcommand{\Z}{\mathbb{Z}}
\newcommand{\N}{\mathbb{N}}
\DeclareMathOperator{\supp}{supp}
\newcommand{\ep}{\varepsilon}
\newcommand{\dist}{\text{\rm dist}}
\newcommand{\PP}{\mathbb P}
\newcommand{\lp}{\left(}
\newcommand{\rp}{\right)}
\newcommand{\lc}{\left[}
\newcommand{\rc}{\right]}
\newcommand{\lcl}{\left\{}
\newcommand{\rcl}{\right\}}
\newcommand{\lln}{\left|}
\newcommand{\rrn}{\right|}
\newcommand{\wt}[1]{\widetilde{#1}}
\newcommand{\wc}[1]{\widecheck{#1}}
\newcommand{\ros}[1]{R_{H_1,H_2} \left( #1 \right)}
\newcommand{\norm}[1]{\left\lVert#1\right\rVert_{L^2(\Omega)}}
\newcommand{\norminf}[1]{\left\lVert#1\right\rVert_\infty}
\renewcommand{\epsilon}{\varepsilon}
\DeclareFontFamily{U} {cmsy}{}
\DeclareFontShape{U}{cmsy}{m}{n}{
  <-8> cmsy7
  <8-9> cmsy8
  <9-10> cmsy9
  <10-> cmsy10}{}
\DeclareSymbolFont{Xcmsy} {U} {cmsy}{m}{n}
\DeclareMathSymbol{\cmemptyset}{\mathord}{Xcmsy}{59}
\def\dist{\mbox{\rm dist}}
\title{ Wavelet methods to study the pointwise regularity of the generalized Rosenblatt process }
\author{L. Daw\footremember{Luxembourg}{University of Luxembourg,Department of Mathematics (DMATH), Maison du Nombre, 6, avenue de la Fonte, L-4364 Esch-sur-Alzette, Grand Duchy Of Luxembourg.}, L. Loosveldt\footrecall{Luxembourg} \footnote{Corresponding author at laurent.loosveldt@uni.lu.}}
\begin{document}

\maketitle

\begin{abstract}
We prove that we can identify three types of pointwise behaviour in the regularity of the (generalized) Rosenblatt process. This extends to a non Gaussian setting previous results known for the (fractional) Brownian motion. On this purpose, fine bounds on the increments of the Rosenblatt process are needed. Our analysis is essentially based on various wavelet methods.
\end{abstract}
\noindent \textit{Keywords}: Wiener chaos, Rosenblatt process, Wavelet series, Random Series, slow/ordinary/rapid points, modulus of continuity    

\noindent  \textit{2020 MSC}: 60G18, 60G22, 42C40, 26A16, 60G17
\section{Introduction}

Precise study of path behaviour, and in particular regularity, of stochastic processes is a classical research field, initiated in the 1920s by the works of Wiener \cite{wiener76}. It lies in between probability and (harmonic) analysis and a common strategy is to mix probabilistic arguments with analytical tools. Pioneer works concerned Brownian motion. Among them, one can cite Paley and Wiener's expansion \cite{paleywiener} using Fourier series, L\'evy's representation \cite{levy48} obtained with some techniques of interpolation theory or, more recently, Kahane's expansion \cite{kahane85} in the Schauder basis. 

In the last decades, the emergence of wavelet analysis allowed to obtain series expansions for many stochastic processes. Let $\psi \, : \, \R \to \R$ be a smooth function satisfying the admissibility condition \cite{Meyer:95}
\begin{equation}\label{intro:admcond}
\int_{\R} \frac{|\widehat{\psi}(\xi)|}{|\xi|} \, d\xi < \infty,
\end{equation}
where $\widehat{\psi}$ is the Fourier transform of $\psi$. As such it generates an orthonormal basis of $L^{2}(\R)$. More precisely, any function $f\in L^2(\R)$ can be decomposed as
\begin{equation}\label{intro:waveletexp}
f= \sum_{j \in \Z}\sum_{k \in \Z} c_{j,k} \psi(2^j
\cdot-k),
\end{equation}
where
\[
c_{j,k}=2^{j}\int_{\R}f(x) \psi(2^jx-k)\, dx.
\]
It is noteworthy that the expansion \eqref{intro:waveletexp} holds true in many function spaces. We refer to the seminal books \cite{Daubechies:92,Meyer:95,Mallat:99} for more details and proofs of these facts. Multifractal analysis has demonstrated the efficiency of wavelet methods to study uniform and pointwise H\"older regularity of functions both from a theoretical \cite{barralseuret,Bastin:14,Clausel:11,jaff97siam,Jaffard:04b,jaffardmeyer96} and a practical point of view \cite{arneodoadn,Arneodo:95,brundekropp,deklsn:17,flandrin,jaffardabryroux,Nicolay:07,wendtabryjaf,wendtbis}. 

Now, let us consider a probability space $(\Omega,\mathcal{A},\mathbb{P})$ and a real-valued stochastic process $X$ defined on it. If $X$ is smooth enough, for all $\omega \in \Omega$, one can apply expansion \eqref{intro:waveletexp} to the simple path $t \mapsto X(t,\omega)$. This way, one defines a sequence of random wavelet coefficients $(c_{j,k}(\omega))_{j,k \in \Z}$. For instance, if $X=B_H$ is the fractional Brownian motion of Hurst index $H \in (0,1)$ and if $\psi$ is a sufficiently regular wavelet, one has \cite{meyersellantaqqu99,jaffardabryroux}
\begin{equation}\label{intrpo:bh}
B_{H} = \sum_{j \in \N}\sum_{k \in \Z} 2^{-Hj} \xi_{j,k} \psi_{H+1/2} (2^{j}\cdot-k)
 + R,
\end{equation}
where $(R(t,\cdot))_{t \in \R^+}$ is a  process with alsmost surely $C^\infty$ sample paths, $(\xi_{j,k})_{j\in \N, k \in \Z}$ is a sequence of independent $\mathcal{N}(0,1)$ random variables and $\psi_{H+1/2}$ is a fractional antiderivative of $\psi$, see Section \ref{sec:wav} for a precise definition. 

In \cite{esserloosveldt}, Esser and Loosveldt undertook  a systematic study of Gaussian wavelet series. Thanks to \eqref{intrpo:bh}, it applies in particular to the fractional Brownian motion and leads to the following theorem.
\begin{Thm}
\label{thm:FBM}
For all $H \in (0,1)$, there exists an event $\Omega_H$ of probability $1$ satisfying the following assertions for all $\omega \in \Omega_H$ and every non-empty interval $I$ of $\R$.
\begin{itemize}
\item For almost every $t\in I$,
\[
0< \limsup_{s \to t} \dfrac{|B_H(t,\omega)-B_H(s,\omega) |}{|t-s|^{H} \sqrt{\log \log |t-s|^{-1}}} < + \infty.
\]
Such points  are called \emph{ordinary points}.
\item There exists a dense set of points $t \in I$  such that
\[
0< \limsup_{s \to t} \dfrac{|B_H(t,\omega)-B_H(s,\omega) |}{|t-s|^{H}  \sqrt{\log |t-s|^{-1}}} < + \infty.
\]
Such points  are called \emph{rapid points}.
\item There exists a dense set of points $t \in I$  such that
\[
0<\limsup_{s \to t} \dfrac{|B_H(t,\omega)-B_H(s,\omega) |}{|t-s|^{H} } < + \infty.
\]
Such points  are called \emph{slow points}.
\end{itemize}
\end{Thm}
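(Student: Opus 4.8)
The plan is to deduce Theorem~\ref{thm:FBM} from the wavelet representation~\eqref{intrpo:bh} together with a pathwise analysis of the Gaussian random wavelet series
\[
W(t,\omega) := \sum_{j \in \N}\sum_{k \in \Z} 2^{-Hj}\,\xi_{j,k}(\omega)\,\psi_{H+1/2}(2^{j}t-k).
\]
Since the remainder $R$ has almost surely $C^\infty$ sample paths, on any compact subinterval one has $|R(t,\omega)-R(s,\omega)| \le C_\omega\,|t-s|$, and because $H<1$ this term is negligible after division by $|t-s|^H$. Hence each of the three $\limsup$ quantities appearing in the statement is unchanged — in particular, it stays finite and strictly positive — when $B_H$ is replaced by $W$, so it suffices to prove the three assertions for $W$. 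The whole statement is then the application of the systematic study of Gaussian wavelet series of~\cite{esserloosveldt} to the coefficient sequence $(2^{-Hj}\xi_{j,k})_{j,k}$ and the fractionally integrated wavelet $\psi_{H+1/2}$; below I describe what that study establishes.

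For the \textbf{upper bounds} (finiteness of the $\limsup$'s), I would split, for $|t-s|$ of order $2^{-J}$, the difference $W(t)-W(s)$ into a low-frequency block $j\le J$ and a high-frequency block $j>J$. Using the decay of $\psi_{H+1/2}$ and of its derivative, and the fact that only $O(1)$ indices $k$ contribute at each scale (those with $2^{j}t$ or $2^{j}s$ near $k$), the low-frequency block is controlled by $\sum_{j\le J} 2^{-Hj}2^{j}|t-s|\cdot\sup_k|\xi_{j,k}|$ and the high-frequency block by $\sum_{j>J} 2^{-Hj}\sup_k|\xi_{j,k}|$, the suprema being over the relevant indices only. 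A Gaussian maximal inequality plus Borel--Cantelli gives $\sup_{|k|\le R2^{j}}|\xi_{j,k}|=O(\sqrt{j})$ almost surely, uniformly in $j$, while for Lebesgue-almost every base point $t$ the law of the iterated logarithm reduces this to $O(\sqrt{\log\log 2^{j}})$. Summing the resulting geometric series yields exactly the prescribed moduli: $|t-s|^{H}\sqrt{\log|t-s|^{-1}}$ everywhere, $|t-s|^{H}\sqrt{\log\log|t-s|^{-1}}$ almost everywhere, and the bare $|t-s|^{H}$ at points $t$ chosen so that the finitely many relevant coefficients at every scale never exceed a fixed constant.

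For the \textbf{lower bounds} (strict positivity) the argument differs for each type of point. For ordinary points one invokes the almost-sure lower bound in the law of the iterated logarithm for $W$ (equivalently for $B_H$), which by Fubini holds simultaneously for almost every $t$ on an event of probability one. For rapid points, a first-moment/Borel--Cantelli argument produces, inside every dyadic subinterval, a scale $j$ and an index $k$ with $|\xi_{j,k}|$ of order $\sqrt{j}$ while the neighbouring coefficients stay moderate; the single large wavelet term then forces the increment at a suitable $t=(k+\mathrm{const})2^{-j}$ to be at least of order $2^{-Hj}\sqrt{j}$, and such $t$ can be arranged to be dense in any $I$. For slow points one runs the dual construction: along a nested sequence of dyadic intervals shrinking to a point, one selects $t$ so that \emph{all} the $O(1)$ relevant coefficients at \emph{every} scale are bounded below in absolute value in a controlled way, which simultaneously delivers the $|t-s|^{H}$ upper bound and a matching lower bound, again with the base points dense in $I$.

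The main obstacle is the construction of the slow and rapid points with the \emph{exact} logarithmic corrections together with the density statements: one must orchestrate the Borel--Cantelli selections along a countable family of dyadic intervals shrinking to every point of every interval $I$, while keeping uniform control on the contributions of all the other (``non-resonant'') scales, so that the selected point has a genuine two-sided modulus of continuity rather than merely a one-sided estimate. Calibrating the constants and the interplay between the resonant scale and the tail scales is the delicate part; by contrast, the upper bounds and the almost-everywhere (ordinary-point) statement follow routinely from Gaussian maximal inequalities and the law of the iterated logarithm.
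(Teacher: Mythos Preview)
The paper does not actually prove Theorem~\ref{thm:FBM}: it is stated as a known result, obtained by applying the systematic study of Gaussian wavelet series carried out in~\cite{esserloosveldt} to the expansion~\eqref{intrpo:bh}. Your proposal is precisely a sketch of that argument --- reduce to the Gaussian wavelet series $W$ by discarding the smooth remainder, then run the upper/lower bound machinery of~\cite{esserloosveldt} on the coefficients $2^{-Hj}\xi_{j,k}$ --- so at the level of strategy you are fully aligned with what the paper invokes.

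That said, two places in your outline are looser than they should be and would need to be tightened to match the actual proof. First, for ordinary points you appeal to ``the law of the iterated logarithm'' to get $|\xi_{j,k}|=O(\sqrt{\log\log 2^j})$ for the relevant $k$; but the $\xi_{j,k}$ are i.i.d.\ across \emph{both} indices, so there is no natural martingale/random-walk structure in $j$ to which the classical LIL applies. What~\cite{esserloosveldt} actually does is closer to what the present paper does in Lemma~\ref{Lemma:ordinary}: one re-indexes by $k-k_j(t)$ and uses a Borel--Cantelli argument (then Fubini over $t$) to show that, for a.e.\ $t$, the coefficients with $|k-k_j(t)|$ bounded satisfy $|\xi_{j,k}|\le C\sqrt{\log j}$ eventually. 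Second, your description of the slow-point construction (``select $t$ so that all relevant coefficients at every scale are bounded below'') has the sign backwards and misses the mechanism: one needs the relevant coefficients to be \emph{uniformly bounded above}, and the existence of such $t$ is not a simple Borel--Cantelli selection but a Kahane-type survival argument (counting surviving dyadic intervals via a branching/binomial estimate, as recalled around \eqref{eqn:slownota1}--\eqref{eqn:proba1pourslow} in the present paper). The positivity of the $\limsup$ at slow points then comes from a separate, uniform lower bound on wavelet leaders, not from the same nested selection.
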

Note that Theorem \ref{thm:FBM} extends some well-known results of Kahane concerning the Brownian motion \cite{kahane85}. The ``ordinary'', ``rapid'' and ``slow'' terminology is inspired by them. Let us justify it. In a measure-theoretical point of view, the modulus of continuity $x \mapsto |x|^H \sqrt{\log \log |x|^{-1}}$ is the most frequent among the points of singles paths. Thus, it is natural to refer it as ordinary. Now, $|x|^H \sqrt{\log \log |x|^{-1}} = o(|x|^H \sqrt{\log |x|^{-1}})$ if $x \to 0^+$ and thus points for which $x \mapsto |x|^H \sqrt{\log |x|^{-1}}$ is the pointwise modulus of continuity are refereed as rapid. On the other side, points for which $x \mapsto |x|^H $ is the pointwise modulus of continuity are referred as slow because $|x|^H = o(|x|^H \sqrt{\log \log |x|^{-1}})$ if $x \to 0^+$.  

\bigskip

Now, let us turn to the stochastic process we will deal with in this paper. The Rosenblatt process appears naturally as a limit of normalized sums of long-range dependent random variables \cite{dobrushin}. Like the fractional Brownian motion, it belongs to the class of Hermite processes, fractional Brownian motion being of order $1$ while Rosenblatt process is of order $2$. Both are selfsimilar stochastic processes with stationary increments and are characterized by a parameter $H$, called the Hurst exponent. However, unlike the fractional Brownian motion, the Rosenblatt process is not Gaussian. Does it make a big difference regarding ordinary, rapid and slow points? In other words, can Theorem \ref{thm:FBM} be extended to cover the non Gaussian Rosenblatt process?

For the last fifteen years the Rosenblatt process has received a significantly increasing interest in both theoretical and practical lines of research. Due to its self-similarity, its applications are numerous across a multitude of fields, including internet traffic \cite{chaurasia2019performance} and turbulence \cite{sakthivel2018retarded, lakhel2019existence}. From a statistical point of view, estimating the value of the Hurst index $H$ is important for practical applications and various estimators exist, see \cite{bardet-tudor-2010, tudor-viens-2009}. Also, from a mathematical point of view the Rosenblatt process has received a lot of interest since its inception in~\cite{rosenblatt-1956}. Its distribution, still not known in explicit form, was studied first in~\cite{albin-1998} and more recently in~\cite{maejima-tudor-2013} and~\cite{veillette-taqqu-2013}.

In this paper, we even consider a generalization of the Rosenblatt process, as defined and studied in \cite{tudor12}. It depends on two parameters $H_1,H_2 \in (\frac{1}{2},1)$ which are such that $H_1+H_2> \frac{3}{2}$. The generalized Rosenblatt process $\{R_{H_1,H_2}(t,\cdot)\}_{t \in \R_+}$ is defined as a double Wiener-It\^o integral of a kernel function $K_{H_1,H_2}$ with respect to a given Brownian motion. More precisely, consider a standard two-sided Brownian motion $B$, and set
\begin{equation}\label{eqn:introdefrosen}
R_{H_1,H_2}(t,\cdot) = \int_{\R^2}^{'}  K_{H_1,H_2}(t,x_1,x_2) \, dB(x_1) dB(x_2),
\end{equation}
where $\int_{\R^2}^{'}$ denotes integration over $\R^2$ excluding the diagonal. The kernel function in \eqref{eqn:introdefrosen} is expressed, for all $(t,x_1,x_2)$ on $\R_+ \times \R^2$, by
\[ K_{H_1,H_2}(t,x_1,x_2) = \frac{1}{\Gamma \left(H_1-\frac{1}{2}\right) \Gamma \left(H_2-\frac{1}{2}\right)} \int_0^t (s-x_1)_+^{H_1-\frac{3}{2}} (s-x_2)_+^{H_2-\frac{3}{2}} \, ds,\]
where $\Gamma$ stands for the usual Gamma Euler function, and where for $(x,\alpha) \in \R^2$
\begin{align*}
x_+^\alpha=\left\{ \begin{array}{cl}
                       x^\alpha & \text{if } x \geq 0\\
                       0 & \text{otherwise.}
                     \end{array} \right.
\end{align*}
 Note that the (standard) Rosenblatt process is the process $\{R_{H,H}(t,\cdot)\}_{t \in \R_+}$ for $H \in (3/4,1)$. The generalized Rosenblatt process $\{R_{H_1,H_2}(t,\cdot)\}_{t \in \R_+}$ is non-Gaussian, belongs to the second Wiener chaos, and has the following basic properties:
\begin{enumerate}
	\item[\textbf{(1)}] \textbf{Continuity:} the trajectories of the Rosenblatt process $R_{H_1,H_2}$ are continuous. 
	\item[\textbf{(2)}] \textbf{Stationary increments: } $R_{H_1,H_2}$ has stationary increments; that is, the distribution of the process $\left\{ R_{H_1,H_2}(t+s,\cdot)- R_{H_1,H_2}(s,\cdot)\right\}_{t \in \R_+}$ does not depend on $s \geq 0$.
	\item[\textbf{(3)}] \textbf{Self-similarity: } $R_{H_1,H_2}$ is self-similar with exponent $H_1+H_2-1$; that is, the processes $\{R_{H_1,H_2}(ct,\cdot)\}_{t \in \R_+}$ and $\left\{c^{H_1+H_2-1}R_{H_1,H_2}(t,\cdot)\right\}_{t \in \R_+}$ have the same distribution for all $c>0$.
\end{enumerate}

In \cite{ayacheesmili}, Ayache and Esmili presented a wavelet-type representation of the generalized Rosenblatt process, very similar to the one given in \cite{meyersellantaqqu99} for fractional Brownian motion, excepted for the use of integrals of two-dimensional wavelet bases. This representation is the starting point of this paper. It is one of our key tools to prove the following Theorem \ref{thm:main} which is the main result of this paper.

\begin{Thm}\label{thm:main}
For all $H_1,H_2 \in (\frac{1}{2},1)$ such that $H_1+H_2> \frac{3}{2}$, there exists an event $\Omega_{H_1,H_2}$ of probability $1$ satisfying the following assertions for all $\omega \in \Omega_{H_1,H_2}$ and every non-empty interval $I$ of $\R$.
\begin{itemize}
\item For almost every $t\in I$,
\begin{equation}\label{eqn:mainthm1}
0< \limsup_{s \to t} \dfrac{|R_{H_1,H_2}(t,\omega)-R_{H_1,H_2}(s,\omega) |}{|t-s|^{H_1+H_2-1} \log \log |t-s|^{-1}} < + \infty.
\end{equation}
Such points  are called \emph{ordinary points}.
\item There exists a dense set of points $t \in I$  such that
\begin{equation}\label{eqn:mainthm2}
0< \limsup_{s \to t} \dfrac{|R_{H_1,H_2}(t,\omega)-R_{H_1,H_2}(s,\omega) |}{|t-s|^{H_1+H_2-1}  \log |t-s|^{-1}} < + \infty.
\end{equation}
Such points  are called \emph{rapid points}.
\item There exists a dense set of points $t \in I$  such that
\begin{equation}\label{eqn:mainthm3}
\limsup_{s \to t} \dfrac{|R_{H_1,H_2}(t,\omega)-R_{H_1,H_2}(s,\omega) |}{|t-s|^{H_1+H_2-1} } < + \infty.
\end{equation}
Such points  are called \emph{slow points}.
\end{itemize}
\end{Thm}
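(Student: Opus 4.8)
The plan is to transport the strategy behind Theorem~\ref{thm:FBM} to the second Wiener chaos, using the Ayache--Esmili wavelet-type representation of $R_{H_1,H_2}$ as the bridge together with an analogue, valid for chaos-two random wavelet series, of the general theory of \cite{esserloosveldt}. Concretely, I would start from a representation of the form
\[
R_{H_1,H_2}(t,\cdot)\;=\;\sum_{j\in\N}\sum_{k}\,2^{-(H_1+H_2-1)j}\,\eta_{j,k}(\cdot)\,\Phi\big(2^{j}t-k\big)\;+\;R_\infty(t,\cdot),
\]
where $\Phi$ is a fixed, smooth, well-localized profile built from fractional primitives of the underlying wavelet, $R_\infty$ has almost surely $C^\infty$ sample paths, and the $\eta_{j,k}$ are centered second-chaos random variables (double Wiener--It\^o integrals against products of wavelets), renormalized so that $\|\eta_{j,k}\|_{L^2(\Omega)}$ is bounded above and below uniformly in $j,k$; the exponent $H_1+H_2-1$ is dictated by the self-similarity property~\textbf{(3)}. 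Every increment estimate for $R_{H_1,H_2}$ then reduces, modulo the harmless $C^\infty$ term, to controlling the $\eta_{j,k}$ with $2^{-j}\gtrsim|t-s|$ at indices $k$ near $2^jt$, the remaining terms contributing only rapidly decaying tails.

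The probabilistic input replacing Gaussianity is hypercontractivity on the second chaos: $\|\eta_{j,k}\|_{L^p(\Omega)}\le (p-1)\|\eta_{j,k}\|_{L^2(\Omega)}$ for all $p\ge 2$, which yields sub-exponential upper tails $\PP(|\eta_{j,k}|\ge\lambda)\lesssim e^{-c\lambda}$, a matching anti-concentration lower bound $\PP(|\eta_{j,k}|\ge\lambda)\gtrsim e^{-C\lambda}$ on a suitable range of $\lambda$, and enough decorrelation between coefficients at well-separated scales to run a converse Borel--Cantelli argument. This is exactly where the result departs from Theorem~\ref{thm:FBM}: the $e^{-c\lambda}$ tail --- as opposed to $e^{-c\lambda^2}$ in the Gaussian case --- turns a union bound over the $\asymp 2^j$ coefficients at scale $j$ into a cost of order $j$ rather than $\sqrt j$, and a Borel--Cantelli over the scales $j\asymp\log|t-s|^{-1}$ into a cost of order $\log j=\log\log|t-s|^{-1}$ rather than its square root, which is precisely why every $\sqrt{\log}$ in Theorem~\ref{thm:FBM} is replaced here by a bare $\log$.

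With these tools the three regimes are handled as in \cite{esserloosveldt}. For the \emph{ordinary points} \eqref{eqn:mainthm1}, finiteness of the $\limsup$ follows, at Lebesgue-almost every fixed $t$, by summing over $j$ the tail bound for $\max_{|k-2^jt|\le M}|\eta_{j,k}|$ at threshold $\asymp\log j$, while positivity of the $\limsup$ comes from the converse Borel--Cantelli lemma along a sparse sequence of scales using the decorrelation estimate; a Fubini argument produces a full-probability event on which this holds for a.e.\ $t$ in every interval. For the \emph{rapid points} \eqref{eqn:mainthm2}, a uniform union bound over all dyadic $(j,k)$ gives the global modulus of continuity $|t-s|^{H_1+H_2-1}\log|t-s|^{-1}$, hence the upper bound; for the lower bound one shows, via a first/second moment argument on the $\asymp 2^j$ coefficients at scale $j$, that almost surely every dyadic interval contains a point near which some $\eta_{j,k}$ is as large as $\asymp\log(2^j)$, forcing anomalously large increments, with a countable intersection making this simultaneous over all intervals. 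For the \emph{slow points} \eqref{eqn:mainthm3}, where only finiteness is claimed, one must instead exhibit in every interval a point near which \emph{all} relevant coefficients stay $O(1)$ at \emph{every} scale; this is produced by showing that a carefully chosen dyadic subinterval is ``good at all scales'' with positive probability (using independence, or near-independence, across scales) and iterating a subdivision argument to make such points dense. Intersecting the countably many full-probability events so produced defines $\Omega_{H_1,H_2}$.

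The main obstacle, I expect, is the interplay of non-Gaussianity with the dependence among the $\eta_{j,k}$: hypercontractivity delivers the upper tail bounds almost for free, but the lower bounds --- the ``$0<$'' in \eqref{eqn:mainthm1} and \eqref{eqn:mainthm2}, and above all the slow-point construction --- require quantitative anti-concentration and decoupling for second-chaos variables built from overlapping double integrals, which is genuinely delicate and is exactly the ``fine bounds on the increments of the Rosenblatt process'' announced in the abstract.
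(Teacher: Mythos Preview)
Your high-level intuition is right in several places --- the sub-exponential tails on the second chaos are indeed what turn every $\sqrt{\log}$ into a $\log$, the three regimes are handled by exactly the kind of Borel--Cantelli / union-bound / nested-interval arguments you describe, and you correctly anticipate that the lower bounds are the delicate part. But the starting point of your plan contains a real gap.

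You assume a representation
\[
R_{H_1,H_2}(t,\cdot)=\sum_{j\in\N}\sum_{k}2^{-(H_1+H_2-1)j}\,\eta_{j,k}\,\Phi(2^jt-k)+R_\infty(t,\cdot),
\]
i.e.\ a genuine single-indexed wavelet series with second-chaos coefficients, directly mimicking \eqref{intrpo:bh}. No such representation is available. The Ayache--Esmili expansion (Theorem~\ref{thm:decompo}) is a \emph{four}-parameter sum
\[
\sum_{(j_1,j_2,k_1,k_2)\in\Z^4}2^{j_1(1-H_1)+j_2(1-H_2)}\,\varepsilon_{j_1,j_2}^{k_1,k_2}\int_0^t\psi_{H_1}(2^{j_1}x-k_1)\psi_{H_2}(2^{j_2}x-k_2)\,dx,
\]
and the integrands are not dilates of a single profile $\Phi$. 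Because of this, the standard machinery linking wavelet coefficients to pointwise regularity does not apply, and in fact the paper must use \emph{two different wavelet bases}: the Meyer wavelet and the above expansion for the upper bounds (Section~\ref{sec:upper}), and a compactly supported Daubechies wavelet, with wavelet leaders, for the lower bounds (Section~\ref{sec:lower}). The upper-bound analysis is not a matter of bounding $\max_{|k-2^jt|\le M}|\eta_{j,k}|$; it requires splitting the $(j_1,j_2)$-sum according to the position of each index relative to the critical scale $n$ (Lemmata~\ref{lem:rapid1}--\ref{lem:rapid5}, \ref{lem:ordinary1}--\ref{lem:ordinary7}, \ref{lem:slow1}--\ref{lem:slow6}), exploiting the near-diagonal cancellation of Lemma~\ref{Lemma:intR} and introducing auxiliary random variables $\Xi_j(\lambda)$ built from partial sums of the $\varepsilon_{j,\ell}^{k_1,k_2}I_{j,\ell}^{k_1,k_2}$.

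For the lower bounds, your plan to use ``decorrelation between coefficients at well-separated scales'' is too vague to succeed: the Daubechies wavelet coefficients $c_{j,k}$ of $R_{H_1,H_2}$ are double Wiener--It\^o integrals over an unbounded domain and are genuinely dependent. The paper's device is to split each $c_{j,k}$ as $\wt{c_{j,k}}^M+\wc{c_{j,k}}^M$, where $\wt{c_{j,k}}^M$ integrates only over a window of width $\asymp M2^{-j}$ (hence is $\sigma_{j,k}^M$-measurable and yields true independence under a separation condition $(C_M)$), while $\|\wc{c_{j,k}}^M\|_{L^2(\Omega)}\lesssim M^{\max\{H_1,H_2\}-1}2^{-j(H_1+H_2-1)}$ can be made negligible by choosing $M$ large (Lemma~\ref{prop:normel2}). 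This is the concrete ``decoupling'' that makes the converse Borel--Cantelli arguments in Lemmata~\ref{Lemma:leaderord1} and \ref{Lemma:rapid1} go through, and it is not something hypercontractivity alone provides. Without identifying this mechanism, the ``$0<$'' parts of \eqref{eqn:mainthm1} and \eqref{eqn:mainthm2} remain unproved in your outline.
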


Theorem \ref{thm:main} shows in particular that slow, ordinary and rapid points are not specific to Gaussian processes.

\begin{Rmk}
Let us compare Theorems \ref{thm:FBM} and \ref{thm:main}. Each type of points is defined in the same way when considering their pointwise moduli of continuity. Indeed, if $X$ denotes both the fractional Brownian motion or the generalized Rosenblatt process, we see that the asymptotic behaviour of \linebreak $|X(t,\omega)-X(s,\omega)|$ is always compared to a modulus of continuity of the form $|t-s|^\alpha \theta(|t-s|)$, with $\alpha$ corresponding to the self-similarity exponent of $X$ and $\theta$ a potential logarithmic correction. For the ordinary points, $\theta$ is an iterated logarithm. More precisely, for the fractional Brownian motion, we have \linebreak $\theta(|t-s|)= \sqrt{\log \log |t-s|^{-1}}$ while, for the generalized Rosenblatt process, $\theta(|t-s|)=\log \log |t-s|^{-1}$. The same feature appears for the rapid points: in the case of the fractional Brownian motion we have $\theta(|t-s|)= \sqrt{\log |t-s|^{-1}}$ and for the generalized Rosenblatt process we have $\theta(|t-s|)= \log |t-s|^{-1}$. Therefore, the only difference between the corresponding logarithmic corrections is the square root that is used for the fractional Brownian motion and not for the generalized Rosenblatt process. It comes from the estimates that can be done on the tails of the distribution of random variables in the first order Wiener chaos, for the fractional Brownian motion, or the second order, for the generalized Rosenblatt process, see Theorems \ref{thm:bound} and \ref{thm:bound2} below. Concerning the slow points, there is no logarithmic correction, $\theta=1$ in both case. Unfortunately, contrary to the fractional Brownian motion, we did not manage to show the positiveness of the limit in \eqref{eqn:mainthm3}. In fact, for that, we would need to find an almost-sure uniform lower modulus of continuity for the generalized Rosenblatt process and to be able to judge its optimality, which seems to be a difficult task. This is discussed in details in Remark \ref{rmk:last} below, where we give an almost-sure uniform lower modulus of continuity using the techniques we use to prove the positiveness of the limits in \eqref{eqn:mainthm1} and \eqref{eqn:mainthm2}.
\end{Rmk}

Our strategy to prove Theorem \ref{thm:main} is as follows. First, in Section \ref{sec:upper} we derive upper-bounds for the oscillations $|R_{H_1,H_2}(t,\omega)-R_{H_1,H_2}(s,\omega) |$ that are sharp enough to imply the finiteness of the limits \eqref{eqn:mainthm1}, \eqref{eqn:mainthm2} and \eqref{eqn:mainthm3}. This is done by means of the wavelet-type expansion given in \cite{ayacheesmili}, see Theorem \ref{thm:decompo} below. Then, in Section \ref{sec:lower}, we give lower bounds for the so-called wavelet-leaders, see Section \ref{sec:wav}, of the generalized Rosenblatt process on a given compactly supported wavelet basis. This will prove the positiveness of the limits \eqref{eqn:mainthm1}, \eqref{eqn:mainthm2}. In particular, we use different bases depending on whether we deal with the finiteness of the limits in Theorem \ref{thm:main} or with their strict positiveness. This is very different from \cite{esserloosveldt} where the authors always work with the same wavelet. The reason is that the expression \eqref{eqn:waveletserie} in Theorem \ref{thm:decompo} below is not a wavelet series: it involves additional quantities. Therefore, standard arguments linking wavelet coefficients and regularity of the associated functions can no longer be used.

There is a priori no obstacles to extend our results in Section \ref{sec:lower} to any Hermite process. On the contrary, extending the results of Section \ref{sec:upper} does not seem obvious at all. This is because a wavelet-type expansion of arbitrary Hermite process is still missing but also because our strategy relies on arguments which are specific to the two-dimensional feature of the Rosenblatt process, see Lemma \ref{Lemma:intR} for instance. 

Notations used through this paper are rather standard except, maybe, that if $s,t$ are two real numbers,$ \int_{[s,t]}$ stands for $\int_s^t$ if $s \leq t$ and $-\int_s^t = \int_t^s$ otherwise.

\section{Some important facts involving wavelets} \label{sec:wav}

In this section, we gather all the facts concerning wavelets that we will strongly use all along this article. First, an immediate but important consequence of the admissibility condition \eqref{intro:admcond} is that, if the wavelet $\psi \in L^1(\R)$, its first moment always vanishes, i.e.
$$
\int_{\R} \psi(x) dx=0. 
$$
This condition is met for all the wavelets we consider in this paper.

First, while dealing with the upper bounds for the limits in Theorem \ref{thm:main}, we will use a wavelet-type expansion of the generalized Rosenblatt process. It is given in \cite{ayacheesmili} by the mean of the Meyer's wavelet: $\psi$ belongs to the Schwartz class $\mathcal{S}(\R)$, and its Fourier transform is compactly supported, see \cite{lemeyer}. In particular, for all $H \in (1/2,1)$, $\psi_H$, the fractional antiderivative $\widehat{\psi_H}$ of order $H-1/2$ of $\psi$ is well-defined by means of its Fourier transform as
\begin{equation}\label{eqn:fracprim}
 \widehat{\psi_H}(0)=0 \quad \text{and} \quad \widehat{\psi_H}(\xi)=(i\xi)^{-(H-\frac{1}{2})} \widehat{\psi}(\xi), \, \forall \, \xi \neq 0.
\end{equation}
It also belongs to the Schwartz class $\mathcal{S}(\R)$, see \cite{ayache,ayacheesmili,schwartz:78} for instance. Moreover, some standard facts from distribution theory \cite{schwartz:78,ayache} give us the explicit formula
\[ \psi_H(t) = \frac{1}{\Gamma \left(H-\frac{1}{2}\right)} \int_{\R} (t-x)_+^{H-\frac{3}{2}} \psi(x) \, dx.\]

From \eqref{eqn:fracprim}, we see that $\supp(\widehat{\psi_H})= \supp(\widehat{\psi})$ which is the key fact to establish the following lemma, gathering facts already proved in \cite{ayacheesmili}.

\begin{Lemma}\label{Lemma:intR}
Let $H_1,H_2 \in (\frac{1}{2},1)$. If $(j_1,j_2,k_1,k_2) \in \Z^4$ are such that $|j_1-j_2|>1$, then the integral
\[ I_{j_1,j_2}^{k_1,k_2} := \int_\R \psi_{H_1}(2^{j_1}x-k_1) \psi_{H_2}(2^{j_2}x-k_2)\]
vanishes. Moreover, for all $(j,k_1,k_2) \in \Z^3$, we have
\begin{align}
I_{j+1,j}^{k_1,k_2}&=2^{-j} \int_\R e^{-i(k_1-2k_2)\xi} \widehat{\psi_{H_1}}(\xi) \overline{\widehat{\psi_{H_2}}(2\xi)} \, d\xi, \label{eqn:intR1}\\
I_{j,j}^{k_1,k_2}&=2^{-j} \int_\R e^{-i(k_1-k_2)\xi} \widehat{\psi_{H_1}}(\xi) \overline{\widehat{\psi_{H_2}}(\xi)} \, d\xi, \label{eqn:intR2}\\
I_{j,j+1}^{k_1,k_2}&=2^{-j} \int_\R e^{-i(2k_1-k_2)\xi} \widehat{\psi_{H_1}}(2\xi) \overline{\widehat{\psi_{H_2}}(\xi)} \, d\xi.\label{eqn:intR3}
\end{align}

In addition, for all $L>0$, there exists a constant $C_L>0$ such that for all $(j,k_1,k_2) \in \Z^3$,
\begin{align*}
|I_{j+1,j}^{k_1,k_2}| &\leq C_L  \frac{2^{-j}}{(3+|k_1-2k_2|)^L},\\
|I_{j,j}^{k_1,k_2}| &\leq C_L  \frac{2^{-j}}{(3+|k_1-k_2|)^L},\\
|I_{j,j+1}^{k_1,k_2}| &\leq C_L  \frac{2^{-j}}{(3+|2k_1-k_2|)^L}.
\end{align*}
\end{Lemma}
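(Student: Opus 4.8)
The plan is to establish the three claims in order: the vanishing of $I_{j_1,j_2}^{k_1,k_2}$ when $|j_1-j_2|>1$, the explicit integral formulas \eqref{eqn:intR1}--\eqref{eqn:intR3}, and then the decay estimates. For the first claim, I would pass to the Fourier side. Writing $\psi_{H_1}(2^{j_1}\cdot-k_1)$ and $\psi_{H_2}(2^{j_2}\cdot-k_2)$ via their inverse Fourier transforms and using Parseval's identity (interpreting the product integral as an $L^2$ inner product, which is legitimate since both functions lie in the Schwartz class), the integral $I_{j_1,j_2}^{k_1,k_2}$ becomes, up to an explicit exponential phase and a power of $2$, the integral of $\widehat{\psi_{H_1}}$ evaluated at one rescaled frequency against the conjugate of $\widehat{\psi_{H_2}}$ evaluated at another rescaled frequency. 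Concretely, after the change of variables one gets an integral of the form $\int_\R \widehat{\psi_{H_1}}(2^{-j_1}\eta)\,\overline{\widehat{\psi_{H_2}}(2^{-j_2}\eta)}\,e^{i(\cdots)\eta}\,d\eta$ (modulo normalization). Now invoke the crucial fact, emphasized just before the lemma, that $\supp(\widehat{\psi_{H}})=\supp(\widehat{\psi})$ for every admissible $H$; since $\widehat\psi$ is the Meyer wavelet's Fourier transform, it is supported in an annulus of the type $\{2\pi/3\le|\xi|\le 8\pi/3\}$. The supports of $\xi\mapsto\widehat{\psi_{H_1}}(2^{-j_1}\xi)$ and $\xi\mapsto\widehat{\psi_{H_2}}(2^{-j_2}\xi)$ are then annuli whose inner/outer radii scale like $2^{j_1}$ and $2^{j_2}$ respectively; a direct computation with the endpoints $2\pi/3$ and $8\pi/3$ shows these annuli are disjoint as soon as $|j_1-j_2|\ge 2$, forcing the integrand to vanish identically. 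That settles the first assertion.

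For the explicit formulas \eqref{eqn:intR1}--\eqref{eqn:intR3}, I would redo the same Parseval computation but keep careful track of all constants in the three remaining cases $(j_1,j_2)=(j+1,j)$, $(j,j)$, $(j,j+1)$. In each case the change of variables $\eta = 2^{j_2}\xi$ (or $2^{j_1}\xi$) produces the factor $2^{-j}$ out front and collapses the two dyadic scales to a single clean integral over $\xi$; the translation parameters $k_1,k_2$ combine into the phase $e^{-i(k_1-2k_2)\xi}$, $e^{-i(k_1-k_2)\xi}$, or $e^{-i(2k_1-k_2)\xi}$ according to which scale is the coarser one. One should note a small bookkeeping point: $\psi_H$ is real-valued, so $\widehat{\psi_H}(-\xi)=\overline{\widehat{\psi_H}(\xi)}$, and this symmetry is what lets one write the formulas with the conjugate on the second factor in the form stated. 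The argument here is essentially the same self-similar rescaling already used in \cite{ayacheesmili}; I would simply reproduce it.

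For the decay estimates, the key structural feature is that in each of the three integrals the integration is effectively over the fixed compact annulus $\supp(\widehat\psi)$ (intersected with its dilate by $2$, still compact and bounded away from the origin), and the integrand is the product of two Schwartz functions of $\xi$ — hence a $C^\infty$ function with all derivatives bounded on that compact set, independently of $j,k_1,k_2$. Writing $m:=k_1-2k_2$ (resp. $k_1-k_2$, resp. $2k_1-k_2$), the integral is $2^{-j}$ times $\int g(\xi)e^{-im\xi}\,d\xi$ where $g\in C_c^\infty$ is fixed (it depends only on $H_1,H_2,\psi$, not on the indices). For $|m|\ge 1$, integrating by parts $L$ times gains a factor $|m|^{-L}$ with constant $\|g^{(L)}\|_{L^1}$; for $|m|=0$ the bound $|\int g|\le\|g\|_{L^1}$ is trivial. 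Absorbing the additive constant $3$ into the constant $C_L$ (which is harmless since $3+|m|\asymp 1+|m|$ uniformly) gives exactly the stated bounds. The mildly delicate point — and the one I would be most careful about — is verifying the support/disjointness arithmetic with the explicit Meyer annulus endpoints so that the threshold really is $|j_1-j_2|\ge 2$ and not something weaker; everything else is routine Fourier analysis. I expect no serious obstacle, as this lemma is stated as a compilation of facts from \cite{ayacheesmili}.
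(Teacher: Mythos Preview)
Your proposal is correct and follows exactly the standard route: Parseval to pass to the Fourier side, the Meyer annular support of $\widehat{\psi_H}=\widehat\psi$ to get vanishing when $|j_1-j_2|\ge 2$, the same computation with bookkeeping for the three surviving cases, and integration by parts on a fixed $C_c^\infty$ integrand for the decay. The paper does not give its own proof of this lemma but simply cites \cite{ayacheesmili}, where precisely this argument is carried out; your sketch reproduces it faithfully.
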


When dealing with the the lower bounds for the limits in Theorem \ref{thm:main}, we use Daubechies compactly supported wavelets \cite{Daubechies:88}. Note that, if $\supp(\Psi) \subseteq [-N,N]$, for a positive integer $N$, then, using the first vanishing moment, for all $(j,k) \in \N \times \Z$ and $t \in \R$, one can write

\begin{equation}\label{eqn:coef}
c_{j,k}= \int_{-N}^{N} \left( f \left(\frac{x+k}{2^j} \right) - f \left(t \right) \right) \Psi(x) \, dx
\end{equation}

Since $\Psi$ is compactly supported, $\Psi(2^j \cdot-k)$ is localized around the dyadic interval
\[\lambda_{j,k} :=   \left[\frac{k}{2^j}, \frac{k+1}{2^j}\right)\]
and it is therefore common to index wavelets these intervals. For simplicity, we sometimes omit any references to the indices $j$ and $k$ for such intervals by writing $\lambda = \lambda_{j,k}$, and $k=s(\lambda)$. Similarly, $c_\lambda$ refers to the quantity $c_{j,k}$. The notation $\Lambda_j$ stands for the set of dyadic intervals $\lambda$ of $\R$ with side length $2^{-j}$. The unique dyadic interval from $\Lambda_j$ containing the point $t \in \R$ is denoted $\lambda_j(t)$. The set of dyadic intervals is $\Lambda:= \cup_{j\in\N} \Lambda_j$. Two dyadic intervals $\lambda$ and $\lambda'$ are adjacent if there exist $j\in\N$ such that $\lambda, \lambda' \in \Lambda_j$ and $\dist(\lambda,\lambda')=0$. The set of dyadic intervals adjacent to $\lambda$ is denoted by $3\lambda$. In this setting, one defines the \textit{wavelet leader} \cite{Jaffard:04b} of $f$ at $t$ and of scale $j$ by
\begin{equation}\label{eqn:leader}
d_j(t_0) = \max_{\lambda \in 3 \lambda_j(t_0)} \sup_{\lambda' \subseteq \lambda} |c_\lambda'|.
\end{equation}
Then, if $\supp(\psi) \subseteq [-N,N]$, from \eqref{eqn:coef}, one can write
\begin{equation}\label{eqn:boundleader}
 d_j(t) \leq 2N \sup_{s \in (t_0-2^{-j}(N+2),t_0+2^{-j}(N+2))} |f(s)-f(t)| \|\psi \|_{L^\infty}.
\end{equation}
When we study stochastic processes, the wavelet leaders are random variables $ d_j(t,\omega)$. Inequality \eqref{eqn:boundleader} with some easy computations implies that in order to obtain the positiveness of the limit \eqref{eqn:mainthm1}, it suffices to show that for all $\omega \in \Omega_{H_1,H_2}$ and all open intervals $I \subseteq \R^+$, for almost every $t \in I$,
\begin{equation} \label{eq:leaderord}
0 < \limsup_{j \to + \infty} \frac{d_j(t,\omega)}{2^{-j(H_1+H_2-1)}\log(j)}.
\end{equation}
Similarly, to prove the positiveness of the limit \eqref{eqn:mainthm2}, we just have to show that for all $\omega \in \Omega_{H_1,H_2}$ and all open intervals $I \subseteq \R^+$, there exists a dense set of points $t \in I$ such that
\begin{equation}\label{eq:leaderrapid}
0 < \limsup_{j \to + \infty} \frac{d_j(t,\omega)}{2^{-j(H_1+H_2-1)} j}.
\end{equation}

\begin{Rmk}
Let us mention that wavelet leaders can not be used to prove the finiteness of the limits in Theorem \ref{thm:main} because they do not precisely characterize the pointwise regularity, see for instance \cite{kreit18,loonic21} for more details.
\end{Rmk}

\section{Upper bounds for oscillations} \label{sec:upper}

Starting from now and until the end of the paper, we fix $H_1,H_2 \in (\frac{1}{2},1)$ such that $H_1+H_2 > \frac{3}{2}$. In this section, we show the finiteness of the limits \eqref{eqn:mainthm1}, \eqref{eqn:mainthm2} and \eqref{eqn:mainthm3}. Concerning the rapid points, we will in fact show a stronger result, obtaining an almost sure \textit{uniform} modulus of continuity for the generalized Rosenblatt process.

We use a wavelet-type expansion of the generalized Rosenblatt process. It relies on the following random variables.

\begin{Def}
For all  $(j_1,j_2,k_1,k_2) \in \Z^4$, let $\varepsilon_{j_1,j_2}^{k_1,k_2}$ be the second order Wiener chaos random variable defined by
\[ 2^{\frac{j_1+j_2}{2}} \int_{\R^2}' \psi(2^{j_1}x_1-k_1) \psi(2^{j_2}x_2-k_2) \, dB(x_1)dB(x_2).\]
\end{Def}

\begin{Rmk}
For all $(j_1,j_2,k_1,k_2) \in \Z^4$, we have (\cite[Proposition 2.3]{ayacheesmili})
\begin{equation}\label{eqn:randomvar1}
\varepsilon_{j_1,j_2}^{k_1,k_2} = \left( 2^\frac{j_1}{2} \int_\R \psi(2^{j_1}x-k_1) dB(x) \right) \left( 2^\frac{j_2}{2} \int_\R \psi(2^{j_2}x-k_2) dB(x) \right)
\end{equation}
for $j_1 \neq j_2$ or $k_1 \neq k_2$, and 
\begin{equation}\label{eqn:randomvar2}
\varepsilon_{j_1,j_1}^{k_1,k_1} = \left( 2^\frac{j_1}{2} \int_\R \psi(2^{j_1}x-k_1) dB(x) \right)^2-1 
\end{equation}
for $j_1=j_2$ and $k_1=k_2$. Using the fact that $(2^{j/2} \psi(2^j \cdot-k))_{(j,k) \in \Z^2}$ forms an orthonormal basis of $L^2(\R)$, and elementary properties of Wiener integral, we know that $(2^{j/2} \int_\R \psi(2^j x-k) \, dB(x))_{(j,k) \in \Z^2}$ is a family of iid $\mathcal{N}(0,1)$ random variables. So the random variables $\varepsilon_{j_1,j_2}^{k_1,k_2}$ and $\varepsilon_{j_1',j_2'}^{k_1',k_2'}$ are independent as soon as $$\{(j_1,k_1),(j_2,k_2) \} \cap \{(j_1',k_1'),(j_2',k_2') \} = \cmemptyset.$$
\end{Rmk}

The following theorem, proved in \cite{ayacheesmili}, gives the wavelet-type expansion we use in this section.

\begin{Thm}\label{thm:decompo}
Let $\psi$ be the Meyer wavelet and $I$ be any compact interval of $\R_+$. Almost surely, the random series
\begin{equation}\label{eqn:waveletserie}
 \sum_{(j_1,j_2,k_1,k_2) \in \Z^4} 2^{j_1(1-H_1) + j_2(1-H_2)} \varepsilon_{j_1,j_2}^{k_1,k_2} \int_0^{t} \psi_{H_1}(2^{j_1} x-k_1) \psi_{H_2}(2^{j_2} x-k_2) \, dx
\end{equation}
converges uniformly to $R_{H_1,H_2}$ on the interval $I$.
\end{Thm}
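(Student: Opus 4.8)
The plan is to expand, for each fixed $t$, the deterministic kernel $(x_1,x_2)\mapsto K_{H_1,H_2}(t,x_1,x_2)$ in the tensor-product Meyer wavelet basis of $L^2(\R^2)$ formed by the functions $2^{(j_1+j_2)/2}\psi(2^{j_1}\cdot-k_1)\psi(2^{j_2}\cdot-k_2)$, $(j_1,j_2,k_1,k_2)\in\Z^4$, and then to transport this deterministic expansion through the double Wiener--It\^o integral, which is a bounded linear map from $L^2(\R^2)$ to $L^2(\Omega)$. Since $R_{H_1,H_2}(t,\cdot)$ is a well-defined element of the second Wiener chaos (see \cite{tudor12}), one has $K_{H_1,H_2}(t,\cdot,\cdot)\in L^2(\R^2)$, so that
\[
K_{H_1,H_2}(t,x_1,x_2)=\sum_{(j_1,j_2,k_1,k_2)\in\Z^4}c^{t}_{j_1,j_2,k_1,k_2}\,2^{\frac{j_1+j_2}{2}}\,\psi(2^{j_1}x_1-k_1)\,\psi(2^{j_2}x_2-k_2)
\]
in $L^2(\R^2)$, where $c^{t}_{j_1,j_2,k_1,k_2}$ denotes the corresponding wavelet coefficient. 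Applying the map $f\mapsto\int_{\R^2}^{'}f(x_1,x_2)\,dB(x_1)dB(x_2)$ to both sides and recalling that $\int_{\R^2}^{'}2^{\frac{j_1+j_2}{2}}\psi(2^{j_1}x_1-k_1)\psi(2^{j_2}x_2-k_2)\,dB(x_1)dB(x_2)=\varepsilon_{j_1,j_2}^{k_1,k_2}$, we obtain, for each fixed $t$,
\[
R_{H_1,H_2}(t,\cdot)=\sum_{(j_1,j_2,k_1,k_2)\in\Z^4}c^{t}_{j_1,j_2,k_1,k_2}\,\varepsilon_{j_1,j_2}^{k_1,k_2}
\]
with convergence in $L^2(\Omega)$.

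The second step is to compute $c^{t}_{j_1,j_2,k_1,k_2}$ explicitly. Substituting the integral formula defining $K_{H_1,H_2}$ and applying Fubini's theorem (licit since $\psi$ is Schwartz and $H_i-\frac{3}{2}\in(-1,-\frac{1}{2})$), one gets
\[
c^{t}_{j_1,j_2,k_1,k_2}=2^{\frac{j_1+j_2}{2}}\int_0^t\prod_{i=1}^{2}\left(\frac{1}{\Gamma(H_i-\frac{1}{2})}\int_\R(s-x_i)_+^{H_i-\frac{3}{2}}\psi(2^{j_i}x_i-k_i)\,dx_i\right)ds.
\]
For each $i$, the change of variables $y_i=2^{j_i}x_i-k_i$ turns the inner integral into $2^{-j_i(H_i-\frac{1}{2})}\psi_{H_i}(2^{j_i}s-k_i)$, by the explicit formula for the fractional antiderivative $\psi_{H_i}$ recalled in Section \ref{sec:wav}. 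Since $\frac{j_1+j_2}{2}-j_1(H_1-\frac{1}{2})-j_2(H_2-\frac{1}{2})=j_1(1-H_1)+j_2(1-H_2)$, this yields
\[
c^{t}_{j_1,j_2,k_1,k_2}=2^{j_1(1-H_1)+j_2(1-H_2)}\int_0^t\psi_{H_1}(2^{j_1}x-k_1)\psi_{H_2}(2^{j_2}x-k_2)\,dx,
\]
which is exactly the coefficient appearing in \eqref{eqn:waveletserie}. At this stage the claimed identity holds, with $L^2(\Omega)$-convergence for each fixed $t$.

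The delicate step is to upgrade this to almost sure uniform convergence on a given compact interval $I\subseteq\R_+$. Writing $\Phi_{j_1,j_2}^{k_1,k_2}(t):=2^{j_1(1-H_1)+j_2(1-H_2)}\int_0^t\psi_{H_1}(2^{j_1}x-k_1)\psi_{H_2}(2^{j_2}x-k_2)\,dx$, I would first use the rapid decay of the Schwartz functions $\psi_{H_1},\psi_{H_2}$ and the boundedness of $I$ to prove sup-norm bounds on $I$ for $\Phi_{j_1,j_2}^{k_1,k_2}$ that decay both in $\max(j_1,j_2)$ and in the distances from $2^{-j_1}k_1$ and $2^{-j_2}k_2$ to $I$, so that only $O(2^{\max(j_1,j_2)})$ indices $(k_1,k_2)$ contribute significantly in each dyadic block; in the part of the series arising from integration over the whole real line, Lemma \ref{Lemma:intR} kills the blocks with $|j_1-j_2|>1$. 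Then, since each $\varepsilon_{j_1,j_2}^{k_1,k_2}$ lives in the second Wiener chaos, hypercontractivity gives $\norm{\varepsilon_{j_1,j_2}^{k_1,k_2}}\le\sqrt{2}$ and $\|\varepsilon_{j_1,j_2}^{k_1,k_2}\|_{L^p(\Omega)}\le\sqrt{2}\,(p-1)$ for all $p\ge2$, hence sub-exponential tail bounds. Combining these deterministic and stochastic estimates with a union bound over the relevant indices in each dyadic block and with the Borel--Cantelli lemma, one controls $\sup_{t\in I}$ of the tail of the random series outside a large finite set of indices on an event of probability $1$, which gives the desired uniform convergence. The main obstacle is precisely this last step: producing coefficient estimates that are simultaneously summable over all four indices $j_1,j_2,k_1,k_2$ and uniform in $t\in I$ while keeping the stochastic fluctuations under control. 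This is where the specific structure of the Meyer wavelet and its fractional primitives, together with the second-chaos moment bounds, are indispensable.
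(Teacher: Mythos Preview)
The paper does not prove this theorem; it is quoted verbatim from \cite{ayacheesmili} and used as a black box. So there is no ``paper's own proof'' to compare against beyond the citation. Your overall strategy --- expand $K_{H_1,H_2}(t,\cdot,\cdot)$ in the tensor Meyer basis of $L^2(\R^2)$, push the expansion through the double Wiener--It\^o integral, identify the coefficients via the fractional-antiderivative formula, and then upgrade to almost sure uniform convergence by combining deterministic decay of the $\Phi_{j_1,j_2}^{k_1,k_2}$ with second-chaos tail bounds and Borel--Cantelli --- is precisely the route taken in \cite{ayacheesmili}, so there is no methodological divergence.

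That said, your write-up stops exactly where the work begins, and you say so yourself. The first two steps (the $L^2(\Omega)$ identity for each fixed $t$ and the computation of the coefficients) are complete and correct. The third step is only a plan: you have not produced any concrete summable bound on $\sup_{t\in I}|\Phi_{j_1,j_2}^{k_1,k_2}(t)|$, and the probabilistic input you invoke (hypercontractivity plus union bound) is too coarse on its own. What is actually needed --- and what \cite{ayacheesmili} supplies --- is the almost sure logarithmic bound on the coefficients recorded here as Lemma~\ref{Lemma:ayacheesmili}, together with explicit decay estimates on $\Phi_{j_1,j_2}^{k_1,k_2}$ sharp enough to absorb those logarithms and remain summable over $\Z^4$. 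Note also that Lemma~\ref{Lemma:intR} only handles the integral over all of $\R$; for $\int_0^t$ one must separately control the tail integrals $\int_{-\infty}^{0}$ and $\int_t^{+\infty}$, uniformly in $t\in I$, which is a substantial part of the argument in \cite{ayacheesmili}. As written, then, your proposal is a correct outline rather than a proof.
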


\begin{Rmk}\label{rmk:mainthm}
Any open interval in $\R$ can be written as a countable union of dyadic intervals $( \lambda_{j,k} )_{j \in \N, k \in \Z}$. Then, to prove Theorem \ref{thm:main}, it is sufficient to show that, for all $j \in \N, k \in \Z$, there exist an event $\Omega_{j,k}$ of probability $1$ such that, for all $\omega \in \Omega_{j,k}$, almost every $t \in \lambda_{j,k}$ is ordinary and there exist $t_r \in \lambda_{j,k}$ which is rapid and $t_s \in \lambda_{j,k}$ which is slow. For the sake of simpleness in notation, we will only do the proofs in full details for $\lambda_{0,0}=[0,1)$. In fact, after dilatation and translation, our proofs hold true for any arbitrary dyadic interval. 
\end{Rmk}

\subsection{Rapid points}

Let us first focus on rapid points. we prove that $x \mapsto |x|^{H_1+H_2-1} \log |x|^{-1}$ is almost surely a uniform modulus of continuity for $R_{H_1,H_2}$.

\begin{Prop} \label{prop:rapid}
There exists an event $\Omega_{\text{rap}}$ of probability $1$ such that for all $\omega \in \Omega_{\text{rap}}$ there exists $C_R(\omega)>0$ such that, for all $t,s \in (0,1)$, we have
\begin{equation}\label{eqn:Prop:rapid}
|R_{H_1,H_2}(t,\omega)-R_{H_1,H_2}(s,\omega)| \leq  C_R(\omega) |t-s|^{H_1+H_2-1} \log |t-s|^{-1}.
\end{equation}
\end{Prop}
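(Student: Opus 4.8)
The plan is to work from the wavelet-type expansion \eqref{eqn:waveletserie} and estimate the oscillation $|R_{H_1,H_2}(t,\omega)-R_{H_1,H_2}(s,\omega)|$ scale by scale. Write $\delta = |t-s|$ and split the sum over $(j_1,j_2,k_1,k_2)$ according to whether the common scale $j := \min(j_1,j_2)$ (recall by Lemma \ref{Lemma:intR} only $|j_1-j_2|\leq 1$ contributes, so $j_1$ and $j_2$ differ by at most one) satisfies $2^{-j}\geq \delta$ (``low frequencies'') or $2^{-j}<\delta$ (``high frequencies''). For the high-frequency part I would bound $\bigl|\int_0^t\psi_{H_1}(2^{j_1}x-k_1)\psi_{H_2}(2^{j_2}x-k_2)\,dx - \int_0^s(\cdots)\bigr|$ crudely by $\delta \cdot \|\psi_{H_1}(2^{j_1}\cdot-k_1)\psi_{H_2}(2^{j_2}\cdot-k_2)\|_\infty$, i.e. by $\delta$ times a Schwartz-type localization factor in $k_1,k_2$; summing over $k_1,k_2$ (the tails converge because of the rapid decay of the Schwartz functions) and over $j$ with $2^{-j}<\delta$ leaves a factor $\sum_{2^{-j}<\delta}2^{-j(H_1+H_2-2)}\delta \sim \delta^{H_1+H_2-1}$ since $H_1+H_2-2<0$, provided the random coefficients $\varepsilon^{k_1,k_2}_{j_1,j_2}$ are controlled. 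For the low-frequency part I would instead bound the difference of the two integrals by $\int_{[s,t]}|\psi_{H_1}(2^{j_1}x-k_1)\psi_{H_2}(2^{j_2}x-k_2)|\,dx$, use again that $\psi_{H_i}\in\mathcal S(\R)$ to get a bound $2^{-j}(3+|k_1-rk_2|)^{-L}$-type (mirroring the estimates in Lemma \ref{Lemma:intR}), so that after summing over $k_1,k_2$ the low-frequency contribution of scale $j$ is $\lesssim 2^{-j(H_1+H_2-1)}\sup_{k_1,k_2}|\varepsilon^{k_1,k_2}_{j_1,j_2}|$-weighted, and $\sum_{2^{-j}\geq\delta}2^{-j(H_1+H_2-1)}$ is again $\lesssim \delta^{H_1+H_2-1}$ because $H_1+H_2-1>0$.

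The crucial probabilistic input is a uniform-in-$(k_1,k_2)$, polynomial-in-$j$ almost-sure bound on the second-chaos variables $\varepsilon^{k_1,k_2}_{j_1,j_2}$. Using the hypercontractivity / tail estimate for the second Wiener chaos (this is ``Theorem \ref{thm:bound2}'' alluded to in the introductory remark), one has $\PP(|\varepsilon^{k_1,k_2}_{j_1,j_2}|>x)\lesssim e^{-cx}$, which via Borel--Cantelli gives an event $\Omega_{\text{rap}}$ of probability $1$ on which $|\varepsilon^{k_1,k_2}_{j_1,j_2}|\leq C(\omega)\bigl(j + \log(3+|k_1|)+\log(3+|k_2|)\bigr)$ for all relevant indices; the logarithmic-in-$k$ growth is absorbed by the Schwartz decay when summing over $k_1,k_2$, and the $\log j$ in $\delta$-terms is what produces the extra $\log|t-s|^{-1}$ factor (note it is $\log$, not $\sqrt{\log}$, precisely because the chaos is of order two). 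Combining the two regimes with $j_\delta := \lfloor \log_2 \delta^{-1}\rfloor$ yields
\[
|R_{H_1,H_2}(t,\omega)-R_{H_1,H_2}(s,\omega)| \leq C(\omega)\,\delta^{H_1+H_2-1}\bigl(1 + \log\delta^{-1}\bigr) \leq C_R(\omega)\,\delta^{H_1+H_2-1}\log\delta^{-1}
\]
for $t,s\in(0,1)$ with $\delta$ small, and the case $\delta$ bounded away from $0$ is trivial by continuity on the compact $[0,1]$.

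The main obstacle I anticipate is not the deterministic summation but making the interchange ``$\sup$ of a random series'' rigorous: one must show the estimates hold \emph{simultaneously} for all $t,s\in(0,1)$ on a single full-probability event, which forces the uniform-in-$(k_1,k_2)$ control of the $\varepsilon$'s together with absolute convergence of the multi-index sum — and here the non-wavelet nature of \eqref{eqn:waveletserie} matters, since the integrals $\int_0^t\psi_{H_1}(2^{j_1}x-k_1)\psi_{H_2}(2^{j_2}x-k_2)\,dx$ couple the two indices and do not telescope the way genuine wavelet coefficients would; one really needs the precise decay from (the proof technique of) Lemma \ref{Lemma:intR} to decouple $k_1$ and $k_2$. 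A secondary technical point is handling the boundary term at $x=0$ in $\int_0^t$ versus $\int_0^s$ — but since both integrals start at $0$, only the $\int_{[s,t]}$ piece survives, which is exactly what the two regime-bounds above estimate.
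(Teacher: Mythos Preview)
There is a genuine gap, rooted in a misreading of Lemma~\ref{Lemma:intR}. That lemma says the \emph{full-line} integrals $I_{j_1,j_2}^{k_1,k_2}=\int_{\R}\psi_{H_1}(2^{j_1}x-k_1)\psi_{H_2}(2^{j_2}x-k_2)\,dx$ vanish when $|j_1-j_2|>1$; it says nothing about the truncated integrals $\int_0^t(\cdots)\,dx$ or $\int_{[s,t]}(\cdots)\,dx$ that actually appear in \eqref{eqn:waveletserie}. So your reduction to a single scale $j=\min(j_1,j_2)$ with $|j_1-j_2|\le 1$ is unjustified: the increment involves a genuine double sum over $(j_1,j_2)\in\Z^2$, and one must separately treat the regimes $j_1,j_2<n$, $j_1<n\le j_2$ (and symmetrically), and $j_1,j_2\ge n$. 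In the paper the orthogonality of Lemma~\ref{Lemma:intR} is exploited only \emph{after} writing $\int_{[s,t]}=\int_\R-\int_{-\infty}^{\min\{s,t\}}-\int_{\max\{s,t\}}^{+\infty}$ for the relevant index ranges, and one must still control the two tail integrals.

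Even granting your reduction, the two regime estimates are reversed and one of them diverges. For large $j$ (your ``high frequencies''), bounding $\int_{[s,t]}(\cdots)$ by $\delta\cdot\|\cdot\|_\infty$ and then summing gives essentially $\sum_{j>j_\delta}2^{j(2-H_1-H_2)}\,\delta\,j=+\infty$ since $2-H_1-H_2>0$; this is the regime where the mean-value bound is useless and one needs the $\int_\R$ trick above. Conversely, for small $j$ (your ``low frequencies'') the Lemma~\ref{Lemma:intR} bound $2^{-j}(3+|k_1-rk_2|)^{-L}$ is simply not available for $\int_{[s,t]}$; the correct bound there \emph{is} the mean-value one, yielding $\sum_{j\le j_\delta}2^{j(2-H_1-H_2)}\delta\,j\sim\delta^{H_1+H_2-1}\log\delta^{-1}$ (and note that $\sum_{j\le j_\delta}2^{-j(H_1+H_2-1)}$ is bounded by a constant, not by $\delta^{H_1+H_2-1}$ as you wrote). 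Finally, the pointwise bound $|\varepsilon_{j_1,j_2}^{k_1,k_2}|\lesssim j+\log(3+|k_1|)+\log(3+|k_2|)$ is too crude for the high--high block $j_1,j_2\ge n$: the paper needs a second Borel--Cantelli argument (Lemma~\ref{lemma:borelcantellitotal}) bounding $L^2$-normalised partial sums $\Xi_j(\lambda)$ simultaneously over all dyadic $\lambda\subset[0,1]$, which is where the factor $n\sim\log\delta^{-1}$ genuinely enters; without it the diagonal contribution $\sum_{k_1,k_2\in\Z_j[t,s]}\varepsilon_{j,j}^{k_1,k_2}I_{j,j}^{k_1,k_2}$ cannot be controlled uniformly in $t,s$.
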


Let us set, for all $s,t \in (0,1)$ and $(j_1,j_2,k_1,k_2) \in \Z^4$,
\[ I_{j_1,j_2}^{k_1,k_2}[t,s] = \int_{[t,s]} \psi_{H_1}(2^{j_1}x-k_1) \psi_{H_2}(2^{j_2}x-k_2) \, dx. \]

All along this section, if $s,t \in (0,1)$ are given, $n$ always refers to the unique positive integer such that
\begin{equation}\label{eqn:defofn}
2^{-n-1} < |t-s| \leq 2^{-n}.
\end{equation}
Our proof consists in writing
\begin{equation}\label{eqn:maindiff}
|R_{H_1,H_2}(t,\cdot))-R_{H_1,H_2}(s,\cdot)| = \left | \sum_{(j_1,j_2,k_1,k_2) \in \Z^4} 2^{j_1(1-H_1)} 2^{j_2(1-H_2)} \varepsilon_{j_1,j_2}^{k_1,k_2}  I_{j_1,j_2}^{k_1,k_2}[t,s] \right|
\end{equation}
and to split the sum in the right-hand side in subsums determined according to the position of $j_1$ and $j_2$ with respect to $n$. To bound from above some of these subsums the following lemma is key.
\begin{Lemma}\label{Lemma:ayacheesmili}\cite[Lemma 2.4.]{ayacheesmili} There exist an event $\Omega^*$ of probability $1$ and a positive random variable  $C_1$ with finite moment of any order, such that, for all $\omega \in \Omega^*$ and for each $(j_1,j_2,k_1,k_2) \in \Z^4$,
\begin{equation}\label{eqn:moustique}
|\varepsilon_{j_1,j_2}^{k_1,k_2}(\omega)| \leq C_1(\omega) \sqrt{\log(3+|j_1|+|k_1|)} \sqrt{\log(3+|j_2|+|k_2|)}.
\end{equation}
\end{Lemma}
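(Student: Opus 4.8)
The final statement is Lemma~\ref{Lemma:ayacheesmili}, an almost-sure uniform-in-indices bound on the second-chaos coefficients $\varepsilon_{j_1,j_2}^{k_1,k_2}$ carrying a $\sqrt{\log}\cdot\sqrt{\log}$ logarithmic factor. I do not grind through the whole thing but plan as follows.

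\textbf{Overall strategy.} The plan is to reduce the bound on the genuinely second-chaos product variables to a bound on the underlying Gaussian factors, and then to exploit a Gaussian tail estimate combined with a Borel--Cantelli argument over the countable index set $\Z^4$. First I would invoke the representation \eqref{eqn:randomvar1}--\eqref{eqn:randomvar2}: setting $g_{j,k} := 2^{j/2}\int_\R \psi(2^j x - k)\,dB(x)$, the remark preceding the lemma tells us that $(g_{j,k})_{(j,k)\in\Z^2}$ is a family of i.i.d.\ $\mathcal N(0,1)$ random variables, and that $\varepsilon_{j_1,j_2}^{k_1,k_2} = g_{j_1,k_1} g_{j_2,k_2}$ when $(j_1,k_1)\neq(j_2,k_2)$, while $\varepsilon_{j_1,j_1}^{k_1,k_1}=g_{j_1,k_1}^2-1$ on the diagonal. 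Thus it suffices to find an almost-sure bound on the single-index Gaussians of the form $|g_{j,k}(\omega)| \leq C(\omega)\sqrt{\log(3+|j|+|k|)}$, since multiplying two such bounds produces exactly the claimed right-hand side (and on the diagonal $g^2-1$ is controlled by $g^2$ up to the same logarithmic factor).

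\textbf{Key steps in order.} So the real task is the one-index Gaussian maximal bound. I would proceed by the standard Gaussian concentration plus summability scheme. For a fixed threshold I use the elementary tail bound $\PP(|g_{j,k}| > x) \leq 2e^{-x^2/2}$ valid for $g_{j,k}\sim\mathcal N(0,1)$. Choosing $x = x_{j,k} := a\sqrt{\log(3+|j|+|k|)}$ for a constant $a$ to be fixed, this gives $\PP(|g_{j,k}| > x_{j,k}) \leq 2(3+|j|+|k|)^{-a^2/2}$. The number of pairs $(j,k)\in\Z^2$ with $|j|+|k| = m$ grows only linearly in $m$, so the series $\sum_{(j,k)} (3+|j|+|k|)^{-a^2/2}$ converges provided $a^2/2 > 2$, i.e.\ for any fixed $a > 2$. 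By the Borel--Cantelli lemma, almost surely only finitely many events $\{|g_{j,k}| > x_{j,k}\}$ occur; denote the corresponding probability-one event $\Omega^*$. On $\Omega^*$, the exceptional finite set can be absorbed into the constant, so there is a positive random variable $C_0(\omega)$ with $|g_{j,k}(\omega)| \leq C_0(\omega)\sqrt{\log(3+|j|+|k|)}$ for all $(j,k)\in\Z^2$. Setting $C_1(\omega)=\max\{C_0(\omega),C_0(\omega)^2, 1\}$ and multiplying the two single-index bounds (and treating the diagonal via $|g^2-1|\le g^2+1$ together with $\log(3+|j|+|k|)\ge\log 3$) yields \eqref{eqn:moustique}.

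\textbf{Finite moments and the main obstacle.} To get that $C_1$ has finite moments of every order, I would make $C_0$ explicit rather than abstractly ``large enough'': define $C_0(\omega) := \sup_{(j,k)} |g_{j,k}(\omega)| / \sqrt{\log(3+|j|+|k|)}$. The Borel--Cantelli step shows this supremum is finite a.s., but for moments I would instead bound its moments directly. For any $p>0$, $\E[C_0^{p}]$ can be controlled by $\E\big[\sum_{(j,k)} (|g_{j,k}|/\sqrt{\log(3+|j|+|k|)})^{q}\big]$ for $q$ large: since $\E|g_{j,k}|^q$ is a finite constant depending only on $q$ (Gaussian absolute moments), and $(\log(3+|j|+|k|))^{-q/2}$ is summable over $\Z^2$ once $q$ is taken large enough relative to the polynomial index count, the expectation of the summed $q$-th powers is finite; then $C_0 \le \big(\sum (\cdots)^q\big)^{1/q}$ gives $\E[C_0^p]<\infty$ for the chosen $p\le q$, and letting $q\to\infty$ covers all $p$. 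I expect this moment control to be the main technical obstacle: one must choose the exponent in the logarithmic normalisation and the summation exponent $q$ compatibly so that both the index-counting series converges and the Gaussian moments stay uniformly bounded, and one must then transfer finiteness of moments from the single-index $C_0$ to the product/diagonal constant $C_1$ (this last transfer is immediate since $C_1$ is a fixed polynomial expression in $C_0$, and polynomials of a variable with all finite moments again have all finite moments). The product structure itself is harmless because $C_1$ is deterministic in $C_0$; no independence or hypercontractivity of the chaos is needed for this statement, which is why reducing to the Gaussian factors at the very start is the cleanest route.
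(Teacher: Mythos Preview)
Your overall strategy is sound and is exactly the route taken in \cite{ayacheesmili} (the present paper does not reprove the lemma but simply cites it): reduce to the single-index Gaussian bound $|g_{j,k}|\le C_0(\omega)\sqrt{\log(3+|j|+|k|)}$ via the product formulas \eqref{eqn:randomvar1}--\eqref{eqn:randomvar2}, then run Borel--Cantelli on the events $\{|g_{j,k}|>a\sqrt{\log(3+|j|+|k|)}\}$. That part is correct.

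The genuine gap is in your moment argument. You claim that $\sum_{(j,k)\in\Z^2}(\log(3+|j|+|k|))^{-q/2}$ converges for $q$ large enough; it does not converge for \emph{any} $q$. Grouping by $m=|j|+|k|$, the number of pairs with $|j|+|k|=m$ is of order $m$, so the series is comparable to $\sum_{m\ge1} m\,(\log(3+m))^{-q/2}$, whose general term tends to $+\infty$. No power of a logarithm can offset polynomial growth, so the $\ell^q$-embedding trick $\sup\le(\sum(\cdot)^q)^{1/q}$ is useless here.

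The fix is to extract the moments from the tail of $C_0$ directly, which you have essentially already computed. With $C_0(\omega)=\sup_{(j,k)}|g_{j,k}(\omega)|/\sqrt{\log(3+|j|+|k|)}$ and the Gaussian tail bound you used, a union bound gives, for every $a\ge\sqrt{6}$,
\[
\PP(C_0>a)\le 2\sum_{(j,k)\in\Z^2}(3+|j|+|k|)^{-a^2/2}\le 2\cdot 3^{-(a^2/2-3)}\sum_{m\ge0}(4m+1)(3+m)^{-3}=C\,3^{-a^2/2}.
\]
This Gaussian-type tail decay yields $\E[C_0^p]=\int_0^\infty p\,a^{p-1}\PP(C_0>a)\,da<\infty$ for every $p>0$, and the transfer to $C_1$ (a polynomial in $C_0$) is then immediate as you note.
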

In view of Lemma \ref{Lemma:ayacheesmili}, we set
\[ L_{j_1,j_2}^{k_1,k_2} =  \sqrt{\log(3+|j_1|+|k_1|)} \sqrt{\log(3+|j_2|+|k_2|)}.\]
As a first step, Lemmata \ref{lem:rapid1} to \ref{lem:rapid4} are devoted to bound some deterministic series whose general term is
\[ 2^{j_1(1-H_1)} 2^{j_2(1-H_2)}  L_{j_1,j_2}^{k_1,k_2} |I_{j_1,j_2}^{k_1,k_2}[t,s]| . \]

This first lemma will be useful to bound the subsums in the right-hand side of \eqref{eqn:maindiff} for $j_1<n$ and $j_2<n$.

\begin{Lemma}\label{lem:rapid1}
There exists a deterministic constant $C>0$ such that, for all $t,s \in (0,1)$, we have
\[
  \sum_{j_1<n} \sum_{j_2<n} \sum_{(k_1,k_2) \in \Z^2} 2^{j_1(1-H_1)} 2^{j_2(1-H_2)}  L_{j_1,j_2}^{k_1,k_2} \left| I_{j_1,j_2}^{k_1,k_2}[t,s] \right| \leq  C |t-s|^{H_1+H_2-1} \log |t-s|^{-1}.
\]
\end{Lemma}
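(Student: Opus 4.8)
The plan is to estimate the triple sum by first controlling the sum over the spatial indices $k_1,k_2$ for fixed scales $j_1,j_2$, and then summing the resulting bound over the scales $j_1,j_2<n$. Fix $t,s\in(0,1)$ with $n$ as in \eqref{eqn:defofn}. Because $\psi_{H_1},\psi_{H_2}$ lie in the Schwartz class, each factor $\psi_{H_i}(2^{j_i}x-k_i)$ decays faster than any polynomial away from the point $k_i2^{-j_i}$; since the interval $[t,s]$ has length at most $2^{-n}\le 2^{-j_i}$ and is contained in $(0,1)$, only the indices $k_i$ with $k_i2^{-j_i}$ within $O(1)$ of $[t,s]$ contribute non-negligibly, and for those $|I_{j_1,j_2}^{k_1,k_2}[t,s]|\le \|\psi_{H_1}\|_\infty\|\psi_{H_2}\|_\infty\,|t-s|$. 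More precisely, I would bound
\[
|I_{j_1,j_2}^{k_1,k_2}[t,s]|\le |t-s|\,\|\psi_{H_2}\|_\infty\sup_{x\in[t,s]}|\psi_{H_1}(2^{j_1}x-k_1)|\le C_L\,|t-s|\,(3+\dist(k_1,2^{j_1}[t,s]))^{-L},
\]
and symmetrically with the roles of the indices exchanged, so in fact one gets decay in both $k_1$ and $k_2$. Summing the logarithmic weight $L_{j_1,j_2}^{k_1,k_2}$ against this polynomial decay over $k_1,k_2\in\Z$ costs only a factor $\sqrt{\log(3+|j_1|)}\sqrt{\log(3+|j_2|)}$ (up to an absolute constant): the $k_i$-ranges that matter have size $O(2^{j_i})$, so $\log(3+|j_i|+|k_i|)\le \log(3+|j_i|+C2^{j_i})\le C(1+|j_i|)$, and the tails are summable. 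Hence for every $j_1,j_2<n$,
\[
\sum_{(k_1,k_2)\in\Z^2}L_{j_1,j_2}^{k_1,k_2}|I_{j_1,j_2}^{k_1,k_2}[t,s]|\le C\,|t-s|\,(1+|j_1|)(1+|j_2|).
\]

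It then remains to estimate $\sum_{j_1<n}\sum_{j_2<n}2^{j_1(1-H_1)}2^{j_2(1-H_2)}(1+|j_1|)(1+|j_2|)$, which factors as a product of two one-dimensional sums of the form $\sum_{j<n}2^{j(1-H)}(1+|j|)$ for $H\in\{H_1,H_2\}$. Here one must be slightly careful because the sum runs over all $j<n$, including negative $j$; for $j\le 0$ one uses instead the trivial bound $|I_{j_1,j_2}^{k_1,k_2}[t,s]|\le C|t-s|$ together with the fact that for $j\le 0$ the interval $[t,s]\subset(0,1)$ meets only boundedly many translates $k2^{-j}$, so that branch of the sum contributes $O(|t-s|)$. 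For $0\le j<n$, since $1-H_i>0$, the geometric factor dominates and $\sum_{0\le j<n}2^{j(1-H_i)}(1+j)\le C\,2^{n(1-H_i)}\,n$ (the linear factor $(1+j)$ is absorbed by shrinking the exponent slightly, or summed directly against the geometric series). Multiplying the two branches gives a bound $C\,|t-s|\cdot 2^{n(1-H_1)}2^{n(1-H_2)}n^2 = C\,|t-s|\,2^{n(2-H_1-H_2)}n^2$. Finally I substitute $2^{-n}\asymp|t-s|$, i.e. $2^{n(2-H_1-H_2)}\asymp |t-s|^{H_1+H_2-2}$ and $n\asymp\log|t-s|^{-1}$, to obtain
\[
\sum_{j_1<n}\sum_{j_2<n}\sum_{(k_1,k_2)\in\Z^2}2^{j_1(1-H_1)}2^{j_2(1-H_2)}L_{j_1,j_2}^{k_1,k_2}|I_{j_1,j_2}^{k_1,k_2}[t,s]|\le C\,|t-s|^{H_1+H_2-1}\bigl(\log|t-s|^{-1}\bigr)^2.
\]

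There is a gap between this and the claimed bound: the statement has $\log|t-s|^{-1}$ to the first power, not squared. The main obstacle, therefore, is to save one logarithmic factor. I expect this is done not by brute polynomial decay in $k_i$ but by exploiting more structure: either (i) the extra cancellation coming from the vanishing moments of the Meyer wavelet, which upgrades the estimate on $I_{j_1,j_2}^{k_1,k_2}[t,s]$ from $O(|t-s|)$ to something like $O(2^{j_i}|t-s|^2)$ on the relevant range when $j_i$ is the \emph{smaller} scale (a Taylor-expansion/mean-value argument on the integrand over an interval short compared to $2^{-j_i}$), so that one of the two geometric sums gains a factor and the associated $\log$ disappears; or (ii) using Lemma \ref{Lemma:intR}, which forces $|j_1-j_2|\le 1$, collapsing the double sum over scales to essentially a single sum — in which only one logarithmic factor $\sqrt{\log}\cdot\sqrt{\log}\asymp\log$ survives rather than two. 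Approach (ii) seems cleanest: restricting to $|j_1-j_2|\le1$, the double scale sum $\sum_{j<n}2^{j(2-H_1-H_2)}(1+j)^2$ is again geometric with positive exponent, giving $C\,2^{n(2-H_1-H_2)}n^2$, which still leaves $n^2$; so one genuinely needs the improved increment estimate on $I$. I would thus carry out step one as above to reduce to the diagonal band $|j_1-j_2|\le 1$, then on that band re-estimate $I_{j,j}^{k_1,k_2}[t,s]$ and $I_{j,j\pm1}^{k_1,k_2}[t,s]$ using the Schwartz decay of the product together with a first-order Taylor estimate in $x$ across the short interval $[t,s]$ (legitimate since $|t-s|\le 2^{-n}\le 2^{-j}$ for $j<n$), which produces the extra factor $2^{j}|t-s|$ and hence converts $\sum_{j<n}2^{j(2-H_1-H_2)}(1+j)^2\cdot 2^j|t-s|$... — carefully arranging the bookkeeping so that exactly one power of $n$ survives and reassembling via $2^{-n}\asymp|t-s|$, $n\asymp\log|t-s|^{-1}$ gives the stated inequality.
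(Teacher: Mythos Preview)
Your approach is essentially the paper's: bound $|I_{j_1,j_2}^{k_1,k_2}[t,s]|$ by $|t-s|$ times a pointwise evaluation of the integrand (the paper phrases this via the mean value theorem, producing a single $\xi\in[s,t]$ at which to evaluate $\psi_{H_1}(2^{j_1}\cdot-k_1)\psi_{H_2}(2^{j_2}\cdot-k_2)$, but your sup-bound is equivalent), then sum over $k_1,k_2$ using the Schwartz decay, then over $j_1,j_2<n$ geometrically.

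The ``gap'' you perceive is only a bookkeeping slip, not a real obstacle. Recall that
\[
L_{j_1,j_2}^{k_1,k_2}=\sqrt{\log(3+|j_1|+|k_1|)}\,\sqrt{\log(3+|j_2|+|k_2|)}
\]
is a product of \emph{square roots} of logarithms. Your own estimate $\log(3+|j_i|+|k_i|)\le C(1+|j_i|)$ on the relevant range of $k_i$ therefore yields
\[
L_{j_1,j_2}^{k_1,k_2}\le C\sqrt{(1+|j_1|)(1+|j_2|)},
\]
not $(1+|j_1|)(1+|j_2|)$ as in your displayed bound. With the correct square roots, each one-dimensional scale sum is
\[
\sum_{0\le j<n}2^{j(1-H)}\sqrt{1+j}\le C\,2^{n(1-H)}\sqrt{n},
\]
and the product of the two branches gives $C\,|t-s|\,2^{n(2-H_1-H_2)}\,n$, i.e.\ exactly one factor of $\log|t-s|^{-1}$, as claimed. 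This is precisely the computation \eqref{eqn:ineserie1} in the paper. No Taylor expansion, extra cancellation, or appeal to Lemma~\ref{Lemma:intR} is needed; and in any case that lemma concerns the integral $I_{j_1,j_2}^{k_1,k_2}$ over all of $\R$, not $I_{j_1,j_2}^{k_1,k_2}[t,s]$, so it does not collapse the double scale sum here.
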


\begin{proof}
Let us start by considering, for all $(j_1,j_2) \in \Z^2$, the series 
\begin{align*}
 R_{j_1,j_2} \, : \, t &\mapsto \sum_{(k_1,k_2) \in \Z^2}  L_{j_1,j_2}^{k_1,k_2}\int_{0}^t |\psi_{H_1}(2^{j_1}x-k_1) \psi_{H_2}(2^{j_2}x-k_2)| \, dx \text{ and}\\
 {R'}_{j_1,j_2} \, : \, t& \mapsto \sum_{(k_1,k_2)\in \Z^2} L_{j_1,j_2}^{k_1,k_2}  |\psi_{H_1}(2^{j_1} t -k_1) \psi_{H_2}(2^{j_2} t-k_2)|.
\end{align*}
The fast decay of the fractional antiderivatives of $\psi$ allows us to write, for all $H \in \{H_1,H_2\}$ and for all $x \in \R$
\begin{equation}\label{eqn:fastdecay}
|\psi_H(x)| \leq C (1+|x|)^{-4}.
\end{equation}
Moreover, according to \cite[Lemma 4.2]{ayacheesmili} for all $L>1$ there exists $C>0$ such that, for all $j \in \Z$ and $x \in \R$
\begin{equation}\label{eqn:sumlog}
\sum_{k \in \Z} \frac{\sqrt{\log(3+|j|+k)}}{(3+|2^j x -k|)^L} \leq C \sqrt{\log(3+|j|+2^j |x|)}.
\end{equation}
Therefore, if $K$ is any compact set of $\R_+$, if $s= \sup_K$, for all $t \in K$, we have
\begin{align*}
|R_{j_1,j_2} (t)| & \leq C \int_0^t  \sqrt{\log(3+|j_1|+2^{j_1} |x|)} \sqrt{\log(3+|j_2|+2^{j_2} |x|)} \,dx \\
& \leq C s \sqrt{\log(3+|j_1|+2^{j_1} s)} \sqrt{\log(3+|j_2|+2^{j_2} s)}. 
\end{align*}
The same arguments can be applied to ${R'}_{j_1,j_2}$, which means that both series converge uniformly on any compact set of $\R_+$. From this,  we can use mean value theorem: for all $(j_1,j_2) \in \Z^2$ there is $\xi(j_1,j_2) \in [s,t]$ such that
\begin{align}
\nonumber & \sum_{(k_1,k_2) \in \Z^2} 2^{j_1(1-H_1)} 2^{j_2(1-H_2)}  L_{j_1,j_2}^{k_1,k_2} \left| I_{j_1,j_2}^{k_1,k_2}[t,s] \right| \\ \quad &\leq |t-s| \sum_{(k_1,k_2) \in \Z^2} L_{j_1,j_2}^{k_1,k_2}   |\psi_{H_1}(2^{j_1} \xi-k_1) \psi_{H_2}(2^{j_2} \xi-k_2)|.\label{eqn:aprestaf}
\end{align}
Now, we use the fast decay of the fractional antiderivatives of $\psi$ \eqref{eqn:fastdecay} and inequality \eqref{eqn:sumlog} to bound \eqref{eqn:aprestaf} from above: for all $j_1,j_2<n$,
\begin{align*}
\sum_{(k_1,k_2) \in \Z^2} & L_{j_1,j_2}^{k_1,k_2}   |\psi_{H_1}(2^{j_1} \xi-k_1) \psi_{H_2}(2^{j_2} \xi-k_2)| \\ & \leq C\left(\sum_{k_1 \in \Z} \frac{\sqrt{\log(3+|j_1|+|k_1|)}}{(3+|2^{j_1}\xi-k_1|)^4} \right) \left( \sum_{k_2 \in \Z} \frac{\sqrt{\log(3+|j_2|+|k_2|)}}{(3+|2^{j_2}\xi-k_2|)^4} \right) \\
& \leq C  \sqrt{\log(3+|j_1|+2^{j_1} |\xi|)} \sqrt{\log(3+|j_2|+2^{j_2} |\xi|)} \\
& \leq C \sqrt{\log(3+|j_1|+2^{j_1} )} \sqrt{\log(3+|j_2|+2^{j_2})},
\end{align*}
as $\xi \in (0,1)$. Let us then remark that
\begin{align}
 \nonumber \sum_{j_1<n}& 2^{j_1(1-H_1)} \sqrt{\log(3+|j_1|+2^{j_1} )} \\
 \nonumber &= \sum_{j_1 \leq 0} 2^{j_1(1-H_1)} \sqrt{\log(3+|j_1|+2^{j_1} )} + \sum_{j_1=0}^{n-1} 2^{j_1(1-H_1)} \sqrt{\log(3+|j_1|+2^{j_1} )} \\
\nonumber & \leq C +   \sum_{j_1=0}^{n-1} 2^{j_1(1-H_1)} \sqrt{\log(3+|j_1|+2^{j_1} )} \\
& \leq C 2^{n(1-H_1)} \sqrt{n},\label{eqn:ineserie1}
\end{align}
as $1-H_1>0$. The same can be applied to the sum over $j_2$ and we finally get
\begin{align*}
&   \sum_{j_1<n} \sum_{j_2<n} \sum_{(k_1,k_2) \in \Z^2} 2^{j_1(1-H_1)} 2^{j_2(1-H_2)}  L_{j_1,j_2}^{k_1,k_2} \left| I_{j_1,j_2}^{k_1,k_2}[t,s] \right| \\  \quad &\leq C  |t-s |\sum_{j_1 < n} \sum_{j_2 < n} 2^{j_1(1-H_1)} 2^{j_2(1-H_2)} \sqrt{\log(3+|j_1|+2^{j_1} )} \sqrt{\log(3+|j_2|+2^{j_2})} \\
& \leq C  |t-s| 2^{n(2-H_1-H_2)}n\\
& \leq C  |t-s|^{H_1+H_2-1} \log |t-s|^{-1}.
\end{align*}
\end{proof}

Lemmata \ref{lem:rapid2} and \ref{lem:rapid3} will help finding an upper bound for the subsums in the right-hand side of \eqref{eqn:maindiff} with $j_1<n \leq j_2$ or $j_2 <n \leq j_1$ as well as the ones where $n \leq j_1 \leq j_2$ and $n \leq j_2 \leq j_1$. Let us define the following partition of $\Z$, which determines the relative positions of $[k_2 2^{-j_2},(k_2+1) 2^{-j_2})$ and $[s,t]$.

\begin{Def}\label{def:partitionofZ}
For all $j_2 \in \N$, we set
\begin{align*}
\Z_{j_2}^<(t,s) &= \{ k_2 \in \Z \, : \, k_2 2^{-j_2} <  \min\{t,s\} \},\\
\Z_{j_2}^>(t,s) &= \{ k_2 \in \Z \, : \, k_2 2^{-j_2}>  \max\{t,s\} \},\\
\text{and }\Z_{j_2}[t,s] &= \Z \setminus (\Z_{j_2}^<(t,s) \cup \Z_{j_2}^>(t,s)).
\end{align*}
\end{Def}
\begin{Rmk}
Note that we have $\# \Z_{j_2}[t,s] \leq 2^{j_2-n}+1$.
\end{Rmk}

Let us also observe that for all $a,b>0$,
\begin{equation}\label{eqn:inelog}
\log(3+a+b) \leq \log(3+a) \log(3+b).
\end{equation}

\begin{Lemma}\label{lem:rapid2}
There exists a deterministic constant $C>0$ such that, for all $t,s \in (0,1)$ and $j_1 \leq j_2$,
the quantities
\begin{equation}\label{eqn:r<>3}
\sum_{k_1 \in \Z} \sum_{k_2 \in \Z_{j_2}^<(t,s)}L_{j_1,j_2}^{k_1,k_2} \left|I_{j_1,j_2}^{k_1,k_2}[t,s]  \right|
\end{equation}
\begin{equation}\label{eqn:r<>2}
\sum_{k_1 \in \Z} \sum_{k_2 \in \Z_{j_2}^>(t,s)}L_{j_1,j_2}^{k_1,k_2} \left| I_{j_1,j_2}^{k_1,k_2}[t,s] \right|
\end{equation}
are bounded from above by
\[
 C\sqrt{\log(3+|j_1|+2^{j_1})} \sqrt{\log(3+|j_2|+2^{j_2})} 2^{-j_2}.
\]
\end{Lemma}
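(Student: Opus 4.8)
The key point is that for $k_2 \in \Z_{j_2}^<(t,s) \cup \Z_{j_2}^>(t,s)$, the dyadic interval $[k_2 2^{-j_2}, (k_2+1)2^{-j_2})$ does not meet $[s,t]$, so the factor $\psi_{H_2}(2^{j_2}x - k_2)$ must be evaluated at arguments $2^{j_2}x - k_2$ that are bounded away from $0$ whenever $x \in [s,t]$; more precisely, for such $x$ one has $|2^{j_2} x - k_2| \geq \dist(2^{j_2} x, k_2) \gtrsim 2^{j_2 - n}$ because $|t-s| > 2^{-n-1}$. The plan is to exploit the fast decay \eqref{eqn:fastdecay} of $\psi_{H_2}$ together with the summation estimate \eqref{eqn:sumlog}, summing over $k_1 \in \Z$ first and then over the ``far'' indices $k_2$.

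First I would bound $|I_{j_1,j_2}^{k_1,k_2}[t,s]|$ crudely by $\int_{[s,t]} |\psi_{H_1}(2^{j_1}x-k_1)|\,|\psi_{H_2}(2^{j_2}x-k_2)|\,dx$ (up to the usual sign/orientation convention for $\int_{[t,s]}$). Next, for the inner sum over $k_1 \in \Z$, I would pull out $L_{j_1,j_2}^{k_1,k_2}$, split the logarithm using \eqref{eqn:inelog} so that $\sqrt{\log(3+|j_1|+|k_1|)}$ factors out, and apply \eqref{eqn:fastdecay} and \eqref{eqn:sumlog} to get, for each fixed $x \in (0,1)$,
\[
\sum_{k_1 \in \Z} \sqrt{\log(3+|j_1|+|k_1|)}\, |\psi_{H_1}(2^{j_1}x-k_1)| \leq C \sqrt{\log(3+|j_1|+2^{j_1}|x|)} \leq C\sqrt{\log(3+|j_1|+2^{j_1})},
\]
uniformly in $x \in (0,1)$. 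This disposes of the $j_1$-dependence and reduces the problem to estimating
\[
C \sqrt{\log(3+|j_1|+2^{j_1})} \sum_{k_2 \in \Z_{j_2}^<(t,s)} \sqrt{\log(3+|j_2|+|k_2|)} \int_{[s,t]} |\psi_{H_2}(2^{j_2}x-k_2)|\,dx,
\]
and analogously for $\Z_{j_2}^>(t,s)$.

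Then I would estimate the remaining sum-integral. For $k_2 \in \Z_{j_2}^<(t,s)$ write $k_2 = \lfloor 2^{j_2} \min\{t,s\}\rfloor - m$ with $m \geq 0$; since $x \geq \min\{t,s\}$ on the relevant range and $|t-s| \sim 2^{-n}$, one has $2^{j_2}x - k_2 \geq m$, so using \eqref{eqn:fastdecay} the integral is at most $C |t-s| (1+m)^{-4} \leq C 2^{-n}(1+m)^{-4}$. Summing $\sqrt{\log(3+|j_2|+|k_2|)}(1+m)^{-4}$ over $m \geq 0$: the log factor grows at most polynomially, in fact $\sqrt{\log(3+|j_2|+|k_2|)} \leq C\sqrt{\log(3+|j_2|+2^{j_2})}\sqrt{\log(3+m)}$ by \eqref{eqn:inelog} (using $|k_2| \leq 2^{j_2} + m$), and $\sum_{m\geq 0}\sqrt{\log(3+m)}(1+m)^{-4} < \infty$, giving the bound $C \sqrt{\log(3+|j_2|+2^{j_2})}\, 2^{-n}$. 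Since $2^{-n} < 2 |t-s| \leq 2 \cdot 2^{-j_2}$ would be false in general — here $j_2$ can be much larger than $n$ — I must instead note $2^{-n} \leq 2|t-s|$ and the claimed bound has $2^{-j_2}$, not $2^{-n}$; so I need the sharper observation that when $j_2 \geq n$ the number of $k_2$ for which the integrand is genuinely of size $\sim 2^{-n}$ is bounded, and for $k_2$ farther away one gains additional decay. Concretely, reindexing by the distance (in units of $2^{-j_2}$) from $k_2 2^{-j_2}$ to the interval $[s,t]$ and using that the integral over $[s,t]$ of $|\psi_{H_2}(2^{j_2}x-k_2)|$ is at most $C 2^{-j_2}(1+\dist)^{-4}$ when $\dist \geq 1$ (the interval has length $2^{-j_2}$ there) and at most $C 2^{-n}$ for the $O(2^{j_2-n})$ ``boundary'' indices — but those boundary indices belong to $\Z_{j_2}[t,s]$, not to $\Z_{j_2}^{<}$ or $\Z_{j_2}^{>}$ — shows that every $k_2 \in \Z_{j_2}^<(t,s) \cup \Z_{j_2}^>(t,s)$ is at distance $\geq 1$ in those units, hence contributes at most $C 2^{-j_2}(1+\dist)^{-4}$, and summing over all such $k_2$ (with the polynomially-bounded log factor absorbed as above) yields the geometric-type sum $\sum_{r \geq 1} \sqrt{\log(3+r)}\, 2^{-j_2} r^{-4} \leq C 2^{-j_2}$, times $\sqrt{\log(3+|j_2|+2^{j_2})}$.

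\textbf{Main obstacle.} The delicate point is precisely the bookkeeping in the last step: getting the final power $2^{-j_2}$ (rather than the weaker $2^{-n}$) requires observing that the indices in $\Z_{j_2}^<(t,s)$ and $\Z_{j_2}^>(t,s)$ are exactly those whose supporting dyadic intervals lie strictly to one side of $[s,t]$, so that on $[s,t]$ the bump $\psi_{H_2}(2^{j_2}\cdot - k_2)$ is supported at distance $\geq 2^{-j_2}$ (in the effective sense of the fast decay), and then summing the tail $\sum_{r\geq 1} r^{-4}\sqrt{\log(3+r)}$, which converges, contributes only a constant. One must also carefully track that the $\sqrt{\log}$ weights only ever cost polynomial factors in the distance, which are harmlessly absorbed by the fourth-power decay; handling the case distinction $j_1 \leq j_2$ versus using \eqref{eqn:sumlog} symmetrically is routine once this is set up.
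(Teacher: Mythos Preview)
Your approach is essentially the paper's: bound the $k_1$-sum via \eqref{eqn:sumlog} to extract $\sqrt{\log(3+|j_1|+2^{j_1})}$, peel off $\sqrt{\log(3+|j_2|+2^{j_2})}$ from the $k_2$-log via \eqref{eqn:inelog}, and then exploit the scaling in $\psi_{H_2}(2^{j_2}\cdot-k_2)$ to recover the factor $2^{-j_2}$ from the remaining integral. One small slip: your claim that each term contributes at most $C\,2^{-j_2}(1+\dist)^{-4}$ is off by one in the exponent --- after the change of variable $u=2^{j_2}x-k_2$ you get $2^{-j_2}\int_r^\infty(1+u)^{-4}\,du\le C\,2^{-j_2}(1+r)^{-3}$, not $(1+r)^{-4}$ (and the parenthetical ``the interval has length $2^{-j_2}$ there'' is not the reason; the $2^{-j_2}$ comes purely from the Jacobian) --- but $(1+r)^{-3}$ is of course still summable against $\sqrt{\log(3+r)}$, so the argument goes through.

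The paper avoids your term-by-term bookkeeping by keeping the $k_2$-sum inside the integral and evaluating it as a double integral in one stroke:
\[
\int_{[s,t]}\sum_{m\ge 0}\frac{dx}{(3+2^{j_2}(x-\min\{s,t\})+m)^{3}}
\le \int_{[s,t]}\int_0^\infty\frac{dy\,dx}{(2+2^{j_2}(x-\min\{s,t\})+y)^{3}}
\le C\,2^{-j_2},
\]
which is exactly the inequality \eqref{eqn:intavec2} and sidesteps the detour through the incorrect $2^{-n}$ bound altogether.
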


\begin{proof}
Let us bound \eqref{eqn:r<>3}, the proof for \eqref{eqn:r<>2} being similar. From the fast decay of the fractional antiderivatives of $\psi$ \eqref{eqn:fastdecay}, inequalities \eqref{eqn:sumlog} and \eqref{eqn:inelog}
for $j_1 \leq j_2$, we have
\begin{align*}
\eqref{eqn:r<>3} & \leq C  \int_{[s,t]} \left(\sum_{k_1 \in \Z} \frac{\sqrt{\log(3+|j_1|+|k_1|)}}{(3+|2^{j_1}x-k_1|)^4} \right) \left( \sum_{k_2 \in \Z^<_{j_2}(t,s)} \frac{\sqrt{\log(3+|j_2|+|k_2|)}}{(3+|2^{j_2}x-k_2|)^4} \right) \, dx \\
& \leq C \sqrt{\log(3+|j_1|+2^{j_1})} \sqrt{\log(3+|j_2|+2^{j_2})} \\ &	 \quad \int_{[s,t]} \sum_{k_2 \in \Z^<_{j_2}(t,s)} \frac{\sqrt{\log(3+|2^{j_2}x-k_2|)}}{(3+|2^{j_2}x-k_2|)^4}  \, dx.
\end{align*}
For all $x \in [s,t]$ the mapping $y \mapsto 2+ 2^{j_2}x-2^{j_2} \min \{s,t\}+y)^{-3}$ is decreasing and thus
\begin{align}
\nonumber\int_{[s,t]}& \sum_{k_2 \in \Z^<_{j_2}(t,s)} \frac{\sqrt{\log(3+|2^{j_2}x-k_2|)}}{(3+|2^{j_2}x-k_2|)^4}  \, dx \\ 
\nonumber & \leq \int_{[s,t]} \sum_{k_2 \in \Z^<_{j_2}(t,s)} \frac{dx}{(3+2^{j_2}x-k_2)^{3}} \\
\nonumber & \leq \int_{[s,t]} \sum_{m=0}^{+ \infty} \frac{dx}{(3+2^{j_2}x-2^{j_2} \min \{s,t\}+m)^{3}} \\
\nonumber & \leq \int_{[s,t]} \int_0^{+ \infty} \frac{dx dy}{(2+2^{j_2}x-2^{j_2} \min \{s,t\}+y)^{3}} \\
& \leq C 2^{-j_2}.\label{eqn:intavec2}
\end{align}
This bound leads to
\begin{equation}\label{eqn:boundfor18}
\eqref{eqn:r<>3} \leq C \sqrt{\log(3+|j_1|+2^{j_1})} \sqrt{\log(3+|j_2|+2^{j_2})} 2^{-j_2} .
\end{equation}

\end{proof}

\begin{Lemma}\label{lem:rapid3}
There exists a deterministic constant $C>0$ such that, for all $t,s \in (0,1)$ and $j_1 \leq j_2$, the quantities
\[  \sum_{k_1 \in \Z} \sum_{k_2 \in \Z_{j_2}[t,s]} L_{j_1,j_2}^{k_1,k_2} \left|\int_{- \infty}^{\min\{s,t\}} \psi_{H_1}(2^{j_1} x-k_1) \psi_{H_2}(2^{j_2} x-k_2) \, dx \right| \]
\[  \sum_{k_1 \in \Z} \sum_{k_2 \in \Z_{j_2}[t,s]}L_{j_1,j_2}^{k_1,k_2}\left| \int_{\max\{s,t\}}^{+\infty} \psi_{H_1}(2^{j_1} x-k_1) \psi_{H_2}(2^{j_2} x-k_2) \, dx\right| \]
are bounded from above by
\[
C \sqrt{\log(3+|j_1|+2^{j_2})} \sqrt{\log(3+|j_2|+2^{j_2})} 2^{-j_2}.
\]
\end{Lemma}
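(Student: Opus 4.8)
The plan is to follow the scheme of the proof of Lemma~\ref{lem:rapid2}, the two new ingredients being that the integral now runs over a half-line and that the index $k_2$ is restricted to $\Z_{j_2}[t,s]$. Write $m=\min\{s,t\}$ and $M=\max\{s,t\}$; I will only treat the first sum, the one involving $\int_{-\infty}^m$, the second one (involving $\int_M^{+\infty}$) being handled identically after the reflection $x\mapsto -x$. The only feature of $\Z_{j_2}[t,s]$ used is the location constraint $2^{j_2}m\le k_2\le 2^{j_2}M$, which, since $s,t\in(0,1)$, forces $0<k_2<2^{j_2}$; the cardinality bound on $\Z_{j_2}[t,s]$ --- decisive elsewhere --- plays no role here, which is why no factor $2^{j_2-n}$ shows up in the estimate.

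First I would use the fast decay \eqref{eqn:fastdecay} of the fractional antiderivatives of $\psi$ to bound, up to a multiplicative constant,
\[
\left|\int_{-\infty}^m \psi_{H_1}(2^{j_1}x-k_1)\,\psi_{H_2}(2^{j_2}x-k_2)\,dx\right|
\le C\int_{-\infty}^m \frac{dx}{(3+|2^{j_1}x-k_1|)^4\,(3+|2^{j_2}x-k_2|)^4},
\]
and then interchange summation and integration. The sum over $k_1\in\Z$ is handled by \eqref{eqn:sumlog}, which gives $\sum_{k_1}\sqrt{\log(3+|j_1|+|k_1|)}\,(3+|2^{j_1}x-k_1|)^{-4}\le C\sqrt{\log(3+|j_1|+2^{j_1}|x|)}$. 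In the sum over $k_2\in\Z_{j_2}[t,s]$, the inequality $0<k_2<2^{j_2}$ lets us replace $\sqrt{\log(3+|j_2|+|k_2|)}$ by the larger quantity $\sqrt{\log(3+|j_2|+2^{j_2})}$ and pull it out of the sum.

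Next, since $x\le m$ and $k_2\ge 2^{j_2}m$, one has $|2^{j_2}x-k_2|=(k_2-2^{j_2}m)+2^{j_2}(m-x)$, a sum of two nonnegative quantities, and the elementary inequality $(3+A+B)^4\ge(3+A)^2(3+B)^2$ valid for $A,B\ge 0$ separates the two decays; summing the factor $(3+(k_2-2^{j_2}m))^{-2}$ over $k_2$ produces a universal constant, so that $\sum_{k_2\in\Z_{j_2}[t,s]}(3+|2^{j_2}x-k_2|)^{-4}\le C\,(3+2^{j_2}(m-x))^{-2}$. It then remains to estimate $\int_{-\infty}^m\sqrt{\log(3+|j_1|+2^{j_1}|x|)}\,(3+2^{j_2}(m-x))^{-2}\,dx$: the change of variables $v=2^{j_2}(m-x)$ produces the factor $2^{-j_2}$ and turns the integrand into $\sqrt{\log(3+|j_1|+2^{j_1}|m-2^{-j_2}v|)}\,(3+v)^{-2}$. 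As $|m-2^{-j_2}v|<1+2^{-j_2}v$ and $j_1\le j_2$, we get $2^{j_1}|m-2^{-j_2}v|<2^{j_2}+v$, so a single application of \eqref{eqn:inelog} yields $\sqrt{\log(3+|j_1|+2^{j_1}|m-2^{-j_2}v|)}\le\sqrt{\log(3+|j_1|+2^{j_2})}\,\sqrt{\log(3+v)}$; since $\int_0^{+\infty}\sqrt{\log(3+v)}\,(3+v)^{-2}\,dv<\infty$, collecting the three factors $\sqrt{\log(3+|j_2|+2^{j_2})}$, $2^{-j_2}$ and $\sqrt{\log(3+|j_1|+2^{j_2})}$ gives the announced bound.

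I do not expect any genuine obstacle: the argument is a slightly more careful rerun of Lemma~\ref{lem:rapid2}. The only point demanding some care is that the $x$-integral now extends over a full half-line, so one must check that, once the $j$-dependent logarithmic factors have been peeled off, the growth of $x\mapsto\sqrt{\log(3+|j_1|+2^{j_1}|x|)}$ is dominated by the quadratic decay inherited from $\psi_{H_2}$ --- which is exactly what the integrability of $v\mapsto\sqrt{\log(3+v)}\,(3+v)^{-2}$ ensures.
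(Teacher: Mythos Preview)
Your proof is correct and follows essentially the same approach as the paper's: bound the integrand by fast decay, sum over $k_1$ via \eqref{eqn:sumlog}, use $|k_2|\le 2^{j_2}$ to pull out $\sqrt{\log(3+|j_2|+2^{j_2})}$, and exploit that for $x\le m$ and $k_2\in\Z_{j_2}[t,s]$ one has $|2^{j_2}x-k_2|=k_2-2^{j_2}x\ge 0$. The only cosmetic difference is in how the remaining $(k_2,x)$-double sum/integral is handled: the paper first absorbs both logarithmic factors into the fourth-power decay via \eqref{eqn:inelog} (using $2^{j_2}|x|\le 2^{j_2}+|2^{j_2}x-k_2|$), reducing to $\int_{-\infty}^s\sum_{k_2}(3+k_2-2^{j_2}x)^{-3}\,dx$ which is bounded exactly as in \eqref{eqn:intavec2}; you instead split $(3+A+B)^4\ge (3+A)^2(3+B)^2$ with $A=k_2-2^{j_2}m$ and $B=2^{j_2}(m-x)$, sum the $A$-factor over $k_2$, and only then peel off the $j_1$-logarithm inside the $v$-integral. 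Both routes yield the same bound with the same ease.
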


\begin{proof}
Let us assume that $s \leq t$, the argument for $t < s$ being similar. As $j_2 \geq j_1$, we have, by inequality \eqref{eqn:sumlog},
\begin{align*}
\int_{- \infty}^{s} &\left(\sum_{k_1 \in \Z} \frac{\sqrt{\log(3+|j_1|+|k_1|)}}{(3+|2^{j_1}x-k_1|)^4} \right) \left( \sum_{k_2 \in \Z_{j_2}[t,s]} \frac{\sqrt{\log(3+|j_2|+|k_2|)}}{(3+|2^{j_2}x-k_2|)^4} \right)\, dx \\
& \leq C_L\int_{- \infty}^{s} \sqrt{\log(3+|j_1|+2^{j_1} |x|)}  \left( \sum_{k_2 \in \Z_{j_2}[t,s]} \frac{\sqrt{\log(3+|j_2|+|k_2|)}}{(3+|2^{j_2}x-k_2|)^4} \right)\, dx \\
&\leq C_L\int_{- \infty}^{s} \sqrt{\log(3+|j_1|+2^{j_2} |x|)} \left( \sum_{k_2 \in \Z_{j_2}[t,s]} \frac{\sqrt{\log(3+|j_2|+|k_2|)}}{(3+|2^{j_2}x-k_2|)^4} \right)\, dx.
\end{align*}
For all $k_2 \in \Z_{j_2}[t,s]$, $|k_2| \leq 2^{j_2}$, we have, using \eqref{eqn:inelog},
\begin{align*}
\log&(3+|j_1|+2^{j_2} |x|)\leq  \log(3+|j_1|+2^{j_2}) \log(3+|2^{j_2}x-k_2|) \text{ and} \\
\log&(3+|j_2|+|k_2|) \leq \log(3+j_2+2^{j_2}) \log(3+|2^{j_2}x-k_2|) . 
\end{align*}
Thus, it only remains us to deal with
\begin{align*}
 \int_{- \infty}^{s}  \sum_{k_2 \in \Z_{j_2}[t,s]} \frac{dx}{(3+|2^{j_2}x-k_2|)^{3}}. 
\end{align*}
But, for all $x \leq s$ and $k_2 \in \Z_{j_2}[t,s]$, $|2^{j_2}x-k_2| = k_2-2^{j_2}x$ and then, using the same method as in \eqref{eqn:intavec2}, we get
\begin{align} \label{eqn:boundfor21}
 \int_{- \infty}^{s}  \sum_{k_2 \in \Z_{j_2}[t,s]} \frac{dx}{(3+k_2-2^{j_2}x)^{3}} \leq  C 2^{-j_2} 
\end{align}
which finally leads to
\begin{align}
\nonumber &  \sum_{k_1 \in \Z} \sum_{k_2 \in \Z_{j_2}[t,s]} L_{j_1,j_2}^{k_1,k_2} \left|\int_{- \infty}^{s}  \psi_{H_1}(2^{j_1} x-k_1) \psi_{H_2}(2^{j_2} x-k_2) \, dx \right| \\
& \quad \leq C \sqrt{\log(3+|j_1|+2^{j_2})}\sqrt{\log(3+j_2+2^{j_2})} 2^{-j_2}.\label{eqn:boundfor201}
\end{align}
We get in the same way,
\begin{align*}
&  \sum_{k_1 \in \Z} \sum_{k_2 \in \Z_{j_2}[t,s]} L_{j_1,j_2}^{k_1,k_2} \left|\int_{t}^{+ \infty}  \psi_{H_1}(2^{j_1} x-k_1) \psi_{H_2}(2^{j_2} x-k_2) \, dx \right| 
 \\
& \quad \leq C \sqrt{\log(3+|j_1|+2^{j_2})}\sqrt{\log(3+j_2+2^{j_2})} 2^{-j_2}.
\end{align*}
\end{proof}

Next Lemma will be used to bound the subsums of \eqref{eqn:maindiff} with $j_1<n \leq j_2$ or \linebreak $j_2 <n \leq j_1$.

\begin{Lemma}\label{lem:rapid4}
There exists a deterministic constant $C>0$ such that, for all $t,s \in (0,1)$, the quantities
\[
R^{<\geq n}[t,s] :=\sum_{j_1<n} \sum_{j_2 \geq n} \sum_{(k_1,k_2) \in \Z^2} 2^{j_1(1-H_1)} 2^{j_2(1-H_2)} L_{j_1,j_2}^{k_1,k_2} |I_{j_1,j_2}^{k_1,k_2}[t,s] |
\]
\[ R^{\geq< n}[t,s] :=\sum_{j_1 \geq n} \sum_{j_2 < n} \sum_{(k_1,k_2) \in \Z^2} 2^{j_1(1-H_1)} 2^{j_2(1-H_2)} L_{j_1,j_2}^{k_1,k_2} |I_{j_1,j_2}^{k_1,k_2}[t,s]| \] 
are bounded from above by
\[
C |t-s|^{H_1+H_2-1} \log |t-s|^{-1}. 
\]
\end{Lemma}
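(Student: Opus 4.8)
\textbf{Proof proposal for Lemma \ref{lem:rapid4}.}

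The plan is to bound $R^{<\geq n}[t,s]$, the argument for $R^{\geq < n}[t,s]$ being symmetric up to exchanging the roles of $(j_1,H_1,k_1)$ and $(j_2,H_2,k_2)$. First I would split the inner sum over $k_2 \in \Z$ according to the partition $\Z = \Z_{j_2}^<(t,s) \cup \Z_{j_2}[t,s] \cup \Z_{j_2}^>(t,s)$ of Definition \ref{def:partitionofZ}. For the two ``outer'' families $\Z_{j_2}^<(t,s)$ and $\Z_{j_2}^>(t,s)$, I would apply Lemma \ref{lem:rapid2} (which is valid here since $j_1 < n \leq j_2$ forces $j_1 \leq j_2$): each of the corresponding subsums over $k_1 \in \Z$ is at most $C \sqrt{\log(3+|j_1|+2^{j_1})}\sqrt{\log(3+|j_2|+2^{j_2})}\,2^{-j_2}$. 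For the ``middle'' family $k_2 \in \Z_{j_2}[t,s]$, I would split the integral defining $I_{j_1,j_2}^{k_1,k_2}[t,s]$ as $\int_{[s,t]} = \int_{-\infty}^{\max\{s,t\}} - \int_{-\infty}^{\min\{s,t\}}$ (or, equivalently, write $\int_{[s,t]}\psi_{H_1}\psi_{H_2} = \int_\R \psi_{H_1}\psi_{H_2} - \int_{-\infty}^{\min} - \int_{\max}^{+\infty}$, and note $\int_\R \psi_{H_1}(2^{j_1}x-k_1)\psi_{H_2}(2^{j_2}x-k_2)\,dx = 2^{-j_1}I_{j_1,j_2}^{k_1,k_2}$ vanishes for $|j_1 - j_2|>1$ by Lemma \ref{Lemma:intR}, while for $|j_1-j_2|\le 1$ it is controlled by the last inequalities of that lemma). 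For the two semi-infinite tails $\int_{-\infty}^{\min\{s,t\}}$ and $\int_{\max\{s,t\}}^{+\infty}$ over $k_2 \in \Z_{j_2}[t,s]$, I would invoke Lemma \ref{lem:rapid3}, giving again a bound $C\sqrt{\log(3+|j_1|+2^{j_2})}\sqrt{\log(3+|j_2|+2^{j_2})}\,2^{-j_2}$.

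Collecting these, and bounding the (finitely many, at most $2^{j_2-n}+1$) terms $k_2 \in \Z_{j_2}[t,s]$ coming from the full-line integral $\int_\R$ via Lemma \ref{Lemma:intR} together with the summation estimate \eqref{eqn:sumlog}, I would obtain for each fixed pair $(j_1,j_2)$ with $j_1 < n \leq j_2$ an estimate of the form
\[
\sum_{(k_1,k_2)\in\Z^2} L_{j_1,j_2}^{k_1,k_2}\,|I_{j_1,j_2}^{k_1,k_2}[t,s]| \leq C\,\sqrt{\log(3+|j_1|+2^{j_2})}\,\sqrt{\log(3+|j_2|+2^{j_2})}\,2^{-j_2}.
\]
Here the contribution of $\Z_{j_2}[t,s]$ via the full-line integral is of size $(2^{j_2-n}+1)\cdot 2^{-j_2}\cdot(\text{log factor}) \lesssim 2^{-n}(\text{log factor})$, which is consistent with the displayed bound since $2^{-n}\le 2^{-n}$ and $2^{-j_2}\le 2^{-n}$; I should take the worst of these, namely $C\sqrt{\log(3+|j_1|+2^{j_2})}\sqrt{\log(3+|j_2|+2^{j_2})}\max\{2^{-j_2},2^{-n}\}$, and since $j_2\ge n$ this is $2^{-j_2}$ up to the log factor — so in fact the clean bound above survives, but even if one only gets $2^{-n}$ this is harmless because summing $2^{j_2(1-H_2)}2^{-n}$ over $j_2\ge n$ diverges, so one genuinely needs the $2^{-j_2}$ decay and should be careful that the full-line integral piece really does inherit it.

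Finally I would insert the per-pair bound into the definition of $R^{<\geq n}[t,s]$ and perform the two geometric-type sums. Over $j_1 < n$, the factor $2^{j_1(1-H_1)}\sqrt{\log(3+|j_1|+2^{j_2})}$ sums (as in \eqref{eqn:ineserie1}) to at most $C\,2^{n(1-H_1)}\sqrt{\log(3+n+2^{j_2})} \le C\,2^{n(1-H_1)}\sqrt{n}\,\sqrt{\log(3+2^{j_2})}$ using \eqref{eqn:inelog}. Over $j_2 \geq n$, since $1-H_2 < 0$ the series $\sum_{j_2 \geq n} 2^{j_2(1-H_2)} 2^{-j_2}\sqrt{\log(3+|j_2|+2^{j_2})}\sqrt{\log(3+2^{j_2})} = \sum_{j_2\ge n} 2^{-j_2 H_2}(\text{polynomial in }j_2)$ converges and is bounded by $C\,2^{-nH_2}\sqrt{n}\cdot n$ (the dominant term being $j_2 = n$, with the $\sqrt{\log}$ factors contributing powers of $n$). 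Multiplying, $R^{<\geq n}[t,s] \leq C\,2^{n(1-H_1)}2^{-nH_2}\,n\cdot n = C\,2^{n(1-H_1-H_2)} n^2$. Using \eqref{eqn:defofn}, $2^{-n} \asymp |t-s|$ and $n \asymp \log|t-s|^{-1}$, so $2^{n(1-H_1-H_2)} \asymp |t-s|^{H_1+H_2-1}$; this gives $R^{<\geq n}[t,s] \le C\,|t-s|^{H_1+H_2-1}(\log|t-s|^{-1})^2$, which is off by one logarithmic factor. The main obstacle is precisely squeezing out this spurious extra $\log$: one must avoid applying \eqref{eqn:inelog} crudely and instead keep $\sqrt{\log(3+|j_1|+2^{j_2})}$ as $\sqrt{\log(3+|j_1|)} + \sqrt{\log(3+2^{j_2})} \lesssim \sqrt{\log(3+|j_1|)} + \sqrt{j_2}$, so that the $j_1$-sum produces only $2^{n(1-H_1)}\sqrt{n}$ times a bounded constant (not times $\sqrt{j_2}$ paired with the $j_2$ log factors), and the $j_2$-sum $\sum_{j_2\ge n} 2^{-j_2 H_2}(\sqrt{j_2}+\sqrt{\log(3+2^{j_2})})\sqrt{\log(3+|j_2|+2^{j_2})}$ contributes $C\,2^{-nH_2}\cdot n$ — yielding the sharp $R^{<\geq n}[t,s] \le C\,2^{n(1-H_1-H_2)}\,n = C\,|t-s|^{H_1+H_2-1}\log|t-s|^{-1}$. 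Tracking exactly which log factors are genuinely needed (only a single power of $n$ at the end) is the delicate bookkeeping; everything else is a routine combination of Lemmata \ref{Lemma:intR}, \ref{lem:rapid2} and \ref{lem:rapid3}.
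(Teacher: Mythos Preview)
Your approach is the same as the paper's: split $k_2$ into $\Z_{j_2}^<(t,s)\cup\Z_{j_2}[t,s]\cup\Z_{j_2}^>(t,s)$, apply Lemma \ref{lem:rapid2} to the outer pieces, and on the middle piece write $\int_{[s,t]}=\int_\R-\int_{-\infty}^{\min}-\int_{\max}^{+\infty}$ and invoke Lemmata \ref{Lemma:intR} and \ref{lem:rapid3}. Two points where your execution is muddled and the paper is sharper:

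\medskip
\noindent\emph{The full-line piece.} You correctly note that $I_{j_1,j_2}^{k_1,k_2}=\int_\R\psi_{H_1}(2^{j_1}x-k_1)\psi_{H_2}(2^{j_2}x-k_2)\,dx$ vanishes for $|j_1-j_2|>1$, but then worry about its $2^{-j_2}$ versus $2^{-n}$ decay when summed over $j_2\ge n$. There is no such sum: since $j_1<n\le j_2$, the constraint $|j_1-j_2|\le 1$ forces $(j_1,j_2)=(n-1,n)$, a \emph{single} pair. With $\#\Z_n[t,s]\le 2$, Lemma \ref{Lemma:intR} and \eqref{eqn:sumlog} give this term a direct bound $C\,2^{(n-1)(1-H_1)}2^{n(1-H_2)}\cdot 2^{-n}n\le C\,2^{n(1-H_1-H_2)}n$, so your concern evaporates.

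\medskip
\noindent\emph{The spurious log.} Your additive splitting $\sqrt{\log(3+|j_1|+2^{j_2})}\le\sqrt{\log(3+|j_1|)}+\sqrt{j_2}$ does work, but the paper's device is cleaner: split $j_1<0$ (where $\sum_{j_1<0}2^{j_1(1-H_1)}\sqrt{\log(3+|j_1|)}<\infty$, and \eqref{eqn:inelog} gives $\sqrt{\log(3+|j_1|+2^{j_2})}\le\sqrt{\log(3+|j_1|)}\sqrt{\log(3+2^{j_2})}$) from $0\le j_1<n$ (where $|j_1|\le n\le j_2$ so $\sqrt{\log(3+|j_1|+2^{j_2})}\le\sqrt{\log(3+2^{j_2+1})}$). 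In both ranges the $j_1$-sum is decoupled from $j_2$ up to a single factor $\sqrt{\log(3+2^{j_2+1})}$, and $\sum_{j_2\ge n}2^{-j_2 H_2}\log(3+2^{j_2+1})\le C\,2^{-nH_2}n$ directly, giving the sharp $C\,2^{n(1-H_1-H_2)}n$ without any backtracking.
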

\begin{proof}
As $R^{<\geq n}[t,s]$ and $R^{\geq< n}[t,s]$ can clearly be treated symmetrically, we restrict our attention to $R^{<\geq n}[t,s]$. One sees that
\begin{align}
\nonumber &\sum_{j_1<n} \sum_{j_2 \geq n}  \sum_{(k_1,k_2) \in \Z^2} 2^{j_1(1-H_1)} 2^{j_2(1-H_2)} L_{j_1,j_2}^{k_1,k_2} I_{j_1,j_2}^{k_1,k_2}[t,s] \\
\quad &= \sum_{j_1<n} \sum_{j_2 \geq n} \sum_{k_1 \in \Z} \sum_{k_2 \in \Z_{j_2}^<(t,s)}2^{j_1(1-H_1)} 2^{j_2(1-H_2)} L_{j_1,j_2}^{k_1,k_2} I_{j_1,j_2}^{k_1,k_2}[t,s]\label{eqn:lemma3121} \\ 
& + \sum_{j_1<n} \sum_{j_2 \geq n} \sum_{k_1 \in \Z} \sum_{k_2 \in \Z_{j_2}^>(t,s)}2^{j_1(1-H_1)} 2^{j_2(1-H_2)} L_{j_1,j_2}^{k_1,k_2} I_{j_1,j_2}^{k_1,k_2}[t,s] \label{eqn:lemma3122}\\ 
&  + \sum_{j_1<n} \sum_{j_2 \geq n} \sum_{k_1 \in \Z} \sum_{k_2 \in \Z_{j_2}[t,s]} 2^{j_1(1-H_1)} 2^{j_2(1-H_2)} L_{j_1,j_2}^{k_1,k_2} I_{j_1,j_2}^{k_1,k_2}[t,s]. \label{eqn:lemma3123}
\end{align}
For \eqref{eqn:lemma3121}, we use Lemma \ref{lem:rapid2} to get
\begin{align*}
|\eqref{eqn:lemma3121}|& \leq C \sum_{j_1<n} 2^{j_1(1-H_1)} \sqrt{\log(3+|j_1|+2^{j_1})}  \sum_{j_2 \geq n} 2^{-j_2 H_2} \sqrt{\log(3+|j_2|+2^{j_2})}.
\end{align*}
The sum over over $j_1$ is bounded just as in \eqref{eqn:ineserie1} while, for the sum over $j_2$, we have
\begin{align}
\nonumber \sum_{j_2 \geq n} 2^{-j_2 H_2} \sqrt{\log(3+|j_2|+2^{j_2})} & \leq \sum_{j_2 \geq n} 2^{-j_2 H_2} \sqrt{\log(3+2^{j_2 +1})} \\
& \leq C 2^{-n H_2} \sqrt{n}.\label{eqn:ineserie2}
\end{align}
We bound \eqref{eqn:lemma3122} in exactly the same way.

For \eqref{eqn:lemma3123}, let us again assume $s \leq t$, then we write
\begin{align} 
 \nonumber I_{j_1,j_2}^{k_1,k_2}[t,s] &= \int_{\R} \psi_{H_1}(2^{j_1} x-k_1) \psi_{H_2}(2^{j_2} x-k_2) \, dx \\
\nonumber & - \int_{- \infty}^{s} \psi_{H_1}(2^{j_1} x-k_1) \psi_{H_2}(2^{j_2} x-k_2) \, dx \\
&-\int_{t}^{+ \infty} \psi_{H_1}(2^{j_1} x-k_1) \psi_{H_2}(2^{j_2} x-k_2) \, dx.\label{eqn:split}
\end{align}

Since $j_1<n$ and $j_2 \geq n$, recalling Lemma \ref{Lemma:intR}, the sum
\[\sum_{k_1 \in \Z} \sum_{k_2 \in \Z_{j_2}[t,s]}L_{j_1,j_2}^{k_1,k_2} \int_\R \psi_{H_1}(2^{j_1} x-k_1) \psi_{H_2}(2^{j_2} x-k_2) \, dx,\]
vanishes except maybe when $(j_1,j_2)=(n-1,n)$. In this case, note that $\# \Z_{n}[t,s] \leq 2$ and, for all $k_2 \in\Z_{n}[t,s]$, $|k_2| \leq 2^n$.Then, by Lemma \ref{Lemma:intR} and inequality \eqref{eqn:sumlog}, we get
\begin{align*}
& \sum_{k_1 \in \Z}  \sum_{k_2 \in \Z_{n}[t,s]} L_{n-1,n}^{k_1,k_2} \left|I_{n-1,n}^{k_1,k_2} \right| \\
& \quad \leq C 2^{-n}  \sum_{k_1 \in \Z}  \sum_{k_2 \in \Z_{n}[t,s]} \frac{\sqrt{\log(3+n-1+|k_1|)} \sqrt{\log(3+n+|k_2|}}{(3+|2k_1-k_2|)^4} \\
& \quad \leq   C 2^{-n}  \sum_{k_2 \in \Z_{n}[t,s]} \sqrt{\log(3+n-1+|\frac{k_2}{2}|)} \sqrt{\log(3+n+|k_2|)} \\
& \quad \leq C 2^{-n} n
\end{align*}

Now, using Lemma \ref{lem:rapid3}, we also get
\begin{align}
\nonumber& \left| \sum_{j_1<n} \sum_{j_2 \geq n} \sum_{k_1 \in \Z} \sum_{k_2 \in \Z_{j_2}[t,s]} 2^{j_1(1-H_1)} 2^{j_2(1-H_2)} L_{j_1,j_2}^{k_1,k_2} \int_{- \infty}^{s} \psi_{H_1}(2^{j_1} x-k_1) \psi_{H_2}(2^{j_2} x-k_2) \, dx \right| \\
\nonumber \quad & \leq \sum_{j_1<n} \sum_{j_2 \geq n} 2^{j_1(1-H_1)}2^{-j_2 H_2} \sqrt{\log(3+|j_1|+2^{j_2})} \sqrt{\log(3+|j_2|+2^{j_2})} \\
\nonumber & \leq  \sum_{j_1<0}  \sum_{j_2 \geq n} 2^{j_1(1-H_1)}2^{-j_2 H_2}  \sqrt{\log(3+|j_1|)}\log(3+2^{j_2+1})
 \\ \nonumber & \quad  + \sum_{j_1=0}^{n-1}  \sum_{j_2 \geq n} 2^{j_1(1-H_1)}2^{-j_2 H_2} \sqrt{\log(3+2^{j_2+1})} \sqrt{\log(3+2^{j_2+1})} \\
 & \leq C 2^{n(1-H_1-H_2)}n \label{eqn:ineserie3}
\end{align}
The series
\[ \left| \sum_{j_1<n} \sum_{j_2 \geq n} \sum_{k_1 \in \Z} \sum_{k_2 \in \Z_{j_2}[t,s]} 2^{j_1(1-H_1)} 2^{j_2(1-H_2)} L_{j_1,j_2}^{k_1,k_2} \int_{- \infty}^{s} \psi_{H_1}(2^{j_1} x-k_1) \psi_{H_2}(2^{j_2} x-k_2) \, dx \right|\]
is bounded in exactly the same way and the conclusion follows.
\end{proof}

It remains us to bound the subsums of \eqref{eqn:maindiff} with $j_1 \geq n$ and $j_2 \geq n$. For this, let us define some random variables associated with dyadic intervals.
\begin{Def}
If $\lambda$ is a dyadic interval of scale $n$, we define, for all $j \geq n$, the indexation sets
\begin{align*}
 S_j^0(\lambda) &:= \{ (k^{(1)},K^{(1)},k^{(2)},K^{(2)}) \in \Z^4 \, : \, \frac{k^{(1)}}{2^{j}},\frac{K^{(1)}}{2^{j}},\frac{k^{(2)}}{2^{j}}, \frac{K^{(2)}}{2^j} \in \lambda \}, \\
S_j^1(\lambda)&: = \{ (k^{(1)},K^{(1)},k^{(2)},K^{(2)}) \in \Z^4 \, : \, \frac{k^{(1)}}{2^{j+1}},\frac{K^{(1)}}{2^{j+1}},\frac{k^{(2)}}{2^{j}}, \frac{K^{(2)}}{2^j} \in \lambda \}, \\
 S_j^2(\lambda) &:= \{ (k^{(1)},K^{(1)},k^{(2)},K^{(2)}) \in \Z^4 \, : \, \frac{k^{(1)}}{2^{j}},\frac{K^{(1)}}{2^{j}},\frac{k^{(2)}}{2^{j+1}}, \frac{K^{(2)}}{2^{j+1}} \in \lambda \}
\end{align*}
and consider the random variables, for $(k^{(1)},K^{(1)},k^{(2)},K^{(2)}) \in S_j^0(\lambda)$,
\begin{equation}\label{eqn:rvint2}
\sideset{_j^0}{_{k^{(1)},K^{(1)}} ^{k^{(2)},K^{(2)}}}\sum := \sum_{k^{(1)} \leq k_1 \leq K^{(1)}} \sum_{k^{(2)} \leq k_2 \leq K^{(2)}} \varepsilon_{j,j}^{k_1,k_2} I_{j,j}^{k_1,k_2}
\end{equation}, for $(k^{(1)},K^{(1)},k^{(2)},K^{(2)}) \in S_j^1(\lambda)$,
\begin{equation}\label{eqn:rvint1}
\sideset{_j^1}{_{k^{(1)},K^{(1)}} ^{k^{(2)},K^{(2)}}}\sum := \sum_{k^{(1)} \leq k_1 \leq K^{(1)}} \sum_{k^{(2)} \leq k_2 \leq K^{(2)}} \varepsilon_{j+1,j}^{k_1,k_2} I_{j+1,j}^{k_1,k_2}
\end{equation}
and, for $(k^{(1)},K^{(1)},k^{(2)},K^{(2)}) \in S_j^2(\lambda)$,
\begin{equation}\label{eqn:rvint3}
\sideset{_j^2}{_{k^{(1)},K^{(1)}} ^{k^{(2)},K^{(2)}}}\sum := \sum_{k^{(1)} \leq k_1 \leq K^{(1)}} \sum_{k^{(2)} \leq k_2 \leq K^{(2)}} \varepsilon_{j,j+1}^{k_1,k_2} I_{j,j+1}^{k_1,k_2}. 
\end{equation}
\end{Def}
The idea behind the definition of these random variables is, as $|t-s| \leq 2^{-n}$, $s \in 3\lambda_n(t)$ and thus any sum of the form
\begin{equation}\label{eqn:explirv}
\sum_{k_1 \in \Z_j [t,s]} \sum_{k_2 \in \Z_{\ell}[t,s]} \varepsilon_{j,\ell}^{k_1,k_2} I_{j,\ell}^{k_1,k_2}
\end{equation}
for $\ell \in \{j,j+1\}$ can be written as the sum of random variables \eqref{eqn:rvint2}, \eqref{eqn:rvint1} or \eqref{eqn:rvint3} for some $(k^{(1)},K^{(1)},k^{(2)},K^{(2)})$ belonging to at most two $S_j^\ell(\lambda)$ $(\ell \in \{0,1,2 \}$) with $\lambda \in \lambda_n(t)$. Indeed,
\begin{itemize}
\item if $t$ and $s$ both belong to $\lambda_n(t)$ then we only need to rewrite \eqref{eqn:explirv} in the form \eqref{eqn:rvint2}, \eqref{eqn:rvint1} or \eqref{eqn:rvint3} for $(k^{(1)},K^{(1)},k^{(2)},K^{(2)}) \in S_j^\ell(\lambda_n(t))$;
\item if $s \in \lambda$ with $\lambda \in 3 \lambda_n(t) \setminus \lambda_n(t)$ then we need to consider a first sum indexed by a quadruple of $S_j^\ell(\lambda_n(t))$ and a second indexed by a quadruple of $S_j^\ell(\lambda)$.
\end{itemize}

The reason why we decide to put $\lambda$ instead of $3 \lambda$ in the definition of the sets $S_j^\ell(\lambda)$ is that if, for all $n \in \N$ and for all $\lambda \in \Lambda_n$ and $j \geq n$, we define the random variable
\begin{equation}
\Xi_j (\lambda) =  \max_{\ell \in \{0,1,2\}} \sup_{(k^{(1)},K^{(1)},k^{(2)},K^{(2)}) \in S_j^\ell(\lambda))} \frac{\left| \sideset{_j^\ell}{_{k^{(1)},K^{(1)}} ^{k^{(2)},K^{(2)}}}\sum \right|}{\left \| \sideset{_j^\ell}{_{k^{(1)},K^{(1)}} ^{k^{(2)},K^{(2)}}}\sum \right \|_{L^2(\Omega)}},
\end{equation}
we want $\Xi_j (\lambda)$ to be independent of $\Xi_j (\lambda')$ as soon as $\lambda \cap \lambda' = \cmemptyset$. Moreover, from the definitions of the random variables \eqref{eqn:rvint1}, \eqref{eqn:rvint2} and \eqref{eqn:rvint3}, the remarks below Theorem \ref{thm:decompo} and the explicit expressions \eqref{eqn:intR1}, \eqref{eqn:intR2} and \eqref{eqn:intR3}, the law of $\Xi_j (\lambda)$ does not depend on $\lambda \in \Lambda_n$but only on $j-n$.

The key results to estimate the random variables $\Xi_j$ are \cite[Theorem 6.7 and Theorem 6.12]{janson97} that we recall here.

\begin{Thm}\label{thm:bound}
There exists a strictly positive universal deterministic constant $\overset{\star}{C}$ such that, for every random variable $X$ belonging to the second order Wiener chaos and for each real number $y \geq 2$, one has
\[ \mathbb{P}(|X| \geq y \|X\|_{L^2(\Omega)}) \leq \exp(-\overset{\star}{C} y).\]
\end{Thm}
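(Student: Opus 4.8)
The plan is to reduce, by homogeneity, to the normalized case $\|X\|_{L^2(\Omega)}=1$: since replacing $X$ by $X/\|X\|_{L^2(\Omega)}$ affects neither side, it suffices to produce a universal constant $\overset{\star}{C}>0$ with $\PP(|X|\geq y)\leq\exp(-\overset{\star}{C}y)$ for every $X$ in the second Wiener chaos satisfying $\mathbb{E}[X^2]=1$ and every $y\geq 2$. The analytic engine is hypercontractivity of the Ornstein--Uhlenbeck semigroup (Nelson's theorem): as the semigroup $T_t$ acts on the $q$-th Wiener chaos as multiplication by $e^{-qt}$ and maps $L^2(\Omega)$ into $L^p(\Omega)$ boundedly as soon as $e^{2t}\geq p-1$, any element $X$ of the $q$-th chaos obeys $\|X\|_{L^p(\Omega)}\leq(p-1)^{q/2}\|X\|_{L^2(\Omega)}$ for all $p\geq 2$. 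Taking $q=2$ and $\|X\|_{L^2(\Omega)}=1$ gives the moment bound $\mathbb{E}[|X|^p]\leq(p-1)^p\leq p^p$ for every $p\geq 2$.

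From here the argument is routine. Markov's inequality applied to $|X|^p$ yields
\[
\PP(|X|\geq y)\leq y^{-p}\,\mathbb{E}[|X|^p]\leq\left(\frac{p}{y}\right)^p
\]
for every $p\geq 2$. When $y\geq 2e$ one chooses $p=y/e\geq 2$, which gives $\PP(|X|\geq y)\leq e^{-y/e}$. On the remaining range $2\leq y<2e$ one uses the crude estimate $\PP(|X|\geq y)\leq\PP(|X|\geq 2)\leq\tfrac14$, coming from Chebyshev's inequality; then choosing $\overset{\star}{C}$ small enough --- for instance $\overset{\star}{C}=\min\{1/e,\,(\log 4)/(2e)\}$ --- ensures $\exp(-\overset{\star}{C}y)\geq\tfrac14$ throughout $[2,2e)$ while keeping the bound valid on $[2e,+\infty)$, and the two regimes combine to the asserted inequality.

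The one genuinely substantial ingredient is the hypercontractive moment estimate $\|X\|_{L^p}\leq(p-1)^{q/2}\|X\|_{L^2}$; this is exactly where being in the \emph{second} chaos matters, since $q=2$ produces a linear exponent rather than the sub-Gaussian one obtained for $q=1$, and it is precisely the content of the results of Janson that we quote. Alternatively, one may bypass abstract hypercontractivity: diagonalizing the Hilbert--Schmidt kernel attached to $X$ writes $X=\sum_k\lambda_k(\xi_k^2-1)$ for i.i.d.\ standard Gaussian $\xi_k$, whence the Laplace transform $\mathbb{E}[e^{\theta X}]=\prod_k e^{-\theta\lambda_k}(1-2\theta\lambda_k)^{-1/2}$ on a suitable $\theta$-interval, and a Bernstein-type bound $\PP(|X|\geq t)\leq 2\exp(-c\min\{t^2/\|X\|_{L^2}^2,\,t/\max_k|\lambda_k|\})$; since $\|X\|_{L^2}^2=2\sum_k\lambda_k^2\geq 2\max_k\lambda_k^2$, the linear regime controls the tail once $t\geq 2\|X\|_{L^2}$, again delivering $\exp(-\overset{\star}{C}\,t/\|X\|_{L^2})$. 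I expect no real difficulty beyond correctly invoking hypercontractivity; the passage from moments to tails and the bookkeeping of constants are elementary.
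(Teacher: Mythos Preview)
Your argument is correct. Note, however, that the paper does not supply its own proof of this statement: it is simply recalled from Janson's book \cite[Theorems 6.7 and 6.12]{janson97} as a known input. What you have written is essentially the standard hypercontractivity proof that underlies Janson's result --- the moment bound $\|X\|_{L^p}\leq (p-1)\|X\|_{L^2}$ for second-chaos variables, followed by Markov and optimization in $p$ --- so your approach agrees with the source the paper cites rather than diverging from it. The alternative diagonalization/Bernstein route you sketch is also valid and well known; either yields the universal exponential tail with the same dependence on $y$.
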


\begin{Thm}\label{thm:bound2}
If $X$ is a random variable belonging to the second order Wiener chaos, there exist $a,b,y_0>0$ such that, for all $y\geq y_0$,
\[ \exp(-a y) \leq \mathbb{P}(|X| \geq y ) \leq \exp(-b y). \] 
\end{Thm}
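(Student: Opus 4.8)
The plan is to prove Theorem \ref{thm:bound2} by combining Theorem \ref{thm:bound} for the upper bound with a classical small-ball / anti-concentration argument for the lower bound. For the upper tail, one simply applies Theorem \ref{thm:bound} to the normalized variable $X/\|X\|_{L^2(\Omega)}$: setting $y_1 = y/\|X\|_{L^2(\Omega)}$, as soon as $y \geq 2\|X\|_{L^2(\Omega)}$ we get $\mathbb{P}(|X| \geq y) = \mathbb{P}(|X| \geq y_1 \|X\|_{L^2(\Omega)}) \leq \exp(-\overset{\star}{C} y_1) = \exp(-b y)$ with $b = \overset{\star}{C}/\|X\|_{L^2(\Omega)}>0$. (One may also invoke the hypercontractivity estimate for all moments, $\|X\|_{L^p(\Omega)} \leq (p-1)\|X\|_{L^2(\Omega)}$, which is the standard route to Theorem \ref{thm:bound}; either way the upper bound is immediate.)

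For the lower bound, the idea is that a nonzero second-chaos variable cannot concentrate too fast. First I would reduce to the case $\|X\|_{L^2(\Omega)} = 1$ by homogeneity, the general constants being obtained by rescaling $y$. Since $X$ lies in the second Wiener chaos, it admits a spectral representation $X = \sum_{k \geq 1} \lambda_k (\eta_k^2 - 1)$ where $(\eta_k)$ are i.i.d. $\mathcal{N}(0,1)$ and $\sum_k \lambda_k^2 < \infty$ (this is \cite[Theorem 6.1]{janson97} or the standard diagonalization of the kernel). The normalization gives $2\sum_k \lambda_k^2 = 1$, and in particular not all $\lambda_k$ vanish; say $\lambda_1 \neq 0$. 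The moment generating function $\mathbb{E} e^{\theta X} = \prod_k (1-2\theta\lambda_k)^{-1/2} e^{-\theta\lambda_k}$ is finite for $\theta$ in a neighbourhood of $0$, and a Legendre-transform (Cram\'er) computation shows $\limsup_{y \to +\infty} \tfrac{1}{y}\log \mathbb{P}(|X| \geq y) \leq -a_0 < 0$ for an explicit $a_0$ depending on the $\lambda_k$; more importantly, the lower bound $\mathbb{P}(|X| \geq y) \geq \exp(-a y)$ for large $y$ follows from a change-of-measure (tilting) argument: tilt by $e^{\theta X}/\mathbb{E} e^{\theta X}$ with $\theta$ chosen so that the tilted mean is $\approx y$, and use that under the tilted measure $X$ still has positive variance, so $\mathbb{P}(|X| \geq y)$ picks up only a polynomial loss relative to $e^{-a y}$, which is absorbed by slightly enlarging $a$. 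Equivalently, and perhaps more elementarily for the write-up, one can argue directly: if $|\lambda_1|>0$, then on the event $\{\eta_1^2 - 1 \geq 2y/|\lambda_1|\}$ (say with $\lambda_1>0$; otherwise reflect) together with $\{\sum_{k\geq 2}\lambda_k(\eta_k^2-1) \geq -y\}$ — the latter an event of probability bounded below by a fixed constant $c_0>0$ by independence and Chebyshev — we have $X \geq y$; and $\mathbb{P}(\eta_1^2 \geq 1 + 2y/\lambda_1) \geq \exp(-a y)$ for $y$ large by the Gaussian tail. Hence $\mathbb{P}(|X| \geq y) \geq c_0 \exp(-a y) \geq \exp(-a' y)$ for $y \geq y_0$.

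The main obstacle is organizing the lower bound cleanly: one needs the spectral decomposition of second-chaos variables and a genuinely nonzero coefficient, which is where $X \neq 0$ enters (the statement is vacuous or needs the convention $\mathbb{P} = 0$ if $X = 0$, so implicitly $X$ is a nonzero element of the chaos). The Gaussian-tail route sketched above avoids the full large-deviations machinery but still requires care that the "good" event $\{\sum_{k \geq 2}\lambda_k(\eta_k^2-1) \geq -y\}$ has probability bounded away from $0$ uniformly — which is clear since for $y \geq 1$ it contains $\{\sum_{k\geq 2}\lambda_k(\eta_k^2-1) \geq -1\}$, an event of fixed positive probability by the central limit heuristic / a second-moment bound. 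Everything else is bookkeeping with the constants $a,b,y_0$, which depend on $X$ only through $\|X\|_{L^2(\Omega)}$ and the largest spectral coefficient; since in our application $X$ ranges over the random variables $\sideset{_j^\ell}{}\sum$ whose law depends only on $j-n$, these constants can be taken uniform, which is the point of invoking the theorem in this form.
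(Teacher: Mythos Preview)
The paper does not prove this statement: it is simply recalled from Janson's book (the sentence preceding Theorems \ref{thm:bound} and \ref{thm:bound2} reads ``The key results \ldots\ are \cite[Theorem 6.7 and Theorem 6.12]{janson97} that we recall here''), and the remark following Theorem \ref{thm:bound2} only comments that the constants $a,b$ depend on the law of $X$. So there is no ``paper's own proof'' to compare against; your proposal is a self-contained argument for a result the authors take as given.

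That said, your argument is essentially correct and is in fact close to how such statements are established. The upper bound is immediate from Theorem \ref{thm:bound} as you note. For the lower bound, the spectral decomposition $X=\sum_k \lambda_k(\eta_k^2-1)$ with i.i.d.\ standard Gaussians $(\eta_k)$ is the right tool, and your ``elementary'' route---forcing $|X|\geq y$ via $\{\lambda_1(\eta_1^2-1)\geq 2y\}$ intersected with $\{\sum_{k\geq 2}\lambda_k(\eta_k^2-1)\geq -y\}$---works cleanly. Two small points worth tightening in a formal write-up: (i) the chi-square tail gives $\mathbb{P}(\eta_1^2\geq 1+2y/\lambda_1)\asymp y^{-1/2}e^{-y/\lambda_1}$, so one indeed gets $\geq \exp(-a y)$ for any $a>1/\lambda_1$ and $y$ large; (ii) the positivity of $\mathbb{P}\big(\sum_{k\geq 2}\lambda_k(\eta_k^2-1)\geq -1\big)$ is guaranteed simply because this variable has mean zero (so it cannot be a.s.\ below $-1$), which is slightly cleaner than invoking Chebyshev. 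Your closing remark about uniformity in $j-n$ is a useful observation for the application but is not part of the theorem itself.
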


\begin{Rmk}
As stated in \cite{janson97}, the constants $a,b$ in Theorem \ref{thm:bound2} are not universal and depend on the law of $X$. Note that $b$ can be recovered from Theorem \ref{thm:bound} and thus is universal on the unit sphere in $L^2(\Omega)$.
\end{Rmk}

\begin{Lemma}
There exists a deterministic constant $C>0$ such that, for all $n \in \N$, $\lambda \in \Lambda_n$, $j \geq n$, $\ell \in \{0,1,2\}$ and $(k^{(1)},K^{(1)},k^{(2)},K^{(2)}) \in S_j^\ell(\lambda)$, we have
\[ \left \| \sideset{_j^\ell}{_{k^{(1)},K^{(1)}} ^{k^{(2)},K^{(2)}}}\sum \right \|_{L^2(\Omega)} \leq C 2^{\frac{-j-n}{2}}.\]
\end{Lemma}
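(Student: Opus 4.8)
We must bound the $L^2(\Omega)$-norm of the sums $\sideset{_j^\ell}{_{k^{(1)},K^{(1)}}^{k^{(2)},K^{(2)}}}\sum$ by a constant times $2^{(-j-n)/2}$, uniformly over all the data. I will treat the three cases $\ell\in\{0,1,2\}$ in parallel, since they are entirely analogous; write the generic sum as $S=\sum_{k^{(1)}\le k_1\le K^{(1)}}\sum_{k^{(2)}\le k_2\le K^{(2)}}\varepsilon^{k_1,k_2}I^{k_1,k_2}$, where the index ranges of $j_1,j_2$ differ by at most one.

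**The plan.** The first step is to expand $\|S\|_{L^2(\Omega)}^2$ as a double sum $\sum\sum\sum\sum \E[\varepsilon^{k_1,k_2}\varepsilon^{k_1',k_2'}]\,I^{k_1,k_2}\,I^{k_1',k_2'}$. Using the product formula for second-chaos random variables (as recorded in the Remark after Theorem~\ref{thm:decompo}: the $\varepsilon$'s are products, or squares-minus-one, of an i.i.d.\ $\mathcal N(0,1)$ family indexed by $(j,k)$), the expectation $\E[\varepsilon^{k_1,k_2}\varepsilon^{k_1',k_2'}]$ is nonzero only when the pair of indices $\{(j_1,k_1),(j_2,k_2)\}$ meets $\{(j_1,k_1'),(j_2,k_2')\}$, and in that case it is bounded by an absolute constant (it equals $1$, or $2$, or $0$). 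So $\|S\|_{L^2(\Omega)}^2 \le C\bigl(\sum_{k_1,k_2}|I^{k_1,k_2}|\bigr)\cdot\max_{k_1,k_2}|I^{k_1,k_2}|$ is too crude; better, it is controlled by the sum over the ``diagonal-type'' pairs, which after the Cauchy–Schwarz-type collapse reduces to something like $\sum_{k_1}\sum_{k_2}|I^{k_1,k_2}|^2$ plus a cross term $\sum_{k_1}\bigl|\sum_{k_2}\cdots\bigr|$.

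**The estimate on $I$.** The essential input is the last display of Lemma~\ref{Lemma:intR}: for $|j_1-j_2|\le 1$, $|I_{j_1,j_2}^{k_1,k_2}|\le C_L\,2^{-j}(3+|k_1-2^{\pm1}k_2|)^{-L}$ with $L$ as large as we like (where the shift $2^{\pm1}$ depends on which of $\ell=0,1,2$ we are in, matching \eqref{eqn:intR1}--\eqref{eqn:intR3}). Therefore, for fixed $k_2$, summing $|I^{k_1,k_2}|$ over all admissible $k_1$ gives $O(2^{-j})$; the number of admissible $k_2$, since $K^{(2)}/2^{j}$ and $k^{(2)}/2^{j}$ both lie in the dyadic interval $\lambda$ of length $2^{-n}$, is at most $2^{j-n}+1 \le C\,2^{j-n}$. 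Hence $\sum_{k_1,k_2}|I^{k_1,k_2}|\le C\,2^{-j}\cdot 2^{j-n}=C\,2^{-n}$, and $\max|I^{k_1,k_2}|\le C\,2^{-j}$. Feeding these into the second-moment bound, $\|S\|_{L^2(\Omega)}^2\le C\cdot 2^{-n}\cdot 2^{-j}=C\,2^{-j-n}$, which is exactly $\bigl(C\,2^{(-j-n)/2}\bigr)^2$. The cross term is handled the same way: it is at most $\sum_{k_1}\bigl(\sum_{k_2}|I^{k_1,k_2}|\bigr)^2\le \max_{k_1,k_2}|I^{k_1,k_2}|\cdot\#\{k_2\}\cdot\sum_{k_1,k_2}|I^{k_1,k_2}|\le C\,2^{-j}\cdot 2^{j-n}\cdot 2^{-n}$, which is too big by a factor $2^{j-n}$; so one must instead keep one power of the decay, writing $\sum_{k_1}\bigl(\sum_{k_2}|I^{k_1,k_2}|\bigr)^2 \le \bigl(\sup_{k_1}\sum_{k_2}|I^{k_1,k_2}|\bigr)\sum_{k_1,k_2}|I^{k_1,k_2}|$ and noting $\sup_{k_1}\sum_{k_2}|I^{k_1,k_2}|\le C\,2^{-j}$ (the $k_2$-sum of $(3+|k_1-2^{\pm1}k_2|)^{-L}$ converges) — again giving $C\,2^{-j-n}$.

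**The main obstacle.** The delicate point is bookkeeping the covariance structure: one must check carefully, using \eqref{eqn:randomvar1}--\eqref{eqn:randomvar2}, exactly which quadruples $(k_1,k_2,k_1',k_2')$ contribute to $\E[\varepsilon\varepsilon']$ and verify that the contributing set is ``sparse'' enough — essentially a one-parameter family, forcing at least one coincidence among the four Gaussian indices — so that the double sum really collapses to $\sum_{k_1,k_2}|I^{k_1,k_2}|^2$ plus the controllable cross term, rather than the full product of two sums. Once that combinatorial reduction is in place, the decay estimate from Lemma~\ref{Lemma:intR} together with the cardinality bound $\#\Z_j[t,s]\le 2^{j-n}+1$ finishes the proof. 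I would present the $\ell=0$ case in detail (where $\E[\varepsilon_{j,j}^{k_1,k_2}\varepsilon_{j,j}^{k_1',k_2'}]$ involves also the subtracted constant when $k_1=k_2$) and remark that $\ell=1,2$ are identical after relabelling.
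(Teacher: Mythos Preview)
Your overall strategy---expand the second moment, use the Gaussian product structure \eqref{eqn:randomvar1}--\eqref{eqn:randomvar2} of the $\varepsilon$'s to identify which covariances survive, and then feed in the decay $|I_{j,j}^{k_1,k_2}|\le C_L\,2^{-j}(3+|k_1-k_2|)^{-L}$ from Lemma~\ref{Lemma:intR} together with the cardinality bound $\#\{k_1:k^{(1)}\le k_1\le K^{(1)}\}\le 2^{j-n}+1$---is sound and leads to the right estimate. But your identification of the ``cross term'' is imprecise: for $\ell=0$, Isserlis' formula forces \emph{two} coincidences, so the only off-diagonal contribution to $\|S\|_{L^2}^2$ comes from pairs with $(k_1',k_2')=(k_2,k_1)$, $k_1\ne k_2$, i.e.\ $\sum_{k_1\ne k_2} I_{j,j}^{k_1,k_2}I_{j,j}^{k_2,k_1}$, not the quantity $\sum_{k_1}\bigl(\sum_{k_2}|I^{k_1,k_2}|\bigr)^2$ you wrote (which would correspond to a single coincidence). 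Your looser surrogate still happens to be $O(2^{-j-n})$ since $\sup_{k_1}\sum_{k_2}|I^{k_1,k_2}|\le C\,2^{-j}$, so the argument can be completed, but the ``bookkeeping'' you flag as the main obstacle really does need to be carried out rather than gestured at.

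The paper takes a cleaner route that eliminates cross terms entirely. Following an idea from \cite[Lemma~2.21]{ayacheesmili}, it first splits the sum into three pieces according to whether $k_1<k_2$, $k_1=k_2$, or $k_1>k_2$, and bounds $\|S\|_{L^2}$ by the triangle inequality. Within each piece the random variables $\varepsilon_{j,j}^{k_1,k_2}$ are pairwise \emph{uncorrelated} (the swap $(k_1,k_2)\leftrightarrow(k_2,k_1)$ cannot occur inside one piece), so each $\|\cdot\|_{L^2}^2$ is a pure diagonal sum $\sum_{k_1\mathcal{R}k_2}\E\bigl[(\varepsilon_{j,j}^{k_1,k_2})^2\bigr](I_{j,j}^{k_1,k_2})^2$. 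One then bounds this by $C\sum_{k^{(1)}\le k_1\le K^{(1)}}\sum_{k_2\in\Z}2^{-2j}(3+|k_1-k_2|)^{-8}\le C\,2^{-2j}\cdot 2^{j-n}$. This trick buys orthogonality for free; your direct expansion reaches the same destination but with more work to write down correctly.
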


\begin{proof}
Following an idea from \cite[Lemma 2.21]{ayacheesmili}, we write
\[ \left \| \sideset{_j^\ell}{_{k^{(1)},K^{(1)}} ^{k^{(2)},K^{(2)}}}\sum \right \|_{L^2(\Omega)} \leq \sum_{\mathcal{R} \in \{ <,>,=\} }\left \| \sideset{_j^\ell}{_{k^{(1)},K^{(1)}} ^{k^{(2)},K^{(2)}}}\sum_\mathcal{R} \right \|_{L^2(\Omega)} \]
where 
\[ \sideset{_j^\ell}{_{k^{(1)},K^{(1)}} ^{k^{(2)},K^{(2)}}}\sum_\mathcal{R}\]
is the subsum of \eqref{eqn:rvint1}, \eqref{eqn:rvint2} or \eqref{eqn:rvint3} in which $k_1 \mathcal{R}k_2$. By doing so, we make sure that two random variables $\varepsilon_{j_1,j_2}^{k_1,k_2}$ and $\varepsilon_{j_1',j_2'}^{k_1',k_2'}$ appearing in this subsum are uncorrelated except when $(k_1,k_2)=(k_1',k_2')$. Then from Lemma \ref{Lemma:intR}, we have for $\ell=0$ (the argument being the same for $\ell=1$ or $\ell=2$), for all $\mathcal{R} \in \{<,>,=\}$,
\begin{align*}
\left \| \sideset{_j^\ell}{_{k^{(1)},K^{(1)}} ^{k^{(2)},K^{(2)}}}\sum_\mathcal{R} \right \|_{L^2(\Omega)}^2 & = \sum_{k^{(1)} \leq k_1 \leq K^{(1)}} \sum_{k^{(2)} \leq k_2 \leq K^{(2)}, k_1 \mathcal{R}k_2} \mathbb{E}[(\varepsilon_{j,j}^{k_1,k_2})^2] (I_{j,j}^{k_1,k_2} )^2 \\
& \leq  \sum_{k^{(1)} \leq k_1 \leq K^{(1)}} \sum_{k_2 \in \Z} \frac{2^{-2j}}{(3+|k_1-k_2|)^{8}}. 
\end{align*}
Since $\# \{k_1 \in \Z \, : \, k^{(1)} \leq k_1 \leq K^{(1)} \} \leq 2^{j-n}$, we conclude that
\begin{equation}\label{eqn:boundl2}
\left \| \sideset{_j^\ell}{_{k^{(1)},K^{(1)}} ^{k^{(2)},K^{(2)}}}\sum \right \|_{L^2(\Omega)} \leq C 2^{\frac{-j-n}{2}}.
\end{equation}
\end{proof}

\begin{Lemma}\label{lemma:borelcantellitotal}
There exist an event $\widetilde{\Omega}$ of probability $1$ and a positive random variable $C_2$ with finite moment of any order such that, on $\widetilde{\Omega}$
\begin{equation}\label{eqn:borelcantellitotal}
\forall n \in \N, ~ \forall \, \lambda \subseteq  [0,1],~\lambda \in \Lambda_n,~ \forall j \geq n, ~\Xi_j(\lambda) \leq C_2 \, (j-n+1)n.
\end{equation}
\end{Lemma}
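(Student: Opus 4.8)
The plan is to use a Borel--Cantelli argument combined with the tail estimate for second-order Wiener chaos (Theorem~\ref{thm:bound}). Fix $n\in\N$, a dyadic interval $\lambda\in\Lambda_n$ with $\lambda\subseteq[0,1]$, a scale $j\geq n$, an index $\ell\in\{0,1,2\}$, and a quadruple $(k^{(1)},K^{(1)},k^{(2)},K^{(2)})\in S_j^\ell(\lambda)$. The random variable
\[
\sideset{_j^\ell}{_{k^{(1)},K^{(1)}}^{k^{(2)},K^{(2)}}}\sum \Big/ \left\| \sideset{_j^\ell}{_{k^{(1)},K^{(1)}}^{k^{(2)},K^{(2)}}}\sum \right\|_{L^2(\Omega)}
\]
belongs to the second Wiener chaos and has unit $L^2(\Omega)$-norm, so Theorem~\ref{thm:bound} gives, for $y\geq 2$, a bound $\exp(-\overset{\star}{C}y)$ on the probability that its absolute value exceeds $y$. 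First I would estimate the cardinality of $S_j^\ell(\lambda)$: since $\lambda$ has length $2^{-n}$, each of the four indices ranges over an interval of integers of length at most $2^{j-n+1}$, so $\# S_j^\ell(\lambda)\leq C\,2^{4(j-n)}$ for a deterministic constant. Hence, by a union bound over the (at most three) values of $\ell$ and over all quadruples,
\[
\PP\big(\Xi_j(\lambda)\geq y\big)\leq C\,2^{4(j-n)}\exp(-\overset{\star}{C}y).
\]

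Next I would choose the threshold $y=y_{j,n}:= A\,(j-n+1)\,n$ for a sufficiently large deterministic constant $A$ (to be fixed so that $y_{j,n}\geq 2$ always and so the series below converges). With this choice, $2^{4(j-n)}\exp(-\overset{\star}{C}y_{j,n})\leq \exp\big(4(j-n)\log 2 - \overset{\star}{C}A(j-n+1)n\big)$, which for $A$ large enough is bounded by, say, $\exp(-(j-n+1)-n)$, giving a doubly geometric decay in both $j-n$ and $n$. Then I would sum over all the relevant events: there are at most $2^n$ dyadic intervals $\lambda\in\Lambda_n$ inside $[0,1]$, so
\[
\sum_{n\in\N}\ \sum_{\substack{\lambda\in\Lambda_n\\ \lambda\subseteq[0,1]}}\ \sum_{j\geq n}\PP\big(\Xi_j(\lambda)\geq y_{j,n}\big)\leq \sum_{n\in\N} 2^n\sum_{j\geq n} C\exp\big(-(j-n+1)-n\big)<\infty,
\]
where the factor $2^n$ is absorbed by tightening the constant $A$ so that the exponent dominates $n\log 2$ as well. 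By the Borel--Cantelli lemma, there is an event $\widetilde{\Omega}$ of probability $1$ on which only finitely many of these events occur; enlarging the threshold constant to swallow the finitely many exceptions into a random variable $C_2$ yields
\[
\forall n\in\N,\ \forall\,\lambda\subseteq[0,1],\ \lambda\in\Lambda_n,\ \forall j\geq n,\quad \Xi_j(\lambda)\leq C_2\,(j-n+1)\,n.
\]

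To see that $C_2$ has finite moments of every order, I would note that $C_2$ can be taken of the form $C_2=\sup_{n,\lambda,j}\Xi_j(\lambda)/\big((j-n+1)n\big)$, and its tail $\PP(C_2\geq t)$ is controlled, via the same union bound with $y=t(j-n+1)n$, by $\sum_{n,\lambda,j} C\,2^{4(j-n)}\exp(-\overset{\star}{C}t(j-n+1)n)$, which for $t$ large decays like $\exp(-ct)$; an exponential tail implies finite moments of all orders. The only mild subtlety --- and the main point requiring care --- is the bookkeeping of constants: one must verify that a single deterministic $A$ simultaneously makes $y_{j,n}\geq 2$, makes the triple series summable, and leaves room for the $\exp(-ct)$ tail; this is routine once the cardinality bound $\#S_j^\ell(\lambda)\leq C\,2^{4(j-n)}$ and the count $\#\{\lambda\in\Lambda_n:\lambda\subseteq[0,1]\}\leq 2^n$ are in hand. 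I do not expect any genuine obstacle here; the structure is the standard chaining-free Borel--Cantelli scheme already used for $C_1$ in Lemma~\ref{Lemma:ayacheesmili}, adapted to the coarser indexing by dyadic intervals.
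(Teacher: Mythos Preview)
Your proposal is correct and follows essentially the same route as the paper: apply Theorem~\ref{thm:bound} to each normalized sum, use the cardinality bounds $\#S_j^\ell(\lambda)\leq C\,2^{4(j-n)}$ and $\#\{\lambda\in\Lambda_n:\lambda\subseteq[0,1]\}=2^n$, choose the threshold constant large enough (the paper takes $\theta>4\log 2/\overset{\star}{C}$), and conclude by Borel--Cantelli. You go slightly further than the paper's written proof in spelling out the exponential tail of $C_2$ to justify the finite-moments claim, which is a welcome addition.
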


\begin{proof}
Let us take $\theta > 0$ and consider, for all $n \in \N$ the event
\[ A_n := \left \{ \forall \, \lambda \subseteq  [0,1], \lambda \in \Lambda_n, \, \forall j \geq n, \, \Xi_j(\lambda) \leq \theta (j-n+1)n \right \}.\]
If $A_n^c$ stands for the complementary set of $A_n$ in $\Omega$, we have, of course,
\begin{align*}
\mathbb{P}(A_n^c)= \mathbb{P}(\exists \lambda \subseteq  [0,1], \lambda \in \Lambda_n \, : \, \exists j \geq n \text{ s. t. }  \Xi_j(\lambda) \geq \theta (j-n+1)n).
\end{align*}
But, for all $\lambda \subseteq  [0,1], \lambda \in \Lambda_n$, $j \geq n$, $\ell \in \{0,1,2\}$ and $(k^{(1)},K^{(1)},k^{(2)},K^{(2)}) \in S_j^\ell(\lambda)$ we have, by Theorem \ref{thm:bound}, if $\theta \geq 2$,
\begin{align*}
\mathbb{P} \left(\frac{\left| \sideset{_j^\ell}{_{k^{(1)},K^{(1)}} ^{k^{(2)},K^{(2)}}}\sum \right|}{\left \| \sideset{_j^\ell}{_{k^{(1)},K^{(1)}} ^{k^{(2)},K^{(2)}}}\sum \right \|_{L^2(\Omega)}} \geq \theta (j-n+1)n \right)\leq \exp(-\overset{\star}{C} \theta (j-n+1)n).
\end{align*}
As, for all $j \geq n$, $\#S_j^\ell(\lambda) \leq 2^{4(j-n)}$ and $\# \{\lambda \subseteq  [0,1] \, : \,  \lambda \in \Lambda_n \}=2^n$, we get
\begin{align*}
\mathbb{P}(A_n^c) &\leq C 2^n \sum_{j \geq n} 2^{4(j-n)} \exp(-\overset{\star}{C} \theta (j-n+1)n) \\
& \leq C 2^{n} \exp(-\overset{\star}{C} \theta n) \sum_{j \geq n} 2^{4(j-n)} \exp(-\overset{\star}{C} \theta (j-n))
\end{align*}
for a deterministic constant $C>0$. Therefore, if we take $\theta > 4 \log(2) / \overset{\star}{C}$, the conclusion follows from Borel-Cantelli Lemma.  
\end{proof}

\begin{Lemma}\label{lem:rapid5}
Let $\Omega^*$ and $\widetilde{\Omega}$ be the events of probability $1$ given by Lemmata \ref{Lemma:ayacheesmili} and \ref{lemma:borelcantellitotal} respectively. There exists a positive random variable  $C_3$ with finite moment of any order such that, on $\Omega^* \cap \widetilde{\Omega}$, for all $t,s \in (0,1)$ the random variable
\begin{equation}\label{eqn:lastrv}
\left| \sum_{j_1 \geq n} \sum_{j_2 > n} \sum_{(k_1,k_2) \in \Z^2} 2^{j_1(1-H_1)} 2^{j_2(1-H_2)} \varepsilon_{j_1,j_2}^{k_1,k_2} I_{j_1,j_2}^{k_1,k_2}[t,s] \right|\\
\end{equation}
is bounded from above by
\[
C_{3} |t-s|^{H_1+H_2-1} \log |t-s|^{-1}. 
\]
\end{Lemma}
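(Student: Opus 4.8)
The plan is to handle the remaining ``high-frequency'' part of \eqref{eqn:maindiff}, namely the sum over $j_1 \geq n$ and $j_2 > n$, by reducing it to the random variables $\Xi_j(\lambda)$ controlled in Lemma \ref{lemma:borelcantellitotal}. First I would fix $s,t \in (0,1)$ with $n$ as in \eqref{eqn:defofn}, and invoke Lemma \ref{Lemma:intR}: since $I_{j_1,j_2}^{k_1,k_2}[t,s]$ vanishes whenever $|j_1-j_2|>1$, the triple sum reduces to the three diagonal pieces $j_2 \in \{j_1-1,j_1,j_1+1\}$, i.e. to sums of the form \eqref{eqn:explirv} with $\ell \in \{0,1,2\}$ scaled by $2^{j_1(1-H_1)+j_2(1-H_2)} \approx 2^{j(2-H_1-H_2)}$ where $j = \min\{j_1,j_2\} \geq n$. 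Next, for each fixed $j \geq n$, I would use the observation recorded just after \eqref{eqn:explirv}: because $|t-s|\le 2^{-n}$ we have $s \in 3\lambda_n(t)$, so the inner double sum over $k_1 \in \Z_j[t,s]$, $k_2 \in \Z_\ell[t,s]$ splits as a sum of at most two of the block-sums $\sideset{_j^\ell}{}\sum$ indexed by quadruples in $S_j^\ell(\lambda)$ and $S_j^\ell(\lambda')$, with $\lambda = \lambda_n(t)$ and $\lambda'$ its neighbour containing $s$. Each such block-sum is bounded in absolute value by $\Xi_j(\lambda)$ times its $L^2(\Omega)$-norm, and the latter is $\leq C 2^{(-j-n)/2}$ by \eqref{eqn:boundl2}.

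Putting these together, on $\Omega^* \cap \widetilde\Omega$ the quantity \eqref{eqn:lastrv} is bounded by
\[
C \sum_{j \geq n} 2^{j(2-H_1-H_2)} \, \Xi_j(\lambda) \, 2^{\frac{-j-n}{2}} \cdot \#\{\text{blocks}\} \leq C_2 \sum_{j \geq n} 2^{j(2-H_1-H_2)} 2^{\frac{-j-n}{2}} (j-n+1) n,
\]
using \eqref{eqn:borelcantellitotal}. Writing $j = n + m$ with $m \geq 0$, the general term becomes $C_2 \, n \, 2^{n(\frac{3}{2}-H_1-H_2)} \, (m+1) \, 2^{m(\frac{3}{2}-H_1-H_2)}$, and since $H_1+H_2 > \frac{3}{2}$ the exponent $\frac{3}{2}-H_1-H_2$ is strictly negative, so $\sum_{m \geq 0}(m+1) 2^{m(\frac{3}{2}-H_1-H_2)}$ converges to a finite constant. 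Hence \eqref{eqn:lastrv} $\leq C_3 \, n \, 2^{-n(H_1+H_2-\frac{3}{2})}$. Finally, from \eqref{eqn:defofn} we have $2^{-n} \leq 2|t-s|$ and $n \leq \log_2 |t-s|^{-1} + 1 \leq C \log|t-s|^{-1}$ (for $|t-s|$ small; the range where $|t-s|$ is bounded away from $0$ contributes a bounded term absorbed into the constant), so $n \, 2^{-n(H_1+H_2-\frac{3}{2})} \leq C |t-s|^{H_1+H_2-\frac{3}{2}} \log|t-s|^{-1}$. Wait — I must be careful: the power of $|t-s|$ coming out is $H_1+H_2-\tfrac32$, not $H_1+H_2-1$; so in fact one should keep the full factor $2^{j(2-H_1-H_2)} 2^{(-j-n)/2} = 2^{j(\frac32 - H_1-H_2)}2^{-n/2}$ and sum, giving $2^{n(\frac32-H_1-H_2)}2^{-n/2}\cdot(\cdots)$? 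No: recombining, $2^{-n/2}\sum_{m\ge0}(m+1)n\,2^{(n+m)(\frac32-H_1-H_2)} = n\,2^{-n/2}2^{n(\frac32-H_1-H_2)}\sum_{m\ge0}(m+1)2^{m(\frac32-H_1-H_2)} = C\,n\,2^{n(1-H_1-H_2)}$, and $2^{n(1-H_1-H_2)} = (2^{-n})^{H_1+H_2-1} \leq C|t-s|^{H_1+H_2-1}$. So the correct bookkeeping does yield the claimed exponent $H_1+H_2-1$ together with the single logarithmic factor from $n$, completing the proof.

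The main obstacle I anticipate is the careful combinatorial reduction of the raw double sum \eqref{eqn:explirv} into a controlled number (at most two per scale, per $\ell$) of the block-sums $\sideset{_j^\ell}{}\sum$ with indices genuinely in $S_j^\ell(\lambda)$ for $\lambda \in 3\lambda_n(t)$ restricted to $[0,1]$ — one has to check that the relevant $k_1,k_2$ indices, after the substitution dictated by \eqref{eqn:intR1}--\eqref{eqn:intR3} (where $k_1$ is paired with $2k_2$ or $k_1$ with $k_2/2$ etc.), really do fall into the dyadic blocks defining $S_j^\ell$, and that the off-diagonal ``tail'' contributions (indices $k_1,k_2$ not in $\Z_j[t,s]$, handled via the decomposition \eqref{eqn:split} analogous to Lemma \ref{lem:rapid3} and Lemma \ref{lem:rapid4}) are of the same or smaller order. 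A secondary point requiring care is that the constant $C_3$ must have finite moments of every order: this follows since $C_3$ is a fixed deterministic multiple of $C_2$ (and possibly $C_1$), both of which have finite moments of all orders by Lemmata \ref{Lemma:ayacheesmili} and \ref{lemma:borelcantellitotal}.
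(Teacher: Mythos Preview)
Your argument contains a genuine gap at the very first step. You invoke Lemma \ref{Lemma:intR} to claim that $I_{j_1,j_2}^{k_1,k_2}[t,s]$ vanishes whenever $|j_1-j_2|>1$, but this is false: Lemma \ref{Lemma:intR} concerns only the integral over all of $\R$, i.e.\ $I_{j_1,j_2}^{k_1,k_2} = \int_{\R}\psi_{H_1}(2^{j_1}x-k_1)\psi_{H_2}(2^{j_2}x-k_2)\,dx$, and its vanishing for $|j_1-j_2|>1$ is a Fourier-support (orthogonality) statement that has no reason to survive truncation to a finite interval $[s,t]$. Consequently the triple sum in \eqref{eqn:lastrv} does \emph{not} immediately collapse to a near-diagonal in $(j_1,j_2)$, and the bulk of the work you skipped is precisely controlling the contributions with $|j_1-j_2|>1$.

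The paper's proof proceeds in the order you reversed. One first splits on the location of $k_2$ relative to $[s,t]$ via Definition \ref{def:partitionofZ}; for $k_2\in\Z_{j_2}^{<}(t,s)\cup\Z_{j_2}^{>}(t,s)$ one uses the pointwise bound \eqref{eqn:moustique} on $\varepsilon_{j_1,j_2}^{k_1,k_2}$ together with Lemma \ref{lem:rapid2}, summing over all $j_1\le j_2$ (diagonal and off-diagonal alike). For $k_2\in\Z_{j_2}[t,s]$ one applies the decomposition \eqref{eqn:split}, $I_{j_1,j_2}^{k_1,k_2}[t,s]=I_{j_1,j_2}^{k_1,k_2}-\int_{-\infty}^{s}-\int_{t}^{+\infty}$; the two tail integrals are again handled for all $(j_1,j_2)$ via \eqref{eqn:moustique} and Lemma \ref{lem:rapid3}. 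Only the remaining piece $\sum \varepsilon\, I_{j_1,j_2}^{k_1,k_2}$ is near-diagonal by Lemma \ref{Lemma:intR}, and one must still split off $k_1\in\Z_j^{<}\cup\Z_j^{>}$ (bounded as in \eqref{eqn:boundfor26} using \eqref{eqn:moustique} and the decay of $I_{j,\ell}^{k_1,k_2}$) before the residual sum \eqref{eqn:r>>3} finally matches the block-sums controlled by $\Xi_j(\lambda)$ and \eqref{eqn:borelcantellitotal}. Your ``obstacle'' paragraph gestures at some of this, but treats it as a secondary bookkeeping issue about $k$-tails rather than the structural point that the off-diagonal $(j_1,j_2)$ contributions never vanish and must be bounded separately, via $C_1$ from Lemma \ref{Lemma:ayacheesmili}, before $\Xi_j$ enters at all. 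Your final summation and exponent computation (once corrected) is the right endgame, and your remark that $C_3$ is a deterministic multiple of $\max\{C_1,C_2\}$ is exactly how the moment condition is obtained.
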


\begin{proof}
We start by splitting the sums in \eqref{eqn:lastrv} in two parts:
\begin{align}\label{somme43}
\nonumber & \sum_{j_1 \geq n} \sum_{j_2 \geq j_1} \sum_{(k_1,k_2) \in \Z^2} 2^{j_1(1-H_1)} 2^{j_2(1-H_2)} \varepsilon_{j_1,j_2}^{k_1,k_2} I_{j_1,j_2}^{k_1,k_2}[t,s]\text{ and}   \\
&  \sum_{j_2 \geq n} \sum_{j_1 > j_2} \sum_{(k_1,k_2) \in \Z^2} 2^{j_1(1-H_1)} 2^{j_2(1-H_2)} \varepsilon_{j_1,j_2}^{k_1,k_2} I_{j_1,j_2}^{k_1,k_2}[t,s].
\end{align}
We only focus on the first sums, as the argument is symmetric in $j_1$ and $j_2$. As in Lemma \ref{lem:rapid4} we write
\begin{align}
\nonumber &\sum_{j_1 \geq n} \sum_{j_2 \geq j_1}  \sum_{(k_1,k_2) \in \Z^2} 2^{j_1(1-H_1)} 2^{j_2(1-H_2)} \varepsilon_{j_1,j_2}^{k_1,k_2} I_{j_1,j_2}^{k_1,k_2}[t,s] \\
\quad &= \sum_{j_1 \geq n} \sum_{j_2 \geq j_1} \sum_{k_1 \in \Z} \sum_{k_2 \in \Z_{j_2}^<(t,s)}2^{j_1(1-H_1)} 2^{j_2(1-H_2)} \varepsilon_{j_1,j_2}^{k_1,k_2} I_{j_1,j_2}^{k_1,k_2}[t,s]\label{eqn:lemma3191} \\ 
& + \sum_{j_1 \geq n} \sum_{j_2 \geq j_1} \sum_{k_1 \in \Z} \sum_{k_2 \in \Z_{j_2}^>(t,s)}2^{j_1(1-H_1)} 2^{j_2(1-H_2)} \varepsilon_{j_1,j_2}^{k_1,k_2} I_{j_1,j_2}^{k_1,k_2}[t,s] \label{eqn:lemma3192}\\ 
&  + \sum_{j_1 \geq n} \sum_{j_2 \geq j_1} \sum_{k_1 \in \Z} \sum_{k_2 \in \Z_{j_2}[t,s]} 2^{j_1(1-H_1)} 2^{j_2(1-H_2)} \varepsilon_{j_1,j_2}^{k_1,k_2} I_{j_1,j_2}^{k_1,k_2}[t,s]. \label{eqn:lemma3193}
\end{align}

To bound \eqref{eqn:lemma3191}, we use inequality \eqref{eqn:moustique} and Lemma \ref{lem:rapid2} to get
\begin{align*}
 | \eqref{eqn:lemma3191}| &\leq C C_1 \sum_{j_1 \geq n} \sum_{j_2 \geq j_1} 2^{j_1(1-H_1)} 2^{- j_2 H_2 }\sqrt{\log(3+|j_1|+2^{j_1})} \sqrt{\log(3+|j_2|+2^{j_2})} \\
& \leq C C_1 \sum_{j_1 \geq n} 2^{j_1(1-H_1-H_2)} \sqrt{j_1} \\
& \leq C C_1 2^{n(1-H_1-H_2)} n,
\end{align*}
by applying twice inequality \eqref{eqn:ineserie2}. The sum \eqref{eqn:lemma3192} is bounded in exactly the same way.

To bound \eqref{eqn:lemma3193}, we use once again the equality \eqref{eqn:split}. First we have, by inequality \eqref{eqn:moustique} and Lemma \ref{lem:rapid3},
\begin{align}
\nonumber &\left| \sum_{j_1 \geq n} \sum_{j_2 \geq j_1} \sum_{k_1 \in \Z} \sum_{k_2 \in \Z_{j_2}[t,s]} 2^{j_1(1-H_1)} 2^{j_2(1-H_2)} \varepsilon_{j_1,j_2}^{k_1,k_2} \int_{- \infty}^{s} \psi_{H_1}(2^{j_1} x-k_1) \psi_{H_2}(2^{j_2} x-k_2) \, dx \right| \\
\nonumber \quad & \leq C C_1 \sum_{j_1 \geq n} \sum_{j_2 \geq j_1} 2^{j_1(1-H_1)} 2^{- j_2 H_2 } \sqrt{\log(3+|j_1|+2^{j_2})} \sqrt{\log(3+|j_2|+2^{j_2})} \\
& \leq C C_1 2^{n(1-H_1-H_2)} n. \label{eqn:ineserie4}
\end{align}
We bound
\begin{align*}
\left| \sum_{j_1 \geq n} \sum_{j_2 \geq j_1} \sum_{k_1 \in \Z} \sum_{k_2 \in \Z_{j_2}[t,s]} 2^{j_1(1-H_1)} 2^{j_2(1-H_2)} \varepsilon_{j_1,j_2}^{k_1,k_2} \int_{t}^{+ \infty} \psi_{H_1}(2^{j_1} x-k_1) \psi_{H_2}(2^{j_2} x-k_2) \, dx \right| 
\end{align*}
in the same way.

It only remains us to find an estimate for 
\[ \sum_{j_1 \geq n} \sum_{j_2 \geq j_1} \sum_{k_1 \in \Z} \sum_{k_2 \in \Z_{j_2}[t,s]} 2^{j_1(1-H_1)} 2^{j_2(1-H_2)} \varepsilon_{j_1,j_2}^{k_1,k_2} I_{j_1,j_2}^{k_1,k_2}\]
and thus, recalling Lemma \ref{Lemma:intR}, we reduce the problem to first bound, for $j\geq n$ and $ \ell \in \{j,j+1\}$, the sums
\begin{equation}\label{eqn:r>>6}
\sum_{k_1 \in \Z_j^<(t,s)} \sum_{k_2 \in \Z_{\ell}[t,s]} \varepsilon_{j,\ell}^{k_1,k_2} I_{j,\ell}^{k_1,k_2},
\end{equation}
\begin{equation}\label{eqn:r>>5}
\sum_{k_1 \in \Z_j^>(t,s)} \sum_{k_2 \in \Z_{\ell}[t,s]} \varepsilon_{j,\ell}^{k_1,k_2} I_{j,\ell}^{k_1,k_2},
\end{equation}
\begin{equation} \label{eqn:r>>3}
\sum_{k_1 \in \Z_j [t,s]} \sum_{k_2 \in \Z_{\ell}[t,s]} \varepsilon_{j,\ell}^{k_1,k_2} I_{j,\ell}^{k_1,k_2}
\end{equation}
on $\Omega^* \cap \widetilde{\Omega}$. Let us consider \eqref{eqn:r>>6} with $\ell=j$, the argument for $\ell=j+1$ and \eqref{eqn:r>>5} being similar. Using again Lemmata \ref{Lemma:intR} and \ref{Lemma:ayacheesmili}, we have on $\Omega^* \cap \widetilde{\Omega}$, since for all $k_2 \in \Z_{j}[t,s]$, $|k_2| \leq 2^{j}$, for $j\geq n$,
\begin{align}
\nonumber|\eqref{eqn:r>>6}| &\leq C C_1\sum_{k_1 \in \Z_j^<(t,s)} \sum_{k_2 \in \Z_{j}[t,s]} \frac{2^{-j}}{(3+|k_1-k_2|)^4} \sqrt{\log(3+j+|k_1|)} \sqrt{\log(3+j+|k_2|)} \\
\nonumber & \leq CC_1\sum_{k_1 \in \Z_j^<(t,s)} \sum_{k_2 \in \Z_{j}[t,s]} \frac{2^{-j} \sqrt{j}}{(3+k_2-k_1)^4} \sqrt{\log(3+j+|k_1|)} \\
\nonumber & \leq C C_1\sum_{k_1 \in \Z_j^<(t,s)} \sum_{m=0}^{+ \infty} \frac{2^{-j} \sqrt{j}}{(3+2^j \min\{s,t\}+m-k_1)^4} \sqrt{\log(3+j+|k_1|)} \\
\nonumber &\leq C C_1\sum_{k_1 \in \Z_j^<(t,s)} 2^{-j} \sqrt{j} \int_{0}^{+ \infty} \frac{dy}{(2+2^j \min\{s,t\}+y-k_1)^4} \sqrt{\log(3+j+|k_1|)} \\
\nonumber & \leq C C_1 2^{-j} \sqrt{j} \sum_{k_1 \in \Z_j^<(t,s)} \frac{\sqrt{\log(3+j+|k_1|)}}{(2+2^j \min\{s,t\}-k_1)^{3}} \\
\nonumber & \leq C C_1 2^{-j} \sqrt{j} \sqrt{\log(3+j+2^j \min\{s,t\})} \\
& \leq C C_1 2^{-j} j.\label{eqn:boundfor26}
\end{align}
It follows that
\[ \left |  \sum_{j \geq n} 2^{j(2-H_1-H_2)}\sum_{\ell=j}^{j+1} \sum_{k_1 \in\Z_j^<(t,s)} \sum_{k_2 \in \Z_{\ell}[t,s]} \varepsilon_{j,\ell}^{k_1,k_2} I_{j,\ell}^{k_1,k_2}\right| \leq C C_1 2^{n(1-H_1-H_2)} n\]
and, similarly,
\[ \left |  \sum_{j \geq n} 2^{j(2-H_1-H_2)}\sum_{\ell=j}^{j+1} \sum_{k_1 \in\Z_j^>(t,s)} \sum_{k_2 \in \Z_{\ell}[t,s]} \varepsilon_{j,\ell}^{k_1,k_2} I_{j,\ell}^{k_1,k_2}\right| \leq C C_1 2^{n(1-H_1-H_2)} n.\]
The bound for \eqref{eqn:r>>3} is obtained using \eqref{eqn:borelcantellitotal} and \eqref{eqn:boundl2} which lead to
\begin{align*}
&\left |  \sum_{j \geq n} 2^{j(2-H_1-H_2)}\sum_{\ell=j}^{j+1} \sum_{k_1 \in \Z_j [t,s]} \sum_{k_2 \in \Z_{\ell}[t,s]} \varepsilon_{j,\ell}^{k_1,k_2} I_{j,\ell}^{k_1,k_2}\right| \\ \quad & \leq C C_2 \sum_{j \geq n} 2^{j(\frac{3}{2}-H_1-H_2)} 2^{-\frac{n}{2}} (j-n+1)n \\
& \leq C C_2 2^{n(\frac{3}{2}-H_1-H_2)} 2^{-\frac{n}{2}} n \\
& = C C_2 2^{n(1-H_1-H_2)} n,
\end{align*}
as $\frac{3}{2}<H_1+H_2$.

Putting all of these together we get that \eqref{eqn:lastrv} is bounded from above by
\[  C \max\{C_1,C_2\} |t-s|^{H_1+H_2-1} \log |t-s|^{-1}\]
on $\Omega^* \cap \widetilde{\Omega}$.
\end{proof}

We now prove the main result of this subsection.

\begin{proof}[Proof of Proposition \ref{prop:rapid}]
Let us consider $\omega$ in the event $\Omega^* \cap \widetilde{\Omega}$ of probability $1$, where $\Omega^*$ and $\widetilde{\Omega}$ are given by Lemmata \ref{Lemma:ayacheesmili} and \ref{lemma:borelcantellitotal} respectively.

If $t,s \in (0,1)$, we write
\begin{align*}
\begin{split}
|R_{H_1,H_2}&(t,\omega)-R_{H_1,H_2}(s,\omega)|\\
 & \leq \left| \sum_{j_1 < n} \sum_{j_2 < n} \sum_{(k_1,k_2) \in \Z^2} 2^{j_1(1-H_1)} 2^{j_2(1-H_2)}  \varepsilon_{j_1,j_2}^{k_1,k_2}(\omega) I_{j_1,j_2}^{k_1,k_2}[t,s]\right| \\
\quad & + \left| \sum_{j_1 < n} \sum_{j_2 \geq  n} \sum_{(k_1,k_2) \in \Z^2} 2^{j_1(1-H_1)} 2^{j_2(1-H_2)} \varepsilon_{j_1,j_2}^{k_1,k_2}(\omega)  I_{j_1,j_2}^{k_1,k_2}[t,s]\right| \\
& + \left| \sum_{j_1 \geq n} \sum_{j_2 < n} \sum_{(k_1,k_2) \in \Z^2} 2^{j_1(1-H_1)} 2^{j_2(1-H_2)}  \varepsilon_{j_1,j_2}^{k_1,k_2}(\omega)  I_{j_1,j_2}^{k_1,k_2}[t,s]\right| \\
& + \left| \sum_{j_1 \geq n} \sum_{j_2 \geq n} \sum_{(k_1,k_2) \in \Z^2} 2^{j_1(1-H_1)} 2^{j_2(1-H_2)}  \varepsilon_{j_1,j_2}^{k_1,k_2}(\omega)  I_{j_1,j_2}^{k_1,k_2}[t,s]\right|.
\end{split}
\end{align*}
The first sum is bounded from above by Lemmata \ref{Lemma:ayacheesmili} and \ref{lem:rapid1}, the second and the third one are bounded from above by Lemmata \ref{Lemma:ayacheesmili} and \ref{lem:rapid4} and the last one is bounded from above by Lemma \ref{lem:rapid5}.
\end{proof}

\begin{Rmk}\label{rmk:method}
Starting from now and until the end of this section, one can reduce our attention to the process
\[ \left \{R_{H_1,H_2}'(t)= \sum_{j_1=0}^{+ \infty} \sum_{j_2=0}^{+ \infty} \sum_{(k_1,k_2) \in \Z^2} 2^{j_1(1-H_1)} 2^{j_2(1-H_2)} \varepsilon_{j_1,j_2}^{k_1,k_2}  I_{j_1,j_2}^{k_1,k_2}[0,\cdot] \right \} \]
because almost surely, it is the most irregular part of $R_{H_1,H_2}$. Indeed, using different estimates obtained in this subsection, one can see that, almost surely, there exists a constant $C>0$ such that, for all $s,t \in (0,1)$,
\begin{align*}
\left| \sum_{j_1<0} \sum_{j_2<0} \sum_{(k_1,k_2) \in \Z^2} 2^{j_1(1-H_1)} 2^{j_2(1-H_2)}  \varepsilon_{j_1,j_2}^{k_1,k_2}  I_{j_1,j_2}^{k_1,k_2}[t,s]\right| &\leq C |t-s|, \\
\left| \sum_{j_1<0} \sum_{j_2>0} \sum_{(k_1,k_2) \in \Z^2} 2^{j_1(1-H_1)} 2^{j_2(1-H_2)}  \varepsilon_{j_1,j_2}^{k_1,k_2}  I_{j_1,j_2}^{k_1,k_2}[t,s] \right| &\leq C |t-s|^{-H_2} \log|t-s|^{-1} \\
\left| \sum_{j_1>0} \sum_{j_2<0} \sum_{(k_1,k_2) \in \Z^2} 2^{j_1(1-H_1)} 2^{j_2(1-H_2)}  \varepsilon_{j_1,j_2}^{k_1,k_2}  I_{j_1,j_2}^{k_1,k_2}[t,s] \right| &\leq C |t-s|^{-H_1} \log|t-s|^{-1}
\end{align*}
and we conclude because $H_1+H_2-1<\min \{H_1,H_2\}<1$.
\end{Rmk}

\subsection{Ordinary points}

Let us now go to the almost sure finiteness of the limit \eqref{eqn:mainthm2} for almost every point. The main idea behind our method is that wavelets which contribute the most in $|R_{H_1,H_2}(t,\cdot)-R_{H_1,H_2}(s,\cdot)|$ are the ones with associated dyadic intervals ``close'' to the interval $[t,s]$. Thus, we aim at proving the following Proposition.

\begin{Prop}\label{Prop:ordinary}
There exists an event $\Omega_{\text{ord}}$ of probability $1$ such that for all $\omega \in \Omega_{\text{ord}}$, for almost every $t \in (0,1)$,
\[ \limsup_{s \to t} \dfrac{|R_{H_1,H_2}(t,\omega)-R_{H_1,H_2}(s,\omega) |}{|t-s|^{H_1+H_2-1}  \log\log |t-s|^{-1}} < + \infty.\]
\end{Prop}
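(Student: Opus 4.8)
The strategy mirrors the proof of Proposition~\ref{prop:rapid}, but now we must gain the extra logarithmic saving that turns $\log|t-s|^{-1}$ into $\log\log|t-s|^{-1}$, and this saving can only hold for \emph{almost every} $t$, not uniformly. By Remark~\ref{rmk:method} it suffices to control $R'_{H_1,H_2}$, so we work with $j_1,j_2 \geq 0$ throughout. As before, given $s,t \in (0,1)$ we let $n$ be defined by \eqref{eqn:defofn} and split the sum in \eqref{eqn:maindiff} according to the position of $(j_1,j_2)$ relative to $n$. The subsums with $\min\{j_1,j_2\} < n$ are purely deterministic after invoking \eqref{eqn:moustique}, and Lemmata~\ref{lem:rapid1} and \ref{lem:rapid4} already bound them by $C_1(\omega)\,|t-s|^{H_1+H_2-1}\log|t-s|^{-1}$; I claim this is in fact $o\big(|t-s|^{H_1+H_2-1}\log\log|t-s|^{-1}\big)$-safe only if we are more careful, so the first task is to re-examine those estimates. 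In fact the geometric sums in \eqref{eqn:ineserie1}, \eqref{eqn:ineserie2}, \eqref{eqn:ineserie3}, \eqref{eqn:ineserie4} produce a factor $\sqrt{n}$ coming \emph{only} from the random-variable bound $L_{j_1,j_2}^{k_1,k_2}$, i.e. from $\sqrt{\log(3+|j_i|+2^{j_i})} \approx \sqrt{j_i}$ near $j_i \approx n$. Hence the genuinely dangerous contribution is the ``diagonal'' block $j_1,j_2 \geq n$, which in Lemma~\ref{lem:rapid5} was bounded by $C(\omega)\,2^{n(1-H_1-H_2)}n$, i.e. exactly of size $|t-s|^{H_1+H_2-1}\log|t-s|^{-1}$. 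The whole point is to improve the factor $n$ to $\log n$ on a full-measure set of $t$.

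\textbf{The key mechanism.} For a fixed dyadic point or a fixed $t$, the relevant random variables are the $\Xi_j(\lambda_n(t))$ and the sums \eqref{eqn:r>>6}--\eqref{eqn:r>>3} with $\lambda = \lambda_n(t)$; there are only finitely many $\lambda \in 3\lambda_n(t)$ involved, and for a single scale $n$ the tail bound of Theorem~\ref{thm:bound} gives $\PP(\Xi_j(\lambda) \geq y) \leq \exp(-\overset{\star}{C}y)$. Thus for a \emph{fixed} $t$ one expects $\Xi_j(\lambda_n(t)) \lesssim (j-n+1) + \log$(something) rather than the uniform $(j-n+1)n$ of Lemma~\ref{lemma:borelcantellitotal}. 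The plan is therefore: (i) fix a large constant $\kappa$ and, for each $n$, let $E_n$ be the (random) set of $t \in [0,1]$ for which $\sup_{\lambda \in 3\lambda_n(t)} \Xi_j(\lambda) \leq \kappa\big((j-n+1) + \log n\big)$ for all $j \geq n$, together with an analogous control on the sums \eqref{eqn:r>>6}, \eqref{eqn:r>>5}; (ii) estimate $\leb{[0,1]\setminus E_n}$: since there are $2^n$ dyadic intervals at scale $n$ and, for each, $\PP\big(\Xi_j(\lambda) \geq \kappa((j-n+1)+\log n)\big) \leq 2^{4(j-n)}\exp(-\overset{\star}{C}\kappa(j-n+1))\,\exp(-\overset{\star}{C}\kappa\log n) = 2^{4(j-n)}\exp(-\overset{\star}{C}\kappa(j-n+1))\, n^{-\overset{\star}{C}\kappa}$, summing over $j \geq n$ and over the $2^n$ intervals and taking expectation of Lebesgue measure gives $\E\,\leb{[0,1]\setminus E_n} \leq C\, 2^{-n}\cdot 2^n \cdot n^{-\overset{\star}{C}\kappa} = C\,n^{-\overset{\star}{C}\kappa}$ — wait, more carefully each bad interval has measure $2^{-n}$, there are at most $2^n$ of them, so $\E\,\leb{\text{bad set at scale }n} \leq C\,n^{-\overset{\star}{C}\kappa}$; choosing $\kappa$ so that $\overset{\star}{C}\kappa > 1$ makes $\sum_n \E\,\leb{[0,1]\setminus E_n} < \infty$; (iii) conclude by Borel--Cantelli (for the measure, i.e. Fatou) that almost surely $\leb{\limsup_n ([0,1]\setminus E_n)} = 0$, so almost every $t$ lies in all but finitely many $E_n$.

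\textbf{Assembling the bound.} On the event of probability $1$ obtained this way, for almost every $t$ there is $N(t,\omega)$ so that $t \in E_n$ for all $n \geq N(t,\omega)$. Then, for $s$ close enough to $t$ (so that the associated $n = n(s,t) \geq N(t,\omega)$), one repeats the proof of Lemma~\ref{lem:rapid5} verbatim but with $\Xi_j(\lambda_n(t))$ controlled by $\kappa((j-n+1)+\log n)$ instead of $C_2(j-n+1)n$, and with \eqref{eqn:boundfor26}-type bounds on \eqref{eqn:r>>6}, \eqref{eqn:r>>5} improved from $C_1 2^{-j}j$ to $C\,2^{-j}(j + \log n)$ (here the $\sqrt{\log(3+j+|k_1|)}$ factor in \eqref{eqn:moustique} must \emph{also} be localized to $t$, so one actually wants a ``localized'' version of Lemma~\ref{Lemma:ayacheesmili}; alternatively one keeps the $\sqrt j$ from \eqref{eqn:moustique} and only improves $\Xi_j$, which already suffices because the diagonal term's size is governed by $\Xi_j$, not by $L_{j_1,j_2}^{k_1,k_2}$ — this needs to be checked). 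Carrying out the geometric summation $\sum_{j \geq n} 2^{j(\frac32 - H_1 - H_2)}2^{-n/2}((j-n+1)+\log n)$ yields $C\,2^{n(1-H_1-H_2)}\log n$, i.e. precisely $C\,|t-s|^{H_1+H_2-1}\log\log|t-s|^{-1}$. The off-diagonal blocks contribute only $|t-s|^{H_1+H_2-1}\log|t-s|^{-1}$ through Lemmata~\ref{lem:rapid1}, \ref{lem:rapid4}; one must therefore double-check that the $\sqrt n$ appearing there is harmless — it is not, a priori, since $\sqrt n \gg \log n$, so these off-diagonal estimates must \emph{also} be sharpened for a.e.\ $t$, presumably by the same localization of \eqref{eqn:moustique} to the relevant indices $k_i$ near $2^{j_i}t$. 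This localization of the random coefficient bound — making $\sqrt{\log(3+|j_i|+|k_i|)}$ effectively $\sqrt{\log(3+|j_i|)} \approx \sqrt{j_i}$ \emph{uniformly harmless} by instead exploiting that for a.e.\ $t$ only $k_i \approx 2^{j_i} t$ matter, and that these behave like a.e.-convergent random series — is the main obstacle, and it is where the distinction between ``every'' and ``almost every'' genuinely enters. I expect the cleanest route is to introduce, for each $n$, the exceptional set where the \emph{entire} contribution of scales $\geq n$ exceeds $\kappa\, 2^{n(1-H_1-H_2)}\log n$, bound its expected Lebesgue measure directly via Theorem~\ref{thm:bound} applied to the single second-chaos variable $R'_{H_1,H_2}(t) - (\text{partial sum})$, and invoke Borel--Cantelli; this packages all the index bookkeeping into one tail estimate and sidesteps the need to localize \eqref{eqn:moustique} by hand.
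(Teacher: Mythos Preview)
Your overall strategy is right, and the Fubini/Borel--Cantelli packaging you sketch is equivalent to what the paper does. You correctly identify the two improvements needed over the rapid-point proof: (a) upgrade the bound on $\Xi_j(\lambda)$ for $\lambda\in 3\lambda_n(t)$ from $(j-n+1)n$ to $(j-n+1)\log n$ for a.e.\ $t$ (the paper's Lemma~\ref{lemma:borelcantellilocal}), and (b) localize the coefficient bound \eqref{eqn:moustique}. But you leave (b) as ``the main obstacle'' and propose two workarounds, neither of which works.

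First workaround: ``keep the $\sqrt{j}$ from \eqref{eqn:moustique} and only improve $\Xi_j$''. This fails. Look again at Lemma~\ref{lem:rapid5}: the pieces \eqref{eqn:lemma3191}, \eqref{eqn:lemma3192} and the boundary sums \eqref{eqn:r>>6}, \eqref{eqn:r>>5} are bounded using \eqref{eqn:moustique} alone, not $\Xi_j$, and each already produces a factor $n$; see \eqref{eqn:ineserie4} and \eqref{eqn:boundfor26}. Likewise Lemmata~\ref{lem:rapid1} and \ref{lem:rapid4} for the off-diagonal blocks give $|t-s|^{H_1+H_2-1}\log|t-s|^{-1}$ purely from $L_{j_1,j_2}^{k_1,k_2}$. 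Improving only $\Xi_j$ touches exactly one term, \eqref{eqn:r>>3}, and leaves the factor $n$ everywhere else.

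Second workaround: apply Theorem~\ref{thm:bound} to the ``entire contribution of scales $\geq n$''. The increment $R'_{H_1,H_2}(t)-R'_{H_1,H_2}(s)$ is a second-chaos variable only for \emph{fixed} $s$; the $\limsup_{s\to t}$ requires controlling a supremum over $s$, which is not in the chaos and not covered by Theorem~\ref{thm:bound}. You would need a chaining or discretization layer on top, which you do not supply.

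The paper's resolution of (b) is a one-line trick you missed: for fixed $t$, apply Lemma~\ref{Lemma:ayacheesmili} to the \emph{translated} family $\xi_{j_1,j_2}^{k_1',k_2'}:=\varepsilon_{j_1,j_2}^{k_1'+k_{j_1}(t),\,k_2'+k_{j_2}(t)}$, which has the same law. This yields, on an event $\Omega_t^*$ of probability $1$,
\[
|\varepsilon_{j_1,j_2}^{k_1,k_2}|\leq C_{t,1}\,\sqrt{\log(3+|j_1|+|k_1-k_{j_1}(t)|)}\sqrt{\log(3+|j_2|+|k_2-k_{j_2}(t)|)}.
\]
The point is that for the relevant indices $k_i$ (those near $2^{j_i}t$) the argument of the logarithm is now $\asymp j_i$, so the factor becomes $\sqrt{\log(3+j_i)}$, \emph{not} $\sqrt{j_i}$ as you wrote. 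One then needs a replacement for \eqref{eqn:sumlog}, namely Lemma~\ref{Lemma:ordinary}, which bounds $\sum_k \frac{\sqrt{\log(3+j+|k-k_j(t)|)}}{(3+|2^jx-k|)^L}$ by $C\sqrt{\log(3+j)}$ for $j<n$ and by $C\sqrt{j-n+1}\sqrt{\log(3+j)}$ for $j\geq n$. With this in hand, every lemma of Section~3.1 is redone (Lemmata~\ref{lem:ordinary1}--\ref{lem:ordinary7}): the geometric sums \eqref{eqn:ineserie1}, \eqref{eqn:ineserie2} become \eqref{eqn:ineserie6}, \eqref{eqn:ineserie7}, each carrying $\sqrt{\log n}$ instead of $\sqrt{n}$, and the final bound is $|t-s|^{H_1+H_2-1}\log\log|t-s|^{-1}$. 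The passage from ``for each $t$, a.s.'' to ``a.s., for a.e.\ $t$'' is then Fubini, exactly as you outline.
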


As in \cite{esserloosveldt}, for all $j \in \N$, we denote by $k_j(t)$ the unique integer such that $t \in [k_j(t)2^{-j},(k_j(t)+1)2^{-j})$. In other words, $k_j(t)=s(\lambda_j(t))$. If $t \in (0,1)$ is fixed, applying Lemma \ref{Lemma:ayacheesmili} to the sequence of random variables $(\xi_{j_1,j_2}^{k_1',k_2'})_{(j_1,j_2,k_1',k_2') \in \Z^4}$ defined by
\[\xi_{j_1,j_2}^{k_1',k_2'}=\varepsilon_{j_1,j_2}^{k_1'+k_{j_1(t)},k_2'+k_{j_2(t)}} \]
we deduce the existence of $\Omega_t^*$, an event of probability $1$, and $C_{t,1}$, a positive random variable with finite moment of any order, such that, for all $\omega \in \Omega_t^*$ and for each $(j_1,j_2,k_1,k_2) \in \Z^4$, one has
\begin{equation}\label{eqn:ordinary}
|\varepsilon_{j_1,j_2}^{k_1,k_2}(\omega)| \leq C_{t,1}(\omega) \sqrt{\log(3+|j_1|+|k_1-k_{j_1}(t)|)} \sqrt{\log(3+|j_2|+|k_2-k_{j_2}(t)|)}.
\end{equation}

In view of this fact, let us set, for $t \in (0,1)$ and $(j_1,j_2,k_1,k_2) \in \N^2 \times \Z^2$
\[L_{j_1,j_2}^{k_1,k_2}(t) =  \sqrt{\log(3+j_1+|k_1-k_{j_1}(t)|)} \sqrt{\log(3+j_2+|k_2-k_{j_2}(t)|)}.\]
In what follows, we show how to modify Lemmata \ref{lem:rapid1} to \ref{lem:rapid5} from the previous subsection, using $L_{j_1,j_2}^{k_1,k_2}(t)$ instead of $L_{j_1,j_2}^{k_1,k_2}$. Before all, we need the following Lemma  which is inspired by results from \cite{esserloosveldt} that can be extended in our case.

\begin{Lemma}\label{Lemma:ordinary}
For all $L>2$ there exists a constant $C_L>0$ such that, for all $n \in \N$ and $t,s \in (0,1)$ such that $2^{-n-1} < |t-s| \leq 2^{-n}$, for all $x \in [s,t]$ 
\begin{enumerate}
\item For all $0\leq j < n $
\[  \sum_{k \in \Z} \frac{\sqrt{\log(3+j+|k-k_j(t)|}}{(3+|2^{j}x-k|)^{L}} \leq C_L \sqrt{\log(3+j)}.\]
\item For all $j \geq n $
\[
\sum_{k \in \Z} \frac{\sqrt{\log(3+j+|k-k_j(t)|}}{(3+|2^{j}x-k|)^{L}} \leq C_L \sqrt{j-n+1}\sqrt{\log(3+j)}.
\]
\end{enumerate}
\end{Lemma}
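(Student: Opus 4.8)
The plan is to estimate the sum $\sum_{k \in \Z} \sqrt{\log(3+j+|k-k_j(t)|)} (3+|2^j x - k|)^{-L}$ by comparing it with the unshifted bound \eqref{eqn:sumlog} from \cite{ayacheesmili}, exploiting the fact that, for $x \in [s,t]$ with $2^{-n-1} < |t-s| \le 2^{-n}$, the point $2^j x$ is close to the integer $k_j(t)$. First I would record the elementary inequality $\log(3+j+|k-k_j(t)|) \le \log(3+j) + \log(3+|k-k_j(t)|)$ coming from \eqref{eqn:inelog} (or rather its additive companion), so that $\sqrt{\log(3+j+|k-k_j(t)|)} \le \sqrt{\log(3+j)} + \sqrt{\log(3+|k-k_j(t)|)}$. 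This splits the sum into a term with the $j$-dependence pulled out, namely $\sqrt{\log(3+j)}\sum_k (3+|2^jx-k|)^{-L}$, which is bounded by an absolute constant since $L>2>1$, and a remaining term $\sum_k \sqrt{\log(3+|k-k_j(t)|)} (3+|2^j x-k|)^{-L}$.

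For the remaining term the key geometric observation is that $|2^j x - k_j(t)| \le 1 + 2^{j-n}$ whenever $x \in [s,t]$: indeed $t \in \lambda_j(t)$ forces $|2^j t - k_j(t)| \le 1$, and $|2^j x - 2^j t| \le 2^j|t-s| \le 2^{j-n}$. Hence $|k - k_j(t)| \le |k - 2^j x| + |2^j x - k_j(t)| \le |k-2^jx| + 1 + 2^{j-n}$, and therefore, using \eqref{eqn:inelog} once more, $\log(3+|k-k_j(t)|) \le \log(3+|2^jx-k|)\,\log(3 + 1 + 2^{j-n}) \le C\log(3+|2^jx-k|)(j-n+1)$ in the regime $j \ge n$ (and $\le C\log(3+|2^jx-k|)$ in the regime $j<n$, where $2^{j-n}<1$). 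Plugging this in,
\[
\sum_{k \in \Z} \frac{\sqrt{\log(3+|k-k_j(t)|)}}{(3+|2^jx-k|)^L} \le C\sqrt{j-n+1}\sum_{k\in\Z}\frac{\sqrt{\log(3+|2^jx-k|)}}{(3+|2^jx-k|)^L},
\]
and since $L>2$ the series $\sum_{k} \sqrt{\log(3+|2^jx-k|)}(3+|2^jx-k|)^{-L}$ converges to an absolute constant (compare with $\sum_m \sqrt{\log(3+m)}(3+m)^{-L}$, finite because $L>1$; one only needs $L>2$ to have the slack used just above, but any $L>1$ would already make this last series converge — the statement asks for $L>2$ to be safe). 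Combining the two terms gives the bound $C_L\sqrt{j-n+1}\sqrt{\log(3+j)}$ in case (2), and in case (1), where $0 \le j < n$, the factor $\sqrt{j-n+1}$ is replaced by $1$, yielding $C_L\sqrt{\log(3+j)}$.

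The only mild subtlety — and the step I would be most careful about — is making the two ``\eqref{eqn:inelog}-type'' manipulations rigorous, since \eqref{eqn:inelog} is stated multiplicatively ($\log(3+a+b)\le\log(3+a)\log(3+b)$) and here it is used both to separate the $j$-contribution and to absorb the shift $|2^jx-k_j(t)|$ into a power of $\log(3+|2^jx-k|)$; one should check the constants are uniform in $j$, $n$, $x$, which they are because $\log(4+2^{j-n}) \le C(j-n+1)$ for $j\ge n$. Everything else is the routine convergence of $\sum_k (3+|u-k|)^{-L}$ and $\sum_k \sqrt{\log(3+|u-k|)}(3+|u-k|)^{-L}$ uniformly in $u \in \R$, for $L > 1$, which is standard. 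No genuinely hard estimate is involved; the lemma is a bookkeeping device that upgrades \eqref{eqn:sumlog} to handle the recentered logarithmic weights $L_{j_1,j_2}^{k_1,k_2}(t)$, at the cost of the harmless extra factor $\sqrt{j-n+1}$ when $j \ge n$.
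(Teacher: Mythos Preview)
Your proof is correct and follows essentially the same route as the paper: both rely on the triangle-inequality bound $|k-k_j(t)|\le |2^jx-k|+1+2^{j-n}$, then use \eqref{eqn:inelog} to separate the logarithmic weight into a $\sqrt{\log(3+j)}$ factor, a $\sqrt{j-n+1}$ factor (constant when $j<n$), and a residual $\sqrt{\log(3+|2^jx-k|)}$ that is absorbed by one power of $(3+|2^jx-k|)^{-1}$ before summing. The only cosmetic difference is that you begin with the additive split $\sqrt{\log(3+j+|k-k_j(t)|)}\le\sqrt{\log(3+j)}+\sqrt{\log(3+|k-k_j(t)|)}$ and then apply \eqref{eqn:inelog} once, whereas the paper applies \eqref{eqn:inelog} twice directly to $\log(3+j+|k-k_j(t)|)$; after recombining your two pieces (using that $\sqrt{\log(3+j)}\ge 1$ and $\sqrt{j-n+1}\ge 1$ so their sum is at most twice their product), the two arguments are interchangeable.
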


\begin{proof}
For all $j \in \N$, $k \in \Z$ and $x \in [s,t]$, observe that
\begin{equation}\label{eqn:Lemma:ordinary}
|k-k_j(t)| \leq |k-2^jx|+|2^jx -2^j t| + |2^jt-k_j(t)| \leq |k-2^jx|+2^{j-n}+1.
\end{equation}

If $0\leq j < n $, then it follows from \eqref{eqn:Lemma:ordinary} that $|k-k_j(t)| \leq |2^jx-k|+2$ which allow us to write, thanks to inequality \eqref{eqn:inelog},
\begin{align*}
\frac{\sqrt{\log(3+j+|k-k_j(t)|}}{(3+|2^{j}x-k|)} &\leq \sqrt{\log(3+j)} \frac{\sqrt{\log(5+|2^jx-k|)}}{|2^jx-k|+3} \\
&\leq C \sqrt{\log(3+j)}.
\end{align*}
where $C := \sup_{x \geq 0} \left(\frac{\sqrt{\log(5+x)}}{x+3} \right)$ and we conclude using the boundedness of the function
\begin{equation}\label{eqn:fctbounded}
\xi \mapsto \sum_{k \in \Z} \frac{1}{(1+|\xi-k|)^{M}}
\end{equation}
for all $M>1$.

Now, if $j \geq n $, from \eqref{eqn:Lemma:ordinary} we get $|k-k_j(t)| \leq |2^jx-k|+2^{j-n+1}$ and thus, again by inequality \eqref{eqn:inelog},
\begin{align*}
\frac{\sqrt{\log(3+j+|k-k_j(t)|}}{(3+|2^{j}x-k|)} &\leq  \sqrt{\log(3+2^{j-n+1})} \sqrt{\log(3+j)} \frac{\sqrt{\log(3+|2^jx-k|)}}{|2^jx-k|+3} \\
&\leq C'  \sqrt{j-n+1}\sqrt{\log(3+j)}.
\end{align*}
where $C' := \sqrt{3} \sup_{x \geq 0} \left(\frac{\sqrt{\log(3+x)}}{x+3} \right)$ and the conclusion comes again from the boundedness of the function in \eqref{eqn:fctbounded} for all $M>1$
\end{proof}

\begin{Lemma}\label{lem:ordinary1}
There exists a deterministic constant $C>0$ such that, for all $t,s \in (0,1)$ we have
\begin{align*}
\sum_{0 \leq j_1<n} &\sum_{0 \leq j_2<n} \sum_{(k_1,k_2) \in \Z^2} 2^{j_1(1-H_1)} 2^{j_2(1-H_2)}  L_{j_1,j_2}^{k_1,k_2}(t) \left| I_{j_1,j_2}^{k_1,k_2}[t,s] \right| \\ & \leq  C |t-s|^{H_1+H_2-1} \log \log |t-s|^{-1} .
\end{align*}
\end{Lemma}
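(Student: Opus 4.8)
The plan is to mimic the proof of Lemma \ref{lem:rapid1}, but replacing the uniform logarithmic factor $L_{j_1,j_2}^{k_1,k_2}$ by the localized version $L_{j_1,j_2}^{k_1,k_2}(t)$ and using the localized estimate of Lemma \ref{Lemma:ordinary}(1) in place of inequality \eqref{eqn:sumlog}. First I would fix $t,s\in(0,1)$ with $n$ as in \eqref{eqn:defofn}, and as in Lemma \ref{lem:rapid1} introduce the auxiliary series
\[
R_{j_1,j_2}\,:\,t\mapsto\sum_{(k_1,k_2)\in\Z^2}L_{j_1,j_2}^{k_1,k_2}(t)\int_0^t|\psi_{H_1}(2^{j_1}x-k_1)\psi_{H_2}(2^{j_2}x-k_2)|\,dx
\]
and its termwise analogue ${R'}_{j_1,j_2}$, check via the fast decay \eqref{eqn:fastdecay} and Lemma \ref{Lemma:ordinary}(1) that both converge uniformly on compact subsets of $(0,1)$, and then apply the mean value theorem exactly as in \eqref{eqn:aprestaf} to reduce the claim to bounding
\[
|t-s|\sum_{0\leq j_1<n}\sum_{0\leq j_2<n}2^{j_1(1-H_1)}2^{j_2(1-H_2)}\sum_{(k_1,k_2)\in\Z^2}L_{j_1,j_2}^{k_1,k_2}(t)\,|\psi_{H_1}(2^{j_1}\xi-k_1)\psi_{H_2}(2^{j_2}\xi-k_2)|
\]
for some $\xi\in[s,t]\subseteq(0,1)$.

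Next I would factor the double sum over $(k_1,k_2)$ into a product of single sums and apply Lemma \ref{Lemma:ordinary}(1) (with $L=4$, using \eqref{eqn:fastdecay}) to each factor, obtaining a bound of the form
\[
\sum_{(k_1,k_2)\in\Z^2}L_{j_1,j_2}^{k_1,k_2}(t)\,|\psi_{H_1}(2^{j_1}\xi-k_1)\psi_{H_2}(2^{j_2}\xi-k_2)|\leq C\sqrt{\log(3+j_1)}\sqrt{\log(3+j_2)}.
\]
The crucial gain over Lemma \ref{lem:rapid1} is that here the logarithmic factor is only $\sqrt{\log(3+j_i)}$ rather than $\sqrt{\log(3+|j_i|+2^{j_i})}\sim\sqrt{j_i}$, because the localization around $k_{j_i}(t)$ suppresses the large-$|k_i|$ contributions. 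Then I would estimate the remaining geometric-type sum: since $1-H_1>0$ and $1-H_2>0$,
\[
\sum_{j_1=0}^{n-1}2^{j_1(1-H_1)}\sqrt{\log(3+j_1)}\leq C\,2^{n(1-H_1)}\sqrt{\log(3+n)},
\]
and similarly for the sum over $j_2$, so that the full quantity is bounded by $C|t-s|\,2^{n(2-H_1-H_2)}\log(3+n)$. Using $2^{-n-1}<|t-s|\leq 2^{-n}$, this gives $2^{n(2-H_1-H_2)}\leq C|t-s|^{H_1+H_2-2}$ and $\log(3+n)\leq C\log\log|t-s|^{-1}$, whence the claimed bound $C|t-s|^{H_1+H_2-1}\log\log|t-s|^{-1}$.

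I do not expect a genuine obstacle here: the proof is a routine adaptation of Lemma \ref{lem:rapid1}, and the only point requiring minor care is the uniform convergence of $R_{j_1,j_2}$ and ${R'}_{j_1,j_2}$ on a neighbourhood of $[s,t]$ so that the mean value theorem applies — but this follows from the same fast-decay argument once one notes that for $x$ in a fixed compact subset of $(0,1)$ and $0\le j<n$ the quantity $\sqrt{\log(3+j+|k-k_j(t)|)}$ is controlled by $\sqrt{\log(3+j)}\sqrt{\log(5+|2^jx-k|)}$ via \eqref{eqn:Lemma:ordinary} and \eqref{eqn:inelog}, exactly as in the proof of Lemma \ref{Lemma:ordinary}(1). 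The slightly delicate bookkeeping is simply tracking that the restriction $j_i\geq 0$ (absent in Lemma \ref{lem:rapid1}, which also summed $j_i\le 0$) is harmless, since here we only sum over $0\le j_i<n$ and the $j_i<0$ tails have been absorbed into $R'_{H_1,H_2}$ by Remark \ref{rmk:method}.
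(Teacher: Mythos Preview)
Your proposal is correct and follows essentially the same approach as the paper: reduce via the mean value theorem to a pointwise estimate at some $\xi\in[s,t]$, apply the fast decay \eqref{eqn:fastdecay} together with the first part of Lemma \ref{Lemma:ordinary} to each factor to get $\sqrt{\log(3+j_i)}$ instead of $\sqrt{\log(3+|j_i|+2^{j_i})}$, and conclude with the geometric sum \eqref{eqn:ineserie6}. The paper's proof is terser (it simply refers back to Lemma \ref{lem:rapid1} for the mean value step), but your more explicit treatment of the uniform convergence needed for that step is a welcome clarification rather than a deviation.
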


\begin{proof}
If $\xi \in [s,t]$, we get from the fast decay of the fractional antiderivatives of $\psi$ \eqref{eqn:fastdecay} and inequality \eqref{eqn:ordinary}, for $0\leq j_1,j_2<n$,
\begin{align*}
\sum_{(k_1,k_2) \in \Z^2} & L_{j_1,j_2}^{k_1,k_2}(t)   |\psi_{H_1}(2^{j_1} \xi-k_1) \psi_{H_2}(2^{j_2} \xi-k_2)| \\ & \leq C C_1  \left(\sum_{k_1 \in \Z} \frac{\sqrt{\log(3+j_1+|k_1-k_{j_1}(t)|)}}{(3+|2^{j_1}\xi-k_1|)^4} \right) \\ & \qquad \qquad \left( \sum_{k_2 \in \Z} \frac{\sqrt{\log(3+j_2+|k_2-k_{j_2}(t)|)}}{(3+|2^{j_2}\xi-k_2|)^4} \right).
\end{align*}
These last two sums are bounded by the first point of Lemma \ref{Lemma:ordinary}. Using
\begin{equation}\label{eqn:ineserie6}
\sum_{j_1=0}^{n-1} 2^{j_1(1-H_1)} \sqrt{\log(3+j_1)} \leq C 2^{n(1-H_1) } \sqrt{\log(n)}
\end{equation}
instead of \eqref{eqn:ineserie1}, we conclude, just as in Lemma \ref{lem:rapid1}, that the desired inequality holds.
\end{proof}

\begin{Lemma}\label{lem:ordinary2}
There exists a deterministic constant $C>0$ such that, for all $t,s \in (0,1)$ and $0 \leq j_1<n \leq j_2$,
the quantities
\begin{equation}\label{eqn:r<>3t}
\sum_{k_1 \in \Z} \sum_{k_2 \in \Z_{j_2}^<(t,s)}L_{j_1,j_2}^{k_1,k_2}(t) \left|I_{j_1,j_2}^{k_1,k_2}[t,s]  \right|
\end{equation}
\begin{equation}\label{eqn:r<>2t}
\sum_{k_1 \in \Z} \sum_{k_2 \in \Z_{j_2}^>(t,s)}L_{j_1,j_2}^{k_1,k_2}(t) \left| I_{j_1,j_2}^{k_1,k_2}[t,s] \right|
\end{equation}
are bounded from above by
\[
C \sqrt{j_2-n+1} \sqrt{\log(3+j_1)}\sqrt{\log(3+j_2)}2^{-j_2}.
\]
\end{Lemma}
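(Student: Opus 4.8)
The plan is to mimic the proof of Lemma \ref{lem:rapid2}, replacing the weight $L_{j_1,j_2}^{k_1,k_2}$ by $L_{j_1,j_2}^{k_1,k_2}(t)$ and using the sharper second point of Lemma \ref{Lemma:ordinary} in place of inequality \eqref{eqn:sumlog}. I treat \eqref{eqn:r<>3t}, the estimate for \eqref{eqn:r<>2t} being entirely analogous by symmetry of the roles of $\Z_{j_2}^<(t,s)$ and $\Z_{j_2}^>(t,s)$. First I would write $I_{j_1,j_2}^{k_1,k_2}[t,s] = \int_{[s,t]} \psi_{H_1}(2^{j_1}x-k_1)\psi_{H_2}(2^{j_2}x-k_2)\,dx$ and use the fast decay \eqref{eqn:fastdecay} of the fractional antiderivatives to bound $|\psi_{H_i}(2^{j_i}x-k_i)| \leq C(3+|2^{j_i}x-k_i|)^{-4}$, so that \eqref{eqn:r<>3t} is at most
\[
C \int_{[s,t]} \left(\sum_{k_1 \in \Z} \frac{\sqrt{\log(3+j_1+|k_1-k_{j_1}(t)|)}}{(3+|2^{j_1}x-k_1|)^4}\right)\left(\sum_{k_2 \in \Z_{j_2}^<(t,s)} \frac{\sqrt{\log(3+j_2+|k_2-k_{j_2}(t)|)}}{(3+|2^{j_2}x-k_2|)^4}\right) dx.
\]

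For the $k_1$-sum I apply the first point of Lemma \ref{Lemma:ordinary} (since $j_1<n$), which bounds it by $C\sqrt{\log(3+j_1)}$, uniformly in $x \in [s,t]$. For the $k_2$-sum I cannot simply invoke the second point of Lemma \ref{Lemma:ordinary} directly, because I still need to extract the extra decay factor $2^{-j_2}$ from integrating in $x$ over the short interval $[s,t]$ against $k_2$ ranging only over $\Z_{j_2}^<(t,s)$. The mechanism should be as in \eqref{eqn:intavec2}: I would peel off one power, writing $(3+|2^{j_2}x-k_2|)^{-4}\sqrt{\log(3+j_2+|k_2-k_{j_2}(t)|)} \leq C\sqrt{j_2-n+1}\sqrt{\log(3+j_2)}\,(3+|2^{j_2}x-k_2|)^{-3}$ — here the bound $|k_2-k_{j_2}(t)| \leq |2^{j_2}x-k_2| + 2^{j_2-n+1}$ from \eqref{eqn:Lemma:ordinary} together with \eqref{eqn:inelog} produces exactly the factor $\sqrt{j_2-n+1}\sqrt{\log(3+j_2)}$ while leaving a $\sqrt{\log(3+|2^{j_2}x-k_2|)}$ that is absorbed into one of the remaining powers. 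Then, since every $k_2 \in \Z_{j_2}^<(t,s)$ satisfies $k_2 \leq 2^{j_2}\min\{s,t\} \leq 2^{j_2}x$ for $x \in [s,t]$, the tail sum $\sum_{k_2 \in \Z_{j_2}^<(t,s)}(3+2^{j_2}x-k_2)^{-3}$ is comparable to an integral and, integrated in $x$ over $[s,t]$, yields a factor $C\,2^{-j_2}$ exactly as in the chain of inequalities culminating in \eqref{eqn:intavec2}.

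Combining the three factors gives the claimed bound $C\sqrt{j_2-n+1}\sqrt{\log(3+j_1)}\sqrt{\log(3+j_2)}\,2^{-j_2}$, and the same computation with $\Z_{j_2}^>(t,s)$ (where now $k_2 \geq 2^{j_2}\max\{s,t\} \geq 2^{j_2}x$, so $|2^{j_2}x-k_2| = k_2-2^{j_2}x$ and the monotone-tail argument applies identically) handles \eqref{eqn:r<>2t}. I expect the only delicate point to be bookkeeping the logarithmic factors: one must verify that the single power $(3+|2^{j_2}x-k_2|)^{-4}$ has enough room to simultaneously absorb the $\sqrt{\log(3+|2^{j_2}x-k_2|)}$ coming from \eqref{eqn:inelog}, leave a summable tail $(3+|2^{j_2}x-k_2|)^{-3}$, and still produce the $2^{-j_2}$ gain upon integration — which is why Lemma \ref{Lemma:ordinary} is stated for $L>2$ and why the decay exponent $4$ in \eqref{eqn:fastdecay} is comfortably large enough.
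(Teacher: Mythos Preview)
Your proposal is correct and follows essentially the same route as the paper: apply the first point of Lemma~\ref{Lemma:ordinary} to the $k_1$-sum (since $j_1<n$), then for the $k_2$-sum use the bound $|k_2-k_{j_2}(t)|\leq |2^{j_2}x-k_2|+2^{j_2-n}+1$ together with \eqref{eqn:inelog} to extract the factor $\sqrt{j_2-n+1}\sqrt{\log(3+j_2)}$, and finish with the tail-integration estimate \eqref{eqn:intavec2} to recover the $2^{-j_2}$. The paper's proof is exactly this, with the same bookkeeping of logarithmic factors you describe.
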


\begin{proof}
Let us prove the bound for \eqref{eqn:r<>3t}, the argument for \eqref{eqn:r<>2t} being similar. We have, by  the first part of Lemma \ref{Lemma:ordinary}, for $0\leq j_1<n \leq j_2$,
\begin{align*}
\eqref{eqn:r<>3t} \leq C \sqrt{\log(3+j_1)} \int_{[s,t]} \sum_{k_2 \in \Z_{j_2}^{<}(t,s)} \frac{\sqrt{\log(3+j_2+|k_2-k_{j_2}(t)|)}}{(3+|2^{j_2}x-k_2|)^4} \, dx
\end{align*}
and, as for all $k_2 \in \Z_{j_2}^{<}(t,s)$ and  $x \in [s,t]$ we have
\[ |k_2-k_{j_2}(t)| \leq |2^{j_2}x-k_2| + |k_{j_2}(t)-2^{j_2}x| \leq |2^{j_2}x-k_2|+2^{j_2-n}+1\]
and, by inequality \eqref{eqn:inelog},
\[ \sqrt{\log(3+j_2+|k_2-k_{j_2}(t)|)} \leq C \sqrt{j_2-n+1}\sqrt{\log(3+j_2)} \sqrt{\log(3+|2^{j_2}x-k_2|)}\]
it just remains us to use the bound \eqref{eqn:intavec2} to write
\begin{equation}\label{eqn:boundfor19ord}
\eqref{eqn:r<>3t} \leq C \sqrt{j_2-n+1} \sqrt{\log(3+j_1)}\sqrt{\log(3+j_2)}2^{-j_2}.
\end{equation}
\end{proof}

\begin{Lemma}\label{lem:ordinary3}
There exists a deterministic constant $C>0$ such that, for all $t,s \in (0,1)$ and $0 \leq j_1 <n \leq j_2$, the quantities
\begin{equation}\label{eqn:ordinary31}
\sum_{k_1 \in \Z} \sum_{k_2 \in \Z_{j_2}[t,s]} L_{j_1,j_2}^{k_1,k_2}(t) \left|\int_{- \infty}^{\min\{s,t\}} \psi_{H_1}(2^{j_1} x-k_1) \psi_{H_2}(2^{j_2} x-k_2) \, dx \right|
\end{equation}
\begin{equation}\label{eqn:ordinary32}
 \sum_{k_1 \in \Z} \sum_{k_2 \in \Z_{j_2}[t,s]}L_{j_1,j_2}^{k_1,k_2}(t)\left| \int_{\max\{s,t\}}^{+\infty} \psi_{H_1}(2^{j_1} x-k_1) \psi_{H_2}(2^{j_2} x-k_2) \, dx\right| 
\end{equation}
are bounded from above by
\[
C \sqrt{\log(3+j_1)} \sqrt{\log(3+j_2)} \sqrt{j_2-n+1} 2^{-j_2}.
\]
\end{Lemma}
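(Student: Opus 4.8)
The plan is to adapt the proof of Lemma \ref{lem:rapid3} almost verbatim, the only change being that the weight $L_{j_1,j_2}^{k_1,k_2}$ is replaced by the centered weight $L_{j_1,j_2}^{k_1,k_2}(t)$, so that one must keep track of the ``recentering'' factors $\sqrt{j_2-n+1}$ this introduces. As there, I would assume without loss of generality that $s \le t$ (so $\min\{t,s\}=s$), treat only the quantity \eqref{eqn:ordinary31}, and obtain the bound for \eqref{eqn:ordinary32} symmetrically by replacing $\int_{-\infty}^{s}$ with $\int_{t}^{+\infty}$. Recall from Definition \ref{def:partitionofZ} that $k_2 \in \Z_{j_2}[t,s]$ means $s \le k_2 2^{-j_2} \le t$; in particular $k_2 \ge 2^{j_2}s$, so that $|2^{j_2}x-k_2| = k_2-2^{j_2}x$ for every $x \le s$, and moreover $|k_2-k_{j_2}(t)| \le 2^{j_2-n}+1$.

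The key preliminary observation -- the analogue here, on the unbounded range $(-\infty,s)$, of inequality \eqref{eqn:sumlog} and Lemma \ref{Lemma:ordinary} -- is the following geometric inequality, valid for $0 \le j_1 < n \le j_2$, every $x \le s$ and every $k_2 \in \Z_{j_2}[t,s]$:
\[
 |2^{j_1}x-k_{j_1}(t)| \le 2^{j_1}|x-s| + 2^{j_1}|s-t| + |2^{j_1}t-k_{j_1}(t)| \le |2^{j_2}x-k_2| + 2,
\]
where the last step uses $j_1 \le j_2$ together with $2^{j_2}(s-x) \le k_2-2^{j_2}x = |2^{j_2}x-k_2|$, $2^{j_1}|s-t| \le 2^{j_1-n} < 1$ and $|2^{j_1}t-k_{j_1}(t)| < 1$. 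Hence $|k_1-k_{j_1}(t)| \le |2^{j_1}x-k_1| + |2^{j_2}x-k_2| + 2$, and two applications of \eqref{eqn:inelog} give
\[
 \sqrt{\log(3+j_1+|k_1-k_{j_1}(t)|)} \le C\sqrt{\log(3+j_1)}\,\sqrt{\log(3+|2^{j_1}x-k_1|)}\,\sqrt{\log(3+|2^{j_2}x-k_2|)},
\]
while from $|k_2-k_{j_2}(t)| \le 2^{j_2-n}+1$ and \eqref{eqn:inelog} one gets $\sqrt{\log(3+j_2+|k_2-k_{j_2}(t)|)} \le C\sqrt{j_2-n+1}\,\sqrt{\log(3+j_2)}$.

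With these at hand I would plug into \eqref{eqn:ordinary31}, bound $|\psi_{H_1}|$ and $|\psi_{H_2}|$ by the fast decay \eqref{eqn:fastdecay}, and sum first over $k_1$: since $\sqrt{\log(3+|2^{j_1}x-k_1|)}(3+|2^{j_1}x-k_1|)^{-4} \le C(3+|2^{j_1}x-k_1|)^{-3}$, the boundedness of the function in \eqref{eqn:fctbounded} produces a factor $C\sqrt{\log(3+j_1)}\sqrt{\log(3+|2^{j_2}x-k_2|)}$. Extracting also $\sqrt{j_2-n+1}\sqrt{\log(3+j_2)}$ from the $k_2$-weight, what remains is
\[
 C\sqrt{\log(3+j_1)}\sqrt{\log(3+j_2)}\sqrt{j_2-n+1}\int_{-\infty}^{s}\sum_{k_2\in\Z_{j_2}[t,s]}\frac{\sqrt{\log(3+|2^{j_2}x-k_2|)}}{(3+|2^{j_2}x-k_2|)^{4}}\,dx,
\]
and, since $\sqrt{\log(3+u)}(3+u)^{-4} \le C(3+u)^{-3}$ and $|2^{j_2}x-k_2| = k_2-2^{j_2}x$ for $x \le s$, the integral is $\le C2^{-j_2}$ exactly as in \eqref{eqn:boundfor21}. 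This yields the announced bound for \eqref{eqn:ordinary31}, and \eqref{eqn:ordinary32} follows identically. I expect the only genuinely delicate point to be the preliminary geometric inequality: the centered weight $\sqrt{\log(3+j_1+|k_1-k_{j_1}(t)|)}$ grows with $|k_1|$ while $x$ runs over the unbounded half-line $(-\infty,s)$, and it is precisely this inequality -- which uses crucially that $j_1 < n$, so that the scale-$j_1$ grid is coarser than the gap $2^{-n}$ -- that lets one absorb that growth into the decay of $\psi_{H_2}$ and thereby reduce everything to the estimates already carried out for Lemma \ref{lem:rapid3}.
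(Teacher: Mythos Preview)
Your proof is correct and follows essentially the same route as the paper's own argument: both hinge on the geometric inequality $|2^{j_1}x-k_{j_1}(t)|\le |2^{j_2}x-k_2|+2$ for $x\le s$ and $k_2\in\Z_{j_2}[t,s]$ (you reach it via the intermediate point $s$, the paper via $2^{j_1-j_2}k_2$), then split the $k_1$- and $k_2$-weights exactly as you do, and finish with the boundedness of \eqref{eqn:fctbounded} and the integral estimate \eqref{eqn:boundfor21}.
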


\begin{proof}
Again we assume $s \leq t$. First, using the fast decay of the fractional antiderivatives of $\psi$ \eqref{eqn:fastdecay}, \eqref{eqn:ordinary31} is bounded from above by
\begin{align}\label{eqn:demoordistep2}
\int_{- \infty}^{s} \sum_{k_1 \in \Z} \sum_{k_2 \in \Z_{j_2}[t,s]}  \frac{\sqrt{\log(3+j_1+|k_1-k_{j_1}(t)|)}}{(3+|2^{j_1}x-k_1|)^4}  \frac{\sqrt{\log(3+j_2+|k_2-k_{j_2}(t)|)}}{(3+|2^{j_2}x-k_2|)^4} \, dx.
\end{align}
Observe that, for all $k_1 \in \Z$, $k_2 \in \Z_{j_2}[t,s]$ and $x \in (- \infty,s]$, we have, as $j_1<n \leq j_2$,
\begin{align*}
|2^{j_1}x-k_{j_1}(t)| & \leq |2^{j_1}x-2^{j_1-j_2}k_2|+|2^{j_1-j_2}k_2-2^{j_1}t| + |2^{j_1}t-k_{j_1}(t)| \\
& \leq |2^{j_2}x-k_2| +2
\end{align*}
and therefore
\[ |k_1-k_{j_1}(t)| \leq |2^{j_1}x-k_1| + |2^{j_2}x-k_2| +2\]
while
\[|k_2-k_{j_2}(t)| \leq |k_2-2^{j_2}t| + |2^{j_2}t -k_{j_2}(t)| \leq 2^{j_2-n}+1.\]
it allows to write, thanks to inequality \eqref{eqn:inelog}, the boundedness of the function \eqref{eqn:fctbounded} and inequality \eqref{eqn:boundfor21}
\begin{align*}
| \eqref{eqn:demoordistep2}| & \leq C\sqrt{\log(3+j_1)} \sqrt{\log(3+j_2)} \sqrt{j_2-n+1} \int_{- \infty}^{s} \sum_{k_2 \in \Z_{j_2}[t,s]}   \frac{dx}{(3+|2^{j_2}x-k_2|)^{3}} \\
& \leq C \sqrt{\log(3+j_1)} \sqrt{\log(3+j_2)} \sqrt{j_2-n+1} 2^{-j_2}.
\end{align*}
We bound the second sums in the same way.
\end{proof}

\begin{Lemma}\label{lem:ordinary4}
There exists a deterministic constant $C>0$ such that, for all $t,s \in (0,1)$, the quantities
\[
\sum_{0\leq j_1<n} \sum_{j_2 \geq n} \sum_{(k_1,k_2) \in \Z^2} 2^{j_1(1-H_1)} 2^{j_2(1-H_2)} L_{j_1,j_2}^{k_1,k_2}(t) |I_{j_1,j_2}^{k_1,k_2}[t,s] |
\]
\[ \sum_{j_1 \geq n} \sum_{0 \leq j_2 < n} \sum_{(k_1,k_2) \in \Z^2} 2^{j_1(1-H_1)} 2^{j_2(1-H_2)} L_{j_1,j_2}^{k_1,k_2}(t) |I_{j_1,j_2}^{k_1,k_2}[t,s]| \] 
are bounded from above by
\[
C |t-s|^{H_1+H_2-1} \log \log |t-s|^{-1} .
\]
\end{Lemma}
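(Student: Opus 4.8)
The plan is to follow exactly the same scheme as in Lemma \ref{lem:rapid4}, but carrying the point-dependent weights $L_{j_1,j_2}^{k_1,k_2}(t)$ and using the refined estimates of Lemmata \ref{lem:ordinary2} and \ref{lem:ordinary3} in place of Lemmata \ref{lem:rapid2} and \ref{lem:rapid3}. By symmetry in $(j_1,j_2)$ it suffices to treat the first quantity, with $0\le j_1<n\le j_2$. As in \eqref{eqn:lemma3121}--\eqref{eqn:lemma3123}, I would split the inner sum over $k_2$ according to the partition $\Z=\Z_{j_2}^<(t,s)\cup\Z_{j_2}^>(t,s)\cup\Z_{j_2}[t,s]$ of Definition \ref{def:partitionofZ}, obtaining three subsums.

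For the subsums indexed by $\Z_{j_2}^<(t,s)$ and $\Z_{j_2}^>(t,s)$, Lemma \ref{lem:ordinary2} gives that each $(j_1,j_2)$-term is at most $C\sqrt{j_2-n+1}\sqrt{\log(3+j_1)}\sqrt{\log(3+j_2)}2^{-j_2}$. Summing the factor $2^{j_1(1-H_1)}\sqrt{\log(3+j_1)}$ over $0\le j_1<n$ contributes $C2^{n(1-H_1)}\sqrt{\log n}$ by \eqref{eqn:ineserie6}, while summing $2^{j_2(1-H_2)}2^{-j_2}\sqrt{j_2-n+1}\sqrt{\log(3+j_2)}=2^{-j_2H_2}\sqrt{j_2-n+1}\sqrt{\log(3+j_2)}$ over $j_2\ge n$ gives, since $H_2>0$ and the geometric decay dominates the polynomial/logarithmic growth in $j_2$, a bound $C2^{-nH_2}\sqrt{\log n}$ (here $\log(3+j_2)\le\log(3+2^{j_2+1})\le C(j_2+1)$, so the series $\sum_{j_2\ge n}2^{-j_2H_2}(j_2+1)^{3/2}$ behaves like its first term up to a constant). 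Multiplying, the total is at most $C2^{n(1-H_1-H_2)}\log n\le C|t-s|^{H_1+H_2-1}\log\log|t-s|^{-1}$, using $2^{-n-1}<|t-s|\le 2^{-n}$ so that $n\asymp\log|t-s|^{-1}$ and $\log n\asymp\log\log|t-s|^{-1}$.

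For the subsum indexed by $\Z_{j_2}[t,s]$ I would use the decomposition \eqref{eqn:split} of $I_{j_1,j_2}^{k_1,k_2}[t,s]$ into the full-line integral minus the two tail integrals. The two tail contributions are bounded via Lemma \ref{lem:ordinary3}, which again produces the factor $\sqrt{j_2-n+1}\sqrt{\log(3+j_1)}\sqrt{\log(3+j_2)}2^{-j_2}$, and summing over $j_1,j_2$ exactly as above yields the desired $C|t-s|^{H_1+H_2-1}\log\log|t-s|^{-1}$. The full-line integral $\int_\R\psi_{H_1}(2^{j_1}x-k_1)\psi_{H_2}(2^{j_2}x-k_2)\,dx=I_{j_1,j_2}^{k_1,k_2}$ vanishes by Lemma \ref{Lemma:intR} unless $|j_1-j_2|\le 1$, which, combined with $j_1<n\le j_2$, forces $(j_1,j_2)=(n-1,n)$. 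In that single term one has $\#\Z_n[t,s]\le 2$ and $|k_2|\le 2^n$ for $k_2\in\Z_n[t,s]$, so $|k_2-k_{j_2}(t)|\le 2^n+1$ and hence $\sqrt{\log(3+n+|k_2-k_{j_2}(t)|)}\le C\sqrt n$; then the decay bound $|I_{n-1,n}^{k_1,k_2}|\le C_L 2^{-n}(3+|2k_1-k_2|)^{-L}$ of Lemma \ref{Lemma:intR} together with the summation inequality \eqref{eqn:sumlog} over $k_1$ gives, just as in Lemma \ref{lem:rapid4}, a bound $C2^{-n}n$ for the double sum over $(k_1,k_2)$; multiplying by $2^{(n-1)(1-H_1)}2^{n(1-H_2)}$ gives $C2^{n(1-H_1-H_2)}n\cdot 2^{-n}\le C2^{n(1-H_1-H_2)}n$, which is again absorbed into $C|t-s|^{H_1+H_2-1}\log\log|t-s|^{-1}$ (indeed $2^{n(1-H_1-H_2)}n\le 2^{n(1-H_1-H_2)}\log\log|t-s|^{-1}\cdot C$ is a weaker requirement than what the other terms already satisfy, since $n\le C\log|t-s|^{-1}$ and the extra factor is harmless given the strictly geometric gain $2^{n(1-H_1-H_2)}=o(|t-s|^{H_1+H_2-1}\cdot\,\text{anything})$ is false, so more carefully: $2^{n(1-H_1-H_2)}\asymp|t-s|^{H_1+H_2-1}$ and the leftover factor is $n\le C\log|t-s|^{-1}$; but the target has only $\log\log|t-s|^{-1}$). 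The main obstacle is exactly this last point: the vanishing-moment argument of Lemma \ref{Lemma:intR} must be invoked to kill all full-line integrals except $(j_1,j_2)=(n-1,n)$, and for that isolated term one must check that the crude bound $C2^{-n}n$ suffices, i.e. that after multiplication by $2^{j_1(1-H_1)+j_2(1-H_2)}$ the remaining power of $2$ is $2^{-n}2^{n(2-H_1-H_2)}=2^{n(1-H_1-H_2)}$ with only a factor $n$ — and since $2^{-n}$ already provides an extra geometric factor beyond $2^{n(1-H_1-H_2)}\asymp|t-s|^{H_1+H_2-1}$, even the factor $n$ is more than absorbed, so this term is in fact negligible compared with the $\Z_{j_2}^{<,>}$ contributions, which are the ones that actually produce the sharp $\log\log|t-s|^{-1}$.
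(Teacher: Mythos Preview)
Your overall scheme is the same as the paper's and the treatment of the $\Z_{j_2}^<$, $\Z_{j_2}^>$ and tail parts via Lemmata \ref{lem:ordinary2}, \ref{lem:ordinary3}, \eqref{eqn:ineserie6} and \eqref{eqn:ineserie7} is correct. The genuine gap is in the isolated full--line term $(j_1,j_2)=(n-1,n)$, where your argument does not give the required bound.

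First, your estimate $|k_2-k_{j_2}(t)|\le 2^n+1$ for $k_2\in\Z_n[t,s]$ is far too crude: since $k_2 2^{-n}\in[\min\{s,t\},\max\{s,t\}]$ and $|t-s|\le 2^{-n}$, one actually has $|k_2-k_n(t)|\le 2$, hence $\sqrt{\log(3+n+|k_2-k_n(t)|)}\le C\sqrt{\log n}$, not $\sqrt n$. Second, you cannot simply import the $C\,2^{-n}n$ bound from Lemma \ref{lem:rapid4}: that was obtained with the global weights $L_{n-1,n}^{k_1,k_2}$, whereas here the weights are $L_{n-1,n}^{k_1,k_2}(t)$. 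Observing that $|k_2-2k_{n-1}(t)|\le O(1)$ for $k_2\in\Z_n[t,s]$, one gets $|k_1-k_{n-1}(t)|\le \tfrac12|2k_1-k_2|+O(1)$, so by \eqref{eqn:inelog}
\[
\sqrt{\log\bigl(3+(n-1)+|k_1-k_{n-1}(t)|\bigr)}\le C\sqrt{\log n}\,\sqrt{\log(3+|2k_1-k_2|)},
\]
and summing against $(3+|2k_1-k_2|)^{-L}$ yields only $C\sqrt{\log n}$. Altogether the $(n-1,n)$ double sum is bounded by $C\,2^{-n}\log n$, and after multiplication by $2^{(n-1)(1-H_1)}2^{n(1-H_2)}$ one obtains $C\,2^{n(1-H_1-H_2)}\log n\asymp C\,|t-s|^{H_1+H_2-1}\log\log|t-s|^{-1}$, as required.

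Finally, your attempted rescue via an ``extra $2^{-n}$'' is an arithmetic slip: $2^{(n-1)(1-H_1)}2^{n(1-H_2)}\cdot 2^{-n}=2^{n(1-H_1-H_2)-(1-H_1)}\asymp 2^{n(1-H_1-H_2)}$, so there is no additional geometric factor to absorb the spurious $n$ produced by the crude bound.
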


\begin{proof}
The proof is exactly the same as the one of Lemma \ref{lem:rapid4} excepted that we use Lemmata \ref{lem:ordinary2} and \ref{lem:ordinary3} instead of Lemmata \ref{lem:rapid2} and \ref{lem:rapid3} respectively and that we conclude using again \eqref{eqn:ineserie6} instead of \eqref{eqn:ineserie1} and
\begin{equation}\label{eqn:ineserie7}
\sum_{j_2=n}^{+ \infty} 2^{ -j_2 H_2} \sqrt{j_2-n+1} \sqrt{\log(3+j_2)} \leq C' 2^{- n H_2} \sqrt{\log(n)}.
\end{equation}
instead of \eqref{eqn:ineserie2}.
\end{proof}

\begin{Lemma}\label{lem:ordinary5}
There exists a deterministic constant $C>0$ such that, for all $t,s \in (0,1)$ and $n \leq j_1 \leq j_2$,
the quantities \eqref{eqn:r<>3t} and \eqref{eqn:r<>2t} are bounded from above by
\[
C \sqrt{j_2-n+1} \sqrt{j_1-n+1} \sqrt{\log(3+j_1)}\sqrt{\log(3+j_2)}2^{-j_2}.
\]
\end{Lemma}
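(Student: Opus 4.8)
The plan is to mirror the proof of Lemma~\ref{lem:rapid2}, but now with the twist that both scales $j_1$ and $j_2$ are larger than $n$, so the logarithmic factor $L_{j_1,j_2}^{k_1,k_2}(t)$ must be controlled using the \emph{second} part of Lemma~\ref{Lemma:ordinary} for \emph{both} the $k_1$-sum and the $k_2$-sum, rather than the first part for the $k_1$-sum as in Lemma~\ref{lem:ordinary2}. This is precisely where the two extra square-root factors $\sqrt{j_1-n+1}$ and $\sqrt{j_2-n+1}$ in the claimed bound come from.

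First I would treat \eqref{eqn:r<>3t}, the argument for \eqref{eqn:r<>2t} being identical after exchanging the roles of $\min$ and $\max$. Using the fast decay \eqref{eqn:fastdecay} of the fractional antiderivatives, I bound the quantity by
\[
C \int_{[s,t]} \left( \sum_{k_1 \in \Z} \frac{\sqrt{\log(3+j_1+|k_1-k_{j_1}(t)|)}}{(3+|2^{j_1}x-k_1|)^4} \right) \left( \sum_{k_2 \in \Z_{j_2}^<(t,s)} \frac{\sqrt{\log(3+j_2+|k_2-k_{j_2}(t)|)}}{(3+|2^{j_2}x-k_2|)^4} \right) dx.
\]
Since $j_1 \geq n$, the second point of Lemma~\ref{Lemma:ordinary} bounds the inner $k_1$-sum by $C\sqrt{j_1-n+1}\sqrt{\log(3+j_1)}$, which is a constant independent of $x$ and can be pulled out of the integral. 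For the remaining $k_2$-sum over $\Z_{j_2}^<(t,s)$, I would reproduce the estimate from the proof of Lemma~\ref{lem:ordinary2}: for $k_2 \in \Z_{j_2}^<(t,s)$ and $x \in [s,t]$ one has $|k_2-k_{j_2}(t)| \leq |2^{j_2}x-k_2| + 2^{j_2-n}+1$, so by \eqref{eqn:inelog},
\[
\sqrt{\log(3+j_2+|k_2-k_{j_2}(t)|)} \leq C \sqrt{j_2-n+1}\sqrt{\log(3+j_2)}\sqrt{\log(3+|2^{j_2}x-k_2|)},
\]
and then the bound \eqref{eqn:intavec2} gives $\int_{[s,t]} \sum_{k_2 \in \Z_{j_2}^<(t,s)} (3+|2^{j_2}x-k_2|)^{-3}\, dx \leq C 2^{-j_2}$. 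Multiplying the three factors together yields exactly $C\sqrt{j_1-n+1}\sqrt{j_2-n+1}\sqrt{\log(3+j_1)}\sqrt{\log(3+j_2)}2^{-j_2}$, as claimed.

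There is no serious obstacle here; this lemma is genuinely a routine variant of Lemmata~\ref{lem:rapid2} and \ref{lem:ordinary2}, and the only point requiring a modicum of care is bookkeeping which part of Lemma~\ref{Lemma:ordinary} applies to which index: since we are now in the regime $n \leq j_1 \leq j_2$, \emph{both} inner sums fall under part~(2), producing the symmetric pair of square-root corrections. One should also check that the monotonicity argument justifying \eqref{eqn:intavec2} (namely that $y \mapsto (2+2^{j_2}x - 2^{j_2}\min\{s,t\}+y)^{-3}$ is decreasing) is unaffected by the value of $j_1$, which it plainly is. The rest is the same chain of inequalities already used twice above.
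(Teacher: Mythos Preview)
Your proposal is correct and follows exactly the approach indicated by the paper, which simply states that the proof is the same as that of Lemma~\ref{lem:ordinary2} except that one uses the second part of Lemma~\ref{Lemma:ordinary} instead of the first (because now $j_1\geq n$). You have spelled out precisely this modification, including the unchanged treatment of the $k_2$-sum via \eqref{eqn:inelog} and \eqref{eqn:intavec2}.
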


\begin{proof}
The proof is exactly the same as for Lemma \ref{lem:ordinary2} except that, here, we use the second part of Lemma \ref{Lemma:ordinary} instead of the first one.
\end{proof}

\begin{Lemma}\label{lem:ordinary6}
There exists a deterministic constant $C>0$ such that, for all $t,s \in (0,1)$ and $n \leq j_1 \leq j_2$
the quantities \eqref{eqn:ordinary31} and \eqref{eqn:ordinary32} are bounded from above by
\[
C \sqrt{j_1-n+1} \sqrt{j_2-n+1} \sqrt{\log(3+j_1)}\sqrt{\log(3+j_2)}2^{-j_2}.
\]
\end{Lemma}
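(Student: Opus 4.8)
The plan is to transcribe the proof of Lemma~\ref{lem:ordinary3} almost word for word, the only change being that the hypothesis $0\le j_1<n$ there is replaced by $n\le j_1\le j_2$, which will cost one extra logarithmic factor. As in Lemma~\ref{lem:ordinary3} I would assume $s\le t$ (the case $t<s$ being symmetric) and, using the fast decay \eqref{eqn:fastdecay} of the fractional antiderivatives of $\psi$, bound \eqref{eqn:ordinary31} from above by
\[
\int_{-\infty}^{s}\sum_{k_1\in\Z}\sum_{k_2\in\Z_{j_2}[t,s]}\frac{\sqrt{\log(3+j_1+|k_1-k_{j_1}(t)|)}}{(3+|2^{j_1}x-k_1|)^4}\,\frac{\sqrt{\log(3+j_2+|k_2-k_{j_2}(t)|)}}{(3+|2^{j_2}x-k_2|)^4}\,dx,
\]
and similarly for \eqref{eqn:ordinary32} with $\int_{t}^{+\infty}$ in place of $\int_{-\infty}^{s}$.

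\textbf{Step 1 (index estimates).} For $k_2\in\Z_{j_2}[t,s]$ one still has, exactly as in Lemma~\ref{lem:ordinary3}, $|k_2-k_{j_2}(t)|\le|k_2-2^{j_2}t|+|2^{j_2}t-k_{j_2}(t)|\le 2^{j_2-n}+1$, whence $\sqrt{\log(3+j_2+|k_2-k_{j_2}(t)|)}\le C\sqrt{j_2-n+1}\,\sqrt{\log(3+j_2)}$ by \eqref{eqn:inelog}. For the index $k_1$ the new point is that, since now $n\le j_1\le j_2$, for all $x\le s$ and $k_2\in\Z_{j_2}[t,s]$,
\[
|2^{j_1}x-k_{j_1}(t)|\le 2^{j_1-j_2}|2^{j_2}x-k_2|+2^{j_1-j_2}|k_2-2^{j_2}t|+|2^{j_1}t-k_{j_1}(t)|\le|2^{j_2}x-k_2|+2^{j_1-n+1}+1,
\]
so that $|k_1-k_{j_1}(t)|\le|2^{j_1}x-k_1|+|2^{j_2}x-k_2|+2^{j_1-n+2}$. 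Applying \eqref{eqn:inelog} repeatedly and using $\log(3+2^{j_1-n+2})\le C(j_1-n+1)$ gives
\[
\sqrt{\log(3+j_1+|k_1-k_{j_1}(t)|)}\le C\sqrt{j_1-n+1}\,\sqrt{\log(3+j_1)}\,\sqrt{\log(3+|2^{j_1}x-k_1|)}\,\sqrt{\log(3+|2^{j_2}x-k_2|)}.
\]
This is precisely where the extra factor $\sqrt{j_1-n+1}$, absent in Lemma~\ref{lem:ordinary3} because there $j_1<n$ forced $|2^{j_1-j_2}k_2-2^{j_1}t|\le 2$, enters.

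\textbf{Step 2 (summation).} Plugging the two displayed bounds into the integral, the $k_1$-summand becomes at most $C(3+|2^{j_1}x-k_1|)^{-3}$, so $\sum_{k_1\in\Z}$ of it is bounded by a universal constant via the boundedness of the function in \eqref{eqn:fctbounded} with $M=3$, while the $k_2$-summand becomes at most $C(3+|2^{j_2}x-k_2|)^{-3}$; then \eqref{eqn:boundfor21} yields $\int_{-\infty}^{s}\sum_{k_2\in\Z_{j_2}[t,s]}(3+|2^{j_2}x-k_2|)^{-3}\,dx\le C 2^{-j_2}$. Altogether one obtains $\eqref{eqn:ordinary31}\le C\sqrt{j_1-n+1}\sqrt{j_2-n+1}\sqrt{\log(3+j_1)}\sqrt{\log(3+j_2)}2^{-j_2}$, and \eqref{eqn:ordinary32} is estimated in the same way using the analogue of \eqref{eqn:boundfor21} on $[t,+\infty)$.

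I do not expect any genuine obstacle: the only new ingredient is the bound $|2^{j_1-j_2}k_2-2^{j_1}t|=2^{j_1-j_2}|k_2-2^{j_2}t|\le 2^{j_1-n+1}$ (which replaced a bounded quantity in Lemma~\ref{lem:ordinary3}), and the only point requiring a little care is to carry the resulting term $2^{j_1-n+2}$ through \eqref{eqn:inelog} so as to produce the clean factor $\sqrt{j_1-n+1}$; everything else is a verbatim repetition of the proof of Lemma~\ref{lem:ordinary3}.
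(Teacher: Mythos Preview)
Your proof is correct and follows essentially the same approach as the paper: the paper's own proof is the one-line remark that one repeats the argument of Lemma~\ref{lem:ordinary3}, replacing the use of the first part of Lemma~\ref{Lemma:ordinary} by the second (i.e.\ accounting for $j_1\ge n$ via the extra factor $\sqrt{j_1-n+1}$), and you have spelled out precisely these details. The key new estimate $|2^{j_1-j_2}k_2-2^{j_1}t|\le 2^{j_1-n}$ (in place of the bound by $1$ when $j_1<n$) is exactly the modification needed, and your subsequent use of \eqref{eqn:inelog}, the boundedness of \eqref{eqn:fctbounded}, and \eqref{eqn:boundfor21} matches the paper's argument.
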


\begin{proof}
The proof is exactly the same as for Lemma \ref{lem:ordinary3} except that, here, we use the second part of Lemma \ref{Lemma:ordinary} instead of the first one.
\end{proof}

Just as we did for the rapid points, it remains us to bound the random variables $\Xi_j(\lambda)$. Here, we don't want anymore to show the existence of an uniform modulus but only a pointwise modulus of continuity at a fixed point of interest $t$. Therefore, we just have to bound, for all $n \in \N$ the random variables $\Xi_j(\lambda)$ for $j \geq n$ and $\lambda \in  3 \lambda_n(t)$. We thus have the following result.

\begin{Lemma}\label{lemma:borelcantellilocal}
For all $t \in (0,1)$, there exist an event $\widetilde{\Omega_t}$ of probability $1$ and a positive random variable $C_{t,2}$ with finite moment of any order such that, on $\widetilde{\Omega_t}$,
\begin{equation}\label{eqn:borelcantellilocal}
\forall  n \in \N, ~ \forall  \lambda  \in 3\lambda_n(t),~ \forall j \geq n,~ \Xi_j(\lambda) \leq C_{t,2} \, (j-n+1)\log(n).
\end{equation}
\end{Lemma}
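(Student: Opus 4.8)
The plan is to mimic the proof of Lemma~\ref{lemma:borelcantellitotal}, the only difference being that we now restrict the family of dyadic intervals to the bounded collection $3\lambda_n(t)$ instead of all of $\Lambda_n$, which lets us take a strictly weaker (merely logarithmic in $n$) growth rate. First I would fix $t\in(0,1)$ and a parameter $\theta>0$, and for each $n\in\N$ introduce the event
\[
B_n := \Bigl\{\, \forall \lambda\in 3\lambda_n(t),~\forall j\geq n,~ \Xi_j(\lambda)\leq \theta\,(j-n+1)\log(n+2)\,\Bigr\}.
\]
(One should use $\log(n+2)$ rather than $\log(n)$, or simply work for $n$ large enough, to avoid the degeneracy at $n=0,1$; this only changes the random constant $C_{t,2}$.) The complement $B_n^c$ is the event that there exist $\lambda\in 3\lambda_n(t)$, $j\geq n$, $\ell\in\{0,1,2\}$ and $(k^{(1)},K^{(1)},k^{(2)},K^{(2)})\in S_j^\ell(\lambda)$ with the normalized sum exceeding $\theta(j-n+1)\log(n+2)$.

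Next I would bound $\PP(B_n^c)$ by a union bound. The crucial counting facts are that $\#\{\lambda\in 3\lambda_n(t)\}\leq 3$ (a deterministic bound, independent of $n$), that $\#S_j^\ell(\lambda)\leq 2^{4(j-n)}$ as already noted, and that each individual normalized sum obeys the subexponential tail of Theorem~\ref{thm:bound}: for $\theta(j-n+1)\log(n+2)\geq 2$,
\[
\PP\Bigl(\tfrac{|\sideset{_j^\ell}{_{k^{(1)},K^{(1)}}^{k^{(2)},K^{(2)}}}\sum|}{\|\sideset{_j^\ell}{_{k^{(1)},K^{(1)}}^{k^{(2)},K^{(2)}}}\sum\|_{L^2(\Omega)}}\geq \theta(j-n+1)\log(n+2)\Bigr)\leq \exp\bigl(-\overset{\star}{C}\,\theta(j-n+1)\log(n+2)\bigr).
\]
Combining these,
\[
\PP(B_n^c)\leq C\sum_{j\geq n}2^{4(j-n)}\exp\bigl(-\overset{\star}{C}\theta(j-n+1)\log(n+2)\bigr)
= C\,(n+2)^{-\overset{\star}{C}\theta}\sum_{m\geq 0}2^{4m}(n+2)^{-\overset{\star}{C}\theta m},
\]
after substituting $m=j-n$. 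For $n$ large enough the inner series converges and is bounded uniformly, so $\PP(B_n^c)\leq C(n+2)^{-\overset{\star}{C}\theta}$, which is summable in $n$ as soon as $\theta>1/\overset{\star}{C}$. By the Borel--Cantelli lemma, on an event $\widetilde{\Omega_t}$ of probability $1$ only finitely many $B_n^c$ occur, hence there is a (finite, random) $N(\omega)$ beyond which $B_n$ holds; absorbing the finitely many small scales into a random constant $C_{t,2}$ with finite moments of all orders (each $\Xi_j(\lambda)$ for $n<N(\omega)$ being an almost surely finite random variable, and the moment bound following from Theorem~\ref{thm:bound} as in Lemma~\ref{lemma:borelcantellitotal}) yields~\eqref{eqn:borelcantellilocal}.

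The only genuinely new point compared with Lemma~\ref{lemma:borelcantellitotal} is that the factor $2^n$ counting all dyadic intervals of scale $n$ is replaced by the constant $3$; this is precisely what allows the logarithmic-in-$n$ rate $(j-n+1)\log n$ instead of the polynomial rate $(j-n+1)n$, since we no longer need the $\exp(-\overset{\star}{C}\theta n)$ factor to beat $2^n$. I do not expect any serious obstacle: the argument is a routine Borel--Cantelli estimate, and the main care is bookkeeping — handling small $n$ so that $\log n$ makes sense, and checking that the random constant inherits finite moments of every order, which is standard once one notes that the moments of $\Xi_j(\lambda)$ are controlled uniformly by Theorem~\ref{thm:bound}.
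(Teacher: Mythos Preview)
Your proposal is correct and follows essentially the same Borel--Cantelli argument as the paper: define the events $A_n(t)$ (your $B_n$), bound $\PP(A_n(t)^c)$ by a union bound using $\#\,3\lambda_n(t)\leq 3$, $\#S_j^\ell(\lambda)\leq 2^{4(j-n)}$ and Theorem~\ref{thm:bound}, then sum. Your treatment is in fact slightly more careful than the paper's, which writes $\log(n)$ without addressing $n\in\{0,1\}$ and takes $\theta>4\log(2)/\overset{\star}{C}$ (so that the inner geometric series converges for every $n$), whereas you use $\log(n+2)$ and only require $\theta>1/\overset{\star}{C}$, letting the inner series converge for $n$ large and absorbing the rest into $C_{t,2}$.
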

\begin{proof}
If $t \in (0,1)$ is fixed and $\theta>0$, let us define the event
\[ A_n(t) = \{ \forall \lambda \in 3\lambda_n(t) ~ \forall j \geq n, ~ \Xi_j(\lambda) \leq \theta (j-n+1) \log(n)\}.\]
Similarly to Lemma \ref{lemma:borelcantellitotal}, we get
\begin{align*}
\mathbb{P}(A_n(t)^c) & \leq C \sum_{j \geq n} 2^{4(j-n)} \exp(- \overset{\star}{C}\theta(j-n+1)\log(n)) \\
&  \leq C \exp(-\overset{\star}{C} \theta \log(n)) \sum_{j \geq n} 2^{4(j-n)} \exp(- \overset{\star}{C}\theta(j-n)),
\end{align*}
for a determistic constant $C>0$. Therefore, if we take again $\theta > 4 \log(2)/\overset{\star}{C}$ then Borel-Cantelli Lemma implies the existence of an event $\widetilde{\Omega_t}$ of probability $1$ and $C_{t,2}$ a positive random variable of finite moment of any order such that, on $\widetilde{\Omega_t}$, assertion \eqref{eqn:borelcantellilocal} holds.
\end{proof}

\begin{Lemma}\label{lem:ordinary7}
If $t \in (0,1)$, let $\Omega_t^*$ be the event of probability $1$ where inequality \eqref{eqn:ordinary} holds and $\widetilde{\Omega_t}$ be the event of probability $1$ given by Lemma \ref{lemma:borelcantellilocal}. There exists a positive random variable $C_{t,3}$  with finite moment of any order such that, on $\Omega_t^* \cap \widetilde{\Omega_t}$, for all $s \in (0,1)$ the random variable
\begin{equation}\label{eqn:lastrvord}
\left| \sum_{j_1 \geq n} \sum_{j_2 > n} \sum_{(k_1,k_2) \in \Z^2} 2^{j_1(1-H_1)} 2^{j_2(1-H_2)} \varepsilon_{j_1,j_2}^{k_1,k_2} I_{j_1,j_2}^{k_1,k_2}[t,s] \right|
\end{equation}
is bounded from above by
\[
C_{t,3} |t-s|^{H_1+H_2-1} \log |t-s|^{-1}.
\]
\end{Lemma}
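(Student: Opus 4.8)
The plan is to mirror the proof of Lemma \ref{lem:rapid5} exactly, with the single modification of replacing the uniform controls on the random variables by their localized counterparts at the point $t$. Concretely, I would start from the decomposition \eqref{somme43}, splitting the sum over $j_1 \geq n$, $j_2 > n$ into the part where $j_2 \geq j_1$ and the part where $j_1 > j_2$, and treat only the first (the second being symmetric). Then, exactly as in \eqref{eqn:lemma3191}--\eqref{eqn:lemma3193}, I would further split according to whether $k_2 \in \Z_{j_2}^<(t,s)$, $k_2 \in \Z_{j_2}^>(t,s)$, or $k_2 \in \Z_{j_2}[t,s]$.

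For the first two subsums, I would invoke inequality \eqref{eqn:ordinary} (in place of \eqref{eqn:moustique}) together with Lemma \ref{lem:ordinary5} (in place of Lemma \ref{lem:rapid2}), which produces an extra factor $\sqrt{j_1-n+1}\sqrt{j_2-n+1}$ compared to the rapid-point estimate. One then sums the resulting deterministic series: the summability in $j_1$ and $j_2$ is governed by the negative exponents $-j_2H_2$ and $1-H_1-H_2 < 0$, so the polynomial corrections $\sqrt{j-n+1}\sqrt{\log(3+j)}$ are harmless, and by the analogue of \eqref{eqn:ineserie7} one gets a bound of the form $C\,C_{t,1}\,2^{n(1-H_1-H_2)}\log n$. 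For the subsum over $k_2 \in \Z_{j_2}[t,s]$, I would use the splitting \eqref{eqn:split} of $I_{j_1,j_2}^{k_1,k_2}[t,s]$ into the full-line integral plus the two tails $\int_{-\infty}^{s}$ and $\int_t^{+\infty}$; the tails are controlled by \eqref{eqn:ordinary} and Lemma \ref{lem:ordinary6} (in place of Lemma \ref{lem:rapid3}), again yielding $C\,C_{t,1}\,2^{n(1-H_1-H_2)}\log n$. The full-line term $\sum_{k_1,k_2} \varepsilon_{j_1,j_2}^{k_1,k_2} I_{j_1,j_2}^{k_1,k_2}$ vanishes unless $|j_1-j_2|\leq 1$ by Lemma \ref{Lemma:intR}, so it reduces, via the random variables \eqref{eqn:rvint2}--\eqref{eqn:rvint3}, to estimating $\Xi_j(\lambda)$ for $j \geq n$ and $\lambda \in 3\lambda_n(t)$ only. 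Here is where Lemma \ref{lemma:borelcantellilocal} replaces Lemma \ref{lemma:borelcantellitotal}: combined with the $L^2$-bound \eqref{eqn:boundl2}, it gives, as in the last display of the proof of Lemma \ref{lem:rapid5},
\[
\sum_{j \geq n} 2^{j(2-H_1-H_2)} \cdot 2^{\frac{-j-n}{2}} (j-n+1)\log(n) \leq C\,C_{t,2}\, 2^{n(1-H_1-H_2)}\log n,
\]
using $\frac{3}{2} < H_1+H_2$. Collecting all contributions and recalling that $2^{-n} \asymp |t-s|$ and $n \asymp \log|t-s|^{-1}$, so $2^{n(1-H_1-H_2)}\log n \asymp |t-s|^{H_1+H_2-1}\log|t-s|^{-1}$, one sets $C_{t,3} := C\max\{C_{t,1},C_{t,2}\}$ and the stated bound follows on $\Omega_t^* \cap \widetilde{\Omega_t}$.

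There is no genuinely new obstacle here: the whole point is that the proof is structurally identical to that of Lemma \ref{lem:rapid5}, and all the analytic work has already been front-loaded into Lemmata \ref{lem:ordinary5}, \ref{lem:ordinary6} and \ref{lemma:borelcantellilocal}. The one place that requires a moment's care is making sure that the extra factors $\sqrt{j_1-n+1}\,\sqrt{j_2-n+1}$ arising from the localized estimates (as opposed to the clean $\sqrt{j}$ in the uniform case) really are absorbed by the geometric decay in the $j$-summations; but since $1-H_1-H_2<0$ this is immediate, and it is precisely this trade-off — a worse polynomial factor against a better logarithmic factor $\log n$ rather than $n$ — that yields the iterated logarithm $\log\log|t-s|^{-1}$ in \eqref{eqn:mainthm1} once this lemma is fed into the proof of Proposition \ref{Prop:ordinary}.
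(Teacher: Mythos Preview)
Your plan is the right one and matches the paper's proof almost step for step. There is, however, one step you elide that the paper does spell out. After using Lemma~\ref{Lemma:intR} to reduce the full-line term to the case $|j_1-j_2|\leq 1$, the remaining sum is over $k_1 \in \Z$ and $k_2 \in \Z_{j_2}[t,s]$; it does \emph{not} reduce to $\Xi_j(\lambda)$ alone. One must still split $k_1$ into $\Z_j^<(t,s)$, $\Z_j^>(t,s)$, and $\Z_j[t,s]$, i.e.\ handle the three sums \eqref{eqn:r>>6}, \eqref{eqn:r>>5}, \eqref{eqn:r>>3}. Only \eqref{eqn:r>>3} is controlled by $\Xi_j(\lambda)$ via Lemma~\ref{lemma:borelcantellilocal} and \eqref{eqn:boundl2}. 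The other two, \eqref{eqn:r>>6} and \eqref{eqn:r>>5}, need a direct estimate: the paper observes that for $k_2 \in \Z_j[t,s]$ one has $|k_2-k_j(t)| \leq 2^{j-n}+1$, and for $k_1 \in \Z_j^<(t,s)$ one has $|k_1-k_j(t)| \leq |2^j\min\{s,t\}-k_1|+2^{j-n}+1$, then combines \eqref{eqn:ordinary} with the decay of $I_{j,\ell}^{k_1,k_2}$ from Lemma~\ref{Lemma:intR} exactly as in \eqref{eqn:boundfor26}, yielding the bound $C\,C_{t,1}(j-n+1)\log(3+j)\,2^{-j}$. This sums to $C\,C_{t,1}2^{n(1-H_1-H_2)}\log n$ as required.

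One further minor slip: you write $2^{n(1-H_1-H_2)}\log n \asymp |t-s|^{H_1+H_2-1}\log|t-s|^{-1}$, but in fact $\log n \asymp \log\log|t-s|^{-1}$. The lemma's stated bound (with $\log|t-s|^{-1}$) is of course still correct since $\log\log \leq \log$; the sharper $\log\log$ bound is what actually feeds into Proposition~\ref{Prop:ordinary}, as you note at the end.
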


\begin{proof}
Again, we use the split \eqref{somme43} and we only do the details for the first sum. We deal with the series \eqref{eqn:lemma3191} and \eqref{eqn:lemma3192} in the same way that in Lemma \ref{lem:rapid5} but using inequality \eqref{eqn:ordinary} and Lemmata \ref{lem:ordinary5} and \ref{lem:ordinary6} and finally inequality \eqref{eqn:ineserie7}. 

For \eqref{eqn:lemma3193}, first, by Lemma \ref{lem:ordinary6} and inequality \eqref{eqn:ordinary}, we have, on $\Omega_t^* \cap \widetilde{\Omega_t}$
\begin{align}
\nonumber &\left| \sum_{j_1 \geq n} \sum_{j_2 \geq j_1} \sum_{k_1 \in \Z} \sum_{k_2 \in \Z_{j_2}[t,s]} 2^{j_1(1-H_1)} 2^{j_2(1-H_2)} \varepsilon_{j_1,j_2}^{k_1,k_2} \int_{- \infty}^{s} \psi_{H_1}(2^{j_1} x-k_1) \psi_{H_2}(2^{j_2} x-k_2) \, dx \right| \\
\nonumber \quad & \leq C C_{t,1} \sum_{j_1 \geq n} \sum_{j_2 \geq j_1} 2^{j_1(1-H_1)} 2^{- j_2 H_2 } \sqrt{j_1-n+1} \sqrt{j_2-n+1} \sqrt{\log(3+j_1)}\sqrt{\log(3+j_2)} \\
& \leq C C_1 2^{n(1-H_1-H_2)} \log(n). \label{eqn:ineserie8}
\end{align}
We bound 
\begin{align*}
\left| \sum_{j_1 \geq n} \sum_{j_2 \geq j_1} \sum_{k_1 \in \Z} \sum_{k_2 \in \Z_{j_2}[t,s]} 2^{j_1(1-H_1)} 2^{j_2(1-H_2)} \varepsilon_{j_1,j_2}^{k_1,k_2} \int_{- \infty}^{s} \psi_{H_1}(2^{j_1} x-k_1) \psi_{H_2}(2^{j_2} x-k_2) \, dx \right|
\end{align*}
on $\Omega_t^* \cap \widetilde{\Omega_t}$ exactly in the same way.

To finish the proof, again, we have to bound \eqref{eqn:r>>6}, \eqref{eqn:r>>5} and \eqref{eqn:r>>3} for $\ell \in \{j,j+1 \}$ (with $j \geq n$) on $\Omega_t^* \cap \widetilde{\Omega_t}$. For \eqref{eqn:r>>6}, in the case $\ell=j$, one can note that, for all $k_2 \in \Z_{j}[t,s]$, $|k_2-k_j(t)| \leq 2^{j-n} +1$ and, for all $k_1 \in \Z_j^<(t,s)$, $|k_1-k_j(t)| \leq |2^j\min\{t,s\}-k_1|+2^{j-n}+1$. Using the same tricks as in \eqref{eqn:boundfor26}, we get, on $\Omega_t^* \cap \widetilde{\Omega_t}$
\begin{align*}
|\eqref{eqn:r>>6}| &\leq C C_{t,1} (j-n+1) \log(3+j) 2^{-j} \sum_{k_1 \in \Z_j^<(t,s)} \frac{1}{(2+|2^j\min\{s,t\}-k_1|)^{3}} \\
& \leq C (j-n+1) \log(3+j) 2^{-j}.
\end{align*}
The bounds for \eqref{eqn:r>>5} and in the case $\ell=j+1$  are obtained in the same way. Finally to bound \eqref{eqn:r>>3}, we use \eqref{eqn:borelcantellilocal} and \eqref{eqn:boundl2} and get on $\Omega_t^* \cap \widetilde{\Omega_t}$
\begin{align*}
&\left |  \sum_{j \geq n} 2^{j(2-H_1-H_2)}\sum_{\ell=j}^{j+1} \sum_{k_1 \in \Z_j [t,s]} \sum_{k_2 \in \Z_{\ell}[t,s]} \varepsilon_{j,\ell}^{k_1,k_2} I_{j,\ell}^{k_1,k_2}\right| \\ \quad & \leq C C_{t,2} \sum_{j \geq n} 2^{j(\frac{3}{2}-H_1-H_2)} 2^{-\frac{n}{2}} (j-n+1) \log(n) \\
& \leq  C C_{t,2} 2^{n(1-H_1-H_2)}  \log(n).
\end{align*}
We conclude that \eqref{eqn:lastrvord} is bounded from above by
\[  C \max\{C_{t,1},C_{t,2}\} |t-s|^{H_1+H_2-1} \log |t-s|^{-1}\]
on $\Omega_t^* \cap \widetilde{\Omega_t}$.
\end{proof}

We can now prove Proposition \ref{Prop:ordinary}.

\begin{proof}[Proof of Proposition \ref{Prop:ordinary}]
Let us fix $t \in (0,1)$ and consider $ \omega \in \Omega_t^* \cap \widetilde{\Omega_t}$. For all $s \in (0,1)$, we write\footnote{We recall that $R_{H_1,H_2}'$ is defined in Remark \ref{rmk:method}.} 
\begin{align*}
\begin{split}
|R_{H_1,H_2}'&(t,\omega)-R_{H_1,H_2}'(s,\omega)| \\ & \leq \left| \sum_{0 \leq j_1 < n} \sum_{0 \leq j_2 < n} \sum_{(k_1,k_2) \in \Z^2} 2^{j_1(1-H_1)} 2^{j_2(1-H_2)}  \varepsilon_{j_1,j_2}^{k_1,k_2}(\omega) I_{j_1,j_2}^{k_1,k_2}[t,s]\right| \\
\quad & + \left| \sum_{0 \leq j_1 < n} \sum_{j_2 \geq  n} \sum_{(k_1,k_2) \in \Z^2} 2^{j_1(1-H_1)} 2^{j_2(1-H_2)} \varepsilon_{j_1,j_2}^{k_1,k_2} (\omega) I_{j_1,j_2}^{k_1,k_2}[t,s]\right| \\
& + \left| \sum_{j_1 \geq n} \sum_{0 \leq j_2 < n} \sum_{(k_1,k_2) \in \Z^2} 2^{j_1(1-H_1)} 2^{j_2(1-H_2)}  \varepsilon_{j_1,j_2}^{k_1,k_2}(\omega)  I_{j_1,j_2}^{k_1,k_2}[t,s]\right| \\
& + \left| \sum_{j_1 \geq n} \sum_{j_2 \geq n} \sum_{(k_1,k_2) \in \Z^2} 2^{j_1(1-H_1)} 2^{j_2(1-H_2)}  \varepsilon_{j_1,j_2}^{k_1,k_2}(\omega)  I_{j_1,j_2}^{k_1,k_2}[t,s]\right|.
\end{split}
\end{align*}
We bound from above the first sum by inequality \eqref{eqn:ordinary} and Lemma \ref{lem:ordinary1}, the second and the third sums by inequality \eqref{eqn:ordinary} and Lemma \ref{lem:ordinary4} and the last sum by Lemma \ref{lem:ordinary7}.

Using inequalities \eqref{eqn:ineserie7} and \eqref{eqn:ineserie8} and Remark \ref{rmk:method}, one can finally write that for all $t\in (0,1)$, for all $\omega$ in the event of probability $1$ $\Omega_t^* \cap \widetilde{\Omega_t}$
\[ \limsup_{s \to t} \dfrac{|R_{H_1,H_2}(t,\omega)-R_{H_1,H_2}(s,\omega) |}{|t-s|^{H_1+H_2-1}  \log\log |t-s|^{-1}} < + \infty\]
and we conclude by Fubini Theorem.
\end{proof}

\subsection{Slow points}

In this section, we aim at showing that the generalized Rosenblatt process admits slow points: we prove the following Proposition.

\begin{Prop}\label{prop:slow}
There exists an event $\Omega_{\text{slo}}$ of probability $1$ such that for all $\omega \in \Omega_{\text{slo}}$ there exist $t \in (0,1)$ such that
\begin{equation}\label{eqn:Prop:slow}
\limsup_{s \to t} \dfrac{|R_{H_1,H_2}(t,\omega)-R_{H_1,H_2}(s,\omega) |}{|t-s|^{H_1+H_2-1} } < + \infty.
\end{equation}
\end{Prop}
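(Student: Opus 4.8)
The plan is to exhibit a single (random) point $t\in(0,1)$ at which the oscillation $|R_{H_1,H_2}'(t,\omega)-R_{H_1,H_2}'(s,\omega)|$ is controlled by $|t-s|^{H_1+H_2-1}$, and then invoke Remark~\ref{rmk:method} to transfer the bound to $R_{H_1,H_2}$ itself. Recall from Remark~\ref{rmk:method} that the ``coarse'' scales contribute only a term of order $|t-s|$, so it suffices to handle $R_{H_1,H_2}'$, whose expansion runs over $j_1,j_2\geq 0$. The strategy is the classical ``tree-selection'' argument for slow points: rather than asking for a good estimate of the wavelet-type coefficients at \emph{every} node, one builds a nested sequence of dyadic intervals $\lambda_{n}\in\Lambda_n$, $\lambda_{n+1}\subseteq\lambda_n$, such that along this branch \emph{and on the two neighbouring branches in $3\lambda_n$} all the relevant random quantities $\varepsilon_{j_1,j_2}^{k_1,k_2}(\omega)$ and $\Xi_j(\lambda)$ are bounded by an absolute constant, not merely by the logarithmic factors of Lemma~\ref{Lemma:ayacheesmili}. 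The intersection $\{t\}=\bigcap_n\overline{\lambda_n}$ is then the desired slow point.

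More precisely, I would first record, from Theorem~\ref{thm:bound} (or Theorem~\ref{thm:bound2}), that a second-chaos variable normalised in $L^2(\Omega)$ stays below a fixed threshold $y_0$ with probability at least some $p\in(0,1)$; and similarly, using \eqref{eqn:boundl2}, that for a fixed scale $j\geq n$ and a fixed $\lambda\in\Lambda_n$ the event $\{\Xi_j(\lambda)\leq M\}$ together with the events $\{|\varepsilon_{j_1,j_2}^{k_1,k_2}(\omega)|\leq M\}$ for the finitely many indices with $j_1,j_2=j$ (or $j,j+1$) and $k_1/2^j,k_2/2^j\in\lambda$ has probability bounded below by a constant depending only on $j-n$ and $M$. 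The combinatorial heart of the construction is then a Galton--Watson / percolation-type argument on the binary tree of dyadic intervals inside $[0,1)$: at each refinement step, among the children of $\lambda_n$ we look for one, say $\lambda_{n+1}$, so that the ``all random quantities attached to scales $\geq n+1$ living in $3\lambda_{n+1}$ are $\leq M$'' condition has a chance to be maintained; by choosing $M$ large and exploiting independence across disjoint dyadic intervals (guaranteed by the remark below Theorem~\ref{thm:decompo} and by the fact that we indexed the $S_j^\ell$ sets by $\lambda$ rather than $3\lambda$), one shows that with positive probability an infinite branch survives, and hence almost surely after repeating the construction on a family of independent subtrees. This yields an event $\Omega_{\mathrm{slo}}$ of probability $1$ on which such a $t$ exists.

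Given the point $t$, the oscillation estimate is then a \emph{simplified} rerun of the computations of Section~\ref{sec:upper}: with $n$ defined by $2^{-n-1}<|t-s|\leq 2^{-n}$, I would split the double sum over $(j_1,j_2)$ exactly as in the proof of Proposition~\ref{prop:rapid} into the four blocks ($j_i<n$ or $j_i\geq n$), handle the blocks with at least one index $<n$ verbatim through Lemmata~\ref{lem:rapid1}--\ref{lem:rapid4} --- these already give something of order $|t-s|^{H_1+H_2-1}\log|t-s|^{-1}$, which is fine since we only need finiteness of the slow limit --- and for the block $j_1,j_2\geq n$ reuse the reduction to the sums \eqref{eqn:r>>6}, \eqref{eqn:r>>5}, \eqref{eqn:r>>3} and to the random variables $\Xi_j(\lambda)$ with $\lambda\in 3\lambda_n(t)$. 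The decisive gain is that on $\Omega_{\mathrm{slo}}$ these quantities are now $\leq M$ \emph{uniformly} (no factor $\sqrt{j}$, no factor $(j-n+1)n$, no $\log$ correction), so in the geometric sums of the type \eqref{eqn:ineserie2} and in the $\Xi_j$-estimate $\sum_{j\geq n}2^{j(\frac32-H_1-H_2)}2^{-n/2}$ one is left with a clean $2^{n(1-H_1-H_2)}$, i.e.\ precisely $|t-s|^{H_1+H_2-1}$. Summing the four blocks and adding the $O(|t-s|)$ coming from Remark~\ref{rmk:method} gives \eqref{eqn:Prop:slow}.

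The main obstacle is the tree argument itself: one must set up the ``survival'' events so that (i) they are decreasing enough under refinement to be controllable, (ii) they involve only countably many random variables so that independence across disjoint dyadic intervals genuinely applies --- here the care taken in defining $S_j^\ell(\lambda)$ with $\lambda$ rather than $3\lambda$, and in making the law of $\Xi_j(\lambda)$ depend only on $j-n$, is exactly what makes the percolation estimate go through --- and (iii) they still leave a positive survival probability, which forces $M$ to be chosen after balancing the branching number $2$ of the binary tree against the per-level cost coming from $\#S_j^\ell(\lambda)\leq 2^{4(j-n)}$ and the tail bound $\exp(-\overset{\star}{C}My)$ of Theorem~\ref{thm:bound}. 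Once the point $t$ is produced, the analytic part is routine and strictly easier than what was already done for rapid and ordinary points. Note that, as emphasised in the introductory Remark, this argument gives only the finiteness (upper bound) in \eqref{eqn:Prop:slow}; the positivity of the $\limsup$ would require an optimal almost-sure \emph{lower} uniform modulus of continuity, which is not attempted here.
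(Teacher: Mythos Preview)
Your proposal contains a genuine gap that would make the argument fail. You write that for the blocks with at least one index $j_i<n$ you can ``handle \dots verbatim through Lemmata~\ref{lem:rapid1}--\ref{lem:rapid4} --- these already give something of order $|t-s|^{H_1+H_2-1}\log|t-s|^{-1}$, which is fine since we only need finiteness of the slow limit''. This is not fine: the slow limit in \eqref{eqn:Prop:slow} has denominator $|t-s|^{H_1+H_2-1}$ with \emph{no} logarithmic correction, so a bound of order $|t-s|^{H_1+H_2-1}\log|t-s|^{-1}$ on the oscillation gives a $\limsup$ equal to $+\infty$, not a finite one. The whole point of Proposition~\ref{prop:slow} is to exhibit a point at which even the logarithm disappears; reusing the rapid-point lemmata verbatim for the low-frequency blocks defeats the purpose.

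The paper's actual proof avoids this by a stronger tree construction. The Kahane-type procedure controls $|\varepsilon_\lambda|$ for \emph{every} dyadic interval $\lambda$, not just those in $3\lambda_n(t)$: one obtains $|\varepsilon_\lambda|\leq 2^l\mu$ whenever $\lambda\in\Lambda_j^l(t)$, i.e.\ the bound grows polynomially with the lateral distance $|s(\lambda)-s(\lambda_j(t))|$ (parameter $m$). This graded control replaces the logarithmic weights $L_{j_1,j_2}^{k_1,k_2}$ of Lemma~\ref{Lemma:ayacheesmili} by the weights $y_{\lambda_1}(t)y_{\lambda_2}(t)$, and one then has to \emph{redo} the analogues of Lemmata~\ref{lem:rapid1}--\ref{lem:rapid4} with these new weights (this is Lemmata~\ref{lem:slow1}--\ref{lem:slow6}), checking that the geometric decay of $(3+|2^j x-k|)^{-L}$ beats the polynomial growth $2^l$ provided $1/m<\min\{H_1,H_2\}$. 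Only with this extra work do the low-frequency blocks also come out as $C|t-s|^{H_1+H_2-1}$ without a $\log$. Your survival events, which constrain only the random variables attached to $3\lambda_{n+1}$ at scales $\geq n+1$, leave the bulk of the $\varepsilon_{j_1,j_2}^{k_1,k_2}$ with $j_1<n$ or $j_2<n$ governed solely by Lemma~\ref{Lemma:ayacheesmili}, and that is too weak.
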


In \cite{kahane85}, Kahane described a procedure to insure the existence of slow points for the Brownian motion. This procedure was then generalized in \cite{esserloosveldt} to fit for any arbitrary fractional Brownian motion. It consists in showing that for any $m>0$, almost surely, there exist $\mu >0$ and $t \in (0,1)$ such that, if one sets
\begin{equation}\label{eqn:slownota1}
\Lambda_{j}^0(t) = \{ \lambda \in \Lambda_j : \, |s(\lambda(t))-s(\lambda)| \leq 1 \}
\end{equation}
and, for all $1 \leq l$
\begin{equation}\label{eqn:slownota2}
\Lambda_{j}^l(t) = \{ \lambda \in \Lambda_j,  : \, 2^{m(l-1)}<|s(\lambda(t))-s(\lambda)| \leq 2^{ml} \},
\end{equation}
then, for all $\lambda \in \Lambda_{j}^l(t)$ we have
\begin{equation}\label{eqn:slownesti}
|\varepsilon_\lambda| \leq 2^l \mu,
\end{equation}
where $\varepsilon_\lambda$ is the random variable
\[ 2^\frac{j}{2} \int_\R \psi_\lambda(x) \, dB(x).\]
In this procedure,  if $\mu \in \N$,for all $j,l \in \N_0$ and $\lambda \in \Lambda_j$, $\lambda \subseteq [0,1]$, we define
\[ \Lambda_{j,l}(\lambda) = \{\lambda' \in \Lambda_j,  \, : \, |s(\lambda)-s(\lambda')| \leq 2^{ml} \}\]
and the random set
\[ S_{j,l}^\mu = \{ \lambda' \in \Lambda_j,  \, : \, 2^l
  \mu < |\varepsilon_{\lambda'}| \leq 2^{l+1} \mu \}.\] 
Finally we consider the random set
\[ I_j^\mu=\{\lambda \in \Lambda_j, \lambda \subseteq [0,1] : \, \forall l \in \N_0 , \, \Lambda_{j,l}(\lambda) \cap S_{j,l}^\mu = \cmemptyset \},\]
and show that almost surely, there exists $\mu \in \N$ such that
\[ S_{\text{low}}^\mu = \bigcap_{j \in \N_{0}} \bigcup_{\lambda \in I_j^\mu } \overline{\lambda} \neq \cmemptyset\]
which is equivalent to the fact that, for any $J$
\[ S_{\text{low},J}^\mu= \bigcap_{j \leq J} \bigcup_{\lambda \in I_j^\mu } \overline{\lambda} \neq \cmemptyset \]
as $(S_{\text{low},J}^\mu)_J$ is a decreasing sequence of compact sets. To do so, let us denote by $2 S_{\text{low},J}^\mu$ the sets of dyadic intervals of scale $J+1$ obtained by cutting in two the remaining intervals\footnote{The interval $[k2^{-j},(k+1)2^{-j}]$ is cut into $[(2k)2 ^{-(j+1)},(2k+1)2 ^{-(j+1)}]$ and $[(2k+1)2 ^{-(j+1)},(2k+2)2 ^{-(j+1)}]$.} in $S_{\text{low},J}^\mu$ and remark that $S_{\text{low},J+1}^\mu$ is obtained from  $2 S_{\text{low},J}^\mu$ by removing the dyadic intervals $\lambda$  such that $\Lambda_{J+1,l}(\lambda) \cap S_{J+1,l}^\mu \neq \cmemptyset$ for a $l \in \N_0$. But now, if $\xi \sim \mathcal{N}(0,1)$, we set, for all such a $l$
\[p_l(\mu) = \mathbb{P}(2^l \mu < |\xi| \leq 2^{l+1} \mu).\]
and note that, if $N$ is the number of intervals of $S_{\text{low},J}^\mu$, counting the number of intervals in  $2S_{\text{low},J}^\mu \cap S_{J+1,l}^\mu$ is a binomial random variable of parameter $2N$ and $p_l(\mu)$ and this number is thus bounded by
\[ 2N (p_l(\mu) + (l+1) \sqrt{p_l(\mu)(1-p_l(\mu))} )\]
on an event of probability $1-(l+1)^{-2} N^{-1}$. Therefore, to pass from $S_{\text{low},J}^\mu$ to $S_{\text{low},J+1}^\mu$
we remove at most
\[ 2N \sum_{l=0}^{+ \infty} (2^{ml+1}+1) (p_l(\mu) + (l+1) \sqrt{p_l(\mu)(1-p_l(\mu))} ) \]
intervals with probability greater than $1-N^{-1}$. But if $\mu$ is large enough, as $p_l(\mu)$ is of order $\frac{e^{-(2^l \mu)^2}}{2^l \mu}$, one can make sure that this last term is bounded by $\frac{N}{2}$. So, if $N_J^\mu$ is the random variable counting the number of subintervals of $S_{\text{low},J}^\mu$, we have
\[ \mathbb{P}(N_{J+1}^\mu \geq \frac{3}{2}N_J^\mu | N_J^\mu =N) \geq 1-N^{-1}\]
which leads to the recursive formula
\[\mathbb{P}(N_{J+1}^\mu \geq (\frac{3}{2})^{J+1}) \geq
  (1-(\frac{2}{3})^J) \mathbb{P}(N_{J}^\mu \geq (\frac{3}{2})^{J}) ,
  \quad \forall J \in \N_{0},  \]
  see \cite[Lemma 3.6 and Theorem 3.7.]{esserloosveldt}. Finally, we deduce
\begin{equation}\label{eqn:proba1pourslow}
\mathbb{P}\big( \bigcup_\mu \bigcap_{J \in\N_{0} } (N_J^\mu \geq 1)\big) =1.
\end{equation}
Moreover, we can show that, in this case, $S_{\text{low}}^\mu \cap (0,1) \neq \cmemptyset$. If $\alpha>0$, applying this procedure with $\frac{1}{m}<\alpha$ gives us that any point $t \in S_{\text{low}}^\mu \cap (0,1)$ is a slow point of the fractional Brownian motion of exponent $\alpha$.

From formulas \eqref{eqn:randomvar1} and \eqref{eqn:randomvar2}, we see that this procedure is also useful to bound the random variables appearing in the expansion \eqref{eqn:waveletserie} of the generalized Rosenblatt process. But, from the proofs of Propositions \ref{prop:rapid} and \ref{Prop:ordinary} we know that this is not sufficient and we also need to give a bound for the random variables $\Xi_j(\lambda)$, for $\lambda \in 3 \lambda_n(t), n \in \N$ and $j \geq n$. Such dyadic intervals are precisely the ones in the set $\Lambda_{n,0}(\lambda_n(t))$ and this fact forces us to consider the following modification of the procedure. For all $j \in \N$, if $l \neq 0$, the sets $S_{j,l}^\mu$ remain untouched as well as its associated probability $p_l(\mu)$ while for $l=0$ we set
\[ S_{j,0}^\mu = \{ \lambda' \in \Lambda_j, \lambda' \subseteq [0,1] \, : \,\exists j' \geq j \, \Xi_{j'}(\lambda') > (j'-j+1) \mu \},\] 
with associated probability (which only depends on $\mu$)
\[ p_0(\mu) = \mathbb{P}(\exists j' \geq j \, \Xi_{j'}(\lambda) > (j'-j+1) \mu).\]
As $\Xi_{j'}(\lambda_1)$ is independent of $\Xi_{j'}(\lambda_2)$ as soon as $\lambda_1 \cap \lambda_2 = \cmemptyset$, for all $J \in \N$, if $N$ is again the number of dyadic intervals of $S_{\text{low},J}^\mu$, the number of such intervals in $2S_{\text{low},J}^\mu \cap S_{J+1,0}^\mu$ is still a binomial random variable of parameter $2N$ and $p_0(\mu)$. Therefore
If $\mu$ is large enough, using Theorems \ref{thm:bound} and \ref{thm:bound2}, one can still affirm
\[ 2N \sum_{l=0}^{+ \infty} (2^{ml+1}+1) (p_l(\mu) + (l+1) \sqrt{p_l(\mu)(1-p_l(\mu))} ) \leq \frac{N}{2} \]
and the end of the procedure is saved: equality \eqref{eqn:proba1pourslow} still holds. Now, if \linebreak $t \in S_{\text{low}}^\mu \cap (0,1)$ we know that
\begin{equation}\label{eqn:concluprocedure}
\forall \, n \in \N, \, \forall \, \lambda  \in 3\lambda_n(t) \,, \forall j \geq n \,, \Xi_j(\lambda) \leq  (j-n+1)\mu.
\end{equation}
Let us remark that, as for all $\lambda \in \Lambda_n$, $|\varepsilon_\lambda^2| \leq 2 \Xi_n(\lambda)+1$, we still have, in this case, for all $\lambda \in 3 \lambda_n(t)$, $|\varepsilon_\lambda| \leq C \mu$, for a deterministic constant $C>0$.
.

Starting from now we take $m$ such that $1/m < \min\{H_1,H_2\}$ and  \linebreak $2/m<1-H_1-H_2$

In order to use notations \eqref{eqn:slownota1} and \eqref{eqn:slownota2}, here after $\lambda_1$ (resp. $\lambda_2$) will always stand for the dyadic interval $[k_1 2^{-j_1}, (k_1+1)2^{-j_1})$ (resp. $[k_2 2^{-j_2}, (k_2+1)2^{-j_2})$) and $\psi_{\lambda _1}$ (resp. $\psi_{\lambda _2}$) will be the associated antiderivative of wavelet $\psi_{H_1}(2^{j_1} \cdot - k_1)$ (resp.  $\psi_{H_2}(2^{j_2} \cdot - k_2)$) and $I_{\lambda_1,\lambda_2}[t,s]$ will stand for $I_{j_1,j_2}^{k_1,k_2}[t,s]$. Finally, $\varepsilon_{\lambda_1,\lambda_2}$ will stand for $\varepsilon_{j_1,j_2}^{k_1,k_2}$. If $t \in (0,1)$, let $(y_{\lambda}(t))_{\lambda \Lambda}$ be the sequence defined by
\[ y_{\lambda}(t) = 2^{l} \text{ if } \lambda \in \Lambda_{j}^{l}(t).\]

Note that, if we apply the preceding procedure, we find $\Omega_\text{slo}$ an event of probability $1$ such that, for all $\omega \in \Omega_\text{slo}$, there exists $\mu$ for which $S_{\text{low}}^\mu \cap (0,1)\neq \cmemptyset$. Then, if $t$ belong to this set, we have, thanks to inequality \eqref{eqn:slownesti} and equalities \eqref{eqn:randomvar1} and \eqref{eqn:randomvar2}
\begin{equation}\label{eqn:slowrvesti}
|\varepsilon_{\lambda_1,\lambda_2}(\omega) | \leq  C \mu^2 y_{\lambda_1}(t) y_{\lambda_2}(t),
\end{equation}
for a deterministic constant $C>0$. Again, we need to adapt the Lemmata from previous sections with this alternative upper bound.

\begin{Lemma}\label{lem:slow1}
There exists a deterministic constant $C>0$ such that, for all $t,s \in (0,1)$ we have
\begin{align*}
\sum_{0 \leq j_1<n} &\sum_{0 \leq j_2<n} \sum_{\lambda_1 \in \Lambda_{j_1}, \lambda_2 \in \Lambda_{j_2}} 2^{j_1(1-H_1)} 2^{j_2(1-H_2)} y_{\lambda_1}(t) y_{\lambda_2}(t) \left| I_{\lambda_1,\lambda_2}[t,s] \right| \\ & \leq  C |t-s|^{H_1+H_2-1}. 
\end{align*}
\end{Lemma}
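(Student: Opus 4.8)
The plan is to follow the proof of Lemma~\ref{lem:rapid1} almost verbatim, replacing the logarithmic weights $L_{j_1,j_2}^{k_1,k_2}$ by $y_{\lambda_1}(t)y_{\lambda_2}(t)$ and exploiting the fact that the latter grow only polynomially, with exponent $1/m<\min\{H_1,H_2\}<1$, in the distance to $t$. The key preliminary observation is that for $0\leq j<n$ and $x\in[s,t]$ one has $|2^jx-k_j(t)|<2$, since $|2^jx-2^jt|=2^j|x-t|\leq 2^{j-n}\leq\tfrac12$ and $|2^jt-k_j(t)|<1$; hence $|k-k_j(t)|\leq|2^jx-k|+2$ for every $k\in\Z$, and the definitions \eqref{eqn:slownota1}--\eqref{eqn:slownota2} give $y_{\lambda_{j,k}}(t)\leq 2(3+|2^jx-k|)^{1/m}$. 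Combined with the fast decay \eqref{eqn:fastdecay}, this yields, for $H\in\{H_1,H_2\}$, $0\leq j<n$ and $x\in[s,t]$,
\[ \sum_{k\in\Z} y_{\lambda_{j,k}}(t)\,|\psi_H(2^jx-k)|\leq C\sum_{k\in\Z}\frac{1}{(3+|2^jx-k|)^{4-1/m}}\leq C, \]
a constant independent of $j$, $x$, $s$, $t$. This is the analogue, in the present setting, of inequality \eqref{eqn:sumlog}, and it is precisely where the absence of a logarithmic correction comes from.

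With this at hand I would reproduce the three steps of Lemma~\ref{lem:rapid1}. First, exactly as there, the series $t\mapsto\sum_{(k_1,k_2)}y_{\lambda_1}(t)y_{\lambda_2}(t)\int_0^t|\psi_{H_1}(2^{j_1}x-k_1)\psi_{H_2}(2^{j_2}x-k_2)|\,dx$ and its non-integrated counterpart converge uniformly on $[s,t]$ (their $k_1,k_2$-sums being dominated by the product of the two convergent series above), which justifies the mean value theorem and reduces each inner term to $|t-s|$ times a product of sums of the form just estimated, hence to $C\,|t-s|\,2^{j_1(1-H_1)}2^{j_2(1-H_2)}$. Second, summing over $0\leq j_1,j_2<n$ and using $1-H_1>0$, $1-H_2>0$ to bound the two geometric sums by $C\,2^{n(1-H_1)}$ and $C\,2^{n(1-H_2)}$ respectively gives an upper bound $C\,|t-s|\,2^{n(2-H_1-H_2)}$. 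Third, since $2-H_1-H_2>0$ and $2^{n}\leq|t-s|^{-1}$ by \eqref{eqn:defofn}, this is bounded by $C\,|t-s|^{1-(2-H_1-H_2)}=C\,|t-s|^{H_1+H_2-1}$, which is the claim.

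No step is a serious obstacle; the one point requiring care — and the conceptual content of the lemma — is the polynomial bound $y_{\lambda_{j,k}}(t)\leq 2(3+|2^jx-k|)^{1/m}$ valid on the range $j<n$, which replaces the $\sqrt{\log}$-gain of \eqref{eqn:sumlog} by a genuine constant gain. This is exactly why the geometric summation over $j_1,j_2<n$ produces $2^{n(2-H_1-H_2)}$ with no additional $\sqrt n$ or $n$ factor, and therefore no logarithmic correction in the final estimate: only the self-similarity exponent $H_1+H_2-1$ survives.
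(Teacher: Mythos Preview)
Your proof is correct and follows the same approach as the paper: both exploit that for $j<n$ and $x\in[s,t]$ the weight $y_{\lambda_{j,k}}(t)$ is controlled by $|2^jx-k|$, then proceed via the mean value theorem and geometric summation exactly as in Lemma~\ref{lem:rapid1}. The only cosmetic difference is that you encode this control as the single pointwise bound $y_{\lambda_{j,k}}(t)\leq 2(3+|2^jx-k|)^{1/m}$ and absorb it directly into the fast decay, whereas the paper splits the $k$-sum over the level sets $\Lambda_j^l(t)$, uses $(3+|2^j\xi-k|)^{-1}\leq C\,2^{-ml}$ on each level, and sums the geometric series in $l$; these are equivalent formulations of the same observation.
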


\begin{proof}
If $\xi \in [s,t]$ and $\lambda \in \lambda_j^l(t)$, for $0 \leq j < n$ and $l \geq 1$, \[|2^j \xi - s(\lambda)| \geq |s(\lambda(t))-s(\lambda)|-2 > 2^{m(l-1)}-2\] and so, using the fast decay of the fractional antiderivatives of $\psi$ \eqref{eqn:fastdecay} and the definition of $(y_{\lambda})_{\lambda \in \Lambda}$ , we get for $0 \leq j_1,j_2 <n$
\begin{align}
\nonumber &\sum_{\lambda_1 \in \Lambda_{j_1}, \lambda_2 \in \Lambda_{j_2} } y_{\lambda_1}(t) y_{\lambda_2}(t) |\psi_{\lambda_1}(\xi) \psi_{\lambda_2}(\xi)|  \\
\nonumber & \quad = \sum_{(l_1,l_2) \in \N_0^2} \sum_{\lambda_1 \Lambda_{j_1}^{l_1}(t)} \sum_{\lambda_2 \Lambda_{j_2}^{l_2}(t)} y_{\lambda_1}(t) y_{\lambda_2}(t) |\psi_{\lambda_1}(\xi) \psi_{\lambda_2}(\xi)| \\
\nonumber & \quad \leq C \sum_{(l_1,l_2) \in \N_0} \sum_{\lambda_1 \Lambda_{j_1}^{l_1}(t)} \sum_{\lambda_2 \Lambda_{j_2}^{l_2}(t)}  \frac{2^{l_1+l_2} }{(3+|2^{j_1}\xi-k_1|)^{4} (3+|2^{j_2}\xi-k_2|)^{4}} \\
\nonumber & \quad \leq C \sum_{(l_1,l_2) \in \N_0} \sum_{\lambda_1 \Lambda_{j_1}^{l_1}(t)} \sum_{\lambda_2 \Lambda_{j_2}^{l_2}(t)}   \frac{2^{l_1+l_2} 2^{-m(l_1+l_2)}}{(3+|2^{j_1}\xi-k_1|)^{3} (3+|2^{j_2}\xi-k_2|)^{3}} \\
\nonumber & \leq C \sum_{k_1 \in \Z} \frac{1}{(3+|2^{j_1}\xi-k_1|)^{3}} \sum_{k_2 \in \Z} \frac{1}{(3+|2^{j_2}\xi-k_2|)^{3}} \\
& \leq C. \label{slow:step1}
\end{align}
It leads, just as in Lemmata \ref{lem:rapid1} and \ref{lem:ordinary1}, to the desired estimate.
\end{proof}

In what follows, we use these notations instead of the one given in Definition \ref{def:partitionofZ}:
\[ \Lambda_{j_2}^{<}(t,s) = \{ \lambda_2 \in \Lambda_{j_2} \, : \, s(\lambda_2) \in \Z_{j_2}^<(t,s) \},\]
\[ \Lambda_{j_2}^{>}(t,s) = \{ \lambda_2 \in \Lambda_{j_2} \, : \, s(\lambda_2) \in \Z_{j_2}^>(t,s) \},\]
\[ \Lambda_{j_2}[t,s] = \{ \lambda_2 \in \Lambda_{j_2} \, : \, s(\lambda_2) \in \Z_{j_2}[t,s] \}.\]

\begin{Lemma}\label{lem:slow2}
There exists a deterministic constant $C>0$ such that, for all $t,s \in (0,1)$ and $0 \leq j_1<n \leq j_2$,
the quantities
\begin{equation}\label{eqn:r<>3s}
\sum_{\lambda_1  \in \Lambda_{j_1}} \sum_{\lambda_2 \in \Lambda_{j_2}^{<}(t,s)}y_{\lambda_1}(t) y_{\lambda_2}(t) \left|I_{\lambda_1,\lambda_2}[t,s]  \right|
\end{equation}
\begin{equation}\label{eqn:r<>2s}
\sum_{\lambda_1 \in \Lambda_{j_1}} \sum_{\lambda_2 \in \Lambda_{j_2}^{>}(t,s)} y_{\lambda_1}(t) y_{\lambda_2}(t) \left| I_{\lambda_1,\lambda_2}[t,s] \right|
\end{equation}
are bounded by
\[
C 2^{\frac{1}{m}(j_2-n)} 2^{-j_2}.
\]
\end{Lemma}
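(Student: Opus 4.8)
The plan is to run the proof of Lemma~\ref{lem:ordinary2} essentially verbatim, trading the logarithmic weights $L_{j_1,j_2}^{k_1,k_2}(t)$ for the polynomial weights $y_{\lambda_1}(t)y_{\lambda_2}(t)$; the whole point is that, since $1/m<1$, these powers are absorbed by the fast decay \eqref{eqn:fastdecay} of the fractional antiderivatives of $\psi$ and produce no logarithmic factor. As usual one may assume $s\le t$ and treat only \eqref{eqn:r<>3s}; the estimate for \eqref{eqn:r<>2s} follows from the symmetric computation, using $|2^{j_2}x-k_2|=k_2-2^{j_2}x$ on $[s,t]$ when $\lambda_2$ lies to the right of $[s,t]$.

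First I would record the elementary comparison, immediate from the definition \eqref{eqn:slownota1}--\eqref{eqn:slownota2} of $y_\lambda(t)$, that $y_\lambda(t)\le 2\,(1+|s(\lambda)-s(\lambda_j(t))|)^{1/m}$ for every $\lambda\in\Lambda_j$: if $\lambda\in\Lambda_j^l(t)$ with $l\ge1$ then $2^{m(l-1)}<|s(\lambda)-s(\lambda_j(t))|$, so $y_\lambda(t)=2^l<2\,(1+|s(\lambda)-s(\lambda_j(t))|)^{1/m}$, and the case $l=0$ is trivial. Next, for $x\in[s,t]$ one has the crude bounds $|k_1-s(\lambda_{j_1}(t))|\le|2^{j_1}x-k_1|+2$ (because $j_1<n$ forces $|2^{j_1}x-s(\lambda_{j_1}(t))|<2$) and $|k_2-s(\lambda_{j_2}(t))|\le|2^{j_2}x-k_2|+2^{j_2-n}+1$, exactly as in Lemmata~\ref{lem:ordinary2} and~\ref{lem:ordinary3}. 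Combining these with the comparison above and the subadditivity $(a+b)^{1/m}\le a^{1/m}+b^{1/m}$ (valid since $1/m<1$) gives, for all $x\in[s,t]$,
\[
y_{\lambda_1}(t)\le C\,(3+|2^{j_1}x-k_1|)^{1/m},\qquad y_{\lambda_2}(t)\le C\,2^{\frac1m(j_2-n)}\,(3+|2^{j_2}x-k_2|)^{1/m}.
\]

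Plugging these and \eqref{eqn:fastdecay} into \eqref{eqn:r<>3s}, it remains to bound
\[
\eqref{eqn:r<>3s}\le C\,2^{\frac1m(j_2-n)}\int_{[s,t]}\Bigg(\sum_{k_1\in\Z}\frac{1}{(3+|2^{j_1}x-k_1|)^{4-\frac1m}}\Bigg)\Bigg(\sum_{k_2\in\Z_{j_2}^<(t,s)}\frac{1}{(3+|2^{j_2}x-k_2|)^{4-\frac1m}}\Bigg)\,dx .
\]
Since $4-\tfrac1m>1$, the $k_1$-sum is bounded by a constant independent of $j_1$ and $x$ (boundedness of $\xi\mapsto\sum_k(1+|\xi-k|)^{-M}$ for $M>1$), which is exactly why no $j_1$-dependence survives in the final bound; for the $k_2$-sum one argues as in \eqref{eqn:intavec2}, using $|2^{j_2}x-k_2|=2^{j_2}x-k_2\ge 2^{j_2}x-2^{j_2}\min\{s,t\}\ge0$ for $x\in[s,t]$ and $k_2\in\Z_{j_2}^<(t,s)$, comparing the sum with an integral, and then integrating in $x$, which produces the factor $C\,2^{-j_2}$ (this step needs $4-\tfrac1m>2$). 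Altogether one obtains the claimed bound $C\,2^{\frac1m(j_2-n)}\,2^{-j_2}$.

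There is no serious obstacle here; the only thing to keep an eye on — and it is mild — is that the polynomial weights must not destroy the summability and integrability used throughout, which is guaranteed solely by $1/m<1$: this simultaneously gives the subadditivity of $a\mapsto a^{1/m}$ used in the second paragraph and the strict inequalities $4-\tfrac1m>2$ used in the convolution over $k_1$ and in the final integral over $k_2$ and $x$. No further property of $m$ is needed for this lemma.
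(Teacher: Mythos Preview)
Your argument is correct. The only place one might pause is at the bound $y_{\lambda_2}(t)\le C\,2^{\frac1m(j_2-n)}(3+|2^{j_2}x-k_2|)^{1/m}$, but this follows at once from $1+|k_2-s(\lambda_{j_2}(t))|\le (3+|2^{j_2}x-k_2|)\cdot 2^{j_2-n}$ when $j_2\ge n$, so no subadditivity is even needed; the rest of the computation (the convolution in $k_1$, the comparison of the $k_2$-sum with an integral as in \eqref{eqn:intavec2}, and the final $x$-integration) is exactly as you say, with the exponent $4-\tfrac1m>2$ ensuring convergence throughout.

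Your route differs from the paper's. The paper does not bound $y_\lambda(t)$ pointwise by a power of $|2^{j}x-s(\lambda)|$; instead it partitions the integration interval $[s,t]$ into the level sets $D_{j_2}^l(t)=\bigcup_{\lambda\in\Lambda_{j_2}^l(t)}\lambda$, noting that for $x\in[s,t]$ only the levels $0\le l\le\tfrac1m(j_2-n)$ occur, and then on each $D_{j_2}^l(t)$ it re-indexes the sums over $\lambda_1,\lambda_2$ according to the sets $\Lambda_{j_i}^{l_i}(x)$ centred at $x$; this produces the bound $y_{\lambda_1}(t)y_{\lambda_2}(t)\le C\,2^{l+l_1+l_2}$, and one power of $2^{-m l_i}$ is then peeled off the fast decay $(3+|2^{j_i}x-k_i|)^{-4}$ to make the $l_i$-sums converge, leaving the extra $2^{l}$ summed over $l\le\tfrac1m(j_2-n)$ to give the factor $2^{\frac1m(j_2-n)}$. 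Your approach short-circuits this double decomposition by absorbing the polynomial weight directly into the kernel, which is cleaner and closer in spirit to Lemma~\ref{lem:ordinary2}; the paper's level-set decomposition, on the other hand, is the same mechanism used in Lemma~\ref{lem:slow1} and carries over verbatim to Lemmata~\ref{lem:slow3}--\ref{lem:slow6}, so it has the advantage of uniformity across the section.
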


\begin{proof}
Again, we prove the bound for \eqref{eqn:r<>3s}, the reasoning for \eqref{eqn:r<>2s} being similar. Let us remark that, if $j_2 \geq n$ $x \in [s,t]$ and $\lambda_{j_2}(x) \in  \Lambda_{j_2}^{l}(t)$ then, the construction and the definition of $(y_{\lambda}(t))_{\lambda \in \Lambda}$ gives that
\begin{itemize}
\item $l \leq \frac{1}{m}(j_2-n)$, as $|s-t| \leq 2^{-n}$,
\item if $\lambda \in \Lambda_{j_2}^{l_2}(x)$ then $|y_\lambda| \leq 2^{l_2} 2^{l+1} \mu$ while, by definition, if $l_2 \geq 1$
\[ 3+|2^{j_2}x -s(\lambda)| \geq 2+2^{m(l_2-1)}.\]
\end{itemize}
Therefore, if we set
\[ D_{j_2}^l (t) = \bigcup_{\lambda \in  \Lambda_{j_2}^{l}(t)} \lambda,\]
we have
\begin{align}\label{eqn:demoslowsplit}
\begin{split}
\eqref{eqn:r<>3s} & \leq  \sum_{\lambda_1 \in \Lambda_{j_1}}\sum_{ \lambda_2 \in \Lambda_{j_2}^{<}(t,s) } y_{\lambda_1}(t) y_{\lambda_2}(t) \int_{[s,t]}|\psi_{\lambda_1}(x) \psi_{\lambda_2}(x)| \, dx \\
& \leq  \sum_{0 \leq l \leq  \frac{1}{m}(j_2-n)} \sum_{\lambda_1 \in \Lambda_{j_1}} \sum_{\lambda_2 \in \Lambda_{j_2}^{<}(t,s) } y_{\lambda_1}(t) y_{\lambda_2}(t)  \int_{D_{j_2}^l(t)} |\psi_{\lambda_1}(x) \psi_{\lambda_2}(x)| \, dx.
\end{split}
\end{align}
But, for all $x \in D_{j_2}^l$, using the same method as in \eqref{slow:step1}, but splitting the sums according to the set $\Lambda_{j_1}^{l_1}(x)$ and $\Lambda_{j_2}^{l_2}(x)$ on which $y_{\lambda_1}(t) y_{\lambda_2}(t) \leq  2^{l+l_1+l_2+1}$ we get
\begin{align}
\begin{split}
\sum_{\lambda_1 \in \Lambda_{j_1}} & \sum_{\lambda_2 \in \Lambda_{j_2}^{<}(t,s) }  |\varepsilon_{\lambda_1,\lambda_2} | |\psi_{\lambda_1}(x) \psi_{\lambda_2}(x)|\\
&\leq C 2^{l+1}  \sum_{\lambda_1 \in \Lambda_{j_1}} \frac{1}{(3+|2^{j_1}x-k_1|)^{3}} \sum_{\lambda_2 \in \Lambda_{j_2}^{<}(t,s)} \frac{1}{(3+|2^{j_2}x-k_2|)^{3}} \\
& \leq C 2^{l+1}  \sum_{\lambda_2 \in \Lambda_{j_2}^{<}(t,s)} \frac{1}{(3+|2^{j_2}x-k_2|)^{3}}.
\end{split}
\end{align}
Finally, using the techniques in \eqref{eqn:intavec2}, we get
\[ \eqref{eqn:r<>3s} \leq C 2^{\frac{1}{m}(j_2-n)} 2^{-j_2}. \]
\end{proof}

\begin{Lemma}\label{lem:slow3}
There exists a deterministic constant $C>0$ such that, for all $t,s \in (0,1)$ and $0 \leq j_1 <n \leq j_2$, the quantities
\begin{equation}\label{eqn:slow31}
\sum_{\lambda_1 \in \Lambda_{j_1}} \sum_{\lambda_2 \in \Lambda_{j_2}[t,s]} y_{\lambda_1}(t) y_{\lambda_2}(t) \left|\int_{- \infty}^{\min\{s,t\}} \psi_{H_1}(2^{j_1} x-k_1) \psi_{H_2}(2^{j_2} x-k_2) \, dx \right|
\end{equation}
\begin{equation}\label{eqn:slow32}
 \sum_{\lambda_1 \in \Lambda_{j_1}} \sum_{\lambda_2 \in \Lambda_{j_2}[t,s]}y_{\lambda_1}(t) y_{\lambda_2}(t)\left| \int_{\max\{s,t\}}^{+\infty} \psi_{H_1}(2^{j_1} x-k_1) \psi_{H_2}(2^{j_2} x-k_2) \, dx\right| 
\end{equation}
are bounded by
\[
C 2^{\frac{1}{m}(j_2-n)} 2^{-j_2}.
\]
\end{Lemma}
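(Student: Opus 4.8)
The plan is to copy the proof of Lemma~\ref{lem:slow2} almost verbatim, replacing the integral over $[s,t]$ by the tails $(-\infty,\min\{s,t\}]$ and $[\max\{s,t\},+\infty)$, and then to recover the gain $2^{-j_2}$ through the change-of-scale device already exploited in Lemmata~\ref{lem:rapid3} and \ref{lem:ordinary3}. As always one may assume $s\le t$ and bound \eqref{eqn:slow31}, since \eqref{eqn:slow32} is treated symmetrically. First I would use the fast decay of the fractional antiderivatives of $\psi$ \eqref{eqn:fastdecay} to bound \eqref{eqn:slow31} from above by
\[
\int_{-\infty}^{s}\Big(\sum_{\lambda_1\in\Lambda_{j_1}}\frac{y_{\lambda_1}(t)}{(3+|2^{j_1}x-k_1|)^{4}}\Big)\Big(\sum_{\lambda_2\in\Lambda_{j_2}[t,s]}\frac{y_{\lambda_2}(t)}{(3+|2^{j_2}x-k_2|)^{4}}\Big)\,dx.
\]
For the $\lambda_2$-sum, by the same reasoning as in the proof of Lemma~\ref{lem:slow2}, every $\lambda_2\in\Lambda_{j_2}[t,s]$ satisfies $|s(\lambda_2)-k_{j_2}(t)|\le2^{j_2-n}+1$, hence belongs to some $\Lambda_{j_2}^{l_2}(t)$ with $2^{l_2}\le C\,2^{\frac1m(j_2-n)}$; this extracts the global factor $2^{\frac1m(j_2-n)}$ and leaves $\sum_{\lambda_2\in\Lambda_{j_2}[t,s]}(3+|2^{j_2}x-k_2|)^{-4}$.

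For the $\lambda_1$-sum, writing $y_{\lambda_1}(t)=2^{l_1}$ with $\lambda_1\in\Lambda_{j_1}^{l_1}(t)$, the inequality $2^{m(l_1-1)}<|k_1-k_{j_1}(t)|\le|2^{j_1}x-k_1|+|2^{j_1}x-k_{j_1}(t)|$, which holds trivially also when $l_1=0$, together with the subadditivity of $u\mapsto u^{1/m}$ (valid as $1/m<1$), yields
\[
y_{\lambda_1}(t)\le C\,(3+|2^{j_1}x-k_1|)^{1/m}\,(3+|2^{j_1}x-k_{j_1}(t)|)^{1/m}.
\]
Summing over $k_1\in\Z$ and using that $4-\tfrac1m>1$ together with the boundedness of the function in \eqref{eqn:fctbounded}, the $\lambda_1$-sum is at most $C\,(3+|2^{j_1}x-k_{j_1}(t)|)^{1/m}$. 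It then remains to absorb this polynomial factor into the decay coming from the $\lambda_2$-terms. For this I would use that $j_1<n\le j_2$ and $|t-s|\le2^{-n}$: for $x\le s$ one has $2^{j_1}(t-x)=2^{j_1}(t-s)+2^{j_1}(s-x)\le1+2^{j_2}(s-x)$, and for $k_2\in\Z_{j_2}[t,s]$ one has $2^{j_2}(s-x)\le|2^{j_2}x-k_2|+1$; combining these gives $3+|2^{j_1}x-k_{j_1}(t)|\le C\,(3+|2^{j_2}x-k_2|)$. Plugging this in reduces \eqref{eqn:slow31} to
\[
C\,2^{\frac1m(j_2-n)}\int_{-\infty}^{s}\sum_{k_2\in\Z_{j_2}[t,s]}\frac{dx}{(3+|2^{j_2}x-k_2|)^{\,4-\frac1m}},
\]
and, since $4-\tfrac1m>3$, the integral is $\le C\,2^{-j_2}$ by the very computation carried out in \eqref{eqn:boundfor21}. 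For \eqref{eqn:slow32} one argues identically, now using $|2^{j_2}x-k_2|=2^{j_2}x-k_2$ and $2^{j_1}(x-t)\le2^{j_2}(x-t)\le|2^{j_2}x-k_2|$ for $x\ge t$ and $k_2\in\Z_{j_2}[t,s]$.

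The step I expect to be the crux is the transfer of scales at the end: in contrast with the merely logarithmic weights $L_{j_1,j_2}^{k_1,k_2}$ of the previous subsections, the weights $y_{\lambda_1}(t)$ grow polynomially, so one must check that this polynomial growth in $x$ is entirely swallowed by the fourth-power decay of $\psi_{H_2}(2^{j_2}\cdot-k_2)$ once $x$ is measured at scale $j_2$ rather than $j_1$. This is precisely where the standing assumption $1/m<\min\{H_1,H_2\}$ (only $1/m<3$ is actually needed in this lemma) and the localization $|t-s|\le2^{-n}$ enter, and it is the only place where the proof departs in substance from that of Lemma~\ref{lem:slow2}.
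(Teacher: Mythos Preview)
Your argument is correct. The only cosmetic point is that you invoke ``subadditivity of $u\mapsto u^{1/m}$'' to pass to the product bound $y_{\lambda_1}(t)\le C(3+|2^{j_1}x-k_1|)^{1/m}(3+|2^{j_1}x-k_{j_1}(t)|)^{1/m}$; strictly speaking subadditivity gives a sum, and you then use that $a^{1/m}+b^{1/m}\le 2a^{1/m}b^{1/m}$ for $a,b\ge 1$. This is harmless.

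Your route is, however, organised differently from the paper's. The paper follows the pattern of Lemma~\ref{lem:slow2} and partitions the domain of integration $(-\infty,s]$ into the sets $D_{j_1}^l(s)$ according to the scale-$j_1$ dyadic distance of $x$ from $s$; on each piece it uses that for $x\in D_{j_1}^l(s)$ and $k_2\in\Z_{j_2}[t,s]$ one has $3+|2^{j_2}x-k_2|\ge 2+2^{j_1}(s-x)\ge 2^{m(l-1)}$, which sacrifices one power of $(3+|2^{j_2}x-k_2|)$ to kill the factor $2^l$ carried by $y_{\lambda_1}(t)$. You bypass this partition entirely: you bound $y_{\lambda_1}(t)$ pointwise by the product above, absorb the first factor into the $\psi_{H_1}$-decay when summing over $k_1$, and absorb the second into the $\psi_{H_2}$-decay after the scale transfer $3+|2^{j_1}x-k_{j_1}(t)|\le C(3+|2^{j_2}x-k_2|)$. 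The underlying mechanism---the same scale-transfer inequality---is identical, but your presentation avoids the $l$-indexed bookkeeping and is somewhat more transparent; the paper's version has the advantage of reusing verbatim the machinery set up in Lemma~\ref{lem:slow2}.
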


\begin{proof}
Again, we assume $s \leq t$. If $x \in (-\infty,s]$ is such that $\lambda_{j_1}(x) \in \Lambda_{j_1}^{l}(s)$, we have, for all $\lambda_1 \in \Lambda_{j_1}^{l_1}(x)$ and $\lambda_2 \in \Lambda_{j_2}[t,s] \cap \Lambda_{j_2}^{l_2}(s)$ (with $j_1 <n \leq j_2)$,
\begin{align}\label{eqn:demoslowsplitbound}
\begin{split}
\frac{y_{\lambda_1}(t) y_{\lambda_2}(t)}{(3+|2^{j_1}x-k_1|)^4(3+|2^{j_2}x-k_2|)^4} & \leq C  \frac{2^{\frac{1}{m}(j_2-n)+l+l_1+l_2+1} \mu^2}{(3+|2^{j_1}x-k_1|)^4(3+|2^{j_2}x-k_2|)^5} \\
& \leq C \frac{2^{\frac{1}{m}(j_2-n)+l+1}}{(3+|2^{j_1}x-k_1|)^{3}(3+|2^{j_2}x-k_2|)^{4}} \\
& \leq C \frac{2^{\frac{1}{m}(j_2-n)}}{(3+|2^{j_1}x-k_1|)^{3}(3+|2^{j_2}x-k_2|)^{3}}
\end{split}
\end{align}
because
\[ 3+|2^{j_2}x-k_2| = 3+k_2-2^{j_2}x \geq 2+2^{j_1}(s-x) \geq 2^{m(l-1)}. \]
Thus we get, using the fast decay of the fractional antiderivatives of the wavelet before splitting the integral over $(- \infty,s]$ into the integral over the sets \linebreak $(-\infty,s] \cap D_{j_1}^l(s)$, in the same way as in \eqref{eqn:demoslowsplit}, using \eqref{eqn:demoslowsplitbound} and finally the boundedness of the function \eqref{eqn:fctbounded} for $M=3$ and inequality \eqref{eqn:boundfor21}
\begin{align*}
\sum_{\lambda_1 \in \Lambda_{j_1}} & \sum_{\lambda_2 \in \Lambda_{j_2} [t,s]} \int_{-\infty}^s  |\psi_{\lambda_1}(x) \psi_{\lambda_2}(x)| \, dx \\ &\leq C 2^{\frac{1}{m}(j_2-n)}  \int_{- \infty}^s \sum_{\lambda_1 \in \Lambda_{j_1}} \sum_{\lambda_2 \in \Lambda_{j_2} [t,s]}\frac{dx }{(3+|2^{j_1}x-k_1|)^{3}(3+|2^{j_2}x-k_2|)^{3}} \\
& \leq C 2^{\frac{1}{m}(j_2-n)}  \int_{- \infty}^s  \sum_{\lambda_2 \in \Lambda_{j_2} [t,s]}\frac{dx }{(3+|2^{j_2}x-k_2|)^{3}} \\
& \leq C 2^{\frac{1}{m}(j_2-n)} 2^{-j_2}  .
\end{align*}
In the same way we get
\begin{equation}\label{eqn:demoslowholds}
\sum_{\lambda_1 \in \Lambda_{j_1}} \sum_{\lambda_2 \in \Lambda_{j_2}[t,s]}y_{\lambda_1}(t) y_{\lambda_2}(t)\left| \int_{\max\{t,s\}}^{+\infty} \psi_{H_1}(2^{j_1} x-k_1) \psi_{H_2}(2^{j_2} x-k_2) \, dx\right| \leq C  2^{-j_2}.
\end{equation}
\end{proof}

\begin{Lemma}\label{lem:slow4}
There exists a deterministic constant $C>0$ such that, for all $t,s \in (0,1)$, the quantities
\[
\sum_{0\leq j_1<n} \sum_{j_2 \geq n} \sum_{\lambda_1 \in \Lambda_{j_1}, \lambda_2 \in \Lambda_{j_2} } 2^{j_1(1-H_1)} 2^{j_2(1-H_2)}y_{\lambda_1}(t) y_{\lambda_2}(t) |I_{\lambda_1,\lambda_2}[t,s] |
\]
\[ \sum_{j_1 \geq n} \sum_{0 \leq j_2 < n} \sum_{\lambda_1 \in \Lambda_{j_1}, \lambda_2 \in \Lambda_{j_2}} 2^{j_1(1-H_1)} 2^{j_2(1-H_2)} y_{\lambda_1}(t) y_{\lambda_2}(t)|I_{\lambda_1,\lambda_2}[t,s]| \] 
are bounded by
\[
C |t-s|^{H_1+H_2-1}.
\]
\end{Lemma}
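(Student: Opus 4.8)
The plan is to reproduce the proof of Lemma \ref{lem:rapid4} almost verbatim, replacing the logarithmic weights $L_{j_1,j_2}^{k_1,k_2}$ by the polynomial weights $y_{\lambda_1}(t)y_{\lambda_2}(t)$ and substituting Lemmata \ref{lem:slow2} and \ref{lem:slow3} for Lemmata \ref{lem:rapid2} and \ref{lem:rapid3}. The two displayed quantities are interchanged by swapping $(j_1,H_1)\leftrightarrow(j_2,H_2)$ — the first calling on $\frac1m<H_2$, the second on $\frac1m<H_1$ — so I only bound the first; I also assume $s\leq t$, the case $t<s$ being identical, and recall $2^{-n-1}<|t-s|\leq 2^{-n}$. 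First I split the innermost sum over $\lambda_2\in\Lambda_{j_2}$ along the partition $\Lambda_{j_2}=\Lambda_{j_2}^{<}(t,s)\cup\Lambda_{j_2}^{>}(t,s)\cup\Lambda_{j_2}[t,s]$.

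For the pieces indexed by $\Lambda_{j_2}^{<}(t,s)$ and $\Lambda_{j_2}^{>}(t,s)$, Lemma \ref{lem:slow2} bounds the $(\lambda_1,\lambda_2)$-summation by $C2^{\frac1m(j_2-n)}2^{-j_2}$, so the associated contribution is
\[
\leq C\Bigl(\sum_{0\leq j_1<n}2^{j_1(1-H_1)}\Bigr)\Bigl(\sum_{j_2\geq n}2^{-j_2H_2}2^{\frac1m(j_2-n)}\Bigr)\leq C\,2^{n(1-H_1)}2^{-nH_2}\leq C\,|t-s|^{H_1+H_2-1},
\]
the first geometric sum being finite because $1-H_1>0$ and the second because $\frac1m<H_2$.

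For the piece indexed by $\Lambda_{j_2}[t,s]$ I use the splitting \eqref{eqn:split}, writing $I_{\lambda_1,\lambda_2}[t,s]=\int_{\R}-\int_{-\infty}^{s}-\int_{t}^{+\infty}$. By Lemma \ref{Lemma:intR} the $\int_{\R}$-term vanishes unless $|j_1-j_2|\leq 1$, which with $j_1<n\leq j_2$ forces $(j_1,j_2)=(n-1,n)$; here $\#\Lambda_n[t,s]\leq 2$, each such $\lambda_2$ has $|s(\lambda_n(t))-s(\lambda_2)|\leq 1$ so $y_{\lambda_2}(t)=1$, and since $|s(\lambda_{n-1}(t))-s(\lambda_1)|\leq C(1+|2s(\lambda_1)-s(\lambda_2)|)$ one gets $y_{\lambda_1}(t)\leq C(1+|2s(\lambda_1)-s(\lambda_2)|)^{1/m}$; inserting the bound $|I_{n-1,n}^{k_1,k_2}|\leq C_L2^{-n}(3+|2k_1-k_2|)^{-L}$ from Lemma \ref{Lemma:intR} with $L>1+\frac1m$ makes the $\lambda_1$-sum converge and produces a contribution $\leq C\,2^{n(2-H_1-H_2)}2^{-n}=C\,2^{n(1-H_1-H_2)}\leq C\,|t-s|^{H_1+H_2-1}$. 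The $\int_{-\infty}^{s}$- and $\int_{t}^{+\infty}$-terms are, after the $(\lambda_1,\lambda_2)$-summation, bounded by $C2^{\frac1m(j_2-n)}2^{-j_2}$ by Lemma \ref{lem:slow3}, and hence contribute through exactly the same geometric series as above. Summing the three pieces yields the claimed estimate for the first quantity, and the second follows by the symmetric argument with $\frac1m<H_1$.

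I expect the only step requiring thought beyond the bookkeeping of Lemma \ref{lem:rapid4} to be the $(n-1,n)$-leftover of the $\int_{\R}$-term: because $y_{\lambda_1}(t)$ grows like a power of $|2k_1-k_2|$ rather than logarithmically, one must genuinely use that the decay exponent $L$ in Lemma \ref{Lemma:intR} can be chosen larger than $1+\frac1m$; once that is secured, the entire computation runs on the standing hypotheses $\frac1m<\min\{H_1,H_2\}$ and $H_1,H_2<1$, which is precisely why $m$ was fixed with $\frac1m<\min\{H_1,H_2\}$.
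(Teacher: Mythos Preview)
Your proposal is correct and follows the paper's own proof almost exactly: both reproduce the scheme of Lemma \ref{lem:rapid4}, invoke Lemmata \ref{lem:slow2} and \ref{lem:slow3} in place of \ref{lem:rapid2} and \ref{lem:rapid3}, and sum the resulting geometric series using $\frac{1}{m}<H_2$ (respectively $\frac{1}{m}<H_1$). The only cosmetic difference is in the treatment of the $(n-1,n)$ leftover from the $\int_{\R}$-term: the paper decomposes $\sum_{\lambda_1}$ into layers $\Lambda_{n-1}^{l_1}(t)$ and absorbs the factor $2^{l_1}$ into one power of the decay $(3+|2k_1-k_2|)^{-4}$ (using $m>1$), whereas you bound $y_{\lambda_1}(t)\leq C(1+|2k_1-k_2|)^{1/m}$ directly and then choose $L>1+\frac{1}{m}$ in Lemma \ref{Lemma:intR}; these are two phrasings of the same estimate.
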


\begin{proof}
The proof is exactly the same as the one of Lemma \ref{lem:rapid4} excepted that we use Lemmata \ref{lem:slow2} and \ref{lem:slow3} instead of Lemmata \ref{lem:rapid2} and \ref{lem:rapid3} respectively. It leads on one side us to consider the sums
\[ \left( \left(\sum_{j_1 =0}^{n-1} 2^{j_1(1-H_1)}\sum_{j_2=n}^{+ \infty} 2^{\frac{1}{m}(j_2-n)} 2^{-j_2H_2 } \right) + 2^{n(1-H_1-H_2)} \right) \]
which are bounded by
\[ C 2^{n(1-H_1-H_2)} \leq C |t-s|^{H_1+H_2-1} \]
because $\frac{1}{m}<H_2$.
On the other side, if we write $I_{\lambda_1,\lambda_2}$ for $I_{j_1,j_2}^{k_1,k_2}$ in Lemma \ref{Lemma:intR}, we have, from it,
\begin{align}\label{eqn:demoslow:intR}
\begin{split}
&\left | \sum_{\lambda_1 \in \Lambda_{n-1}} \sum_{\lambda_2 \in \Lambda_{n}[t,s]} y_{\lambda_1}(t) y_{\lambda_2}(t) I_{\lambda_1,\lambda_2} \right|  \\
& \quad \leq C  2^{-n} \left| \sum_{l_1=0}^{+ \infty} \sum_{\lambda_1 \in \Lambda_{n-1}^{l_1}(t)}\sum_{\lambda_2 \in \Lambda_{n}[t,s]} \frac{2^{l_1}}{(3+|2k_1-k_2|)^4} \right| \\
& \quad \leq C 2^{-n}  \left|  \sum_{\lambda_1 \in \Lambda_{n-1}}\sum_{\lambda_2 \in \Lambda_{n}[t,s]} \frac{1}{(3+|2k_1-k_2|)^{3}} \right| \\
& \quad \leq C 2^{-n} .
\end{split}
\end{align}
\end{proof}

\begin{Lemma}\label{lem:slow5}
There exists a deterministic constant $C>0$ such that, for all $t,s \in (0,1)$ and $n \leq j_1 \leq j_2$
the quantities \eqref{eqn:r<>3s} and \eqref{eqn:r<>2s} are bounded by
\[
C 2^{\frac{1}{m}(j_1-n)} 2^{\frac{1}{m}(j_2-n)} 2^{-j_2}.
\]
\end{Lemma}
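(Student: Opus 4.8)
The plan is to reproduce the proof of Lemma \ref{lem:slow2} almost verbatim, the only new feature being that now $j_1 \geq n$ as well, so the wavelets at scale $j_1$ whose dyadic intervals meet $[s,t]$ can sit in blocks $\Lambda_{j_1}^{l'}(t)$ with $l'$ as large as $\frac{1}{m}(j_1-n)$; this is exactly what produces the extra factor $2^{\frac{1}{m}(j_1-n)}$ compared with Lemma \ref{lem:slow2}.

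Concretely, I would assume $s \leq t$ and, exactly as in Lemma \ref{lem:slow2}, bound \eqref{eqn:r<>3s} from above via the fast decay \eqref{eqn:fastdecay} by $\int_{[s,t]}\big(\sum_{\lambda_1 \in \Lambda_{j_1}} y_{\lambda_1}(t)|\psi_{\lambda_1}(x)|\big)\big(\sum_{\lambda_2 \in \Lambda_{j_2}^{<}(t,s)} y_{\lambda_2}(t)|\psi_{\lambda_2}(x)|\big)\,dx$. For $x \in [s,t]$ I record the two localisation facts already used for Lemma \ref{lem:slow2}: since $|s-t| \leq 2^{-n}$ one has $\lambda_{j_1}(x) \in \Lambda_{j_1}^{l'}(t)$ with $l' \leq \frac{1}{m}(j_1-n)$ and $\lambda_{j_2}(x) \in \Lambda_{j_2}^{l}(t)$ with $l \leq \frac{1}{m}(j_2-n)$ (up to an additive constant, harmlessly absorbed below); and, by the triangle inequality for $s(\cdot)$, if $\lambda_1 \in \Lambda_{j_1}^{l_1}(x)$ then $y_{\lambda_1}(t) \leq 2^{l'+l_1+1}$ while $3+|2^{j_1}x-s(\lambda_1)| \geq 2+2^{m(l_1-1)}$ for $l_1 \geq 1$, and symmetrically for $\lambda_2$.

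Then I would estimate the $\lambda_1$-sum uniformly in $x$: splitting it according to the sets $\Lambda_{j_1}^{l_1}(x)$ and converting one power of $(3+|2^{j_1}x-k_1|)^{-4}$ into $(3+|2^{j_1}x-k_1|)^{-3}$ at the cost of $C 2^{-m(l_1-1)}$, one gets $\sum_{\lambda_1 \in \Lambda_{j_1}} y_{\lambda_1}(t)|\psi_{\lambda_1}(x)| \leq C 2^{l'}\sum_{l_1 \geq 0} 2^{(1-m)l_1}\sum_{k_1 \in \Z}(3+|2^{j_1}x-k_1|)^{-3} \leq C 2^{\frac{1}{m}(j_1-n)}$, the geometric series converging because $m>1$ (recall that $m$ was chosen with $1/m < \min\{H_1,H_2\} < 1$). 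The remaining integral $\int_{[s,t]}\sum_{\lambda_2 \in \Lambda_{j_2}^{<}(t,s)} y_{\lambda_2}(t)|\psi_{\lambda_2}(x)|\,dx$ is then treated word for word as in Lemma \ref{lem:slow2}: bounding the inner sum by $C 2^{\frac{1}{m}(j_2-n)}\sum_{\lambda_2 \in \Lambda_{j_2}^{<}(t,s)}(3+|2^{j_2}x-k_2|)^{-3}$ and integrating in $x$ via \eqref{eqn:intavec2} gives the factor $2^{-j_2}$. Multiplying the two contributions yields the announced bound $C 2^{\frac{1}{m}(j_1-n)} 2^{\frac{1}{m}(j_2-n)} 2^{-j_2}$, and \eqref{eqn:r<>2s} follows identically with $\Z_{j_2}^{>}(t,s)$ replacing $\Z_{j_2}^{<}(t,s)$.

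I do not anticipate a genuine obstacle: the statement is a routine two-scale refinement of Lemma \ref{lem:slow2}. The only point demanding a little care is the bookkeeping of the two independent ``levels'' at scale $j_1$ — namely $l'$, the block of $\lambda_{j_1}(x)$ relative to $t$, versus $l_1$, the block of $\lambda_1$ relative to the running point $x$ — together with checking that the geometric series over $l'$, $l_1$ (and $l_2$) all converge, which is precisely where the choice of $m$ enters.
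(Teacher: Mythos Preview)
Your proposal is correct and follows essentially the same approach as the paper's own proof. The key observation you single out --- that for $x \in [s,t]$ one has $\lambda_{j_1}(x) \in \Lambda_{j_1}^{l'}(t)$ with $l' \leq \frac{1}{m}(j_1-n)$, producing the extra factor $2^{\frac{1}{m}(j_1-n)}$ compared with Lemma~\ref{lem:slow2} --- is exactly the one-line modification the paper records; your presentation (handling the $\lambda_1$-sum pointwise before integrating) is a cosmetic reorganisation of the same argument.
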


\begin{proof}
The proof is essentially the same as for Lemma \ref{lem:slow2} excepted that, now, as $n \leq j_1 \leq j_2$, we remark that if $x \in D_{j_2}^l(t)$ for a $0 \leq l \leq  \frac{1}{m}(j_2-n)$ then $x \in D_{j_1}^{l'}(t)$ for a $0\leq l' \leq \frac{1}{m}(j_1-n)$.
\end{proof}

\begin{Lemma}\label{lem:slow6}
There exists a deterministic constant $C>0$ such that, for all $t,s \in (0,1)$ and $n \leq j_1 \leq j_2$
the quantities \eqref{eqn:slow31} and \eqref{eqn:slow32} are bounded by
\[
C 2^{\frac{1}{m}(j_1-n)} 2^{\frac{1}{m}(j_2-n)} 2^{-j_2}.
\]
\end{Lemma}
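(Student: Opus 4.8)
The plan is to reproduce the proof of Lemma~\ref{lem:slow3} almost verbatim, the only genuine change being forced by the hypothesis $j_1\geq n$ in place of $j_1<n$; this is the same modification that turns the proof of Lemma~\ref{lem:slow2} into that of Lemma~\ref{lem:slow5}. As there, it suffices to bound~\eqref{eqn:slow31} under the assumption $s\leq t$, the quantity~\eqref{eqn:slow32} being estimated on $[t,+\infty)$ in exactly the same way, and the case $t<s$ being symmetric. First I would use the fast (indeed Schwartz) decay~\eqref{eqn:fastdecay} of the fractional antiderivatives of $\psi$ to bound $|\psi_{\lambda_1}(x)\psi_{\lambda_2}(x)|$, keeping a couple of extra powers of decay in reserve, and I would observe that for $\lambda_2\in\Lambda_{j_2}[t,s]$ one still has $y_{\lambda_2}(t)\leq C\,2^{\frac{1}{m}(j_2-n)}$, since such a $\lambda_2$ lies within $2^{j_2-n}+1$ dyadic intervals of $\lambda_{j_2}(t)$; this part of the argument is untouched.

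Next, exactly as in Lemma~\ref{lem:slow3}, I would split the integral over $(-\infty,s]$ into the pieces over $(-\infty,s]\cap D_{j_1}^l(s)$, $l\in\N_0$, and on each piece split $\sum_{\lambda_1\in\Lambda_{j_1}}$ according to the sets $\Lambda_{j_1}^{l_1}(x)$. The one place where $j_1\geq n$ enters is the triangle-inequality control of $|s(\lambda_{j_1}(t))-s(\lambda_1)|$: decomposing it as $|s(\lambda_{j_1}(t))-s(\lambda_{j_1}(s))|+|s(\lambda_{j_1}(s))-s(\lambda_{j_1}(x))|+|s(\lambda_{j_1}(x))-s(\lambda_1)|$, the first term is now only bounded by $2^{j_1}|t-s|+1\leq 2^{j_1-n}+1$ instead of by a constant, so that $y_{\lambda_1}(t)\leq C\,2^{\frac{1}{m}(j_1-n)}\,2^{l+l_1}$. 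Feeding this into the analogue of~\eqref{eqn:demoslowsplitbound} yields precisely the extra factor $2^{\frac{1}{m}(j_1-n)}$; the remaining powers $2^{l_1}$, $2^l$ and $2^{l_2}$ (the last one coming from the splitting of $\lambda_2$) are absorbed, respectively, into one power of $3+|2^{j_1}x-k_1|$ and two powers of $3+|2^{j_2}x-k_2|$, using that $3+|2^{j_1}x-k_1|\geq c\,2^{m l_1}$, that $3+|2^{j_2}x-k_2|=3+k_2-2^{j_2}x\geq 2+2^{j_1}(s-x)\geq c\,2^{m(l-1)}$ (here $j_2\geq j_1$ is exactly what makes this step work, just as in Lemma~\ref{lem:slow3}), that $3+|2^{j_2}x-k_2|\geq c\,2^{m(l_2-1)}$ because $\lambda_2\in\Lambda_{j_2}[t,s]$ with $s\leq t$ forces $k_2\geq 2^{j_2}s\geq 2^{j_2}x$, and the inequality $1/m<1$. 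One is then left with the integrand dominated by $C\,2^{\frac{1}{m}(j_1-n)}\,2^{\frac{1}{m}(j_2-n)}\,(3+|2^{j_1}x-k_1|)^{-3}(3+|2^{j_2}x-k_2|)^{-3}$.

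To conclude I would sum over $\lambda_1$ using the boundedness of the function in~\eqref{eqn:fctbounded} (with $M=3$), and then bound $\int_{-\infty}^{s}\sum_{\lambda_2\in\Lambda_{j_2}[t,s]}(3+|2^{j_2}x-k_2|)^{-3}\,dx\leq C\,2^{-j_2}$ by~\eqref{eqn:boundfor21}; this gives the asserted bound for~\eqref{eqn:slow31}, and~\eqref{eqn:slow32} is obtained identically on the other half-line, as in~\eqref{eqn:demoslowholds}. I do not expect a genuine obstacle here, since everything reduces to bookkeeping: the only points requiring care are to verify that the single quantity $3+|2^{j_2}x-k_2|$ dominates both $2^{m(l-1)}$ (where $j_2\geq j_1$ is used) and $2^{m(l_2-1)}$, so that $2^{l+l_2}$ can be absorbed while leaving exponent $3$ on each decay factor, and to make sure that the factor $2^{\frac{1}{m}(j_1-n)}$ which emerges is precisely the one the statement permits.
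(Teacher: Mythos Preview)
Your proposal is correct and follows exactly the route the paper indicates: reproduce Lemma~\ref{lem:slow3} and insert the single modification from Lemma~\ref{lem:slow5}, namely that when $j_1\geq n$ the triangle-inequality piece $|s(\lambda_{j_1}(t))-s(\lambda_{j_1}(s))|$ is now bounded by $2^{j_1-n}+1$ rather than by a constant, producing the extra factor $2^{\frac{1}{m}(j_1-n)}$ in the bound for $y_{\lambda_1}(t)$. The absorptions of $2^{l_1}$, $2^{l}$ and $2^{l_2}$ into the decay factors (using $j_2\geq j_1$ for the $2^l$ step) and the concluding use of~\eqref{eqn:fctbounded} and~\eqref{eqn:boundfor21} are exactly as in the paper's proof of Lemma~\ref{lem:slow3}.
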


\begin{proof}
The proof is essentially the same as for Lemma \ref{lem:slow3} and the only modification is the same as in the proof of Lemma \ref{lem:slow5}.
\end{proof}

This time, the bound for the random variables $\Xi_j(\lambda)$ are already considered in the construction and we can directly go to the proof of the main Proposition of this subsection.

\begin{proof}[Proof of Proposition \ref{prop:slow}]
If we apply the procedure with $m$ such that $1/m < \min\{H_1,H_2\}$ and $2/m<1-H_1-H_2$, we find an event $\Omega_\text{slo}$ of probability $1$ such that, for all $\omega \in \Omega_\text{slo}$, there is $\mu \in \N$ for which $S_{\text{low}}^\mu \cap (0,1)\neq \cmemptyset$. Then, if $\omega \in \Omega_\text{slo}$ and $t \in S_{\text{low}}^\mu(\omega) \cap (0,1)$ and $s \in (0,1)$, we write
\begin{align}\label{demoslow:decom}
\begin{split}
|R_{H_1,H_2}'(&t,\omega)-R_{H_1,H_2}'(s,\omega)| \\ & \leq \left| \sum_{0 \leq j_1 < n} \sum_{0 \leq j_2 < n} \sum_{\lambda_1 \in \Lambda_{j_1}, \lambda_2 \in \Lambda_{j_2}} 2^{j_1(1-H_1)} 2^{j_2(1-H_2)}  \varepsilon_{\lambda_1,\lambda_2}(\omega) I_{\lambda_1,\lambda_2}[t,s]\right| \\
\quad & + \left| \sum_{0 \leq j_1 < n} \sum_{j_2 \geq  n} \sum_{\lambda_1 \in \Lambda_{j_1}, \lambda_2 \in \Lambda_{j_2}} 2^{j_1(1-H_1)} 2^{j_2(1-H_2)} \varepsilon_{\lambda_1,\lambda_2}(\omega) I_{\lambda_1,\lambda_2}[t,s]\right| \\
& + \left| \sum_{j_1 \geq n} \sum_{0 \leq j_2 < n} \sum_{\lambda_1 \in \Lambda_{j_1}, \lambda_2 \in \Lambda_{j_2}}2^{j_1(1-H_1)} 2^{j_2(1-H_2)}  \varepsilon_{j_1,j_2}^{k_1,k_2}(\omega)  I_{\lambda_1,\lambda_2}[t,s]\right| \\
& + \left| \sum_{j_1 \geq n} \sum_{j_2 \geq n} \sum_{\lambda_1 \in \Lambda_{j_1}, \lambda_2 \in \Lambda_{j_2}} 2^{j_1(1-H_1)} 2^{j_2(1-H_2)}  \varepsilon_{\lambda_1,\lambda_2}(\omega) I_{\lambda_1,\lambda_2}[t,s]\right|.
\end{split}
\end{align}

As inequality \eqref{eqn:slowrvesti} holds, we use Lemma \ref{lem:slow1} to bound the first sum, and Lemma \ref{lem:slow4} to bound the second and the third one. For the last sum, from inequality \eqref{eqn:slowrvesti} and Lemmata \ref{lem:slow5} and \ref{lem:slow6}, it just remains us to find bound for the random variables \eqref{eqn:r>>6}, \eqref{eqn:r>>5} and \eqref{eqn:r>>3}with $\ell \in \{j,j+1\}$ on $\Omega_{\text{slo}}$. For \eqref{eqn:r>>6} with $\ell=j$ , we have, as in \eqref{eqn:demoslow:intR} and then \eqref{eqn:boundfor26}
\begin{align*}
&\left | \sum_{\lambda_1 \in \Lambda_{j}^<(t,s)} \sum_{\lambda_2 \in \Lambda_{j}[t,s]} \varepsilon_{\lambda_1,\lambda_2}(\omega) I_{\lambda_1,\lambda_2} \right|  \\
& \quad \leq C 2^{-j} 2^{\frac{2}{m}(j-n)}   \mu^2 \left| \sum_{\lambda_1 \in \Lambda_{j}^<(t,s)} \sum_{\lambda_2 \in \Lambda_{j}[t,s]} \frac{1}{(3+|2k_1-k_2|)^{3}} \right| \\
& \quad \leq C 2^{-j} 2^{\frac{2}{m}(j-n)} \mu^2 .
\end{align*}
The same bound holds when we consider the sums over $\lambda_1 \in \Lambda_{j}^>(t,s)$ or $\lambda_2 \in \Lambda_{j+1}[t,s]$, i.e. for\eqref{eqn:r>>6} and \eqref{eqn:r>>5}. Finally the construction and especially \eqref{eqn:concluprocedure} insures us that
\[ \left | \sum_{\lambda_1 \in \Lambda_{j}[
t,s]} \sum_{\lambda_2 \in \Lambda_{j}[t,s]} \varepsilon_{\lambda_1,\lambda_2}(\omega) I_{\lambda_1,\lambda_2} \right|  \leq C (j-n+1) 2^{\frac{-j-n}{2}} \mu.\]
Therefore, the last term in \eqref{demoslow:decom} is bounded from above by
\begin{align*}
& C \mu^2 \left(\sum_{j_1 \geq n} 2^{j_1(1-H_1)} 2^{\frac{1}{m}(j_1-n)} \sum_{j_2 \geq j_1} 2^{-j_2H_2} 2^{ \frac{1}{m}(j_2-n)}  +\sum_{j \geq n} 2^{j(\frac{3}{2}-H_1-H_2)} (j-n+1) 2^{- \frac{n}{2}} \right) \\
& \leq C \mu^2 \left(\sum_{j_1 \geq n} 2^{j_1(1-H_1-H_2)} 2^{\frac{2}{m}(j_1-n)} + 2^{n(\frac{3}{2}-H_1-H_2)}2^{- \frac{n}{2}} \right) \\
& \leq C \mu^2 2^{n(1-H_1-H_2)} \\
& \leq C \mu^2 |t-s|^{H_1+H_2-1}
\end{align*}
and thus inequality \eqref{eqn:Prop:slow} holds.
\end{proof}

\section{Lower bounds for wavelet leaders} \label{sec:lower}

In this section, we show that the limits \eqref{eqn:mainthm1} and \eqref{eqn:mainthm2} are strictly positive. In \cite{ayacheesserkleyntssens}, the authors used the independence of the increments of the Brownian motion to bound from below its wavelet leaders. But, for the (generalized) Rosenblatt process this nice feature is not met anymore. Nevertheless, following an idea by Ayache in a close but different context\footnote{In \cite{ayache20}, Ayache does not consider wavelets at all but directly work on Wiener-It\^o integrals} \cite{ayache20}, we decompose the wavelet coefficients of the generalized Rosenblatt process in two parts. We gain some independence properties in the first part while the second is, in some sense, negligible compared to the first, see Proposition \ref{prop:normel2} below. All along this section, $C$ stands for a deterministic constant whose value may change from a line to another but does not depend on any relevant quantities, and in order to ease notations we set
\[ C_{H_1,H_2} := \frac{1}{\Gamma \left(H_1-\frac{1}{2}\right) \Gamma \left(H_2-\frac{1}{2}\right)}  \]
and for $s,x_1,x_2 \in \R$
\[ f_{H_1,H_2}(s,x_1,x_2) = \lp s-x_1 \rp_+^{H_1-3/2} \lp s-x_2 \rp_+^{H_2-3/2} \]

 Let $\Psi$ be a wavelet with compact support included in $[-N,N]$. Using formula \eqref{eqn:coef} at $t=k/2^j$, the wavelet coefficient $c_{j,k}$ of the generalized Rosenblatt process is given by 
\begin{align*}
    c_{j,k} =&  \, \int_{-N}^N \lc\ros{\frac{x+k}{2^j}} - \ros{\frac{k}{2^j}} \rc\Psi(x) dx \\ =
    & \, c_{H_1,H_2} \int_{-N}^N \Psi(x) \int_{\R^2}'\int_{\frac{k}{2^j}}^{\frac{x+k}{2^j}}  f_{H_1,H_2}(s,x_1,x_2) \, ds \, dB(x_1)\, dB(x_2) \, dx \\= 
    & \,  c_{H_1,H_2} \int_{\R^2}'\int_{-N}^N \Psi(x) \int_{\frac{k}{2^j}}^{\frac{x+k}{2^j}} f_{H_1,H_2}(s,x_1,x_2)  \,ds \, dx\, dB(x_1)\, dB(x_2)  \\= 
    & \, \,  c_{H_1,H_2} \int_{A}'\int_{-N}^N \Psi(x) \int_{\frac{k}{2^j}}^{\frac{x+k}{2^j}} f_{H_1,H_2}(s,x_1,x_2)  \, ds \, dx\, dB(x_1)\, dB(x_2)
\end{align*}
where $A := \left] - \infty, \frac{k+N}{2^j} \right]^2$, because, as soon as $x \in [-N,N]$ and $s \in [k2^{-J},(k+N)2^{-j}]$, $f(s,x_1,x_2)$ vanishes for all $x_1,x_2$ outside of $A$.
\begin{Def}
Given an integer $M \geq 0$, $c_{j,k}$ can be written as following 
\[
    c_{j,k} = \wt{c_{j,k}}^M + \wc{c_{j,k}}^M
\]
where
\begin{align}\label{eqn:coefind}
    \wt{c_{j,k}}^M  = c_{H_1,H_2} \int_{\lambda_{j,k}^{M}}'\int_{-N}^N \Psi(x) \int_{\frac{k}{2^j}}^{\frac{x+k}{2^j}} f_{H_1,H_2}(s,x_1,x_2) \, ds \, dx\, dB(x_1)\, dB(x_2)
\end{align}
with \[\lambda_{j,k}^{M} :=\left] \frac{k-NM}{2^j}, \frac{k+N}{2^j} \right]^2\] and 
\[
    \wc{c_{j,k}}^M  = c_{H_1,H_2} \int_{A \setminus \lambda_{j,k}^{M}}'\int_{-N}^N \Psi(x) \int_{\frac{k}{2^j}}^{\frac{x+k}{2^j}}f_{H_1,H_2}(s,x_1,x_2) \,  ds \, dx\, dB(x_1)\, dB(x_2).
\]
\end{Def}
\begin{Rmk}\label{rmk:iid}
Let us highlight the fact that using time change of variable for Wiener-It\^o integrals \cite[Theorem 8.5.7]{oksendal2013stochastic}, for all $j,k$, we have $\wt{c_{j,k}}^M$ is equal in law to the random variable
\[ c_{H_1,H_2}2^{-j (H_1+H_2-1)} \int_{I_{M}}' \int_{-N}^{N} \psi(x) \int_0^x f_{H_1,H_2}(s,x_1,x_2) \, ds dx \, dB(x_1)dB(x_2)\]
with $I_{M}=(-MN,N]^2$, while $\wc{c_{j,k}}^M$ is equal in law to the random variable
\[ c_{H_1,H_2}2^{-j (H_1+H_2-1)} \int_{I_{M}'}' \int_{-N}^{N} \psi(x) \int_0^x f_{H_1,H_2}(s,x_1,x_2) \, ds dx \, dB(x_1)dB(x_2)\]
with $I'_{M}=(-\infty,N]^2 \setminus (-MN,N]^2$.
\end{Rmk}
\begin{Def}
For all $(j,k) \in \N \times \Z$ and $M \in \N$ we define the random variables  
\begin{align*}
\wt{\ep_{j,k}}^M := \dfrac{\wt{c_{j,k}}^M}{2^{-j(H_1+H_2-1)}}\text{ and } \wc{\ep_{j,k}}^M := \dfrac{\wc{c_{j,k}}^M}{2^{-j(H_1+H_2-1)}}.
\end{align*}
\end{Def}
\begin{Rmk}
Note that $\wt{\ep_{j,k}}^M $ and $\wt{\ep_{j',k'}}^M $ are independent when 
\begin{align} \label{eqn:condindep}
    \lambda_{j,k}^{M} \cap \lambda_{j',k'}^{M}= \cmemptyset.
\end{align}
Indeed, if $(f_j)_j$ is a sequence of real-valued step functions on $\R^2 \setminus \{(x,x) \, : \, x \in \R \}$ which converge to the integrand with respect to $dB(x_1)dB(x_2)$ in \eqref{eqn:coefind} then $\int_{\R^2}' f_j(x_1,x_2) \, dB(x_1)dB(x_2)$ is a polynomial function of a finite number of increments $B(t_2)-B(t_1)$ of the Brownian motion for some $t_1,t_2 \in \lambda_{j,k}^{M}$. Thus $\wt{\ep_{j,k}}^M$ is measurable with respect to the $\sigma$-algebra generated by these increments
\[ \sigma_{j,k}^{M} :=\sigma \left(\{ B(t_2)-B(t_1) \, : \,  t_1,t_2 \in \lambda_{j,k}^{M} \} \right). \]
Using the independence of the increments of the Brownian motion, one concludes that $\sigma_{j,k}^{M}$ and $\sigma_{j',k'}^{M}$ are independent as soon as condition \eqref{eqn:condindep} is met and so the same holds for $\wt{\ep_{j,k}}^M $ and $\wt{\ep_{j,k'}}^M $.
Moreover, $\wt{\ep_{j_1,k_1}}^M, \dots, \wt{\ep_{j_n,k_n}}^M$ are independent when the following condition is satisfied 
\begin{align}
\label{cond:indep}
     \ \lambda_{j_i,k_i}^{M} \cap  \lambda_{j_l,k_l}^{M} = \cmemptyset \mbox{ for all } 1 \leq i < l \leq n.
    \end{align}
\end{Rmk}
This leads to defining the following condition.    
\begin{Def}\label{def:conditionind}
Let $n \geq 2$. We say $\lambda_{j_1,k_1}, \dots, \lambda_{j_n,k_n}$ satisfy condition $(C_M)$ if \eqref{cond:indep} is satisfied. 
\end{Def}
From Remark \ref{rmk:iid}, we know that $(\wt{\ep_{j,k}}^M)_{\lambda \in \Lambda}$ is a family of identically distributed second order Wiener chaos random variables. Moreover, $\wt{\ep_{j_1,k_1}}^M, \dots, \wt{\ep_{j_n,k_n}}^M$ are independent as soon as $\lambda_{j_1,k_1}, \dots, \lambda_{j_n,k_n}$ satisfies $(C_M)$. The following proposition provides a lower bound (independent of $M$) for the tail behavior of the random variable $\wt{\ep_{j,k}}^M$. 
\begin{Prop}
Let $M \in \N$ and $y \in \R^+$. If $M$ and $y$ are large enough, then the exists a deterministic constant $c_2>0$ (independent of $M$) such that 
\begin{equation}\label{eqn:constantM}
\mathbb{P} \lp |\wt{\ep_{j,k}}^M| > y \rp \geq \exp{\lp- c_2 y\rp}
\end{equation}
for all $(j,k) \in \N \times \Z$
\end{Prop}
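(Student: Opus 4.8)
The plan is to reduce the tail estimate for $\wt{\ep_{j,k}}^M$ to a statement about a \emph{fixed} second order Wiener chaos random variable, and then apply the universal lower bound of Theorem \ref{thm:bound2}. By Remark \ref{rmk:iid}, $\wt{\ep_{j,k}}^M$ has, for every $(j,k)$, the same law as the random variable
\[
Z_M := c_{H_1,H_2} \int_{I_M}' \int_{-N}^{N} \psi(x) \int_0^x f_{H_1,H_2}(s,x_1,x_2)\, ds\, dx\, dB(x_1)\, dB(x_2),
\]
so it suffices to prove the bound for $Z_M$, uniformly in $M$ large. First I would observe that $Z_M$ lives in the second Wiener chaos, so by Theorem \ref{thm:bound2} there exist $a_M, y_{0,M}>0$ with $\mathbb{P}(|Z_M|\ge y)\ge \exp(-a_M y)$ for $y\ge y_{0,M}$; the whole issue is that a priori $a_M$ and $y_{0,M}$ depend on $M$, and we need a constant $c_2$ independent of $M$.

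The key step is therefore a uniform control of the $L^2(\Omega)$ norm of $Z_M$ from below and above, together with the remark (stated after Theorem \ref{thm:bound2}) that the constant $b$ in the \emph{upper} tail bound of Theorem \ref{thm:bound2} is universal on the unit sphere of $L^2(\Omega)$, hence, by scaling, only the ratio $y/\|Z_M\|_{L^2(\Omega)}$ matters and the constant $a$ can likewise be taken uniform over any family of chaos variables whose $L^2$ norms are bounded above and below by fixed positive constants. So I would: (i) show $\|Z_M\|_{L^2(\Omega)}\le C$ for all $M$, which follows from the kernel computations already used in the paper (the integrand is in $L^2(\R^2)$ with a norm controlled by that of the full Rosenblatt kernel on a unit interval, since $I_M \subseteq (-\infty,N]^2$); (ii) show $\|Z_M\|_{L^2(\Omega)}\ge c>0$ for all $M$ large, by noting that $Z_M \to Z_\infty$ in $L^2(\Omega)$ as $M\to\infty$ (the excluded region $I'_M$ shrinks to a null set and the kernel is $L^2$), where $Z_\infty$ is the analogue with $I_\infty=(-\infty,N]^2$, and $\|Z_\infty\|_{L^2(\Omega)}>0$ because the process is genuinely non-degenerate; hence $\|Z_M\|_{L^2(\Omega)}\ge \tfrac12 \|Z_\infty\|_{L^2(\Omega)}$ for $M$ large. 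With both bounds in hand, Theorem \ref{thm:bound2} applied to $Z_M/\|Z_M\|_{L^2(\Omega)}$ gives $\mathbb{P}(|Z_M|\ge y) = \mathbb{P}(|Z_M|/\|Z_M\|_{L^2(\Omega)} \ge y/\|Z_M\|_{L^2(\Omega)}) \ge \exp(-a\, y/\|Z_M\|_{L^2(\Omega)}) \ge \exp(-c_2 y)$ for $y$ large, with $c_2 := a/c$ independent of $M$.

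I expect the main obstacle to be step (ii): making precise that the lower bound on $\|Z_M\|_{L^2(\Omega)}$ is uniform in $M$. One must argue that $Z_\infty$ is not identically zero — equivalently that the symmetrized kernel $\int_{-N}^N \psi(x)\int_0^x f_{H_1,H_2}(s,x_1,x_2)\,ds\,dx$ does not vanish in $L^2((-\infty,N]^2)$ — and this should follow from the fact that, up to the $2^{-j(H_1+H_2-1)}$ normalization, this kernel is exactly (a piece of) the one producing a nonzero wavelet coefficient of $R_{H_1,H_2}$; non-vanishing of wavelet coefficients of a non-trivial process is standard. Then the $L^2(\Omega)$-convergence $Z_M \to Z_\infty$ reduces to convergence of kernels in $L^2(\R^2)$, which is dominated convergence against the fixed $L^2$ kernel. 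A secondary, more bookkeeping-type obstacle is checking that the constant $a$ in Theorem \ref{thm:bound2} can indeed be chosen uniformly over the sphere in $L^2(\Omega)$ inside the second chaos; I would handle this by invoking the hypercontractivity / equivalence of all $L^p$ norms on a fixed Wiener chaos, which pins down the whole distribution of a normalized chaos variable up to uniformly controlled constants, so that both $a$ and $y_0$ in Theorem \ref{thm:bound2} are uniform there.
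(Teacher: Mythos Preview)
Your reduction to the common law $Z_M$ and the $L^2$ bounds on $Z_M$ are fine, but the step you flag as a ``bookkeeping-type obstacle'' is in fact a genuine gap and, as stated, is false. The constant $a$ in Theorem \ref{thm:bound2} is \emph{not} uniform over the unit $L^2$-sphere of the second Wiener chaos: for instance $X_n = n^{-1/2}\sum_{i=1}^n (g_i^2-1)$ all have unit variance, yet $X_n$ converges in law to a standard Gaussian, whose tail is of order $\exp(-y^2/2)$, so no fixed $a$ can give $\mathbb{P}(|X_n|\ge y)\ge \exp(-ay)$ for all $n$ and all large $y$. Hypercontractivity only yields uniform \emph{upper} moment/tail bounds; the exponential lower bound in Theorem \ref{thm:bound2} is governed by the largest eigenvalue of the Hilbert--Schmidt operator associated to the kernel, and that quantity is not controlled by $\|Z_M\|_{L^2(\Omega)}$ alone. (One could try to salvage your route by showing that the kernels of $Z_M$ converge in Hilbert--Schmidt norm to that of $Z_\infty$, hence their largest eigenvalues converge to something nonzero, but that is a nontrivial spectral argument, not hypercontractivity.)

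The paper sidesteps this issue completely by applying Theorem \ref{thm:bound2} only once, to the \emph{fixed} random variable $\ep_{j,k}=Z_\infty$, which does not depend on $M$; this gives a single constant $c_1$ with $\mathbb{P}(|\ep_{j,k}|\ge y)\ge \exp(-c_1 y)$. Then, writing $\wt{\ep_{j,k}}^M = \ep_{j,k} - \wc{\ep_{j,k}}^M$, one has
\[
\mathbb{P}\bigl(|\wt{\ep_{j,k}}^M|\ge y\bigr)\;\ge\;\mathbb{P}\bigl(|\ep_{j,k}|\ge 2y\bigr)\;-\;\mathbb{P}\bigl(|\wc{\ep_{j,k}}^M|> y\bigr),
\]
and the remainder is controlled by the \emph{universal} upper bound (Theorem \ref{thm:bound}) together with the explicit estimate $\|\wc{\ep_{j,k}}^M\|_{L^2(\Omega)}\le C^*_{\Psi,H_1,H_2} M^{\max\{H_1,H_2\}-1}$, so that $\mathbb{P}(|\wc{\ep_{j,k}}^M|>y)\le \exp(-\overset{\star}{C}(C^*_{\Psi,H_1,H_2})^{-1}M^{1-\max\{H_1,H_2\}}y)$, which for $M$ large is at most $\tfrac12\exp(-2c_1 y)$. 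This is morally the same ``$Z_M\to Z_\infty$'' idea you sketch, but executed at the level of tail probabilities rather than $L^2$ norms, and it requires only the lower tail bound for a single $M$-independent variable.
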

\begin{proof}
Fix $y \in \R^+$ (large enough). Our aim is to prove the existence of lower bound for $\mathbb{P} \lp |\wt{\ep_{\lambda}}^M| > y \rp$ which is independent of $M$. To this end, we start by proving the following lemma
\begin{Lemma} \label{prop:normel2}
There exist three strictly positive deterministic constants $C_{\Psi,H_1,H_2}$, $C'_{\Psi,H_1,H_2}$ and $C^*_{\Psi,H_1,H_2}$ such that for all $(j,k) \in \N \times \Z$ and $M \geq 2$ one has 
\begin{align*}
 C_{\Psi,H_1,H_2} 2^{-j(H_1+H_2-1)} \leq & \norm{\wt{c_{j,k}}^M}\leq C'_{\Psi,H_1,H_2} 2^{-j(H_1+H_2-1)}  \\
    & \norm{\wc{c_{j,k}}^M} \leq C^*_{\Psi,H_1,H_2} 2^{-j(H_1+H_2-1)} M^{\max\{H_1,H_2\}-1} 
\end{align*}
\end{Lemma}
\begin{proof}
Let us assume, w.l.o.g. that $H_1 \geq H_2$. We define the functions
\[ \Phi_1 \, : \, (x_1,x_2) \mapsto  \int_{-N}^N \Psi(x) \int_{0}^{x}  f_{H_1,H_2}(s,x_1,x_2)ds \, dx, \] 
\[ \Phi_2 \, : \, (x_1,x_2) \mapsto  \int_{-N}^N \Psi(x) \int_{0}^{x}  f_{H_1,H_2}(s,x_2,x_1) ds \, dx, \] 
and the symmetric function\footnote{The function $\Phi$ is in the fact the symmetrization of $\Phi_1$.}
\[ \Phi = \frac{1}{2} \left( \Phi_1 + \Phi_2 \right).\]
By Remark \ref{rmk:iid} we have, using the ``Wiener isometry'' \footnote{For $f$ a symmetric function in $L^2(\R^2)$ , and $I_2(f)$ the second order Wiener-It\^o integral of $f$. One has $\E(I_m(f))^2= 2! ||f||_{L^2(\R^2)}$.} \cite[Section 5]{taqqu-2011},
\begin{align*}
\norm{\wt{c_{j,k}}^M} =   \sqrt{2} c_{H_1,H_2} 2^{-j(H_1+H_2-1)} \left \| \Phi \right \|_{L^2(I_M)}
\end{align*}
and thus it suffices to take  
\begin{align*}
    & C_{\Psi,H_1,H_2} := \sqrt{2} c_{H_1,H_2}  \left \| \Phi \right \|_{L^2([-N,N]^2)} \\ & C'_{\Psi,H_1,H_2} := \sqrt{2} c_{H_1,H_2}  \left \| \Phi \right \|_{L^2((-\infty,N]^2)}
\end{align*}
Now, still using Remark \ref{rmk:iid} and ``Wiener isometry'' we have
\begin{align*}
    \norm{\wc{c_{j,k}}^M} &=  \sqrt{2} c_{H_1,H_2} 2^{-j(H_1+H_2-1)} \left \| \Phi \right \|_{L^2(I_M')} \\
    & \leq \sqrt{2} c_{H_1,H_2} 2^{-j(H_1+H_2-1)} \left \| \Phi_1 \right \|_{L^2(I_M')}. 
\end{align*}
Also as 
\begin{align*}
    I'_{M}=(-\infty,N]^2 \setminus (-MN,N]^2 \subset \R \times \left( -\infty,-MN\right] \bigcup \left( -\infty,-MN\right] \times \R,
\end{align*}
we write
\begin{align*}
&\left \| \Phi_1 \right \|_{L^2(I_M')}^2  \\&= \int_{I_M'} \left| \int_{-N}^N \Psi(x) \int_0^x (s-x_1)_+^{H_1-\frac{3}{2}} (s-x_2)_+^{H_2-\frac{3}{2}} ds \, dx \right |^2 dx_{1} \, dx_2 \\
& \leq \int_{I_M'} \left( \int_{-N}^N |\Psi(x)| \int_{[0,x]} (s-x_1)_+^{H_1-\frac{3}{2}} (s-x_2)_+^{H_2-\frac{3}{2}} ds \, dx \right )^2 dx_{1} \, dx_2 \\
& \leq \int_\R \int_{- \infty}^{-MN}  \left( \int_{-N}^N |\Psi(x)| \int_{[0,x]} (s-x_1)_+^{H_1-\frac{3}{2}} (s-x_2)_+^{H_2-\frac{3}{2}} ds \, dx \right )^2 dx_{1} \, dx_2 \\
& +  \int_\R \int_{- \infty}^{-MN}  \left( \int_{-N}^N |\Psi(x)| \int_{[0,x]} (s-x_1)_+^{H_1-\frac{3}{2}} (s-x_2)_+^{H_2-\frac{3}{2}} ds \, dx \right )^2 dx_{2} \, dx_1.
\end{align*}
Let us deal with the first term in the last sum, the second one can be treated similarly by permuting the roles of $H_1$ and $H_2$ as well as $x_1$ and $x_2$. As the function $y \mapsto  y^{H_1-3/2}$ is decreasing, one gets 
\begin{align*}
&\int_\R  \int_{- \infty}^{-MN}  \left( \int_{-N}^N |\Psi(x)| \int_{[0,x]} (s-x_1)_+^{H_1-\frac{3}{2}} (s-x_2)_+^{H_2-\frac{3}{2}} ds \, dx \right )^2 dx_{1} \, dx_2 \\
& \leq \left(  \int_{- \infty}^{-MN} (-N-x_1)^{2H_1-3} \, dx_1 \right) \times \int_\R \lp \int_{-N}^N | \Psi(x)| \lln \int_{[0,x]} \lp s-x_2 \rp_+^{H_2-3/2} ds \rrn dx \rp^2 dx_2.
\end{align*}
Concerning the first integral, we have, as $M \geq 2$
\begin{align*}
    \int_{-\infty}^{ -NM} \lp -N-x_1 \rp^{2H_1-3} dx_1 &= \dfrac{1}{2-2H_1}  \lp NM-N \rp_+^{2H_1-2} \\
    & = \dfrac{1}{2-2H_1} N^{2H_1-2}  (M-1) ^{2H_1-2} \\
    & \leq  c \cdot  M^{2H_1-2} 
\end{align*}
while, using again the ``Wiener isometry'', 
\begin{align*}
    \int_\R &\lp \int_{-N}^N | \Psi(x)| \lln \int_{[0,x]} \lp s-x_2 \rp_+^{H_2-3/2} \rrn dx \rp^2 dx_2  \\ \leq 
    & \, 2N \norminf{\Psi} \sup_{x \in [-N,N]} \int_\R  \lln \int_{[0,x]} \lp s-x_2 \rp_+^{H_2-3/2} ds \rrn^2 dx_2 \\ =
    & \, 2N \norminf{\Psi} \sup_{x \in [-N,N]} \E \lc \lln B_{H_2} \lp x \rp - B_{H_2} \lp 0 \rp \rrn^2 \rc \\ \leq 
    & \, 2N \norminf{\Psi} \sup_{x \in [-N,N]} C_{H_2} \lp |x|\rp^{2H_2} \leq c, 
\end{align*}
where $B_{H_2}$ denotes the fractional Brownian motion with parameter $H_2$. As a result, there exists a positive constant $C^*_{\Psi,H_1,H_2}$ such that, as we suppose $H_1 \geq H_2$, one has 
\begin{align*}
    \norm{\wc{c_{j,k}}^M} \leq C^*_{\Psi,H_1,H_2} 2^{-j(H_1+H_2-1)} M^{H_1-1}.
\end{align*}
\end{proof} 
By Lemma \ref{prop:normel2}, one can remark that as $M \rightarrow +\infty$, $(\wt{\ep_{j,k}}^M )_M$ converges in $L^2(\Omega)$ to the random variable
   \[ \ep_{j,k} := \dfrac{c_{j,k}}{2^{-j(H_1+H_2-1)}} \]
   with, for all $M \in \N$,
   \[  \ep_{j,k}-\wt{\ep_{j,k}}^M =  \wc{\ep_{j,k}}^M . \]
By Theorem \ref{thm:bound2}, there exists a constant $c_1>0$ such that, for all $\lambda \in \Lambda$ and $y$ sufficiently large
\begin{align*}
 \mathbb{P} \lp \lvert \ep_{j,k} \rvert \geq y \rp \geq \exp{\lp- c_1 y\rp}.
\end{align*}
Then, for all $M \in \N$, we have, for all such $\lambda$ and $y$ 
\begin{align*}
 \mathbb{P} \lp \lvert \wt{\ep_{j,k}}^M\rvert \geq y \rp & \geq  \mathbb{P} \lp \{\lvert \wt{\ep_{j,k}}^M\rvert \geq y \} \cap \{|\wc{\ep_{j,k}}^M | \leq y \} \rp \\
 & \geq \mathbb{P} \lp \{ \lvert \ep_{j,k}\rvert- |\wc{\ep_{j,k}}^M |\geq y \} \cap \{ |\wc{\ep_{j,k}}^M| \leq y \} \rp \\
 & \geq \mathbb{P} \lp \{ \lvert \ep_{j,k}\rvert\geq 2 y \} \cap \{ |\wc{\ep_{j,k}}^M| \leq y \} \rp \\
 & \geq \mathbb{P} \lp \lvert \ep_{j,k}\rvert\geq 2 y  \rp -\mathbb{P} \lp (|\wc{\ep_{j,k}}^M| > y  \rp.
\end{align*}
Using Lemma \ref{prop:normel2} and Theorem \ref{thm:bound} one has
\begin{align*}
\mathbb{P} \lp |\wc{\ep_{j,k}}^M| > y \rp &\leq \mathbb{P} \lp (|\wc{c_{j,k}}^M| > y \norm{\wc{c_{j,k}}^M} (C^*_{\Psi,H_1,H_2})^{-1} M^{1-\max\{H_1,H_2\}}  \rp \\
& \leq \exp(- \overset{\star}{C} (C^*_{\Psi,H_1,H_2})^{-1} M^{1-\max\{H_1,H_2\}} y).
\end{align*}
Thus, if $M$ is large enough, one has, as $1-\max\{H_1,H_2\}>0$,
\[ \exp(- \overset{\star}{C} (C^*_{\Psi,H_1,H_2})^{-1} M^{1-\max\{H_1,H_2\}} y) \leq \frac{1}{2}  \exp{\lp- 2c_1 y\rp}\]
which gives that, for all large enough $y$, one gets
\begin{equation}\label{eqn:constantM}
\mathbb{P} \lp |\wt{\ep_{j,k}}^M| > y \rp \geq \exp{\lp- c_2 y\rp}
\end{equation}
with $c_2:= 2 c_1$. In the sequel, we will implicitly always consider such large enough $M$.
\end{proof}

In the following two subsections, Lemmata \ref{Lemma:leaderord1} and \ref{Lemma:rapid1} follow the lines of Lemmata 3.6 and 3.8 in \cite{ayacheesserkleyntssens} respectively, with some subtle modifications as the authors in \cite{ayacheesserkleyntssens} deal with $\mathcal{N}(0,1)$ random variables while, here, we focus on random variables in the second order Wiener chaos that depend on the parameter $M$. For the sake of completeness and clarity, we write the proofs in full details.

\subsection{Ordinary Points}
In this section our aim is to prove the following proposition.
\begin{Prop}
\label{Prop:ordinleader}
There exists $\Omega^*_1 \subset \Omega$ with probability 1 such that for all $\omega \in \Omega^*_1$ and Lebesgue almost every $t \in (0,1)$ one has 
\begin{align}
\label{eq:Propslow}
    \limsup_{j \rightarrow +\infty}  \dfrac{d_j(t,\omega)}{2^{-j(H_1+H_2-1)} \log j} >0.
\end{align}
\end{Prop}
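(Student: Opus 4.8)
The plan is to reduce the almost-everywhere statement on $(0,1)$ to a pointwise one by Fubini's theorem, and then to prove that pointwise statement by a Borel--Cantelli argument based on the approximate independence provided by the splitting $c_{j,k}=\wt{c_{j,k}}^M+\wc{c_{j,k}}^M$ together with the tail estimate \eqref{eqn:constantM}. Concretely, I would first fix a deterministic $\kappa>0$ and try to show that for \emph{every} $t\in(0,1)$,
\[ \mathbb{P}\Big( d_j(t,\cdot)\ \geq\ \kappa\,2^{-j(H_1+H_2-1)}\log j\ \text{ for infinitely many } j\Big)=1 . \]
Writing $f(\omega,t)$ for the indicator of $\big\{\limsup_{j\to+\infty}\tfrac{d_j(t,\omega)}{2^{-j(H_1+H_2-1)}\log j}>0\big\}$, this says $\int_\Omega f(\omega,t)\,d\mathbb{P}(\omega)=1$ for each $t$, so $\mathbb{E}\big[\text{Leb}\{t\in(0,1):f(\cdot,t)=1\}\big]=1$ by Fubini's theorem; since that Lebesgue measure is at most $1$ it must equal $1$ almost surely, and the associated probability-one event is the $\Omega_1^*$ we want.

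\textbf{Choice of independent pieces.} Fix $t\in(0,1)$. Since $d_j(t,\cdot)\geq|c_\mu|$ for any dyadic interval $\mu\subseteq\lambda_j(t)$ (by the definition of the leader), it is enough to exhibit one large coefficient inside $\lambda_{j_n}(t)$ along a sparse sequence of scales $j_1<j_2<\cdots$ with $j_{n+1}-j_n$ a large constant. I would take a large integer $r$ and, for each $n$, the dyadic interval $\mu_n$ of scale $j_n+r$ sitting at the end of $\lambda_{j_n}(t)$ opposite to $t$, so that $\dist(t,\mu_n)\geq2^{-j_n-2}$; for $r$ and the gaps $j_{n+1}-j_n$ large enough the squares $\lambda^{M}_{j_n+r,\,s(\mu_n)}$ are pairwise disjoint, hence by Remark~\ref{rmk:iid} the variables $\wt{\ep_{\mu_n}}^M$ are i.i.d. Since $\ep_{\mu_n}=\wt{\ep_{\mu_n}}^M+\wc{\ep_{\mu_n}}^M$ and $\mu_n$ has scale $j_n+r$, one gets $|c_{\mu_n}|\geq\kappa\,2^{-j_n(H_1+H_2-1)}\log j_n$, hence $d_{j_n}(t,\cdot)\geq\kappa\,2^{-j_n(H_1+H_2-1)}\log j_n$, as soon as
\[ |\wt{\ep_{\mu_n}}^M|>2\kappa\,2^{r(H_1+H_2-1)}\log j_n \qquad\text{and}\qquad |\wc{\ep_{\mu_n}}^M|\leq\kappa\,2^{r(H_1+H_2-1)}\log j_n . \]

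\textbf{Two Borel--Cantelli lemmas.} For the first inequality, \eqref{eqn:constantM} gives $\mathbb{P}\big(|\wt{\ep_{\mu_n}}^M|>2\kappa\,2^{r(H_1+H_2-1)}\log j_n\big)\geq j_n^{-2c_2\kappa\,2^{r(H_1+H_2-1)}}$ for $n$ large; choosing $\kappa$ small enough that $2c_2\kappa\,2^{r(H_1+H_2-1)}\leq1$ makes this series diverge, and independence of the $\wt{\ep_{\mu_n}}^M$ then yields, via the second Borel--Cantelli lemma, that the first inequality holds for infinitely many $n$ almost surely. For the second inequality, Lemma~\ref{prop:normel2} bounds $\norm{\wc{c_{\mu_n}}^M}\leq C^*_{\Psi,H_1,H_2}\,2^{-(j_n+r)(H_1+H_2-1)}M^{\max\{H_1,H_2\}-1}$, so Theorem~\ref{thm:bound} gives, for $n$ large,
\[ \mathbb{P}\big(|\wc{\ep_{\mu_n}}^M|>\kappa\,2^{r(H_1+H_2-1)}\log j_n\big)\ \leq\ \exp\!\Big(-\overset{\star}{C}\,(C^*_{\Psi,H_1,H_2})^{-1}\,\kappa\,2^{r(H_1+H_2-1)}\,M^{1-\max\{H_1,H_2\}}\,\log j_n\Big); \]
taking $M$ large enough (which is allowed, since only a large enough $M$ has been fixed around \eqref{eqn:constantM}) this exponent is $\geq2\log j_n$, the series converges, and the first Borel--Cantelli lemma gives that the second inequality fails for only finitely many $n$ almost surely. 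Intersecting the two almost-sure events produces infinitely many $n$ satisfying both, which is the required pointwise statement, and the reduction above then finishes the proof.

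\textbf{Main obstacle.} The delicate point — and the reason this is heavier than the Gaussian case of \cite{ayacheesserkleyntssens}, where independence of Brownian increments is used directly — is that the wavelet coefficients of $R_{H_1,H_2}$ are not independent, so one can only decouple them \emph{approximately} through $c_{j,k}=\wt{c_{j,k}}^M+\wc{c_{j,k}}^M$. This forces $\kappa$ to be small (so that the divergent tail series for $\wt{\ep}^M$ survives) and $M$ to be large (so that the remainder $\wc{\ep}^M$ is negligible) simultaneously; since the geometric separation of the squares $\lambda^{M}_{j,k}$ makes $r$ grow like $\log_2 M$, one has to check that $\kappa\,2^{r(H_1+H_2-1)}$ stays bounded while $2^{r(H_1+H_2-1)}M^{1-\max\{H_1,H_2\}}\to\infty$, which works because $H_1+H_2-\max\{H_1,H_2\}=\min\{H_1,H_2\}>\tfrac12$. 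Verifying this compatibility of constants, and the elementary but slightly fiddly disjointness of the squares $\lambda^M_{j_n+r,\,s(\mu_n)}$, is where the actual work lies.
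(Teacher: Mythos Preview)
Your approach is correct and close in spirit to the paper's, but the mechanics differ. The paper also fixes $t$, runs two Borel--Cantelli arguments (one for $\wt\ep^M$ via the lower tail bound \eqref{eqn:constantM}, one for $\wc\ep^M$ via Theorem~\ref{thm:bound} and Lemma~\ref{prop:normel2}), and concludes by Fubini. The difference is in how independence is manufactured. Rather than one attempt per scale along a sparse arithmetic sequence $(j_n)$, the paper works in dyadic blocks: for the block $j=m=2^p$ it builds the ``sibling'' sequence $T_1,\dots,T_m$ (the halves discarded when descending from $\lambda_{2^p}(t)$ to $\lambda_{2^{p+1}}(t)$), picks one $T'_n\in S_{T_n,\lfloor\log_2 NM\rfloor+2}$ in each with $\lambda^M_{T'_n}\subset T_n$, and shows the block event $\mathcal E_{2^p,2^p}(t)=\{\max_n|\wt\ep_{T'_n}^M|\geq C\log(2\cdot 2^p)\}$ has probability close to~$1$; the blocks are mutually independent because the $T'_n$'s from different blocks live in disjoint regions.

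What this buys is a decoupling of constants: the $C$ from the divergence argument (Lemma~\ref{Lemma:leaderord1}) and the $C'$ from the $\wc\ep^M$ upper bound (Lemma~\ref{Lemma:leaderord2}) are both \emph{independent of $M$}, so at the end one simply takes $M$ large enough that $C-C'M^{\max\{H_1,H_2\}-1}>0$. In your route the depth $r\sim\log_2 M$ feeds back into the threshold $\kappa\,2^{r(H_1+H_2-1)}$ for \emph{both} Borel--Cantelli steps, which forces the compatibility check you describe; it goes through (exactly because $\min\{H_1,H_2\}>0$, as you note), but the paper's separation into two lemmas with $M$-free constants is cleaner. Your ``opposite end'' placement of $\mu_n$ also works, though the sibling construction makes pairwise disjointness of the boxes $\lambda^M$ automatic --- the $T_n$ tile $\lambda_j(t)\setminus\lambda_{j+m}(t)$ --- which avoids the fiddly geometry you flag at the end.
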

To this end, as  a first step, let us state the following lemma concerning the random variable $\wt{\ep_{\lambda}}^M$. If $\lambda=\lambda_{j,k}$ is a dyadic interval and $m \in \N$, $S_{\lambda,m}=S_{j,k,m}$ stands for the finite set of cardinality $2^m$ whose elements are the dyadic intervals of scale $j + m$ included in $\lambda_{j,k}$, formally speaking 
\begin{align*}
S_{j,k,m} := \{ \lambda \in \Lambda_{j+m} \, : \, \lambda \subset \lambda_{j,k} \}
\end{align*}

\begin{Lemma}
\label{Lemma:leaderord1}
There is a deterministic constant $C>0$ such that the following holds: for all $M \in \N$ and for all $t \in (0,1)$, there exists $\Omega_{t,1} \subset \Omega$ with probability 1 such that for all $\omega \in \Omega_{t,1}$ there are infinitely many $j \in \N$ such that
\begin{align*}
    \max_{\begin{matrix}
\lambda' \in S_{\lambda,\lfloor \log_2 (NM) \rfloor +2}\\
\lambda \in 3\lambda_j(t)
\end{matrix}} \lln \wt{\ep_{\lambda'}}^M(\omega) \rrn \geq C \log j.
\end{align*}
\end{Lemma}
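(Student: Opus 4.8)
The statement is a Borel–Cantelli-type lower bound for the maximum of the "independent parts" $\wt{\ep_{\lambda'}}^M$ of the wavelet coefficients over the dyadic descendants (at a fixed extra depth $m_0:=\lfloor \log_2(NM)\rfloor+2$) of the three dyadic intervals adjacent to $\lambda_j(t)$. The reason this depth is the right one: $\lambda_{j+m_0,k'}^M$ has side length $2^{-(j+m_0)}(NM+N) \le 2^{-j}$ roughly, more precisely the choice $m_0 = \lfloor\log_2(NM)\rfloor+2$ guarantees that two dyadic intervals $\lambda', \lambda''$ of scale $j+m_0$ sitting in \emph{disjoint} intervals of scale $j$ satisfy condition $(C_M)$ from Definition \ref{def:conditionind}, so the corresponding $\wt{\ep_{\lambda'}}^M$ are independent. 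The plan is to fix $t$, consider the events that the maximum over $S_{\lambda,m_0}$ (for $\lambda \in 3\lambda_j(t)$, or even just $\lambda=\lambda_j(t)$) exceeds $C\log j$ for a suitably small deterministic $C>0$, show these events have probabilities whose sum diverges, extract from each scale $j$ a \emph{large} sub-collection of mutually independent such events, and apply the second Borel–Cantelli lemma along a sparse subsequence of scales to conclude infinitely many $j$ work almost surely.

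First I would make the independence bookkeeping precise. Restrict attention to, say, $\lambda=\lambda_j(t)$ and the set $S_{\lambda_j(t),m_0}$ of its $2^{m_0}$ dyadic descendants of scale $j+m_0$. Among these $2^{m_0}$ intervals, a positive proportion (in fact at least one, but more usefully at least $c\,2^{m_0}$ of them with $c$ depending only on $N,M$) are pairwise spaced far enough apart that the associated $\lambda^M_{j+m_0,k'}$ are pairwise disjoint, hence $(C_M)$ holds and the random variables $(\wt{\ep_{\lambda'}}^M)$ over this sub-collection are independent; they are also identically distributed by Remark \ref{rmk:iid}. Call this independent sub-family $\mathcal{I}_j$, with $\#\mathcal{I}_j \ge 2$. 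By the lower tail bound \eqref{eqn:constantM}, for $y=C\log j$ with $C$ large enough that $y$ is "large enough",
\[
\PP\!\big(|\wt{\ep_{\lambda'}}^M| > C\log j\big) \ge \exp(-c_2 C \log j) = j^{-c_2 C}.
\]
Choosing $C < 1/c_2$ (say $C = 1/(2c_2)$, shrinking if necessary so that the earlier "large enough" thresholds are respected for all $j$ past some $j_0$) gives each such probability at least $j^{-1/2}$, so
\[
\PP\!\Big(\max_{\lambda' \in \mathcal{I}_j} |\wt{\ep_{\lambda'}}^M| \le C\log j\Big) = \prod_{\lambda'\in\mathcal{I}_j}\big(1-\PP(|\wt{\ep_{\lambda'}}^M|>C\log j)\big) \le \big(1-j^{-1/2}\big)^{\#\mathcal{I}_j} \le \exp\big(-\#\mathcal{I}_j\, j^{-1/2}\big).
\]

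The obstacle — and the reason the statement only claims "infinitely many $j$" rather than "all large $j$" — is that $\#\mathcal{I}_j$ is essentially $c\,2^{m_0}$, which does \emph{not} grow with $j$: it is a fixed number depending on $M$. So the per-scale failure probability $\exp(-\#\mathcal{I}_j\, j^{-1/2})$ is close to $1$ and the events at consecutive scales are not independent of each other (descendants of $\lambda_j(t)$ at scale $j+m_0$ overlap spatially with descendants of $\lambda_{j'}(t)$ for $j'$ near $j$). The fix, following the scheme of Lemma 3.6 in \cite{ayacheesserkleyntssens}, is to pass to a sparse subsequence of scales $j_1 < j_2 < \cdots$ chosen so that (i) the spatial supports of the relevant coefficients across different $j_p$'s are disjoint, which makes the events $E_p := \{\max_{\lambda'\in\mathcal{I}_{j_p}}|\wt{\ep_{\lambda'}}^M| > C\log j_p\}$ mutually independent, and (ii) $\sum_p \PP(E_p) = \sum_p \big(1-\exp(-\Theta(\#\mathcal{I}_{j_p} j_p^{-1/2}))\big) \asymp \sum_p \#\mathcal{I}_{j_p}\, j_p^{-1/2} = +\infty$, which holds provided the subsequence grows slowly, e.g. $j_p \sim p^2$. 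Concretely one chooses $j_{p+1}$ large enough relative to $j_p$ that every interval of $S_{\lambda_{j_{p+1}}(t),m_0}$ (enlarged to its $\lambda^M$-version) is disjoint from all $\lambda^M$'s arising at scale $j_p$; the independence of increments of $B$ (as in the Remark following Definition \ref{def:conditionind}) then gives independence of $E_p$ and $E_{p+1}$, and inductively of the whole family. The second Borel–Cantelli lemma yields an event $\Omega_{t,1}$ of probability $1$ on which infinitely many $E_p$ occur, which is exactly the claim (with $3\lambda_j(t)$ in place of $\lambda_j(t)$ only making the max larger). The constant $C$ produced this way is deterministic and independent of $t$ and $M$, as asserted.

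One remaining technical point to address is the uniformity of the tail bound \eqref{eqn:constantM} in $(j,k)$: the Proposition giving \eqref{eqn:constantM} already states the constant $c_2$ and the threshold on $y$ are independent of $(j,k)$, so no extra work is needed there; similarly the identical distribution of the $\wt{\ep_{\lambda'}}^M$ (Remark \ref{rmk:iid}) is what lets us write a single tail probability for all members of $\mathcal{I}_j$. I would also note explicitly that $C\log j \ge y_0$ (the "large enough $y$" threshold) forces us to discard finitely many initial scales, harmless since we only need infinitely many $j$.
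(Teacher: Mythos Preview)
Your overall strategy (lower tail bound \eqref{eqn:constantM}, independence via condition $(C_M)$, second Borel--Cantelli along a subsequence) is correct, but there is a genuine gap in the inter-scale independence step, and the paper's construction is set up precisely to avoid it.

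You work at a \emph{single} depth $m_0=\lfloor\log_2(NM)\rfloor+2$ below $\lambda_{j_p}(t)$ and collect a sub-family $\mathcal I_{j_p}\subset S_{\lambda_{j_p}(t),m_0}$ of $O(1)$ intervals with pairwise disjoint $\lambda^M$'s. You then claim that ``one chooses $j_{p+1}$ large enough relative to $j_p$'' so that the $\lambda^M$'s from $\mathcal I_{j_{p+1}}$ are disjoint from those of $\mathcal I_{j_p}$. This does not follow: both families sit inside $\lambda_{j_p}(t)\supset\lambda_{j_{p+1}}(t)$, and nothing in your description prevents some $\lambda'\in\mathcal I_{j_p}$ from having $t\in(\lambda')^M$; in that case $(\lambda')^M$ meets $\lambda_{j_{p+1}}(t)$ no matter how large $j_{p+1}$ is. You only imposed \emph{pairwise} disjointness within a single scale, not avoidance of a neighbourhood of $t$, so the events $E_p$ need not be independent and the second Borel--Cantelli lemma cannot be invoked.

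The paper fills exactly this gap by a telescoping ``sibling'' construction that simultaneously manufactures many independent trials \emph{and} guarantees independence across super-scales. Fix $j$ and $m$; for $n=1,\dots,m$ let $I_n=\lambda_{j+n}(t)$ and let $T_n$ be its dyadic sibling, so that $T_n\subset\lambda_{j+n-1}(t)\setminus\lambda_{j+n}(t)$. The $T_n$ are pairwise disjoint and all avoid $t$. Inside each $T_n$ one picks $T'_n\in S_{T_n,m_0}$ with $(T'_n)^M\subset T_n$; this yields $m$ random variables $(\wt{\ep_{T'_n}}^M)_{1\le n\le m}$ satisfying $(C_M)$, hence independent. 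With $m$ growing (the paper takes $j=m=2^p$) the event $\mathcal E_{j,m}(t)=\{\max_n|\wt{\ep_{T'_n}}^M|\ge C\log(2m)\}$ has probability at least $1-(1-(2m)^{-Cc_2})^m$, which tends to $1$ for $Cc_2<1$. Across different $p$, all $T'_n$'s lie in the disjoint annuli $\lambda_{2^p}(t)\setminus\lambda_{2^{p+1}}(t)$, so the events $\mathcal E_{2^p,2^p}(t)$ are mutually independent and Borel--Cantelli applies. Finally, since each $T'_n\in S_{T_n,m_0}$ with $T_n\in 3\lambda_{j+n}(t)$, the occurrence of $\mathcal E_{2^p,2^p}(t)$ produces some $j'=2^p+n\in(2^p,2^{p+1}]$ at which the desired maximum is $\ge C\log(2^{p+1})\ge C\log j'$.

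Your approach can be repaired by forcing each $\mathcal I_{j_p}$ to live inside the sibling of $\lambda_{j_p+1}(t)$, but once you do that you have essentially reproduced the paper's telescoping idea with $m=1$; the paper's grouping ($m=2^p$) is what makes the per-event probability large and the argument cleaner.
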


\begin{proof}
Let us fix $t \in (0,1)$ and $j \in \N $. For any $\lambda \in S_{j,k_j(t),m}$, there exists a unique decreasing finite sequence $(I_n)_{0 \leq n \leq m}$ of decreasing dyadic intervals in the sense of inclusion such that $I_0= \lambda_{j,k_j(t)}$, $I_m=\lambda$ and $I_n \in S_{j,k_j(t),n}$. Then, define the sequence  $(T_n)_{1 \leq n \leq m}$ of unique dyadic intervals such that $I_{n-1} = I_n \cup T_n$. Note that for all $1 \leq n \leq m$, $T_n \in 3I_n$. Moreover, as $(I_n)_{0 \leq n \leq m}$ is decreasing, $(T_n)_{1 \leq n \leq m}$ are pairwisely disjoint. Furthermore, for every $n \in \{1,...,m\}$, there exist $T'_n = \lambda_{j_n,k_n} \in  S_{T_n,\lfloor \log_2 NM \rfloor +2}$ such that 
\begin{align*}
    \lp\dfrac{k_n-NM}{2^{j_n}}, \dfrac{k_n+N}{2^{j_n}} \rp \subset T_n.
\end{align*}
As a consequence, the associated random variables $\lp \wt{\ep_{T'_n}}^M \rp_{1 \leq n \leq m}$ are independent as the dyadic intervals $(T'_n)_{1 \leq n \leq m}$ satisfies condition $(C_M)$ in Definition \ref{def:conditionind}. Next, for a constant $C >0$ to be chosen later, we set 
\begin{align*}
    \mathcal{E}_{j,m}(t)= \lcl \omega \in \Omega \, : \, \max_{1 \leq n \leq m} \lln \wt{\ep_{T'_n}}^M\rrn \geq C \log(2m) \rcl.
\end{align*}
Note that, as the random variables $\lp \wt{\ep_{T'_n}}^M \rp_{1 \leq n \leq m}$ are independent, 
\begin{align*}
    \mathbb{P} \lp \mathcal{E}_{j,m}(t)\rp = 1 - \prod_{n=1}^m \mathbb{P} \lp \lln \wt{\ep_{T'_n}}^M \rrn < C \log(2m) \rp 
\end{align*}
Recalling \eqref{eqn:constantM}, and the fact that $\log(1-x) \leq -x$ if $x \in (0,1)$, one gets, for $m$ is large enough, 
\begin{align*}
    \mathbb{P} \lp \mathcal{E}_{j,m}(t)\rp \geq & \, 1 - (1- \exp(-Cc_2 \log(2m))^m\\ = 
     & \, 1 - \lp 1 - \lp \dfrac{1}{2m} \rp ^{Cc_2} \rp^m \\ \geq 
     & \, 1 - \exp \lp \dfrac{m}{(2m)^{Cc_2}} \rp \\ = 
     & \, 1 - \exp \lp \dfrac{m^{1-Cc_2}}{2^{Cc_2}} \rp.
     \end{align*}

Finally, choosing $C$ such that $0<Cc_2<1$, one obtain that
\begin{align*}
    \sum_{p \in \N} \mathbb{P} \lp \mathcal{E}_{2^p,2^p}(t)\rp = + \infty.
\end{align*}
Knowing that the events $\mathcal{E}_{2^p,2^p}(t)$ are independent for all $p \in \N$, one concludes using Borel-Cantelli Lemma that 
\begin{align*}
    \mathbb{P} \lp  \limsup_{m \rightarrow +\infty} \mathcal{E}_{2^m,2^m}(t)\rp =1
\end{align*}
It follows that for a fixed $t \in \R$, almost surely, there are infinitely many $j \in \N$ such that
\begin{align*}
    \max_{\begin{matrix}
\lambda' \in S_{\lambda,\lfloor \log_2 NM \rfloor +2}\\
\lambda \in 3\lambda_j(t)
\end{matrix}} \lln \wt{\ep_{\lambda'}}^M(\omega) \rrn \geq C \log j.
\end{align*}
\end{proof}

Concerning the ``non-independent part'' of the wavelet coefficients, one can state the following Lemma.

\begin{Lemma}\label{Lemma:leaderord2}
There is a deterministic constant $C'>0$ such that, for all $M \in \N$ and for all $t \in (0,1)$,  there exists $\Omega_{t,2} \subset \Omega$ with probability 1 such that for all $\omega \in \Omega_{t,2}$ there exists $J \in \N$ such that, for all $j \geq J$,
\begin{align*}
    \max_{\begin{matrix}
\lambda' \in S_{\lambda,\lfloor \log_2( NM) \rfloor +2}\\
\lambda \in 3\lambda_j(t)
\end{matrix}} \lln \wc{\ep_{\lambda'}}^M(\omega) \rrn \leq C'  M^{\max\{H_1,H_2\}-1} \log j.
\end{align*}
\end{Lemma}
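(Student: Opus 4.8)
The plan is to mimic the strategy used to bound the ``non-independent'' part of the wavelet coefficients elsewhere in the paper, combining the $L^2$-estimate of Lemma~\ref{prop:normel2} with the subgaussian-type tail bound of Theorem~\ref{thm:bound} and a Borel--Cantelli argument along a suitable dyadic subsequence. First I would recall that, by Remark~\ref{rmk:iid} and Lemma~\ref{prop:normel2}, for every dyadic interval $\lambda=\lambda_{j,k}$ one has $\norm{\wc{c_{j,k}}^M} \leq C^*_{\Psi,H_1,H_2} 2^{-j(H_1+H_2-1)} M^{\max\{H_1,H_2\}-1}$, so the normalized variable $\wc{\ep_\lambda}^M = \wc{c_{j,k}}^M / 2^{-j(H_1+H_2-1)}$ satisfies $\norm{\wc{\ep_\lambda}^M} \leq C^* M^{\max\{H_1,H_2\}-1}$. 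Since $\wc{\ep_\lambda}^M$ belongs to the second order Wiener chaos, Theorem~\ref{thm:bound} gives, for $y\geq 2$,
\[
\mathbb{P}\big( |\wc{\ep_\lambda}^M| \geq y\, C^* M^{\max\{H_1,H_2\}-1} \big) \leq \exp(-\overset{\star}{C} y).
\]

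Next I would fix $t\in(0,1)$ and count the wavelets involved. For each scale $j$, the index $\lambda$ ranges over $3\lambda_j(t)$, a set of at most $3$ dyadic intervals of scale $j$, and $\lambda'$ ranges over $S_{\lambda,\lfloor \log_2(NM)\rfloor+2}$, which has cardinality $2^{\lfloor \log_2(NM)\rfloor+2}\leq 4NM$, a quantity independent of $j$. Hence the maximum in the statement is over at most $12NM$ random variables. Taking a union bound and choosing $y = y_j := A \log j$ for a constant $A>0$ to be fixed, I get
\[
\mathbb{P}\Big( \max_{\substack{\lambda'\in S_{\lambda,\lfloor \log_2(NM)\rfloor+2}\\ \lambda\in 3\lambda_j(t)}} |\wc{\ep_{\lambda'}}^M| \geq A C^* M^{\max\{H_1,H_2\}-1} \log j \Big) \leq 12NM \exp(-\overset{\star}{C} A \log j) = 12NM\, j^{-\overset{\star}{C} A}.
\]
Choosing $A$ large enough that $\overset{\star}{C} A > 1$, this is summable in $j$, so Borel--Cantelli yields an event $\Omega_{t,2}$ of probability $1$ on which the event in the display occurs for only finitely many $j$; equivalently, there is a (random) $J$ such that for all $j\geq J$ the reverse inequality holds, with $C' := A C^*_{\Psi,H_1,H_2}$. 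This is exactly the claimed bound.

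The argument is essentially routine given the tools already assembled; the only point requiring a little care is making sure the number of terms in the maximum does not grow with $j$ — this is why the paper deliberately uses $S_{\lambda,\lfloor\log_2(NM)\rfloor+2}$ with a fixed refinement depth rather than letting the depth grow, so that the union bound produces a genuinely summable tail $j^{-\overset{\star}{C}A}$ rather than something that would need $A$ to depend on $j$. A secondary, more cosmetic issue is the threshold $y\geq 2$ in Theorem~\ref{thm:bound}: since $\log j\to\infty$, the condition $A\log j \geq 2$ holds for all large $j$, and the finitely many small $j$ are harmless because they can be absorbed into the choice of the random level $J$. No obstacle here is genuinely hard; the main thing is simply to organize the union bound and the Borel--Cantelli step cleanly, exactly as in the companion Lemma~\ref{Lemma:leaderord1} for the independent part.
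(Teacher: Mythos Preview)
Your proposal is correct and follows essentially the same route as the paper: use the $L^2$-bound from Lemma~\ref{prop:normel2} to turn the threshold $C' M^{\max\{H_1,H_2\}-1}\log j$ into a multiple of $\norm{\wc{\ep_{\lambda'}}^M}$, apply the second-chaos tail estimate of Theorem~\ref{thm:bound}, take a union bound over the $O(NM)$ intervals $\lambda'$ at each scale $j$, and choose the constant so that the resulting probabilities $\asymp j^{-\overset{\star}{C}C'/C^*_{\Psi,H_1,H_2}}$ are summable, concluding by Borel--Cantelli. The only cosmetic differences are that the paper writes the constant directly as $C'>C^*_{\Psi,H_1,H_2}/\overset{\star}{C}$ rather than factoring it as $A\,C^*_{\Psi,H_1,H_2}$, and counts $4NM$ terms (per $\lambda\in 3\lambda_j(t)$) where you count $12NM$ globally; neither affects the argument.
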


\begin{proof}
Let us fix $t \in (0,1)$. For any $C'>0$, for all $j$ sufficiently large and $\lambda \in 3\lambda_j(t)$, we have, by Theorem \ref{thm:bound},
\begin{align*}
\mathbb{P}&\left(\exists \lambda' \in  S_{\lambda,\lfloor \log_2 NM \rfloor +2} \, : \,  \lln \wc{\ep_{\lambda'}}^M \rrn \geq  C'  M^{\max\{H_1,H_2\}-1}\log j \right) \\
& \leq \sum_{\lambda' \in  S_{\lambda,\lfloor \log_2 NM \rfloor +2}} \mathbb{P} \left(\lln \wc{\ep_{\lambda'}}^M) \rrn \geq  C'  M^{\max\{H_1,H_2\}-1}\log j  \right) \\& \leq \sum_{\lambda' \in  S_{\lambda,\lfloor \log_2 NM \rfloor +2}} \mathbb{P} \left(\lln \wc{\varepsilon_{\lambda'}}^M \rrn \geq  C' (C^*_{\Psi,H_1,H_2})^{-1} \|\wc{\varepsilon_{\lambda'}}^M  \|_{L^2(\Omega)} \log j  \right) \\
& \leq 4NM \exp(-\overset{\star}{C} C' (C^*_{\Psi,H_1,H_2})^{-1}  \log j)
\end{align*}
Thus, for $C' > C^*_{\Psi,H_1,H_2}/\overset{\star}{C}$, the conclusion follows by Borel-Cantelli Lemma.
\end{proof}

\begin{proof}[Proof of Proposition \ref{Prop:ordinleader}]
The constant $C$ and $C'$ of Lemmata \ref{Lemma:leaderord1} and \ref{Lemma:leaderord2} being deterministic and independent of $M$, on can choose $M$ large enough such that 
\[ C-C' M^{\max\{H_1,H_2\}-1} >0.\]
Let us fix $t \in (0,1)$ and consider $\omega \in \Omega_{t,1} \cap \Omega_{t,2}$, where the events, of probability $1$, $\Omega_{t,1}$ and $\Omega_{t,2}$ are given by the same Lemmata. For all $J \in \N$, by Lemma \ref{Lemma:leaderord1}, there exist $ j \geq J$ and $\lambda'(j) \subseteq 3\lambda_j(t)$ of scale $j'= j + \lfloor \log NM \rfloor +2$ such that 
\begin{align*}
    \lln \wt{c_{\lambda'(j)}}^M(\omega)\rrn \geq C 2^{-j'(H_1+H_2-1)}\log j.
\end{align*}
If $J$ is large enough, we also have, for all such $j \geq J$, by Lemma \ref{Lemma:leaderord2},
\begin{align*}
    \lln \wc{c_{\lambda'(j)}}^M(\omega)\rrn \leq C' M^{\max\{H_1,H_2\}-1} 2^{-j'(H_1+H_2-1)} \log j .
\end{align*}
From this we deduce that
\begin{align*}
     d_j(t,\omega)  \geq &
     \lln c_{\lambda'(j)}(\omega) \rrn \\ \geq 
    & \lln \wt{c_{\lambda'(j)}}^M(\omega)\rrn  - \lln \wc{c_{\lambda'(j)}}^M(\omega)\rrn  \\ \geq 
    & 2^{-j'(H_1+H_2-1)} \log j \lp C - C'  M^{\max\{H_1,H_2\}-1}  \rp \\
    \geq & 2^{-j(H_1+H_2-1)} (4NM)^{1-H_1-H_2} \log j \lp C - C'  M^{\max\{H_1,H_2\}-1}  \rp
\end{align*}
Therefore, \eqref{eq:Propslow} holds true for all $t \in (0,1)$ and $\omega \in \Omega_{t,1} \cap \Omega_{t,2}$. The conclusion follows then from Fubini Theorem.
\end{proof}

\subsection{Rapid Points}
In this section our aim is to prove the following proposition.
\begin{Prop}
\label{Prop:rapid}
There exists $\Omega^*_2 \subset \Omega$ with probability 1 such that, for all $\omega \in \Omega^*_2$, there exist $t \in (0,1)$ such that 
\begin{align}
\label{eq:Proprapid}
    \limsup_{j \rightarrow +\infty}  \dfrac{d_j(t,\omega)}{2^{-j(H_1+H_2-1)} j} >0.
\end{align}
\end{Prop}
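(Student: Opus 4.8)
The approach mirrors the "slow points" construction of Kahane, but now aiming at the opposite behaviour: instead of imposing uniform control on the random variables along a decreasing sequence of dyadic intervals, I want to \emph{force} large wavelet coefficients at every scale along such a sequence, producing a single (random) point $t$ where the wavelet leaders grow like $2^{-j(H_1+H_2-1)}j$. The key input is the tail estimate \eqref{eqn:constantM} for the "independent part" $\wt{\ep_{j,k}}^M$, together with the independence of $\wt{\ep_{j_1,k_1}}^M,\dots,\wt{\ep_{j_n,k_n}}^M$ when the intervals satisfy condition $(C_M)$ (Definition \ref{def:conditionind}), and the sub-exponential \emph{upper} tail of $\wc{\ep_{j,k}}^M$ coming from Theorem \ref{thm:bound} and Lemma \ref{prop:normel2}, exactly as in Lemma \ref{Lemma:leaderord2}.

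First I would fix $M$ large enough (as in the proof of Proposition \ref{Prop:ordinleader}) so that $C-C'M^{\max\{H_1,H_2\}-1}>0$, where $C,C'$ are deterministic constants to be produced below, analogous to those of Lemmata \ref{Lemma:leaderord1} and \ref{Lemma:leaderord2}. Then I would run a Kahane-type percolation/survival argument on dyadic intervals: starting from $[0,1)$, at each step $J\to J+1$ subdivide each surviving interval of $S_{J}$ into its two dyadic children and keep a child $\lambda$ only if it contains a subinterval $\lambda'\in S_{\lambda,\lfloor\log_2(NM)\rfloor+2}$ with $|\wt{\ep_{\lambda'}}^M|\ge C\,J$ (the threshold now grows linearly in $J$, not like $\log J$ as for ordinary points). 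Since we pick the relevant $T'_n$-type subintervals from disjoint dyadic blocks, the associated random variables are independent, so by \eqref{eqn:constantM} each child survives with probability at least $\exp(-c_2 C J)=(\text{something})$, hence, splitting into two children, the expected number of survivors at step $J+1$ is at least $2\,e^{-c_2 C J}$ times the number at step $J$. Choosing $C$ small enough that $2e^{-c_2 C}>3/2$ (say), and using the standard second-moment / binomial concentration step from \cite[Lemma 3.6, Theorem 3.7]{esserloosveldt} exactly as in the slow-points construction, one obtains $\mathbb{P}\big(\bigcap_{J}\{N_J\ge 1\}\big)>0$ along this branching process; a $0$--$1$ law (or repeating the construction on a grid of subintervals, as in Remark \ref{rmk:mainthm}) upgrades this to probability $1$ that the limiting compact set $\bigcap_J\bigcup_{\lambda\in S_J}\overline\lambda$ is nonempty. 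Any $t$ in this set satisfies: for all $j$ large, there is $\lambda'\subseteq 3\lambda_j(t)$ of scale $j'=j+\lfloor\log_2(NM)\rfloor+2$ with $|\wt{c_{\lambda'}}^M|\ge C\,2^{-j'(H_1+H_2-1)}j$.

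To conclude I would control the non-independent part along the \emph{same} sequence: by a Borel--Cantelli argument identical to Lemma \ref{Lemma:leaderord2} (but with threshold $C'M^{\max\{H_1,H_2\}-1}j$ in place of $\log j$, which only makes the tail bound better), almost surely for all $j$ large and all $\lambda\in 3\lambda_j(t)$, $|\wc{\ep_{\lambda'}}^M|\le C'M^{\max\{H_1,H_2\}-1}j$ for every $\lambda'\in S_{\lambda,\lfloor\log_2(NM)\rfloor+2}$; combining, $d_j(t,\omega)\ge |c_{\lambda'}(\omega)|\ge 2^{-j'(H_1+H_2-1)}j\,(C-C'M^{\max\{H_1,H_2\}-1})$, and since $2^{-j'(H_1+H_2-1)}=2^{-j(H_1+H_2-1)}(4NM)^{1-H_1-H_2}$ is a constant multiple of $2^{-j(H_1+H_2-1)}$, this gives \eqref{eq:Proprapid}. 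The main obstacle is making the branching/survival argument rigorous with the sub-exponential (rather than Gaussian) tails: in the slow-points case one exploits that $p_l(\mu)$ decays super-exponentially, whereas here \eqref{eqn:constantM} only gives a one-sided \emph{exponential} lower bound $\exp(-c_2 y)$, so one must check that the constant $C$ can be chosen small enough that the expected growth factor $2e^{-c_2 C}$ exceeds $1$ while still being compatible with the (upper) tail control needed to discard the $\wc{\ep}^M$ contribution — i.e. that the two constant constraints $2e^{-c_2C}>3/2$ and $C>C'M^{\max\{H_1,H_2\}-1}$ (with $C'$ forced to be large) can be met simultaneously by first fixing $C$, then $c_2$ via \eqref{eqn:constantM}, and only afterwards choosing $M$ large; verifying this ordering of quantifiers is the delicate point, together with the bookkeeping that the intervals selected at distinct steps indeed satisfy $(C_M)$ so that independence across the whole branching tree (not merely within one generation) holds.
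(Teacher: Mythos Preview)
Your proposal contains a genuine gap in the survival argument. You write that the expected number of survivors at step $J+1$ is at least $2e^{-c_2 C J}$ times the number at step $J$, but then you propose to choose $C$ so that $2e^{-c_2C}>3/2$, silently dropping the dependence on $J$. The survival probability $e^{-c_2CJ}$ decays \emph{exponentially} in the generation number, so no fixed choice of $C>0$ makes the branching factor $2e^{-c_2CJ}$ exceed $1$ for all large $J$; the branching process you describe dies out almost surely. A binary subdivision simply does not provide enough independent trials per scale to force $|\wt\ep^M_{\lambda'}|\ge Cj$ somewhere, because that event has probability only of order $e^{-c_2Cj}$.

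The paper circumvents this by a completely different mechanism: at each scale $j$ it partitions $[0,1)$ into $\lfloor 2^{j(1-a)}\rfloor$ blocks of length roughly $2^{-j(1-a)}$, and within each block there are about $2^{aj}/(2NM)$ mutually $(C_M)$-separated dyadic intervals, hence independent $\wt\ep^M$'s. The probability that \emph{every} one of them stays below $Cj$ is at most $(1-e^{-c_2Cj})^{\lfloor 2^{aj}/(2NM)\rfloor}$, which tends to $0$ super-exponentially provided $Cc_2<a\log 2$. A Borel--Cantelli argument then gives directly (with probability $1$, no $0$--$1$ law needed) that for all large $j$, every such block contains a coefficient of size $\ge Cj$. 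One then defines the open sets $O_n(\omega)=\bigcup_{j\ge n}\bigcup_{k\in G_j(\omega)}(k2^{-j},(k+1)2^{-j})$ and checks they are dense in $(0,1)$; Baire's theorem produces the required $t$. So the missing idea in your plan is that one must trade branching for \emph{many independent trials at a single scale}, roughly $2^{aj}$ of them, and then use a density/Baire argument rather than a nested-interval survival argument. Your treatment of the $\wc\ep^M$ part (the analogue of Lemma \ref{Lemma:leaderord2} with threshold proportional to $j$ and uniform over $\lambda\subset[0,1]$) and the final combination of the two pieces are correct and match the paper.
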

 As in the previous subsection, we start by working with the random variables $\wt{\ep_{\lambda}}^M$.
\begin{Lemma}
\label{Lemma:rapid1}
There exists a deterministic constant $C>0$ such that for all $M$ there is $\Omega_2 \subset \Omega$ with probability 1 such that for all $\omega \in \Omega_2$ there exist $t \in (0,1)$ such that 
\begin{align}
\label{eq:rapidpoints}
    \limsup_{j \rightarrow +\infty} \dfrac{\lln \wt{\ep_{\lambda_j(t)}}^M(\omega) \rrn}{j} \geq C. 
\end{align}
\end{Lemma}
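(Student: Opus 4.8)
The plan is to produce a single point $t$ as the intersection of a nested sequence of dyadic intervals along which $\lln\wt{\ep}^M\rrn$ is of order of the scale; the construction exploits that at each fixed scale there are many \emph{independent} copies of the (identically distributed, by Remark \ref{rmk:iid}) random variable $\wt{\ep}^M$, together with the tail lower bound \eqref{eqn:constantM}. Fix once and for all a deterministic constant $C$ with $0 < C < \log(2)/c_2$ ($c_2$ being the constant in \eqref{eqn:constantM}) and a strictly increasing sequence of integers $(j_n)_{n\in\N}$ with $j_n\to+\infty$.

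\textbf{A selection estimate at a fixed scale.} Let $\lambda_0\subseteq[0,1)$ be a dyadic interval of scale $j_0$. For every $n$ with $j_n-j_0$ large enough (depending only on $M$) one can extract a family $\mathcal{G}_n(\lambda_0)$ of dyadic intervals of scale $j_n$ contained in $\lambda_0$, of cardinality at least $\kappa_M 2^{j_n-j_0}$ with $\kappa_M:=(2N(M+1))^{-1}$, whose enlarged intervals $\lambda_{j_n,s(\mu)}^{M}$ are pairwise disjoint — it suffices to keep those $\mu$ whose left endpoint lies in a fixed arithmetic progression of common difference $\lceil N(M+1)\rceil$. By the independence statement following Definition \ref{def:conditionind}, the random variables $(\wt{\ep_\mu}^M)_{\mu\in\mathcal{G}_n(\lambda_0)}$ are independent, and identically distributed by Remark \ref{rmk:iid}. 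Setting $A_n(\lambda_0):=\{\exists\,\mu\in\mathcal{G}_n(\lambda_0):\ \lln\wt{\ep_\mu}^M\rrn\geq C j_n\}$, independence and \eqref{eqn:constantM} (applicable since $Cj_n$ is large for $n$ large) give
\[
\mathbb{P}\big(A_n(\lambda_0)^c\big)\leq\big(1-\exp(-c_2 C j_n)\big)^{\kappa_M 2^{j_n-j_0}}\leq\exp\big(-\kappa_M\, 2^{-j_0}\, e^{(\log 2-c_2 C)j_n}\big).
\]
As $\log 2-c_2 C>0$ and $j_n\to+\infty$, the right-hand side is summable in $n$, so the Borel--Cantelli lemma yields an event $\Omega_{\lambda_0}$ of probability $1$ on which $A_n(\lambda_0)$ holds for all $n$ large enough.

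\textbf{Extraction of the rapid point.} Put $\Omega_2:=\bigcap_{\lambda_0}\Omega_{\lambda_0}$, the intersection being over all dyadic $\lambda_0\subseteq[0,1)$; this is still an event of probability $1$. Fix $\omega\in\Omega_2$. We build recursively dyadic intervals $\lambda^{(0)}\supsetneq\lambda^{(1)}\supsetneq\cdots$: set $\lambda^{(0)}:=[0,1)$, $J_0:=0$; given $\lambda^{(k-1)}$ of scale $J_{k-1}$, apply the conclusion of the previous step to $\lambda_0=\lambda^{(k-1)}$ and pick $n_k$ large enough that $j_{n_k}>J_{k-1}$, that $A_{n_k}(\lambda^{(k-1)})$ holds, and that $\#\mathcal{G}_{n_k}(\lambda^{(k-1)})$ is large; let $\lambda^{(k)}\in\mathcal{G}_{n_k}(\lambda^{(k-1)})$ be such that $\lln\wt{\ep_{\lambda^{(k)}}}^M(\omega)\rrn\geq C j_{n_k}$, and set $J_k:=j_{n_k}$. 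Then $\bigcap_k\overline{\lambda^{(k)}}$ is a single point, which — using the freedom of choice at each step to avoid extreme subintervals — may be taken to be a non-dyadic $t\in(0,1)$. For this $t$ one has $\lambda_{J_k}(t)=\lambda^{(k)}$ for every $k$, hence
\[
\limsup_{j\to+\infty}\frac{\lln\wt{\ep_{\lambda_j(t)}}^M(\omega)\rrn}{j}\ \geq\ \limsup_{k\to+\infty}\frac{\lln\wt{\ep_{\lambda^{(k)}}}^M(\omega)\rrn}{J_k}\ \geq\ C,
\]
which is \eqref{eq:rapidpoints}; note that $C$ is deterministic and, since $c_2$ does not depend on $M$, independent of $M$.

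\textbf{Expected main difficulty.} The only genuinely delicate point is that the events $A_n(\lambda_0)$, $n\in\N$, are \emph{not} independent: the decomposition $c_{j,k}=\wt{c_{j,k}}^M+\wc{c_{j,k}}^M$ only provides independence of the $\wt{\ep}^M$'s attached to spatially disjoint enlarged intervals, whereas all the $A_n(\lambda_0)$ refer to the same region $\lambda_0$ (at different dyadic scales). This is circumvented by observing that, thanks to the $\kappa_M 2^{j_n-j_0}$ independent copies available \emph{within} scale $j_n$, the probabilities $\mathbb{P}(A_n(\lambda_0)^c)$ are already super-exponentially small, so the \emph{first} Borel--Cantelli lemma (requiring no independence across scales) suffices; in particular no conditional tail estimate for $\wt{\ep}^M$ is needed.
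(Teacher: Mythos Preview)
Your argument is correct and the core probabilistic input is the same as the paper's: at each scale one has $\asymp 2^{j}$ independent copies of $\wt{\ep}^M$ (after thinning by $\kappa_M$), and the tail lower bound \eqref{eqn:constantM} together with the first Borel--Cantelli lemma makes the ``bad'' event at each scale super-exponentially unlikely. The organisation, however, is genuinely different. The paper does not build a nested sequence of dyadic intervals; instead it shows that for \emph{every} $j$ large enough and every block $l\in\{0,\dots,\lfloor 2^{j(1-a)}\rfloor-1\}$ of consecutive well-separated indices one finds a good coefficient, deduces that the open set $O_n^M(\omega)=\bigcup_{j\ge n}\bigcup_{k\in G_j^M(\omega)}(k2^{-j},(k+1)2^{-j})$ is dense in $(0,1)$, and then invokes Baire's theorem to get a point in $\bigcap_n O_n^M(\omega)$. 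This buys two things: it yields a dense $G_\delta$ set of such $t$'s for free (useful in the proof of Theorem~\ref{thm:main} via Remark~\ref{rmk:mainthm}), and the use of \emph{open} dyadic intervals in $U_j^M(\omega)$ automatically forces $\lambda_j(t)=\lambda_{j,k}$ without any care about dyadic endpoints. Your construction is more elementary (no Baire) and directly builds the point, which is pedagogically pleasant; to get density you would rerun the argument inside each dyadic interval, which is exactly what Remark~\ref{rmk:mainthm} allows.

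One small wrinkle in your write-up: the phrase ``using the freedom of choice at each step to avoid extreme subintervals'' is not quite right as stated, because $A_{n_k}(\lambda^{(k-1)})$ only guarantees the existence of \emph{one} good $\mu\in\mathcal{G}_{n_k}(\lambda^{(k-1)})$, so you may have no freedom. The clean fix is to \emph{define} $\mathcal{G}_n(\lambda_0)$ so that it excludes the rightmost well-separated subinterval of $\lambda_0$; this costs one element, does not affect the cardinality estimate, and guarantees $\overline{\lambda^{(k)}}\subset\lambda^{(k-1)}$ for every $k$, hence $t\in\lambda^{(k)}$ and $\lambda_{J_k}(t)=\lambda^{(k)}$ as required.
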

\begin{proof}
Let us fix $a \in (0, 1) $ and $C > 0$ to be chosen later on. For every $(j, l) \in \N \times \left\{ 0,\dots, \lfloor 2^{j(1-a)} \rfloor -1\right\}$, we set 
\[ S_{j,l}^M = \lcl l \lfloor 2^{aj}/(2NM) \rfloor , \dots, (l+1) \lfloor 2^{aj}/(2NM) \rfloor-1 \rcl\]
and consider the event
\begin{align*}
    \mathcal{E}_{j,l}^M = \lcl \omega \in \Omega \, : \, \max_{k \in S_{j,l}^M } \lln \wt{\ep_{j,2kNM}}^M(\omega) \rrn \geq Cj\rcl
\end{align*}
Let $j_0$ be the smallest integer such that $\lfloor 2^{aj}/(2NM) \rfloor \geq 1$. If we assume that 
\begin{align}
\label{eq:omega*2}
    \Omega^*_2 = \bigcup_{J \geq j_0} \bigcap_{j \geq J} \bigcap_{l \in \left\{ 0,\dots, \lfloor 2^{j(1-a)} \rfloor -1\right\} } \mathcal{E}_{j,l}^M
\end{align}
is an event of probability 1 and we consider $\omega \in \Omega^*_2$. For every $j \geq j_0$, denote by 
\begin{align}
\label{eq:Gj(omega)}
    G_j^M(\omega) := \lp k \in \{0, \dots, 2^j -1\} \, : \, \lln \wt{\ep_{j,k}}^M(\omega) \rrn \geq Cj \rp.
\end{align}
Moreover, for every $n \geq j_0$, one considers
\begin{align}
\label{eq:On(omega)}
    O_n^M(\omega) := \bigcup_{j \geq n} U_j^M(\omega), \quad \mbox{where } U_j^M(\omega) := \bigcup_{k \in G_j^M(\omega)} \lp \frac{k}{2^j}, \frac{k+1}{2^j} \rp.
\end{align}
If one proves that $O_n^M(\omega)$ is dense in $(0,1)$, then by Baire's theorem the set $\cap_{n \geq j_0} O_n^M(\omega)$ is non-empty and let t be an element of this set. Then for every $n \geq j_0$, there is $j \geq n$ such that $\lln \wt{\ep_{\lambda_j(t)}}^M(\omega) \rrn \geq Cj$, and so desired statement \eqref{eq:rapidpoints} is true.
\\ We still have to prove two points: 
\begin{enumerate}
    \item $O_n^M(\omega)$ is dense in $(0, 1)$.
    \item $\Omega_2^*$ is an event of probability 1.
\end{enumerate}
Indeed, starting with statement 1, consider $t \in (0, 1)$, $j \geq j_0$ and $k$ such that $\lambda_j(t) = \lambda_{j,k}$. Then, we have two cases: 
\begin{enumerate}
    \item[] Case 1 : There is $l \in \{0, \dots, \lfloor 2^{j(1-a)} \rfloor -1\}$ such that 
    \begin{align*}
        k \in \lcl l \lfloor 2^{aj} \rfloor , \dots, (l+1) \lfloor 2^{aj} \rfloor-1 \rcl
    \end{align*}
     Using \eqref{eq:omega*2} and \eqref{eq:Gj(omega)}, there is $k' \in \{ l \lfloor 2^{aj}/(2NM) \rfloor , \dots, (l+1) \lfloor 2^{aj}/(2NM) \rfloor-1 \}$ such that $2k'NM \in G_j(\omega)$. Then, by \eqref{eq:On(omega)}, 
     \begin{align*}
         \lp \dfrac{2NMk'}{2^j}, \dfrac{2NMk'+1}{2^j} \rp \subset O_n^M(\omega).
     \end{align*}
     which is at is at most $2^{-j} \lp\lfloor 2^{aj} \rfloor +2NM\lfloor 2^{aj}/(2NM) \rfloor \rp$ from $t$. Finally, we get that $t$ is at a distance at most $22^{j(a-1)}$ of $U_j^M(\omega)$.
    \item[] Case 2 : $k \in \{ \lfloor 2^{j(1-a)} \rfloor \lfloor 2^{ja} \rfloor,\dots, 2^j-1\}$. Again by \eqref{eq:omega*2} and \eqref{eq:Gj(omega)}, there is $k' \in S_{j,l}^M$ such that $2k'NM \in G_j^M(\omega)$, and similarly, we get that $t$ is at a distance at most $c 2^{j(a-1)}$ of $U_j^M(\omega)$, for some constant $c > 0$ depending only on $N$, $M$ and $a$. 
\end{enumerate}
Finally, in both cases $t$ is at a distance at most $c 2^{j(a-1)}$, and so the density follows. 
\\ Now for statement 2, in order to prove that $\Omega^*_2$ has a probability 1, it is enough to prove that 
\begin{align}
    \label{eq:event}
    \PP \lp \mathcal{C} \lp \bigcap_{l \in \left\{ 0,\dots, \lfloor 2^{j(1-a)} \rfloor -1\right\} } \mathcal{E}_{j,l}^M\rp\rp
\end{align}
is the general term of a convergent series, then the result follows by Borel-Cantelli Lemma. Note that the variables $\wt{\ep_{j,2NMk}}^M(\omega)$, $k \in  S_{j,l}^M$ and $l \in \{0, \ldots , \lfloor2^{j(1-a)}\rfloor - 1\}$, are independent because for every $k \neq k'$, $|2NMk-2NMk'| \geq 2NM$ and so $\lambda_{j,2MNk}^M \cap \lambda_{j,2MNk'}^M = \cmemptyset$. Consequently, one has
\begin{align}\label{prop:utilepourconvergence}
\nonumber
   & \PP \lp \mathcal{C} \lp \bigcap_{l \in \left\{ 0,\dots, \lfloor 2^{j(1-a)} \rfloor -1\right\} } \mathcal{E}_{j,l}^M\rp\rp \\ = \nonumber
   & 1- \PP \lp \bigcap_{l \in \left\{ 0,\dots, \lfloor 2^{j(1-a)} \rfloor -1\right\} } \mathcal{C} \lp \mathcal{E}_{j,l}^M\rp\rp  \\ = \nonumber
    & 1 - \prod_{l \in \left\{ 0,\dots, \lfloor 2^{j(1-a)} \rfloor -1\right\}} \lp 1 - \prod_{k \in  S_{j,l}^M} \PP \lp \lln \wt{\ep_{j,2NMk}}^M(\omega) \rrn < Cj\rp \rp \\ = \nonumber
    & 1 - \lp 1 - \lp1- \PP \lp |\ep| \geq Cj\rp \rp^{ \lfloor 2^{aj}/(2NM) \rfloor}  \rp^{ \lfloor 2^{j(1-a)}\rfloor} \\ \leq 
    & 1- \exp{\lp 2^{j(1-a)} \log (1-p_j)\rp}
\end{align}
where $\ep$ is a random variable belonging to the Wiener chaos of order 2 distributed according to the $(\wt{\ep_{\lambda}})_{\lambda \in \Lambda}$  and $p_j = \lp1- \PP \lp |\ep| \geq Cj\rp \rp^{ \lfloor 2^{aj}/(2NM) \rfloor}$. Remark that $p_j$ is a positive term that tends to $0$ as $j \rightarrow +\infty$. Indeed, using the fact that $\log(1-x) \leq -x$ if $x \in (0,1)$ together with \eqref{eqn:constantM}, there exist $J \in \N$ such that for all $j \geq J$, 
\begin{align}
\label{eq:p_j}
    0 \leq p_j \leq \nonumber &\lp 1 -\exp{(-C\,c_2 \,j) }\rp^{ \left\lfloor 2^{aj}/(2NM) \right\rfloor} \\\leq \nonumber
    &\exp{\lp -\left\lfloor \frac{2^{aj}}{2NM} \right\rfloor  \exp{(-C\,c_2 \,j) }\rp} \\ \leq \nonumber
    & \exp{\lp -C'\exp{(\log 2^{aj})} \exp{(-C\,c_2 \,j) }\rp} \\ \leq 
    & \exp{\lp -C' \exp{j(a\log2-C\,c_2) }\rp}
\end{align}
where $C'$ depends only on $N$, $M$ and $a$ and $c_2$ is the constant given in \eqref{eqn:constantM}. It is enough to choose $C$ such that $a\log2-C\,c_2 >0$ to deduce that and so $p_j \rightarrow 0$ as $j \rightarrow +\infty$. Similarly, one can get for all $j \geq J$
\begin{align*}
    0 \leq 2^{j(1-a)}p_j \leq \exp{\lp -C' \exp{j(\log2-C\,c_2) }\rp}
\end{align*}
which indeed shows that $2^{j(1-a)}p_j $ tends to $0$ as $j \rightarrow +\infty$. Now, using the fact that $\log(1-x) = -x +o(x)$ and $\exp{(x)} = 1+x+ o(x)$ as $x \rightarrow 0$, together with \eqref{prop:utilepourconvergence} we obtain that for all $\delta >0$ 
\begin{align*}
    \PP \lp \mathcal{C} \lp \bigcap_{l \in \left\{ 0,\dots, \lfloor 2^{j(1-a)} \rfloor -1\right\} } \mathcal{E}_{j,l}^M\rp\rp \leq 2^{j(1-a)} \lp \delta (p_j+ \delta p_j) +p_j +\delta p_j\rp
\end{align*}
for $j$ large enough. Using the upper bound in \eqref{eq:p_j}, one can finally conclude that \eqref{eq:event} is indeed the general term of a convergent series.
\end{proof}

Concerning the random variable $\wc{\ep_{\lambda}}^M$, one can give an almost sure upper bound.

\begin{Lemma}\label{Lemma:rapid2}
There exists a deterministic constant $C'>0$ such that for all $M$ there is  $\Omega_2' \subset \Omega$ with probability 1 such that for all $\omega \in \Omega_2'$ there exist $J \in \N$ such that, for all $j \geq J$, for all $\lambda \in \Lambda_j$, $\lambda \subseteq [0,1]$,
\[ \lln \wc{\ep_{\lambda}}^M(\omega) \rrn \leq C'  M^{\max\{H_1,H_2\}-1} j\]
\end{Lemma}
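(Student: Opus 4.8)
The plan is to mimic the argument used for Lemma~\ref{Lemma:leaderord2}, since both lemmas bound the ``non-independent part'' $\wc{\ep_\lambda}^M$ from above, the only difference being that here we need a uniform-in-$\lambda$ statement (over all dyadic $\lambda\subseteq[0,1]$ of scale $j$) rather than a pointwise one along $3\lambda_j(t)$. First I would fix the deterministic constant, combining Theorem~\ref{thm:bound} with the $L^2$-estimate $\norm{\wc{c_{j,k}}^M}\leq C^*_{\Psi,H_1,H_2}2^{-j(H_1+H_2-1)}M^{\max\{H_1,H_2\}-1}$ from Lemma~\ref{prop:normel2}: for $\lambda=\lambda_{j,k}\subseteq[0,1]$ and any $C'>0$,
\[
\PP\lp \lln\wc{\ep_\lambda}^M\rrn \geq C' M^{\max\{H_1,H_2\}-1} j \rp
\leq \PP\lp \lln\wc{c_\lambda}^M\rrn \geq C'(C^*_{\Psi,H_1,H_2})^{-1}\norm{\wc{c_\lambda}^M}\, j\rp
\leq \exp\lp-\overset{\star}{C}\, C'(C^*_{\Psi,H_1,H_2})^{-1} j\rp,
\]
valid as soon as $j$ is large enough that $C'(C^*_{\Psi,H_1,H_2})^{-1}M^{\max\{H_1,H_2\}-1}j\geq 2$, so that the regime $y\geq 2$ in Theorem~\ref{thm:bound} applies.

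Next I would take a union bound over the $2^j$ dyadic intervals $\lambda\in\Lambda_j$ with $\lambda\subseteq[0,1]$:
\[
\PP\lp \exists\,\lambda\in\Lambda_j,\ \lambda\subseteq[0,1]\ :\ \lln\wc{\ep_\lambda}^M\rrn \geq C' M^{\max\{H_1,H_2\}-1} j\rp
\leq 2^j \exp\lp-\overset{\star}{C}\, C'(C^*_{\Psi,H_1,H_2})^{-1} j\rp.
\]
Choosing $C' > C^*_{\Psi,H_1,H_2}\log 2 /\overset{\star}{C}$ makes the exponent $(\log 2 - \overset{\star}{C}C'(C^*_{\Psi,H_1,H_2})^{-1})j$ negative, so the right-hand side is the general term of a convergent series in $j$. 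Borel--Cantelli then furnishes an event $\Omega_2'$ of probability $1$ and, for each $\omega\in\Omega_2'$, an integer $J=J(\omega)\in\N$ such that for all $j\geq J$ and all dyadic $\lambda\in\Lambda_j$ with $\lambda\subseteq[0,1]$ one has $\lln\wc{\ep_\lambda}^M(\omega)\rrn\leq C'M^{\max\{H_1,H_2\}-1}j$, which is the claim.

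I do not expect any genuine obstacle here: unlike Lemma~\ref{Lemma:rapid1}, this is a soft upper-bound argument and the combinatorial factor $2^j$ is easily absorbed by the exponential decay coming from the second-chaos tail estimate in Theorem~\ref{thm:bound}. The only points requiring a little care are (i) making sure the threshold $C'M^{\max\{H_1,H_2\}-1}j$ exceeds $2\norm{\wc{c_\lambda}^M}$ — equivalently, that $j$ is large enough — so that Theorem~\ref{thm:bound} is applicable (this is harmless since we only claim the bound for $j\geq J$), and (ii) noting that the constant $C'$ can be chosen independently of $M$, as in Lemma~\ref{Lemma:leaderord2}, which is what allows the eventual choice of $M$ large in the proof of Proposition~\ref{Prop:rapid} to kill the term $C'M^{\max\{H_1,H_2\}-1}$ against the lower bound $C$ from Lemma~\ref{Lemma:rapid1}.
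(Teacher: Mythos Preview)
Your proposal is correct and follows essentially the same argument as the paper: a union bound over the $2^j$ dyadic intervals in $[0,1]$, the tail estimate from Theorem~\ref{thm:bound} combined with the $L^2$ bound of Lemma~\ref{prop:normel2}, the choice $C'>\log(2)\,C^*_{\Psi,H_1,H_2}/\overset{\star}{C}$, and Borel--Cantelli. Your additional remarks on the applicability threshold for Theorem~\ref{thm:bound} and on the $M$-independence of $C'$ are accurate and simply make explicit what the paper leaves implicit.
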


\begin{proof}
If $C'>0$, for all $j$ sufficiently large, we have, by Theorem \ref{thm:bound}
\begin{align*}
\mathbb{P}&\left(\exists \lambda \in \Lambda_j, \, \lambda \subseteq [0,1] \, : \,  \lln \wc{\ep_{\lambda}}^M(\omega) \rrn \geq  C' M^{\max\{H_1,H_2\}-1}j \right) \\
& \leq \sum_{ \lambda \in \Lambda_j \, \lambda \subseteq [0,1]} \mathbb{P} \left(\lln \wc{\ep_{\lambda}}^M(\omega) \rrn \geq  C' M^{\max\{H_1,H_2\}-1} j  \right) \\
&\leq  2^j \exp(-\overset{\star}{C} C' (C^*_{\Psi,H_1,H_2})^{-1}  j)
\end{align*}
and thus, if $C' > \log(2)C^*_{\Psi,H_1,H_2}/\overset{\star}{C}$, the conclusion follows by Borel-Cantelli Lemma.
\end{proof}

\begin{proof}[Proof of Proposition \ref{Prop:rapid}]
Again, one can choose $M$ large enough such that
\[ C-C' M^{\max\{H_1,H_2\}-1} >0,\]
where $C$ and $C'$ are the constant given by Lemmata \ref{Lemma:rapid1} and \ref{Lemma:rapid2} respectively. Let us consider $ \omega \in \Omega^*_2:= \Omega_2 \cap \Omega_2'$ where the evnets, of probability $1$, $\Omega_2$ and $\Omega_2'$ are giving by the same Lemmata. We use the same notations as in them. First there exist $t \in (0,1)$ such that for all $ J \in \N$ there exist $j \geq n$ such that 
 \begin{align}
    \lln \wt{c_{\lambda_j(t)}}^M(\omega)\rrn \geq C \, j 2^{-j(H_1+H_2-1)}.
\end{align}
Moreover, if $J$ is large enough, for all such $j$ we also have
 \begin{align}
    \lln \wc{c_{\lambda_j(t)}}^M(\omega)\rrn \leq C' \, M^{\max\{H_1,H_2\}-1}  2^{-j(H_1+H_2-1)} j.
\end{align}
In this case, as in \ref{Prop:ordinleader} we have that for all $J$ great enough, there is $j \geq J$ such that
\begin{align*}
     d_j(t,\omega)  \geq  2^{-j(H_1+H_2-1)} j \lp C  - C'  M^{\max\{H_1,H_2\}-1}  \rp 
\end{align*}
and so one can conclude that \eqref{eq:Proprapid} holds true for all $\omega \in \Omega^*_2$.
\end{proof}

\section{Proof of the main Theorem}

Theorem \ref{thm:main} is then a straightforward consequence of Propositions \ref{prop:rapid}, \ref{Prop:ordinary}, \ref{prop:slow}, \ref{Prop:ordinleader} and \ref{Prop:rapid}.

\begin{proof}[Proof of Theorem \ref{thm:main}]
Let us denote by $\Omega_{R}$ the event obtained by taking the intersection of all the events of probability $1$ induced by Propositions \ref{prop:rapid}, \ref{Prop:ordinary}, \ref{prop:slow}, \ref{Prop:ordinleader} and \ref{Prop:rapid}.

If we consider $\omega$ belonging to this event of probability $1$, first, from Proposition \ref{prop:rapid} there exists $C_R>0$ such that, for all $t,s \in (0,1)$
\begin{equation}\label{eqn:lastdemo}
|R_{H_1,H_2}(t,\omega)-R_{H_1,H_2}(s,\omega)| \leq  C_R |t-s|^{H_1+H_2-1} \log |t-s|^{-1}
\end{equation}
while, for almost every $t_o \in (0,1)$, from Propositions \ref{Prop:ordinary} and \ref{Prop:ordinleader}
\[0< \limsup_{s \to t_o} \dfrac{|R_{H_1,H_2}(t_o,\omega)-R_{H_1,H_2}(s,\omega) |}{|t_o-s|^{H_1+H_2-1} \log \log |t_o-s|^{-1}} < + \infty. \]
Nevertheless, from Proposition \ref{Prop:rapid} we also know that there exists $t_r \in (0,1)$ such that
\[ 0< \limsup_{s \to t_r} \dfrac{|R_{H_1,H_2}(t_r,\omega)-R_{H_1,H_2}(s,\omega) |}{|t_r-s|^{H_1+H_2-1}  \log |t_r-s|^{-1}}\]
which, combined with \eqref{eqn:lastdemo}, gives that, for all such a $t_r$,
\[ 0< \limsup_{s \to t_r} \dfrac{|R_{H_1,H_2}(t_r,\omega)-R_{H_1,H_2}(s,\omega) |}{|t_r-s|^{H_1+H_2-1}  \log |t_r-s|^{-1}} < + \infty.\]
Moreover, from Proposition \ref{prop:slow}, we also know that one can find $t_\sigma \in (0,1)$ such that
\[ \limsup_{s \to t_\sigma} \dfrac{|R_{H_1,H_2}(t_\sigma,\omega)-R_{H_1,H_2}(s,\omega) |}{|t_\sigma-s|^{H_1+H_2-1} } < + \infty.\]
The conclusion follows by Remark \ref{rmk:mainthm}.
\end{proof}

\begin{Rmk}\label{rmk:last}
Unfortunately, our method does not allow us to affirm the positiveness of the limit \eqref{eqn:mainthm3}, at the opposite of limits \eqref{eqn:mainthm1} and \eqref{eqn:mainthm2}. Indeed, as for almost every $\omega \in \Omega$
\[ \limsup_{s \to t} \dfrac{|R_{H_1,H_2}(t,\omega)-R_{H_1,H_2}(s,\omega) |}{|t-s|^{H_1+H_2-1} }\]
is finite for \textit{some} $t$, we would need to show its positiveness for \textit{all} $t$ and thus the positiveness of the limit
\begin{equation}\label{eqn:lastremak}
\limsup_{j \to + \infty} \frac{d_j(t,\omega)}{2^{-j(H_1+H_2-1)}}
\end{equation}
for all $t$.

Concerning the random variables $(\wt{\ep_\lambda}^M)_\lambda$, one can obtain a positive result\footnote{This result is again a generalization of \cite[Lemma 3.3.]{ayacheesserkleyntssens} where most of the modifications comes from the fact that we are working in the Wiener chaos of order 2}. Indeed, from \cite[Theorem 6.9 and Remark 6.10]{janson97} we know that there exists an universal deterministic constant $\gamma \in [0,1)$ such that, for each random variable $X$ in the Wiener chaos of order $2$
\[ \mathbb{P} \left(|X| \leq \frac{1}{2} \norm{X} \right) \leq \gamma. \]
As $0 \leq \gamma <1$, of course, one can find $\ell_0 \in \N$ such that 
\begin{equation}\label{eqn:gamma}
\gamma^{\ell_0} < 2^{-1}.
\end{equation}
 Let us go back to the construction starting the proof of Lemma \ref{Lemma:leaderord1}. If the dyadic interval $\lambda_{j,k}$ and $m \in \N$ are fixed and $S \in \mathcal{S}_{j,k,m}$ we define the sequences of dyadic intervals $(I_n)_{0 \leq n \leq m}$ and $(T_n)_{1 \leq n \leq m}$ in the same way: $I_0=\lambda_{j,k}$, $I_m=S$ and, for all $1 \leq n \leq m$, $I_{n-1}=I_n \cup T_n$. Now, for any $1 \leq n \leq m$, there are $\ell_0$ dyadic intervals $(T_n^\ell=\lambda_{j_{n}^{(\ell)},k_{n}^{(\ell)}})_{1 \leq \ell \leq \ell_0}$ in $S_{T_n,\lfloor \log_2(\ell_0 NM) \rfloor +2}$ such that, for all $ 1 \leq \ell \leq \ell_0$
\[ \left(\frac{k_n^{(\ell)}-NM}{2^{j_n^{(\ell)}}},\frac{k_n^{(\ell)}+N}{2^{j_n^{(\ell)}}} \right) \subseteq T_n \]
and, if $\ell \neq \ell'$, $T_n^\ell \cap T_n^{\ell'} = \cmemptyset$. Therefore, the dyadic intervals $(T_n^\ell)_{1 \leq n \leq m, 1 \leq \ell \leq \ell_0}$ satisfy condition $(C_M)$ in Definition \ref{def:conditionind}. From this, for all $S \in \mathcal{S}_{j,k,m}$ we define the Bernouilli random variable
\[ \mathcal{B}_{j,k,m}(S) = \prod_{1 \leq n \leq m, 1 \leq \ell \leq \ell_0} \mathbf{1}_{\{|\wt{\varepsilon_{T_n^\ell}}^M| < 2^{-1} C_{\Psi,H_1,H_2}\}}\]
for which, by Proposition \ref{prop:normel2}, we have, using the independence of the random variables $(\wt{\varepsilon_{T_n^\ell}}^M)_{1 \leq n \leq m, 1 \leq \ell \leq \ell_0}$,  $\E [\mathcal{B}_{j,k,m}(S)] \leq \gamma^{m \ell_0}$. Therefore, if we define the random variable
\[ \mathcal{G}_{j,k,m} = \sum_{S \in \mathcal{S}_{j,k,m}} \mathcal{B}_{j,k,m}(S)  \]
then $\E[\mathcal{G}_{j,k,m}] \leq (2 \gamma^{\ell_0})^m$ and it follows from inequality \eqref{eqn:gamma} and Fatou Lemma that
\[ \E \lc \liminf_{m \to + \infty} \mathcal{G}_{j,k,m} \rc = 0. \]
As a consequence,
\[ \Omega_1 = \bigcap_{j \in \N, 0 \leq k < 2^{j}} \{ \omega \, : \, \liminf_{m \to + \infty} \mathcal{G}_{j,k,m}(\omega)=0 \} \]
is an event of probability $1$.

Now if $\omega \in \Omega_1$ and $t \in (0,1)$, we take $j \in \N$ and $k=k_j(t)$ and since, for all $m$, $\mathcal{G}_{j,k_j(t),m}$ has values in $\{0,\ldots,2^m \}$ we conclude that there are infinitely many $m$ for which, for every $S \in \mathcal{S}_{j,k_{j}(t),m}$, $\mathcal{B}_{j,k,m}(S)=0$. Considering such a $m$ and $S= \lambda_{j+m}(t)$ then we first remark that, for all $1 \leq n \leq m$, $I_n=\lambda_{j+n}(t)$ and thus $T_n \in 3\lambda_{j+n}(t)$. Now, as $\mathcal{B}_{j,k,m}(\lambda_{j+m}(t))=0$, one can find $1 \leq n \leq m$ and $1 \leq \ell \leq \ell_0$ such that
\[ |\wt{\varepsilon_{T_n^\ell}}^M(\omega)| \geq 2^{-1} C_{\Psi,H_1,H_2}.\]
Thus we have showed that, for all $\omega \in \Omega_1$ and $t \in (0,1)$ there exist infinitely many $j' \in \N$ such that
\[   \max_{\begin{matrix}
\lambda' \in S_{\lambda,\lfloor \log_2 (\ell_0NM) \rfloor +2}\\
\lambda \in 3\lambda_{j'}(t)
\end{matrix}} \lln \wt{\ep_{\lambda'}}^M(\omega) \rrn \geq 2^{-1} C_{\Psi,H_1,H_2}.\]
To pass to the wavelet leaders, in the spirit of Propositions \ref{Prop:ordinleader} and \ref{Prop:rapid}, we would need to get from Borel-Cantelli Lemma an upper bound of 
\begin{align*}
\max_{\begin{matrix}
\lambda' \in S_{\lambda,\lfloor \log_2 (\ell_0NM) \rfloor +2}\\
\lambda \in 3\lambda_{j'}(t)
\end{matrix}} \lln \wc{\ep_{\lambda'}}^M(\omega) \rrn
\end{align*}
for \textit{all} $j$ sufficiently large on an event of probability $1$ which does not depend on $t$. Then, as
\begin{align*}
\mathbb{P}&\lp \exists \lambda \in \Lambda_j, \, \lambda \subseteq [0,1] \, : \,  \max_{\begin{matrix}
 \lambda'' \in S_{\lambda',\lfloor \log_2( \ell_0 NM) \rfloor +2}\\
\lambda' \in 3\lambda
\end{matrix}} \lln \wc{\ep_{\lambda'}}^M(\omega) \rrn \geq  C'  M^{\max\{H_1,H_2\}-1} j \rp \\ &\leq 2^j 4\ell_0 N M \exp(-\overset{\star}{C} C' (C^*_{\Psi,H_1,H_2})^{-1}  j),
\end{align*}
 if $C' > \log(2)C^*_{\Psi,H_1,H_2}/\overset{\star}{C}$ this probability is the general term of some convergent series and in this case one can affirm the existence of an event $\Omega'_1$ of probability $1$ such that, for all $\omega \in  \Omega'_1$ there exist $J \in \N$ such that, for all $j \geq J$, for all $\lambda \in \Lambda_j$, $\lambda \subseteq [0,1]$,
\begin{align*} \max_{\begin{matrix}
 \lambda'' \in S_{\lambda',\lfloor \log_2( \ell_0 NM) \rfloor +2}\\
\lambda' \in 3\lambda
\end{matrix}} \lln \wc{\ep_{\lambda'}}^M(\omega) \rrn \leq C'  M^{\max\{H_1,H_2\}-1} j.
\end{align*}
It seems to be the sharper upper bound that we can hope to find with our constraints and the fact that we don't have any independence property to take advantage of when dealing with the random variables $\wc{\ep_{\lambda}}^M$. This is insufficient to consider properly limit \eqref{eqn:lastremak}. Nevertheless, if, instead of working with an uniform constant $M$ we make it depends on the scale $j$ by setting $M_j=(4C' C_{\Psi,H_1,H_2}^{-1} j)^{\frac{1}{1-\max \{H_1,H_2\}}}$, where $C'>\log(2)C^*_{\Psi,H_1,H_2}/\overset{\star}{C}$ is the same constant as in Lemma \ref{Lemma:rapid2},
\[\lambda_{j,k}^{M_j} :=\left] \frac{k-NM_j}{2^j}, \frac{k+N}{2^j} \right]^2,\]
\[  \wt{c_{j,k}}^{M_j}  = c_{H_1,H_2} \int_{\lambda_{j,k}^{M_j}}'\int_{-N}^N \Psi(x) \int_{\frac{k}{2^j}}^{\frac{x+k}{2^j}} f_{H_1,H_2}(s,x_1,x_2) \, ds \, dx\, dB(x_1)\, dB(x_2) \]
and
\[ \wc{c_{j,k}}^{M_j}=c_{j,k}-\wt{c_{j,k}}^{M_j}\]
then Proposition \ref{prop:normel2} stills holds if we replace $M$ by ${M_j}$ with $j$ sufficiently large and, by directly adapting what precedes one can find on event $\Omega_1^*$ of probability $1$ such that, for all $\omega \in \Omega_1^*$ and $t \in (0,1)$ there exist infinitely many $j \in \N$ such that\footnote{The random variables $\wt{\ep_{\lambda'}}^{M_j}$ and $\wc{\ep_{\lambda'}}^{M_j}$ are defined in an obvious way.}
\[   \max_{\begin{matrix}
\lambda' \in S_{\lambda,\lfloor \log_2 (\ell_0NM_{j}) \rfloor +2}\\
\lambda \in 3\lambda_{j}(t)
\end{matrix}} \lln \wt{\ep_{\lambda'}}^{M_j}(\omega) \rrn \geq 2^{-1} C_{\Psi,H_1,H_2}.\]
while there exist $J \in \N$ such that, for all $j \geq J$, for all $\lambda \in \Lambda_j$, $\lambda \subseteq [0,1]$,
\begin{align*} \max_{\begin{matrix}
 \lambda'' \in S_{\lambda',\lfloor \log_2( \ell_0 N{M_j}) \rfloor +2}\\
\lambda' \in 3\lambda
\end{matrix}} \lln \wc{\ep_{\lambda'}}^{M_j}(\omega) \rrn & \leq C'  (M_{j})^{\max\{H_1,H_2\}-1} j \\
& \leq 4^{-1} C_{\Psi,H_1,H_2} .
\end{align*}
As a consequence, as in Proposition \ref{Prop:ordinleader}, for all $J \in \N$ there exist $j \geq J$ with
\begin{align*}
 d_j(t,\omega) \geq 2^{-j(H_1+H_2-1)} (4C' C_{\Psi,H_1,H_2}^{-1} j)^{\frac{1-H_1-H_2}{1-\max\{H_1,H_2\}}} (4 \ell_0 N )^{1-H_1-H_2} 4^{-1} C_{\Psi,H_1,H_2}
\end{align*}
which allows to state that, for all $t \in (0,1)$ and $\omega \in \Omega_1$,
\[ \limsup_{j \to + \infty} \frac{ d_j(t,\omega)}{2^{-j(H_1+H_2-1)} j^{\frac{1-H_1-H_2}{1-\max\{H_1,H_2\}}}} > 0\]
and thus, for all $\omega \in \Omega_1$ and for all $t \in (0,1)$,
\[  \limsup_{s \to t} \dfrac{|R_{H_1,H_2}(t,\omega)-R_{H_1,H_2}(s,\omega) |}{|t-s|^{H_1+H_2-1} (\log |t-s|^{-1})^{\frac{1-H_1-H_2}{1-\max\{H_1,H_2\}}}} >0.\]
In particular, we find an almost sure uniform lower modulus of continuity for the generalized Rosenblatt process, similar to the one established in \cite{nourdin21} for the Rosenblatt process. However, we are not able to judge the optimality of this modulus, which seems to be a difficult problem, as already stated in \cite[Remark 1.2]{ayache20}. 
\end{Rmk}
An interesting corollary of Remark \ref{rmk:last} and Proposition \ref{prop:rapid} is the fact that, almost surely, the pointwise H\"older exponent of the generalized Rosenblatt process is everywhere $H_1+H_2-1$ and, in particular, it is nowhere differentiable.

Similarly, one can also take $(M_j= (4 C'C_{\Psi,H_1,H_2}^{-1}\log(j)^\frac{1}{1-\max \{H_1,H_2 \}})_j$, where $C'$ is this time the same constant that in Lemma \ref{Lemma:leaderord2} and show, precisely like in this Lemma, that there exists a deterministic constant $C'>0$ such that, for all $t \in (0,1)$  there exists $\Omega_{t,2} \subset \Omega$ with probability 1 such that for all $\omega \in \Omega_{t,2}$ there exist $J \in \N$ such that, for all $j \geq J$,
\begin{align*}
    \max_{\begin{matrix}
\lambda' \in S_{\lambda,\lfloor \log_2( \ell_0NM_j) \rfloor +2}\\
\lambda \in 3\lambda_j(t)
\end{matrix}} \lln \wc{\ep_{\lambda'}}^{M_j}(\omega) \rrn \leq 4^{-1} C_{\Psi,H_1,H_2}.
\end{align*}
and conclude in the same way that there exists an event of probability $1$ such that, for all $\omega$ in this event and for almost every $t \in (0,1)$
\[  \limsup_{s \to t} \dfrac{|R_{H_1,H_2}(t,\omega)-R_{H_1,H_2}(s,\omega) |}{|t-s|^{H_1+H_2-1} (\log \log |t-s|^{-1})^{\frac{1-H_1-H_2}{1-\max\{H_1,H_2\}}}} >0\]

\bigskip

\noindent \textbf{Acknowledgement.} Both authors are supported by the FNR OPEN grant APOGEe at University of Luxembourg. 

The authors thank C\'eline Esser from University of Li\`ege, Ivan Nourdin from University of Luxembourg and St\'ephane Seuret from University Paris Est Cr\'eteil for fruitful discussions and valuable advices.

\bibliography{biblio}{}
\bibliographystyle{plain}

\end{document}